\tikzstyle{startstop} = [rectangle, rounded corners, minimum width=1cm, minimum height=1cm,text centered, draw=black]
\tikzstyle{io} = [rectangle, rounded corners, minimum width=1cm, minimum height=1cm,text centered, draw=black]
\tikzstyle{process} = [rectangle, rounded corners, minimum width=1cm, minimum height=1cm,text centered, draw=black]
\tikzstyle{decision} = [rectangle, rounded corners, minimum width=1cm, minimum height=1cm,text centered, draw=black]
\tikzstyle{explain} = [minimum width=1cm, minimum height=0.5cm,text centered, draw=black]
\tikzstyle{equations} = []
\tikzstyle{arrow} = [thick,->,>=stealth]
\tikzstyle{d-arrow} = [thick,->,dashed,>=stealth]
\newcommand\reallywidehat[1]{%
\savestack{\tmpbox}{\stretchto{%
  \scaleto{%
    \scalerel*[\widthof{\ensuremath{#1}}]{\kern-.6pt\bigwedge\kern-.6pt}%
    {\rule[-\textheight/2]{1ex}{\textheight}}
  }{\textheight}%
}{0.5ex}}%
\stackon[1pt]{#1}{\tmpbox}%
}
\newtheorem{theorem}{Theorem}[section]
\numberwithin{equation}{section}
\newtheorem{remark}[theorem]{Remark}
\newtheorem{prop}[theorem]{Proposition}
\newtheorem{lemma}[theorem]{Lemma}
\def\dl{\delta}
\def\tl{\tilde}
\def\Dl{\Delta}
\def\gt{\gtrsim}
\def\mf{\mathfrak}
\def\sig{\sigma}
\def\nn{\nonumber}
\def\sq{\sqrt}
\def\eps{\epsilon}
\def\fr{\frac}
\def\al{\alpha}
\def\la{\langle}
\def\ra{\rangle}
\def\pr{\partial}
\def\nb{\nabla}
\def\les{\lesssim}
\def\lm{\lambda}
\def\Pe{\mathbb{P}}
\def\l|{\left\|}
\def\r|{\right\|}
 \newtheorem{thm}{Theorem}[section]
 \newtheorem{lem}[thm]{Lemma}
 \theoremstyle{definition}
 \newtheorem{rem}[thm]{Remark}
\newcommand{\be}{\begin{equation}}
\newcommand{\ee}{\end{equation}}
\newcommand\sometext
\newcommand{\JB}[1]{\langle #1 \rangle}
\newcommand{\E}{\textup{E}}
\newcommand{\nr}{\textup{NR}}
\newcommand{\T}{\mathbf{T}}
\newcommand{\R}{\mathbf{R}}
\newcommand{\res}{\textup{R}}
\newcommand{\N}{\textup{N}}
\newcommand{\Z}{\mathbb{Z}}
\newcommand{\norm}[1]{\left\lVert#1\right\rVert}
\title[] {Asymptotic stability of the three-dimensional Couette flow for the Stokes-transport equation}
\author{Daniel Sinambela}
\address[D.S.]{Department of Mathematics, New York University Abu Dhabi, Saadiyat Island, P.O. Box 129188, Abu Dhabi, United Arab Emirates.}
\email{dos2346@nyu.edu}
\author{Weiren Zhao}
\address[W.Z.]{Department of Mathematics, New York University Abu Dhabi, Saadiyat Island, P.O. Box 129188, Abu Dhabi, United Arab Emirates.}
\email{zjzjzwr@126.com, wz19@nyu.edu}
\author{Ruizhao Zi}
\address[R.Z.]{School of Mathematics and Statistics, and Key Laboratory of Nonlinear Analysis \& Applications (Ministry of Education), Central China Normal University, Wuhan,  430079,  P. R. China.}
\email{rzz@ccnu.edu.cn}
\begin{document}

\begin{abstract}
 In this paper, we investigate the asymptotic stability of the three-dimensional Couette flow in a stratified fluid governed by the Stokes-transport equation. We observe that a similar lift-up effect to the three-dimensional Navier-Stokes equation near Couette flow destabilizes the system. We find that the inviscid damping type decay due to the Couette flow together with the damping structure caused by the decreasing background density stabilizes the system. More precisely, we prove that if the initial density is close to a linearly decreasing function in the Gevrey-$\frac{1}{s}$ class with $\frac{1}{2}< s\leq 1$, namely, $\|\varrho_{\rm in}(X,Y,Z)-(-Y)\|_{\mathcal{G}^{s}}\leq \epsilon$, then the perturbed density remains close to $-Y$. Moreover, the associated velocity field converges to Couette flow $(Y, 0, 0)^{\top}$ with a convergence rate of $\frac{1}{\langle t\rangle^3}$.
\end{abstract}
\date{\today}
\maketitle


\allowdisplaybreaks

\section{Introduction}\label{Introduction}
We consider the three-dimensional Stokes-transport equations in  $\Omega:=\mathbb{T}\times \mathbb{R}\times \mathbb{T}\ni (X,Y,Z)$, 
\begin{equation}\label{governing eq}
 \begin{cases}
    \partial_t \varrho +v \cdot \nabla \varrho=0 ,\\
   -\Delta v +\nabla p= -\varrho e_2 ,\\
    \nabla \cdot v=0,\\
    \varrho(t=0)=\varrho_{\rm in}.
\end{cases} 
\end{equation}
Here $v(t,X,Y,Z)=\big(v^1(t,X,Y,Z),v^2(t,X,Y,Z),v^3(t,X,Y,Z)\big)^{\top}$ is the velocity field, $p(t,X,Y,Z)$ represents the pressure and $\varrho(t,X,Y,Z)$ denotes fluid density while $e_2=(0,1,0)$ is the unit vector pointing in the vertical direction.  The system \eqref{governing eq} models the evolution of an incompressible and viscous fluid with inhomogeneous density stratification with  gravity acting in $-e_2$ direction. Unlike the classical Boussinesq equation, this model excludes both velocity self-advection and density diffusion terms. From a physical point of view, the system in \eqref{governing eq} can be viewed as a mesoscopic model that portrays the behavior of inertia-less particles in a Stokes fluid during sedimentation \cite{hofer2018sedimentation, ingham1998transport}. The well-posedness theory of \eqref{governing eq} has been a subject of recent investigations, we refer to \cite{LEBLOND2022120} and references therein. 

On the one hand, the Stokes-transport equation and the incompressible porous medium equation (IPM) both describe the dynamics of fluids driven by density variations. In a two-dimensional fluid at rest ($v_{\rm s}\equiv 0$), it is well-known that a vertically decreasing stratified density profile is asymptotically stable under sufficiently small and smooth perturbations. We refer to \cite{dalibard2023long, leblond2023well} for recent stability results of Stokes-transport equations. Additionally, for recent works on stability/instability of IPM, we refer readers to \cite{bianchini2024relaxation, castro2019global, elgindi2017asymptotic, jo2024quantitative, kiselev2023small} and \cite{Dong2022Asymptotic, Tao2020Stability} for stability of two-dimensional Boussinesq system. 

On the other hand, the Stokes-transport equation belongs to the family of active scalar equations, which means that its velocity $v$ depends on the scalar function $\varrho$ in a nonlocal way. There are many important equations that belong to this family, for instance, the two-dimensional Euler equation in the vorticity form, the surface quasi-geostrophic equation, and IPM. All of these equations share a common class of steady states known as the shear flows whose asymptotic stability has been a major theme of research in recent decades. The stability of shear flows of the two-dimensional Euler equation at linear and nonlinear levels have been done, respectively, by the following authors \cite{IonescuIyerJia2023, Jia2,    WZZ1, WZZ2, WZZ3, Zill} and \cite{BM1, IonescuJia2, IonescuJia,  masmoudizhao2024}. 

For the Stokes-transport equations, the stability mechanisms are intrinsically different for the two-dimensional and three-dimensional regimes, with and without the Couette flow. More precisely, in the two-dimensional case \cite{SZZ2024-2D}, we proved that the Couette flow can stabilize the system under no assumption of a monotonic (decreasing) background density profile. In the present article, however, the mechanisms leading to asymptotic stability require the background density profile to be decreasing. It is now the right time to introduce the steady states concerned here:
\begin{equation}\label{steady states}
v_{\rm s}:=(Y, 0, 0)^{\top},\qquad p_{\rm s}=\frac12 Y^2+C_0, \qquad \varrho_{\rm s}=-Y.
\end{equation}
Here the steady velocity field is the three-dimensional Couette flow and the stratified density profile is monotonically decreasing in the vertical direction. 

As our work concerns the stability of solutions in the perturbation regime of the steady states \eqref{steady states}, let us introduce the perturbations $(U,\Theta, P)$. More precisely, we let $v=U+v_{\rm s},\; p=P+p_{\rm s}, \; \varrho=\Theta+\varrho_{\rm s}$ with $U=(U^{(1)},U^{(2)},U^{(3)})^{\top}$. A direct calculation shows that $(U,\Theta, P)$ satisfies 
\begin{equation}\label{perturbation equation}
\begin{cases}
    \partial_t \Theta  +Y\partial_X \Theta -U^2+U\cdot\nabla\Theta=0,\\
       -\Delta U +\nabla P= -\Theta e_2 ,\\
    \nabla \cdot U=0,\\
    \Theta(t=0)=\Theta_{\rm in}.
\end{cases} 
\end{equation}
We study the long-time behavior of the solutions to \eqref{perturbation equation} when the initial perturbation $\Theta_{\rm in}$ is small in some suitable spaces. 
Our main theorem states as follows:
\begin{theorem}\label{Thm: main}
    For all $\frac{1}{2}<s\leq 1$ and $\lambda_{\mathrm{in}}>\lambda_{\infty}>0$, there is an $\epsilon_0=\epsilon_0(s,\lambda_{\mathrm{in}},\lambda_{\infty})\leq \frac{1}{2}$ such that for all $0<\epsilon<\epsilon_0$ if $\Theta_{\mathrm{in}}$ satisfies
    \begin{align*}
        \int_{\mathbb{T}\times \mathbb{R}\times\mathbb{T}}\Theta_{\mathrm{in}}(X, Y, Z)dXdYdZ=0,
    \end{align*}
    and 
    \begin{align*}
        \int_{\mathbb{R}}|Y\Theta_{\mathrm{in}}(X, Y, Z)|dX dYdZ<\epsilon,\quad
        \sup_{\eta\in\mathbb{R}}\left|e^{\lambda_{\mathrm{in}}|\eta|^s}\widehat{\Theta_{\mathrm{in}}}(0,\eta, 0)\right|&<\epsilon,\\
        \|\Theta_{\mathrm{in}}\|_{\mathcal{G}^{\lambda_{\mathrm{in}}}}=\left(\sum_{l}\sum_{\alpha}\int_{\mathbb{R}}e^{2\lambda_{\mathrm{in}}|l,\eta,\alpha|^s}\left|\widehat{\Theta_{\mathrm{in}}}(l,\eta, \alpha)\right|^2d\eta\right)^{\fr{1}{2}}&<\epsilon,
    \end{align*}
    then there exists a constant $C>1$ independent of $\epsilon$ and $\Theta_{\infty}(X, Y, Z)$ with 
    \begin{align*}
        \int_{\mathbb{T}\times \mathbb{R}\times\mathbb{T}}\Theta_{\mathrm{\infty}}(X, Y, Z)dXdYdZ=0,\quad \text{and} \quad \|\Theta_{\infty}\|_{\mathcal{G}^{\lambda_{\mathrm{\infty}}}}\leq C\epsilon
    \end{align*} 
    such that the solution $(\Theta, U)$ to \eqref{perturbation equation} satisfies 
    \begin{align}
        \left\|\Theta(t, X-tY, Y, Z)-\Theta_{\mathrm{\infty}}(X, Y, Z)\right\|_{\mathcal{G}^{\lambda_{\mathrm{\infty}}}}\leq \frac{C\epsilon}{\JB{t}^2}+\frac{C\epsilon^2}{\JB{t}}.
    \end{align}
    Moreover, it holds that
    \begin{align}
        \left\|\int_{\mathbb{T}}\Theta(t, X, \cdot)dX-\frac{1}{2\pi}\int_{\mathbb{T}^2}\Theta(t, X, \cdot,  Z)dXdZ\right\|_{\mathcal{G}^{\lambda_{\mathrm{\infty}}}}\leq \frac{C\epsilon}{\JB{t}^3},
    \end{align}
    and the velocity field $U(t, X, Y, Z)$ satisfies
    \begin{align}
        &\left\|\Big(U^{(1)},U^{(3)}\Big) (t, \cdot )\right\|_{L^2}+\left\|\fr{1}{2\pi}\int_{\mathbb{T}}U^{(2)}(t,X,\cdot)dX\right\|_{L^2}\leq \frac{C\epsilon}{\JB{t}^3},\\
        &\left\|U^{(2)}(t, \cdot)-\fr{1}{2\pi}\int_{\mathbb{T}}U^{(2)}(t,X,\cdot)dX\right\|_{L^2}\leq \frac{C\epsilon}{\JB{t}^4}.
    \end{align}
\end{theorem}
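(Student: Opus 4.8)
The plan is to carry out a nonlinear bootstrap argument in the moving frame adapted to the Couette flow, controlling a carefully weighted Gevrey norm with a time-dependent Fourier multiplier. First I would pass to coordinates $(x,y,z)=(X-tY,Y,Z)$ and write the unknown as $f(t,x,y,z)=\Theta(t,X-tY,Y,Z)$, so that the transport by Couette becomes trivial and the velocity $U^{(2)}$ is recovered from $f$ through the time-dependent elliptic problem $-\Delta_L U \cdot e_2 = -\partial_{xx}\Delta_L^{-1}\Theta$ — more precisely, one inverts the Stokes operator, reads off the second component, and observes that its zero mode in $x$ decouples and satisfies a damped equation thanks to the monotone background $\varrho_s=-Y$ (this is the "damping structure caused by the decreasing background density" advertised in the introduction). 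The heart of the matter is the interaction between the lift-up term $-U^2$ (which, as in 3D Navier–Stokes near Couette, produces algebraic growth of the $x$-independent part of the density) and the inviscid-damping gain of $\langle t\rangle^{-1}$ per derivative in $x$ coming from the mixing; the Gevrey-$\frac1s$ regularity with $s>\frac12$ is exactly what is needed to absorb the derivative loss in the nonlinearity $U\cdot\nabla\Theta$ while still closing in a Gevrey space of slightly smaller radius $\lambda_\infty<\lambda_{\rm in}$.

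The key steps, in order, would be: (1) set up the ghost/multiplier norm, i.e.\ a weight of the form $A(t,\nabla)=e^{\lambda(t)|\nabla|^s}$ with $\lambda(t)$ decreasing from $\lambda_{\rm in}$ to $\lambda_\infty$, possibly augmented by a "Cauchy–Kovalevskaya"–type correction and by an inviscid-damping weight tracking the resonant times $t\approx \eta/l$, mirroring the machinery used for 2D Euler and 3D Navier–Stokes (Bedrossian–Masmoudi, Bedrossian–Germain–Masmoudi); (2) derive the energy identity for $\|A f\|_{L^2}^2$ and split the transport nonlinearity into reaction, transport, and remainder contributions according to frequency supports, using the commutator estimates for $A$ that the Gevrey triangle inequality $|k|^s \le |k-\ell|^s+|\ell|^s$ provides; (3) separately estimate the zero-$x$-mode of $\Theta$, showing it grows at most linearly (the lift-up) but that this is harmless because it multiplies quantities carrying enough decay; (4) prove the decay rates by a bootstrap on weighted quantities $\|y f\|$, $\|\langle t\rangle^2 (f-f_\infty)\|$, etc., where the elliptic regularity for the Stokes problem in the sheared variables converts each $x$-derivative into a factor $\langle t\rangle^{-1}$, giving $U^{(1)},U^{(3)}\sim \langle t\rangle^{-3}$ and $U^{(2)}$ minus its $x$-average $\sim \langle t\rangle^{-4}$; (5) extract $\Theta_\infty=\lim_{t\to\infty} f$ from the uniform-in-time bound plus an integrable-in-time estimate on $\partial_t f$, which also yields the stated convergence rate $C\epsilon\langle t\rangle^{-2}+C\epsilon^2\langle t\rangle^{-1}$.

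The main obstacle I anticipate is the "reaction" term in the transport nonlinearity at the resonant (critical) times, where the stream-function enjoys no extra decay and the loss of one derivative must be paid for entirely out of the Gevrey weight: controlling this requires the precise bookkeeping of how $\lambda(t)$ is spent, quantified via a norm like the one with the $\langle t\rangle$-dependent weight on the $l=0$ frequencies appearing in the hypotheses on $\Theta_{\rm in}$, and it is here that the threshold $s>\frac12$ becomes sharp — below it the accumulated derivative losses over the infinitely many resonant windows are no longer summable against any fixed Gevrey budget. A secondary difficulty is coupling the algebraically growing zero mode (lift-up) with the decaying nonzero modes without destroying the bootstrap; this is handled by isolating the growing mode, treating it as a (slowly varying) coefficient, and exploiting that it only ever appears paired with the rapidly decaying $U^{(2)}$, so the products remain bounded and in fact decay, closing the argument.
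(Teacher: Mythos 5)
Your outline gets the broad contour right — the sheared coordinates, the decreasing Gevrey radius with a Cauchy–Kovalevskaya correction, the Orr–type resonance weight borrowed from the Euler/Navier–Stokes literature, and the transport/reaction/remainder split — but the parts that are genuinely new in this problem, namely how the zero $x$-mode is controlled, are misdescribed in a way that would prevent your plan from closing.

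First, you write that the lift-up term $-U^2$ "produces algebraic growth of the $x$-independent part of the density." This inverts the actual structure: after passing to $(x,y,z)$, the linear term $-U^2$ becomes the \emph{damping} $-\Delta_{xz}\Delta_L^{-2}\theta$, whose $x$-average (for $\alpha\neq 0$) is $-\partial_{zz}\Delta_{yz}^{-2}\theta_0$, i.e.\ the very mechanism stabilizing the zero mode — this is precisely why the monotone background $\varrho_s=-Y$ is required. The lift-up is instead visible in the velocity reconstruction: $u_0^{(1)}\sim -t\,u_0^{(2)}$ grows linearly, mirroring the 3D Navier–Stokes streak growth. Consequently your step (3) — that the zero mode of the density grows linearly and can be carried as a slowly-varying coefficient — is wrong; $\mathbb{P}_{\ne}^z\theta_0$ actually decays like $\langle t\rangle^{-3}$, and this decay is what kills the $t$ factor in $u_0^{(1)}$ and closes the estimate. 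Treating $\theta_0$ as a bounded or growing coefficient would leave an unpaid factor of $t$ and the bootstrap would not close.

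Second, the energy-identity machinery you set up in (2) does not by itself produce the decay of $\theta_0$: the damping rate $\alpha^2/(\alpha^2+\eta^2)^2$ is not uniform in frequency, so a pure energy method gives only integrability of a frequency-dependent damping term, not a clean $\langle t\rangle^{-3}$ decay. The paper instead treats $\theta_0$ via the semigroup $\mathbb{S}(t;\eta,\alpha)=e^{-\alpha^2 t/(\alpha^2+\eta^2)^2}$, pays regularity to make that decay uniform, and necessarily takes $L^\infty_t$ \emph{before} $L^2_{\eta,\alpha}$ (Minkowski-inequality direction), which is why the bootstrap hypotheses include $\sup_t$-inside-$L^2$ norms at several nested regularities $\sigma_2,\sigma_4,\sigma_6$ and an extra $L^\infty_\eta$ norm for the $\alpha=0$, $k=0$ mode. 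Your proposal has no analogue of this semigroup-plus-energy hybrid nor of the hierarchy of energy levels, and without it the zero-mode decay and hence the claimed rates for $\int_{\mathbb{T}}U^{(2)}\,dX$ and $\int_{\mathbb{T}}\Theta\,dX$ cannot be obtained. Finally, you attribute the threshold $s>\frac12$ to the summability of resonant-window losses; in the paper, the non-zero-mode resonances cost only Gevrey-$3$, and the Gevrey-$2$ demand arises from the high–low interaction between $u_0$ and $\theta_{\neq}$ (Lemma \ref{lem: zero-mode-Reaction}) — moreover the paper explicitly does \emph{not} claim $s>\frac12$ is sharp and conjectures Gevrey-$3$ should suffice.
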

Below, we list a few remarks related to our main theorem.
\begin{rem}
    In the statement of Theorem \ref{Thm: main}, the density $\Theta(t, X-tY, Y, Z)$ remains small in $\mathcal{G}^{\lambda_{\infty}}$. In the proof, we allow the highest norm of density to grow as $\JB{t}^{\frac{3}{2}}$, see \eqref{En-1} for more details. This growth is from the nonlinear interactions between zero and non-zero modes. 
\end{rem}
\begin{rem}
    The zero mode of the velocity $\fr{1}{2\pi}\int_{\mathbb{T}}\Big(U^{(2)}, U^{(3)}\Big)(t,X, Y, Z)dX$ and the non-zero modes of the velocity 
    \begin{align*}
        \left(U^{(1)},\  
        U^{(2)}(t)-\fr{1}{2\pi}\int_{\mathbb{T}}U^{(2)}(t,X)dX, \ 
        U^{(3)}(t)-\fr{1}{2\pi}\int_{\mathbb{T}}U^{(3)}(t,X)dX\right)^{\top}
    \end{align*}
    have different decay mechanisms. All the decay rates are optimal. 
\end{rem}

\begin{rem}
The lift-up phenomenon is an important destabilizing effect for Couette flow in three-dimensional fluid. In the present work, a linear growth that is similar to such lift-up is present in the system, see \eqref{eq-theta'}, \eqref{exp-u} and discussions in section \ref{sec:mainidea}. 

To stabilize the lift-up effect and attain asymptotic stability, another mechanism known as enhanced dissipation can be exploited. Various works on the three-dimensional fluid models that involve diffusive structures take full advantage of this mechanism, see \cite{BGM, Bedrossian_Germain_Masmoudi_I, Bedrossian_Germain_Masmoudi_II,  Chen_wei_zhang_2020_3d, Wei_Zhang_2020}. We also refer to \cite{BedrossianHeIyerWang2023, Bedrossian_2016, chen_Li_Wei_Zhang2018transition, LiMasmoudiZhao2022asymptotic, MZ2, MZ1} for the enhanced dissipation of Couette flow in two-dimensional fluid models. More precisely, in the aforementioned papers, the transport-diffusion structure in the equation provides an avenue for a trade between smallness and decay.

In the present article, due to the lack of the diffusion term in the system, such a dissipation mechanism is out of our reach. We, instead, exploit the damping structure in the zero-mode of velocity equation to give the time decay we need by sacrificing regularity. In the process, we allow some growth in the top norm to gain decay in the low norm. Additionally, in order to take full advantage of the damping, we have introduced a decreasing background density profile at the outset of the paper. We refer to section \ref{sec:mainidea} for more discussions. 
\end{rem}

\begin{rem}
    It is an interesting problem to consider the finite channel setting as considered in the two-dimensional case \cite{SZZ2024-2D} and study the effect arising from the presence of boundaries. 
\end{rem}

\subsection{Notations}
 In this subsection, we introduce important notations used throughout the present work. First, we define the common $l^1$ norm for frequency $(k,\eta,\alpha)$ which takes the form $|k,\eta,\alpha|=|k|+|\eta|+|\alpha|$. In addition to that, let us explicitly define the underlying function spaces our work is based upon. Namely, the Gevrey$-\frac{1}{s}$ space with Sobolev correction which together with its norm is defined by
\[
\mathcal{G}^{\lambda,\sigma}:=\{f\in C^{\infty}: \norm{f}_{\mathcal{G}^{\lambda,\sigma}}<\infty\}, \text{ where }\norm{f}^2_{\mathcal{G}^{\lambda,\sigma}}=\sum_{k,\alpha \in \mathbb{Z}}\int_{\eta}|\widehat{f}_k(t,\eta,\alpha)|^2\JB{k,\eta,\alpha}^{2\sigma}e^{2\lambda(t)|k,\eta,\alpha|^s}\;d\eta.
\]
For any given scalar $x$, we define the standard Japanese bracket $\JB{x}=\sqrt{1+|x|^2}$. Furthermore, let $f$ and $g$ be any given functions, we define the commonly used notation $f\lesssim g$ to mean that there exists a pure constant (independent of any parameter) $C>1$ such that $f\leq C g$. Throughout the work, we also write $f\approx g$ to mean that there exists $C>1$ (again, independent of any parameter) such that $C^{-1}f\leq g \leq C f$. We also use $c'$, $c$ and ${\bf c}$ to denote constants satisfying $0<c'<c<{\bf c}<1$.

We write $f_0:=\frac{1}{2\pi}\int_{\mathbb{T}}f(x,y,z)\;dx$; this takes the average of the function $f$ in the horizontal direction. The $f_0$ goes by the name ``the zero mode of $f$". We introduce the non-zero mode of $f$, which is simply denoted by  $f_{\neq}:=f-f_0$. One can think of it as the projection off of the zero mode of $f$. Similarly, we use $\mathbb{P}^{z}_0f$ to denote the average of $f$ in the $z$ direction, namely, $\mathbb{P}^{z}_0f:=\frac{1}{2\pi}\int_{\mathbb{T}}f(x,y,z)\;dz$. Moreover, we also use the notation $\mathbb{P}^{z}_{\ne}f:=f-\mathbb{P}^{z}_0f$.

For $\eta \geq 0$, we let $\E(\eta):=\lfloor \eta \rfloor \in \mathbb{Z}$. Fix $\eta \in \mathbb{R}$ and let $1\leq |k| \leq \E(|\eta|^{\frac{1}{2}})$ with $\eta k\geq 0$. Let $t_{k,\eta}:=\frac{|\eta|}{|k|}-\frac{|\eta|}{2|k|(|k|+1)}=\frac{|\eta|}{|k|+1}+\frac{|\eta|}{2|k|(|k|+1)}$ and $t_{0,\eta}=2|\eta|$. With this in mind, we define the following {\it critical intervals}
\[
\mathbb{I}_{k,\eta}
=\begin{cases} 
[t_{|k|,\eta},t_{|k|-1,\eta}],\qquad \text{ for } \eta k \geq 0, \text{ and } 1\leq k \leq \E(|\eta|^{\frac{1}{2}}), \\
\emptyset, \qquad \qquad \qquad \quad\;  \text{ otherwise.}
\end{cases}
\]
The {\it resonant intervals} are defined by
\be\label{res-int}
{\rm{I}}_{k, \eta}:=\begin{cases}
{\mathbb{I}}_{k,\eta},\ \ \mathrm{if}\ \ 2\sqrt{|\eta|}\le t_{|k|,\eta},\\
\emptyset, \ \ \ \ \ \mathrm{otherwise}.
\end{cases}
\ee
\subsection{Main ideas}\label{sec:mainidea}
Let us now outline the main idea in the proof of Theorem \ref{Thm: main}. We first introduce the linear change of coordinate 
\begin{equation}\label{linear change}
(x,y,z)=(X-tY, Y,Z),
\end{equation}
and $\theta(t, x, y, z)=\Theta(t, X, Y, Z)$, $V(t, x, y, z)=U(t, X-tY, Y, Z)$. Via \eqref{perturbation equation}, we obtain the following expression
\begin{align*}
    V(t, x, y, z)=\left(
        -\partial_{xy}^L\Delta_L^{-2}\theta,
        \Delta_{xz}\Delta_L^{-2}\theta,
        -\partial_{yz}^L\Delta_L^{-2}\theta
\right)^{\top},
\end{align*}
where $\Delta_{xz}=\partial_x^2+\partial_z^2,\ \partial_y^L=\partial_y-t\partial_x,\ \partial_{yy}^L=\left(\partial_y-t\partial_x\right)^2, \ \Delta_L=\partial_{xx}+\partial_{yy}^L+\partial_{zz}$. After taking the Fourier transform, we get that 
\begin{align*}
    \widehat{V}(t, k, \eta, \alpha)=\left(
    \frac{k(\eta-kt)}{(k^2+(\eta-kt)^2+\alpha^2)^2}\widehat{\theta}, 
    \frac{-(k^2+\alpha^2)}{(k^2+(\eta-kt)^2+\alpha^2)^2}\widehat{\theta}, 
    \frac{(\eta-kt)\alpha}{k^2+(\eta-kt)^2+\alpha^2)^2}\widehat{\theta}\right)^{\top}. 
\end{align*}
which implies that for $k\neq 0$, 
\begin{align}\label{eq:V-bd}
    |\widehat{V^{(1)}}|\lesssim \frac{\JB{k,\eta,\alpha}^{3}}{\JB{t}^3}|\widehat{\theta}|, \quad
    |\widehat{V^{(2)}}|\lesssim \frac{\JB{k,\eta,\alpha}^{6}}{\JB{t}^4}|\widehat{\theta}|,\quad
    |\widehat{V^{(3)}}|\lesssim \frac{\JB{k,\eta,\alpha}^{4}}{\JB{t}^3}|\widehat{\theta}|. 
\end{align}
The decay of the non-zero modes is analog to the inviscid damping of Couette flow for the two-dimensional Euler equation, which is due to the vorticity mixing. For the Stokes-transport equation, it is due to the density mixing. A similar phenomenon can also be observed in shear flows for stratified fluid \cite{bedrossian2023nonlinear, zelati2023explicit, MSZ, YangLin2018}, ideal inhomogeneous fluid \cite{ChenWeiZhangZhang2023, Zhao2023}, and inviscid plasma \cite{ZhaoZi2023}, demonstrating its occurrence in various systems. 

Now let us introduce the working system. A direct calculation gives us that $\theta$ satisfies a transport equation with a weak damping term $-\Delta_{xz}\Delta_{L}^{-2}\theta$. More precisely, we have 
\begin{align}\label{eq-theta'}
\partial_t\theta-\Delta_{xz}\Delta_{L}^{-2}\theta+u\cdot\nabla_{xyz}\theta=0,
\end{align}
where the new velocity $u$ can be decomposed into the non-zero modes and the zero mode: 
\begin{equation}\label{exp-u}
\begin{aligned}
    u(t,x,y,z)
    &=(-t\Delta_{xz}-\partial_{xy}^L,\Delta_{xz}, -\partial_{yz}^L
)\Delta_L^{-2}\theta_{\neq}+(-t\pr_{zz},\pr_{zz}, -\partial_{yz}
)\Delta_{yz}^{-2}\theta_0:=u_{\neq}+u_{0}. 
\end{aligned}
\end{equation}
It is easy to see that on the Fourier side
\begin{align}\label{u_ne-decay}
&\nonumber|\widehat{u}_{\ne}(t,k,\eta,\al)|=\left|\mathcal{F}\left[(-t\Delta_{xz}-\partial_{xy}^L,\Delta_{xz}, -\partial_{yz}^L
)\Delta_L^{-2}\theta_{\ne}\right](t, k,\eta,\al)\right|\\
&\lesssim \frac{\langle t\rangle (k^2+\al^2)+|k(\eta-kt)|+|\al(\eta-kt)|}{(k^2+(\eta-kt)^2+\al^2)^2}|\widehat{\theta}_{\ne}(t,k,\eta,\al)|\lesssim\frac{\langle k,\eta,\al\rangle^6}{\langle t\rangle^3}|\widehat{\theta}_{\ne}(t,k,\eta,\al)|.
\end{align}

Next, let us focus on the zero mode. A classical way to treat zero mode is to introduce the nonlinear change of coordinate, which works well for the two-dimensional case, as the zero mode of the velocity is a time-dependent shear flow. It is not easy to apply this method directly in our case. Indeed, we prove the decay of average velocity $u_0$, which is not obvious. We now introduce the equation of the zero mode of the density
\begin{align}\label{eq-theta0'}
\partial_t\theta_0-\partial_{zz}\Delta_{yz}^{-2}\theta_0+\tl{u}_0\cdot\nb_{yz}\theta_0+\left(u_{\ne}\cdot\nabla_{xyz}\theta_{\ne}\right)_0=0,
\end{align}
where 
\begin{equation}\label{eq: tlu_0}
    \tl{u}_0(t,y,z)=(\pr_{zz}, -\partial_{yz}
)\Delta_{yz}^{-2}\theta_0
\end{equation}
Compared to the three-dimensional Navier-Stokes equation near Couette flow, the linear growth in the first component of the zero mode of $u_0$ has a destabilizing effect similar to the lift-up phenomenon. Specifically, in the context of the three-dimensional Navier-Stokes equation, the streak solution $u_0$ satisfies the following system:
\begin{align*}
    \left\{\begin{aligned}
        &\partial_t u_0^{(1)}-\nu \Delta u_0^{(1)}=- u_0^{(2)}+{\text{low order terms}},\\
&\partial_t u_0^{(2)}-\nu \Delta u_0^{(2)} ={\text{low order terms}}, \\
&\partial_t u_0^{(3)}-\nu \Delta u_0^{(3)} ={\text{low order terms}}.
    \end{aligned}\right.
\end{align*} 
The term $- u_0^{(2)}$ in the first equation induces a linear transient growth, expressed as $u_0^{(1)} \approx - t u_0^{(2)}$ for $t \ll \nu^{-1}$, mirroring the relationship between $u_0^{(1)}$ and $u_0^{(2)}$ in \eqref{exp-u}. The equation for $\theta_0$ serves a similar role as the streak equation for the first component of the zero mode of the three-dimensional vorticity field $\omega_0^{(1)} = \partial_y u_0^{(3)} - \partial_z u_0^{(2)}$, with the damping term $\partial_z^2 \Delta^{-2} \theta_0$ replacing the dissipation $\nu \Delta \omega_0^{(1)}$ in the three-dimensional Navier-Stokes equation.

Now, let us discuss more on their differences. In the three-dimensional Navier-Stokes equation, both $\omega_0^{(1)}$ and $(u_0^{(2)}, u_0^{(3)})$ follow well-behaved transport-diffusion equations. However, in our model, while $\theta_0$ follows a transport equation with damping, the equation for $u_0^{(2)}$ lacks such favorable characteristics due to the presence of the anisotropic derivative $\partial_{z}^2\Delta^{-2}$, which disrupts the transport structure. Moreover, a significant disparity lies in the approach to counteract the linear growth of $u_0^{(1)}$. In contrast to the traditional reliance on smallness in the Navier-Stokes equation, our model necessitates the use of derivatives, introducing additional challenges in closing the energy estimate.
More precisely, by dropping the nonlinear term and taking the Fourier transform, we arrive at a toy model
\begin{align}\label{eq: toymodel-0mode}
    \partial_t\widehat{\theta}_0+\frac{\alpha^2}{(\alpha^2+\eta^2)^2}\widehat{\theta}_0 =F, 
\end{align}
where the force term $F$ decays as $\frac{1}{\JB{t}^3}$. Thus we get for $\alpha\neq 0$
\begin{align}\label{eq:theta_0-estimate}
    |\widehat{\theta}_0(t)|\leq |\widehat{\theta}_0(0)|e^{-\frac{\alpha^2}{(\alpha^2+\eta^2)^2} t}+\int_0^te^{-\frac{\alpha^2}{(\alpha^2+\eta^2)^2} (t-\tau) }{\JB{\tau}^{-3}} d\tau \lesssim \fr{1}{\JB{t}^3}\Big(\frac{(\alpha^2+\eta^2)^2}{\alpha^2}\Big)^3|\widehat{\theta_0}(0)|. 
\end{align}
Note that the decay rate $\JB{t}^{-3}$ is also optimal. But to obtain the same decay rate as the non-zero modes in \eqref{u_ne-decay}, we have to pay more regularity.

We now consider the nonlinear interactions and treat $u\cdot \nabla_{xyz}\theta$ appearing in \eqref{eq-theta'}. Due to the transport structure, we can expect a good estimate of the low-high interactions, namely, $u_{\mathrm{low}}\cdot \nabla_{xyz}\theta_{\mathrm{high}}$, since both $u_0$ and $u_{\neq}$ decay fast. Now we focus on the high-low interactions. For the interaction with $u_{\neq}$, we have two observations from \eqref{u_ne-decay}. First, if $k\eta>0$ and $\eta$ is large relative to $k$, then the velocity is amplified by a factor $\JB{\frac{\eta^2}{k^2+\alpha^2}}^2$ at a critical time $\frac{\eta}{k}$. Such transient growth is similar to the Orr mechanism \cite{Orr}. Second, by using the fact that $\JB{t}\lesssim \JB{t-\fr{\eta}{k}}+\JB{\fr{\eta}{k}}$, the linear growth $\JB{t}$ can be also regarded as the derivative loss. In section \ref{sec:growth model}, we introduce a toy model that describes the growth mechanism and design a time-dependent Fourier multiplier that captures the loss of regularity in a very precise way adapted to the critical times and the associated nonlinear effect.  We note that by carefully studying the interactions of the non-zero modes, one may determine that the total growth will result in a Gevrey-3 derivative loss. 
The interactions with $u_0$, however, are more complicated, since they stem from the estimate of $\theta_0$ which enjoys a different decay mechanism. Due to a derivative loss of the source term $\left(u_{\ne}\cdot\nabla_{xyz}\theta_{\ne}\right)_0$ in \eqref{eq-theta0'}, we may expect $\theta_0$ to admit one derivative less than $\theta$. Then by taking the Fourier transform, dropping several terms: the non-zero modes interactions $u_{\neq}\cdot\nabla_{xyz}\theta$, linear damping term $-\Delta_{xz}\Delta_L^{-2}\theta$, the low-high zero mode interactions, and the weak high-low zero mode interactions, we then arrive at a toy model
\begin{align*}
    \partial_t\widehat{\theta}_l(t,\eta,\alpha)= \epsilon\fr{t\alpha^2}{(\alpha^2+\eta^2)^2}\widehat{\theta_0}(\eta,\alpha). 
\end{align*}
By taking the one-derivative loss into account and using \eqref{eq:theta_0-estimate}, we get that 
\begin{align*}
    \partial_t\widehat{\theta}_l(t,\eta,\alpha)
    &\lesssim \epsilon^2\fr{t\alpha^2}{(\alpha^2+\eta^2)^2}|\alpha, \eta|e^{-\frac{\alpha^2}{(\alpha^2+\eta^2)^2} t}\\
    &\lesssim \epsilon^2 \sqrt{t}\sqrt{\fr{t\alpha^2}{(\alpha^2+\eta^2)^2}}\fr{|\alpha||\alpha,\eta|}{(\alpha^2+\eta^2)} e^{-\frac{\alpha^2}{(\alpha^2+\eta^2)^2} t}
    \lesssim \epsilon^2\sqrt{t},
\end{align*}
which gives the $\JB{t}^{\fr32}$ growth of $\theta$. Thus we should allow the top energy to grow with rate $\JB{t}^{\fr32}$, see \eqref{En-1}. In order to obtain enough decay for $u_{\neq}$ and $u_0$, we design  three levels of energy functionals for $\theta$, see \eqref{En-1}, \eqref{En-3}, and $\eqref{En-5}$.  Correspondingly, we define four levels of energy functionals for the zero mode $\theta_0$, see \eqref{En0-1}, \eqref{En0-2}, \eqref{En0-4}, and \eqref{En-6}. Each level has an improved growth rate compared with the previous level. We refer to Figure (\ref{map}) for the clue of bounds improvement. 
\begin{figure*}[tbp]
\caption{The map of bounds improvement}
\label{map}

\medskip\medskip

{\small
\begin{tikzpicture}[node distance=4cm]
\node (start) [startstop]{$\|\theta\|_{\sig_1}\lesssim \epsilon \JB{t}^{\fr32}$};
\node (exp1) [explain, below of=start, yshift=2.5cm] {Main energy};
\node (io1) [io, below of=start, yshift=1cm] {$\|B\theta_0\|_{\sigma_1-2}\lesssim \epsilon$};
\node (exp2) [explain, right of=exp1, xshift=-1.8cm] {Improve};
\node (io2) [io, right of=start, xshift=0cm] {$\|\theta\|_{\mathcal{G}^{\lambda,\sigma_3}}\lesssim \epsilon\JB{t}^{\fr12}$};
\node (exp3) [explain, below of=io2, yshift=2.5cm] {Improve};
\node (io3) [io, below of=exp3, yshift=2.5cm] {$\|\theta_0\|_{\mathcal{G}^{\lambda,\sigma_2}}\lesssim \epsilon \JB{t}^{-\fr32}$};
\node (exp4) [explain, right of=exp3, xshift=-2cm] {Improve};
\node (io4) [io, right of=io2, xshift=0cm] {$\|\theta\|_{\mathcal{G}^{\lambda,\sigma_5}}\lesssim \epsilon $};
\node (exp5) [explain, below of=io4, yshift=2.5cm] {Improve};
\node (exp6) [explain, right of=exp5, xshift=-2cm] {Improve};
\node (io5) [io, below of=exp5, yshift=2.5cm] {$\|\theta_0\|_{\mathcal{G}^{\lambda,\sigma_4}}\lesssim \epsilon\JB{t}^{-\fr52}$};
\node (io6) [io, right of=io5, xshift=0cm] {$\|\theta_0\|_{\mathcal{G}^{\lambda,\sigma_6}}\lesssim \epsilon\JB{t}^{-3}$};
\draw (start) -- (exp1);
\draw (exp1) -- (io1);
\draw[arrow] (start) -- (exp2);
\draw[arrow] (exp2) -- (io3);
\draw[arrow] (io3) -- (exp3);
\draw[arrow] (exp3) -- (io2);
\draw[arrow] (io2) -- (exp4);
\draw[arrow] (exp4) -- (io5);
\draw[arrow] (io5) -- (exp5);
\draw[arrow] (exp5)--(io4);
\draw[arrow] (io4) -- (exp6);
\draw[arrow] (exp6)--(io6);
\end{tikzpicture}

}
\end{figure*}
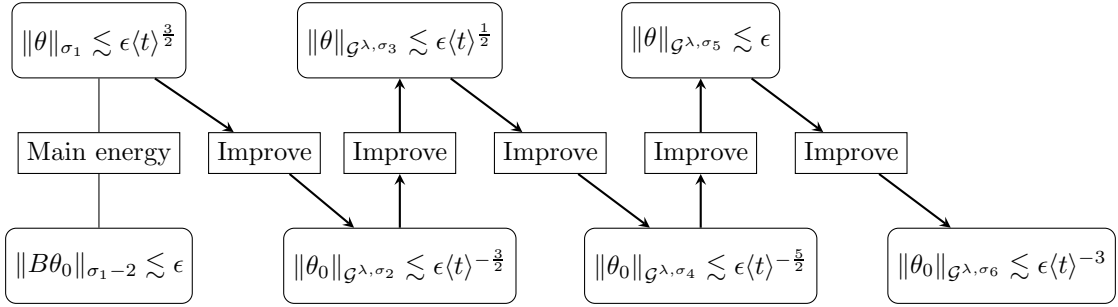

We also want to highlight the new idea that is used to combine the energy method and semi-group estimate. Recall by \eqref{eq: toymodel-0mode}, we have the semi-group $\mathbb{S}(t;\eta,\alpha)=\exp\{\frac{-\alpha^2}{(\alpha^2+\eta^2)^2}t\}$. Although $\mathbb{S}$  decays exponentially, it is not uniform in all frequencies, which suggests we do the estimate frequency separately. That means we need to apply the time norm first then the space norm. This is different from the classical energy method approach where the norm with respect to space is computed first followed by the one with respect to time. To overcome this difficulty, we pay enough regularity to obtain a uniform semi-group estimate. To deal with ${\rm S}_{00z}^{\rm HL}$, see \eqref{Isig2}, we are not allowed to pay derivative. We then introduce the lowest norm \eqref{En-7}, where we can switch and space and time norm freely. 

We conclude this section with observations to enhance the required regularity. The Gevrey-2 regularity requirement is from the interactions between zero mode and non-zero modes, see Lemma \ref{lem: zero-mode-Reaction}, \eqref{G-2 req-1}, and \eqref{up-growth2'}. It might not be optimal. Firstly, one can modify the time-dependent Fourier multipliers as outlined in \cite{SZZ2024-2D}, thereby controlling the Gevrey-3 growth for the interactions of non-zero modes. Secondly, the growth of $t^{\frac{3}{2}}$ is not uniform across all frequencies. By leveraging the advantages gained from derivatives in the zero-mode interactions, one can design a new time-dependent Fourier multiplier to regulate the growth, particularly for $t\lesssim |\alpha, \eta|$. Finally, as the growth from zero-mode interactions and non-zero-mode interactions occurs at different time intervals, one can devise a new time-dependent weight to capture the growth for $t\gg |\alpha, \eta|$, thereby replacing the uniformly time-growing weight $\JB{t}^{\frac{3}{2}}$ with a weaker weight. We conjecture that the required regularity cannot be weaker than Gevrey-3, as observed in the two-dimensional case \cite{SZZ2024-2D}.

\section{Growth Mechanism and  Construction of Weight}\label{sec:growth model}

In this section, we elaborate on the construction of weight $w$ together with the associated multipliers. Recall that the main crack of the issue here is to get control of the regularity of the solution. It is important to note also that one has to pay regularity to get decay in time, say for the velocity field. Similar to the two-dimensional Stokes-transport problem, it reduces to analyze the nonlinear interactions between the velocity field and the density gradient, namely $u_{\ne}\cdot \nabla \theta $. It turns out that such an analysis can be done by looking at a toy model. 

\subsection{Construction of time-dependent weight}
\subsubsection{Toy Model for non-zero mode interactions}
From \eqref{exp-u} and \eqref{eq-theta0'}, on the Fourier side, we derive a toy model to mimic the growth caused by $\fr{\left(t|\beta|+|\xi|+|l,\beta|\right) |l,\beta|}{(l^2+(\xi-l t)^2+\beta^2)^2}$
\begin{align}\label{toy1}
\pr_t\widehat{\theta}_k(t,\eta,\al)=\sum_{l\in\Z\setminus\{0\},\, \beta\in\Z}\int_\xi \fr{\left(t|\beta|+|\xi|+|l,\beta|\right) |l,\beta|}{(l^2+(\xi-l t)^2+\beta^2)^2}\widehat{\theta}_l(t,\xi,\beta)\left(\widehat{\theta}_{k-l}\right)_{\rm low}(t,\eta-\xi,\al-\beta)\;d\xi.
\end{align}
Since the appearance of the low frequency quantity $\left(\widehat{\theta}_{k-l}\right)_{\rm low}(t,\eta-\xi,\al-\beta)$, we fix $\eta=\xi$,  $\beta=\al$ and $l=k\pm1$ in \eqref{toy1}. In addition, $\left(\widehat{\theta}_{k-l}\right)_{\rm low}(t,\eta-\xi,\al-\beta)=O(\kappa)$. Then \eqref{toy1} reduces to 
\begin{align}\label{toy2}
\pr_t\widehat{\theta}_k(t,\eta,\al)=O(\kappa)\sum_{l= k\pm1}{  \fr{\left(t|\al|+|\eta|+|l,\al|\right) |l,\al|}{(l^2+(\eta-l t)^2+\al^2)^2}}\widehat{\theta}_l(t,\eta,\al).
\end{align}
As $\widehat{\theta}$ evolves in time, the equation above undergoes a critical time interval ${\rm I}_{l,\eta}$ at a fixed $k$ around $t=\frac{\eta}{l}$. This interval is of width $\frac{\eta}{l^2}$. When $|\eta|l^{-2}\ll 1$, any growth there in that critical interval is negligible. However, when $|\eta|l^{-2}>1$, one has to do a careful estimate to know and control the growth. Let us commit to the scenario of one critical time in which the resonance may happen, that is to say, $t\in {\rm I}_{l,\eta}$. We only pick one term on the right-hand side of \eqref{toy2}. Before proceeding any further, we note that
\begin{align}\label{up-growth}
{  \fr{\left(t|\al|+|\eta|+|l,\al|\right) |l,\al|}{(l^2+(\eta-l t)^2+\al^2)^2}}\les\fr{t+\fr{|\eta|}{|l|}}{l^2+(\eta-l t)^2+\al^2}.
\end{align}
{\bf (I) From non-resonance to resonance (NR$\rightarrow$ R)}. Since $k$ mode is the resonant mode, we choose $l\neq k$, for simplicity just pick the $l=k+1$. Hence, right hand side of \eqref{toy2} reads
\begin{align}\label{toy3}
\pr_t\widehat{\theta}_k(t,\eta,\al)=O(\kappa){\fr{t+\fr{|\eta|}{|k+1|}}{(k+1)^2+(\eta-(k+1)t)^2+\al^2}}\widehat{\theta}_{k+1}(t,\eta,\al).
\end{align}
For $t\in{\rm I}_{k,\eta}$, we have
\[
t\approx \fr{|\eta|}{|k|},\quad \left|t-\fr{\eta}{k+1}\right|\ge\fr{|\eta|}{2k(k+1)}.
\]
Then a simple calculation leads us to
\begin{align}\label{grow1}
{\fr{t+\fr{|\eta|}{|k+1|}}{(k+1)^2+(\eta-(k+1)t)^2+\al^2}}\les\fr{|k|}{|\eta|}\lesssim\fr{|k|^2}{|\eta|}.
\end{align}
{\bf (II) From resonance to non-resonance (R$\rightarrow$ NR)}. In this case, let us replace $k$ by $k-1$ on the left hand side of \eqref{toy2}, and pick $l=k$ on the right and side of \eqref{toy2}:
\begin{align}\label{toy4}
\pr_t\widehat{\theta}_{k-1}(t,\eta,\al)=O(\kappa)\fr{t+\fr{\eta}{|k|}}{k^2+(\eta-kt)^2+\al^2}\widehat{\theta}_k(t,\eta,\al).
\end{align}
A straightforward calculation yields 
\be\label{grow2}
\fr{t+\fr{\eta}{|k|}}{k^2+(\eta-kt)^2+\al^2}\les \fr{|\eta|}{|k|^2(1+(t-\fr{\eta}{k})^2)}.
\ee

Collecting \eqref{toy3}--\eqref{grow2} and thinking of $k$ as the resonant mode and $l\neq k$ as non-resonant mode, now we arrive at the following toy model in each critical time interval $t\in {\rm I}_{k,\eta}$
\begin{align}\label{toy-theta}
\begin{cases}
\pr_t \theta_{\rm R}\approx \kappa\fr{|k|^2}{|\eta|}\theta_{\rm NR},\\
\pr_t \theta_{\rm NR}\approx \kappa\fr{|\eta|}{|k|^2\left(1+|t-\fr{\eta}{k}|^2\right)}\theta_{\rm R}.
\end{cases}
\end{align}
Note that we enlarge the coefficients to deduce \eqref{toy-theta} which can be modified to a more accurate one. 

\subsubsection{Detailed construction of w}

The above discussion shows that the growth happens when the frequency in $y$ direction dominates the other two directions, namely $|\eta|>\max\{|k|,|\alpha|\}$. However, to make the Fourier multiplier well-adapted to transport structure, we should balance the growth in all three directions. Therefore, we first homogenize the growth direction by introducing  
\begin{equation}\label{eq-def-iota}
  \iota(k,\eta,\alpha)=\left\{
    \begin{array}{ll}
      k,&|k|>|\eta|,\ |k|>|\alpha|\\
      \eta,&|\eta|\geq |k|,\ |\eta|\geq |\alpha|\\
      \alpha,&|\alpha|> |\eta|,\ |\alpha|\geq |k|.
    \end{array}
  \right.
\end{equation}
If $\iota=\eta$, then the Fourier multiplier captures the real growth that happens in the system. If $\iota\neq \eta$, namely frequency in $x$ or frequency in $z$ is larger, then the real growth is less than the growth of the Fourier multiplier. The idea is from \cite{LiMasmoudiZhao2022asymptotic, MZ1}. Throughout the paper, whenever it is clear from the context, we shall use $\iota_1$ to mean $\iota(k,\eta,\alpha)$ and  $\iota_2$ to mean $\iota(l,\xi,\beta)$.

Now we introduce two weights $w_{\rm NR}$  and $w_{\rm R}$ to mimic the growth that the reaction term may create during the time interval $\mathbb{I}_{\ell,\iota}$ for all $\ell=1,2,....,\E(\iota^{\frac{1}{2}})$. 
  Essentially, $w_{\nr}$ and $w_{\res}$ are constructed via an approximate solution to the toy model \eqref{toy-theta} satisfying $w_{\rm NR}(t,-\iota)=w_{\rm NR}(t,\iota)$ and $w_{\rm R}(t,-\iota)=w_{\rm R}(t,\iota)$. It suffices to focus on the definitions of $w_{\rm NR}$ and $w_{\rm R}$ for $\iota\in[0,\infty)$. In light of the toy model \eqref{toy-theta}, we are able to use the weights $w_{\rm NR}$ and $w_{\rm R}$ constructed in \cite{BM1}. For the sake of completeness, let us sketch the process below.  To begin with, for  $0\le\iota\le1$, set
\[
w_{\rm NR}(t,\iota)=w_{\rm R}(t,\iota)=1.
\]
For $\iota>1$, $w_{\rm NR}(t,\iota)$  will be constructed backward in time. First, define
\[
w_{\rm NR}(t,\iota)=w_{\rm R}(t,\iota)=1, \quad\mathrm{if}\quad t>t_{0,\iota}=2\iota.
\]
Next,  for $t\in \mathbb{I}_{\ell,\iota}$, the function $w_{\nr}$ satisfies the following equations
\[
\begin{cases}
w_{\nr}(t,\iota)=\big(\frac{\ell^2}{|\iota|}\big[1+b_{\ell,\iota}|t-\frac{\iota}{\ell}|\big]\big)^{C_*} w_{\nr}(t_{\ell-1,\iota},\iota),\qquad \text{for all } t\in \big[\frac{\iota}{\ell},t_{\ell-1, \iota}\big]=:\mathbb{I}_{\ell,\iota}^{\rm R},\\ \\
w_{\nr}(t,\iota)=\big(\big[1+a_{\ell,\iota}|t-\frac{\iota}{\ell}|\big]\big)^{-1-C_*} w_{\nr}(\frac{\iota}{\ell},\iota), \ \ \, \qquad \text{for all } t\in \big[t_{\ell,\iota},\frac{\iota}{\ell}\big]=:\mathbb{I}_{\ell,\iota}^{\rm L},
\end{cases}
\]
where 
\be\label{def-ab}
b_{\ell,\iota}=
\begin{cases}
\fr{2(\ell-1)}{\ell}\left(1-\fr{\ell^2}{\iota}\right), \quad\mathrm{for}\quad \ell\ge2,\\
1-\fr{1}{\iota},\quad\qquad\ \ \ \ \ \ \ \mathrm{for}\quad \ell=1,
\end{cases}
\quad a_{\ell,\iota}=\fr{2(\ell+1)}{\ell}\left(1-\fr{\ell^2}{\iota}\right),
\ee
and $C_*$ is a positive constant depending on $\kappa$ in \eqref{toy-theta}, see more details in \cite{BM1}.
Finally, set
\[
w_{\rm NR}(t,\iota)=
w_{\rm NR}(t_{\E(\sqrt{\iota}),\iota},\iota),\quad \mathrm{if}\quad t\in\left[0,t_{\E(\sqrt{\iota}),\iota}\right].
\]
Now, on each time interval $\mathbb{I}_{\ell,\iota}$, we define $w_{\res}$ which relies on the behavior of $w_{\nr}$ on  $\mathbb{I}_{\ell,\iota}$. More precisely, we write it as follows
\begin{equation}\label{wR}
\begin{cases}
w_{\res}(t,\iota)=\big(\frac{\ell^2}{|\iota|}\big[1+b_{\ell,\iota}|t-\frac{\iota}{\ell}|\big]\big) w_{\nr}(t,\iota),\qquad \text{for all } t\in \big[\frac{\iota}{\ell},t_{\ell-1,\iota}\big]=:\mathbb{I}_{\ell,\iota}^{\rm R},\\ \\
w_{\res}(t,\iota)=\dfrac{\ell^2}{|\iota|}\big(\big[1+a_{\ell,\iota}|t-\frac{\iota}{\ell}|\big]\big)w_{\nr}(t,\iota),\qquad   \text{for all } t\in \big[t_{\ell,\iota},\frac{\iota}{\ell}\big]=:\mathbb{I}_{\ell,\iota}^{\rm L}.
\end{cases}
\end{equation}

\begin{rem}\label{rem-1}
    One can see from \eqref{def-ab} that $b_{\ell,\iota}$ and $a_{\ell,\iota}$ approach 0 when $\ell$ approaches $\E(\sqrt{\iota})$ which is the reason why the resonant intervals $\textup{I}_{\ell,\iota}$ defined in \eqref{res-int} is introduced. It is worth pointing out that, on one hand, if $t\in\textup{I}_{\ell,\iota}$, there holds $|\ell|\le\fr{1}{2} \sqrt{|\iota|}$, and hence $b_{\ell,\iota}\approx1\approx a_{\ell,\iota}$. On the other hand, if $t\notin\textup{I}_{\ell,\iota}$, then either $t\in{\rm I}_{\ell,\iota}$ with $|\ell|>\fr{\sqrt{|\iota|}}{4}$ or $t\notin{\mathbb I}_{\ell,\iota}$, and hence for both cases, $(\ell,\iota)$ will never be resonant.
\end{rem}

Component-wise $w_k$ is given as follows
\begin{equation}\label{def-wk}
w_k(t, \iota)=
\begin{cases}
   w_k(t_{\E(|\iota|^{\frac{1}{2}}),\iota},\iota), \ \qquad &t < t_{\E(|\iota|^{\frac{1}{2}}),\iota}, \\
   w_\nr(t, \iota), \qquad \qquad \; &t\in \Big[ t_{\E(|\iota|^{\frac{1}{2}}),\iota},2\iota\Big]\setminus \text{I}_{k,\iota},\\
   w_\res(t, \iota)  \qquad \qquad \quad \;&t\in \text{I}_{k,\iota},\\
   1 \ \ \quad \quad \quad \qquad \qquad \; &t\geq 2|\iota|.
\end{cases}
\end{equation}

We now introduce 
\begin{equation}\label{multiplier J}
{\rm J}_k(t,\eta,\al)=\frac{1}{w_k(t,\iota(k,\eta,\alpha))},
\end{equation}
and $w_k$ is as defined in \eqref{def-wk}. In this way, we have three wave numbers $k$, $\eta$, $\alpha$. We first pick up the one that has the largest norm, namely $ \iota(k,\eta,\alpha)$. Then, we introduce the resonance time based on this wave number, although for either $k$ or $\alpha$ larger than $\eta$, the system does not have resonance. Then we choose either $w_{\mathrm{R}}$ or $w_{\mathrm{NR}}$ based on the relationship between $t, k$, and $\iota(k,\eta,\alpha)$. The total growth is enlarged in this way.
\begin{rem}\label{rem-2}
    Note that if $k=0$ or $k=\iota(k,\eta,\alpha)$, the weight $w_k(t, \iota(k,\eta,\alpha))=w_{\nr}(t,\iota(k,\eta,\alpha))$ for all $t\geq 2$. 
\end{rem}

Now we introduce the multiplier that will be used in the construction of the energy functional. For $\sig\ge0,\fr12 <s\leq 1,$ let us define
\[
A^{\sig}_k(t,\eta,\al)=e^{\lm(t)|k,\eta,\al|^s}\la k,\eta,\al\ra^{\sig}{\rm J}_k(t,\eta,\al),
\]
with 
\begin{equation}\label{def-lambda}
\lambda(t):=\lambda_\infty+\fr{\tilde{\delta}}{(1+t)^a},
\end{equation}
for positive constants $\tilde{\delta}$ and $a$ is chosen sufficiently small such that $a<\min\{\fr{s}{4},s-\fr12\}$ and $\lm(0)=\lm_\infty+\tilde{\delta}<\frac{9}{10}\lm_{\rm in}$.

For the top norm of the zero mode $\theta_0$, we also use the following multiplier
\begin{align}\label{B}
B(\eta,\al)=\la |\eta|^{\fr12},\al\ra.
\end{align}
The anisotropic multiplier $B$ still provides us with a good commutator estimate. 

\subsection{Base properties of the main Fourier multiplier}
In this subsection, we present estimates associated with various Fourier multipliers employed in the paper. The majority of them rely on the properties of $w_{\nr}$ and $w_{\res}$ studied in \cite{BM1}. 
Let us begin by introducing a useful property satisfied by the function $\iota$: 
\begin{lemma}[\cite{LiMasmoudiZhao2022asymptotic}]\label{lem-switch}
  It holds that
  \begin{align*}
  \big||\iota(k,\eta,\alpha)|-|\iota(l,\xi,\beta)|\big|\le \left| k-l,\eta-\xi,\alpha-\beta\right|.
\end{align*}
\end{lemma}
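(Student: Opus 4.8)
\textbf{Proof proposal for Lemma \ref{lem-switch}.}
The plan is to first observe that the function $\iota$, despite the tie-breaking conventions in its definition \eqref{eq-def-iota}, simply selects a coordinate of maximal modulus. Concretely, I would check the three cases of \eqref{eq-def-iota} one by one and conclude that in every case $|\iota(k,\eta,\alpha)|=\max\{|k|,|\eta|,|\alpha|\}$; that is, $|\iota(k,\eta,\alpha)|$ equals the $\ell^{\infty}$ norm of the frequency vector $(k,\eta,\alpha)$. This reduces the claim to the statement
\[
\big|\,\|(k,\eta,\alpha)\|_{\ell^\infty}-\|(l,\xi,\beta)\|_{\ell^\infty}\,\big|\le |k-l,\eta-\xi,\alpha-\beta|,
\]
where the right-hand side is the $\ell^1$ norm by the convention fixed in the Notations subsection.

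The next step is the reverse triangle inequality for the $\ell^\infty$ norm. Writing $m=(k,\eta,\alpha)$ and $n=(l,\xi,\beta)$, from $\|m\|_{\ell^\infty}\le\|m-n\|_{\ell^\infty}+\|n\|_{\ell^\infty}$ and the symmetric inequality we get
\[
\big|\,\|m\|_{\ell^\infty}-\|n\|_{\ell^\infty}\,\big|\le \|m-n\|_{\ell^\infty}.
\]
Finally, I would use the elementary comparison $\|m-n\|_{\ell^\infty}\le\|m-n\|_{\ell^1}=|k-l|+|\eta-\xi|+|\alpha-\beta|=|k-l,\eta-\xi,\alpha-\beta|$, which closes the estimate.

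There is no substantive obstacle here: the only point requiring minimal care is the verification that the case distinctions in \eqref{eq-def-iota} indeed produce $|\iota|=\max\{|k|,|\eta|,|\alpha|\}$ (for instance, in the branch $|k|>|\eta|,\,|k|>|\alpha|$ one has $|\iota|=|k|=\max\{|k|,|\eta|,|\alpha|\}$, and similarly for the other two branches), after which the proof is just the reverse triangle inequality together with $\|\cdot\|_{\ell^\infty}\le\|\cdot\|_{\ell^1}$.
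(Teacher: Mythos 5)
Your proof is correct and rests on the same core observation as the paper's: $|\iota(k,\eta,\alpha)|=\max\{|k|,|\eta|,|\alpha|\}=\|(k,\eta,\alpha)\|_{\ell^\infty}$. The paper's proof is essentially the same argument unwound by hand — it sets $\alpha=\iota(k,\eta,\alpha)$ without loss of generality, writes $|\iota(l,\xi,\beta)|=\max\{|l|,|\xi|,|\beta|\}$, and then does casework on which coordinate achieves the second max to bound the difference by a single coordinate difference (hence by the $\ell^1$ norm). You have simply recognized this casework as the standard reverse triangle inequality for $\|\cdot\|_{\ell^\infty}$ combined with $\|\cdot\|_{\ell^\infty}\le\|\cdot\|_{\ell^1}$, which is a cleaner way to package the same computation; nothing substantive differs.
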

\begin{proof}
    Without loss of generality, we let $\alpha=\iota(k,\eta,\alpha)$. Then
    \begin{align*}
\big||\iota(k,\eta,\alpha)|-|\iota(l,\xi,\beta)|\big|
&= \big||\alpha|-\max\{|l|, |\xi|, |\beta|\}\big|. 
    \end{align*}
    If $|\al|\ge \max\{|l|, |\xi|, |\beta|\}$, then $\big||\alpha|-\max\{|l|, |\xi|, |\beta|\}\big|\le |\al-\beta|$. For the case $|\al|< \max\{|l|, |\xi|, |\beta|\}$,
    \[
    \big||\alpha|-\max\{|l|, |\xi|, |\beta|\}\big|\le
    \begin{cases}
    |\beta|-|\al|\le|\beta-\al|,\quad{\rm if}\quad\max\{|l|,|\xi|,|\beta|\}=|\beta|,\\
    |l|-|\al|\le|l-k|,\quad\ \ {\rm if}\quad\max\{|l|,|\xi|,|\beta|\}=|l|,\\
    |\xi|-|\al|\le|\xi-\eta|,\quad \ {\rm if}\quad\max\{|l|,|\xi|,|\beta|\}=|\xi|.
    \end{cases}
    \]\qedhere

\end{proof}

To simplify the presentation, let us denote 
\be\label{iota12}
\iota_1=\iota(k,\eta,\al), \quad \iota_2=\iota(l,\xi,\beta), 
\ee
and
\[
\mathfrak{A}_0=\left\{\big((k,\eta,\alpha),(l,\xi,\beta)\big):  |k-l,\eta-\xi,\beta-\alpha| \leq \frac{3}{16}|l,\xi,\beta| \right\}.
\]
\begin{lemma}\label{lem-ratio-J}
The following inequality holds for $t> 10$
\begin{align}\label{ratio-J}
\dfrac{{\rm J}_k(t,\eta,\alpha)}{{\rm J}_l(t,\xi,\beta)} &\lesssim \bigg({\mathds{1}}_{\textup{A}}\dfrac{|\iota_1|}{|k|^2(1+|t-\frac{\iota_1}{k}|)}
+{\mathds{1}}_{\textup{B}}\frac{|l|^2\Big(1+|t-\frac{\iota_2}{l}|\Big)}{|\iota_2|} 
+{\mathds{1}}_{{\rm C}}\bigg)e^{2\mu|k-l,\eta-\xi,\alpha-\beta|^{\fr12}},
\end{align}
where $\textup{A}=\{t \in \textup{I}_{k,\iota_1}\cap \textup{I}^c_{l,\iota_2}, k \neq l\}$,  $\textup{B}=\{t \in \textup{I}^c_{k,\iota_1}\cap  \textup{I}_{l,\iota_2}\}$ and ${\rm C}=\mathfrak{A}_0\setminus (\textup{A}\cup \textup{B})$.
\end{lemma}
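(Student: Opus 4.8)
The plan is to reduce the estimate on the ratio ${\rm J}_k/{\rm J}_l = w_l(t,\iota_2)/w_k(t,\iota_1)$ to the analogous ratio estimates for $w_{\nr}$ and $w_{\res}$ established in \cite{BM1}, handling the extra subtlety that $\iota_1=\iota(k,\eta,\alpha)$ and $\iota_2=\iota(l,\xi,\beta)$ need not coincide. First I would invoke Lemma \ref{lem-switch} to control $\big||\iota_1|-|\iota_2|\big|\le|k-l,\eta-\xi,\alpha-\beta|$; on $\mathfrak{A}_0$ this difference is small relative to $|l,\xi,\beta|$, which (since $|\iota_2|\approx|l,\xi,\beta|$) forces $\iota_1$ and $\iota_2$ to be comparable, $|\iota_1|\approx|\iota_2|$, and also keeps $t-\iota_1/k$ and $t-\iota_2/l$ from being too far apart when $k=l$. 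This comparability lets me replace $\iota_1$ by $\iota_2$ (and vice versa) inside the bounds at the cost of a harmless factor, absorbed into the $e^{2\mu|k-l,\eta-\xi,\alpha-\beta|^{1/2}}$ term via the elementary inequality $|m|\lesssim e^{c|m|^{1/2}}$.

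Next I would split into the three regimes according to where $t$ sits relative to the resonant intervals. On region $\textup{C}=\mathfrak{A}_0\setminus(\textup{A}\cup\textup{B})$, neither $(k,\iota_1)$ nor $(l,\iota_2)$ is in a resonant interval at time $t$, so by Remark \ref{rem-1} and the construction \eqref{def-wk} both $w_k(t,\iota_1)$ and $w_l(t,\iota_2)$ behave like $w_{\nr}$ evaluated at comparable arguments; the monotonicity and slowly-varying properties of $w_{\nr}$ from \cite{BM1}, combined with $|\iota_1|\approx|\iota_2|$, give $w_l(t,\iota_2)/w_k(t,\iota_1)\lesssim e^{2\mu|k-l,\eta-\xi,\alpha-\beta|^{1/2}}$, which is the $\mathds{1}_{\textup{C}}$ term. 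On region $\textup{A}$, where $t\in\textup{I}_{k,\iota_1}$ but $t\notin\textup{I}_{l,\iota_2}$ and $k\ne l$, I would use that $w_k(t,\iota_1)=w_{\res}(t,\iota_1)$ while $w_l(t,\iota_2)\approx w_{\nr}(t,\iota_2)$, and then read off from \eqref{wR} that $w_{\res}(t,\iota_1)\approx \frac{\ell^2}{|\iota_1|}(1+b_{\ell,\iota_1}|t-\iota_1/\ell|)\,w_{\nr}(t,\iota_1)$ with $\ell=|k|$; since $t\in\textup{I}_{k,\iota_1}$ Remark \ref{rem-1} gives $b_{\ell,\iota_1}\approx 1$, so the denominator produces exactly the factor $\frac{|k|^2(1+|t-\iota_1/k|)}{|\iota_1|}$, i.e. the reciprocal of the claimed $\mathds{1}_{\textup{A}}$ bound; the residual ratio $w_{\nr}(t,\iota_2)/w_{\nr}(t,\iota_1)$ is again controlled by the $w_{\nr}$ ratio lemma of \cite{BM1} up to the exponential factor. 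Region $\textup{B}$ is the mirror image: there $w_l(t,\iota_2)=w_{\res}(t,\iota_2)$ and $w_k(t,\iota_1)\approx w_{\nr}(t,\iota_1)$, so the same computation with \eqref{wR} yields the factor $\frac{|l|^2(1+|t-\iota_2/l|)}{|\iota_2|}$ directly in the numerator.

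The main obstacle I expect is the bookkeeping around the mismatch between $\iota_1$ and $\iota_2$: one must check that when $t$ is near the endpoints $t_{\ell,\iota}$ of a critical interval — exactly where $w_k$ switches between its $w_{\nr}$, $w_{\res}$, and frozen (constant) branches in \eqref{def-wk} — the perturbation $|\iota_1-\iota_2|\lesssim|k-l,\eta-\xi,\alpha-\beta|$ does not move $t$ across a branch boundary in an uncontrolled way, and that $t\in\textup{I}_{k,\iota_1}$ versus $t\in\textup{I}_{l,\iota_2}$ for $k\ne l$ genuinely forces the resonant intervals to be disjoint (so that regions $\textup{A}$ and $\textup{B}$ are as described). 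This is handled by the separation of the $t_{\ell,\iota}$ combined with the smallness $\frac{3}{16}$ in the definition of $\mathfrak{A}_0$; the remaining estimates are then routine consequences of the quantitative $w_{\nr}/w_{\res}$ bounds in \cite{BM1}.
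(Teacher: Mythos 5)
Your treatment of regions $\textup{A}$ and $\textup{B}$ matches the paper's approach: factor out $w_{\res}/w_{\nr}$ via \eqref{wR}, use Remark~\ref{rem-1} to see $b_{\ell,\iota}\approx 1$ on the resonant interval, and control the residual ratio $w_{\nr}(t,\iota_2)/w_{\nr}(t,\iota_1)$ with Lemmas~\ref{lem-switch} and~\ref{lem-r-wNR}. That part is sound.

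The gap is in your characterization of region $\textup{C}$. You write that on $\textup{C}=\mathfrak{A}_0\setminus(\textup{A}\cup\textup{B})$ ``neither $(k,\iota_1)$ nor $(l,\iota_2)$ is in a resonant interval,'' and in your final paragraph you expect $t\in\textup{I}_{k,\iota_1}$ and $t\in\textup{I}_{l,\iota_2}$ with $k\ne l$ to be incompatible. Neither is correct. Since $\textup{A}$ carries the restriction $k\ne l$ and $\textup{B}$ does not cover the both-resonant configuration, $\textup{C}$ contains three genuinely distinct subcases: (i) $t\in\textup{I}^c_{k,\iota_1}\cap\textup{I}^c_{l,\iota_2}$ (the one you describe, handled by Lemma~\ref{lem-r-wNR}); (ii) $k=l$ with $t\in\textup{I}_{k,\iota_1}\cap\textup{I}^c_{k,\iota_2}$, where you write ${\rm J}_k/{\rm J}_l\lesssim\frac{|\iota_1|}{k^2(1+|t-\iota_1/k|)}e^{\mu|\cdot|^{1/2}}$ and then trade $\iota_1$ for $\iota_2$ in both the numerator and the $(1+|t-\iota_i/k|)$ factor, using $\fr{|\iota_2|}{k^2(1+|t-\iota_2/k|)}\lesssim 1$ off resonance; and (iii) the both-resonant case $t\in\textup{I}_{k,\iota_1}\cap\textup{I}_{l,\iota_2}$, including $k\ne l$. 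Subcase (iii) is the delicate one and your proposal does not address it: because $\iota_1\ne\iota_2$ the resonant intervals $\textup{I}_{k,\iota_1}$ and $\textup{I}_{l,\iota_2}$ can certainly overlap, and to show $\fr{|\iota_1|}{k^2(1+|t-\iota_1/k|)}\lesssim\langle k-l,\eta-\xi,\alpha-\beta\rangle$ there you need the trichotomy of Lemma~\ref{Scenarios} (either $k=l$, or the resonance is weak, or $\iota_1$ and $\iota_2$ are well-separated), combined with the comparability $|\iota_1|\approx|\iota_2|$ from $\mathfrak{A}_0$ and Lemma~\ref{lem-switch}. Without invoking Lemma~\ref{Scenarios}, this subcase of $\textup{C}$ is left unproved.
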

\begin{proof}
If $\big((k,\eta,\al),(l,\xi,\beta)\big)\in{\rm A}\cup{\rm B}$, we can get \eqref{ratio-J} by using the definition of ${\rm J}$, \eqref{wR},  Lemmas \ref{lem-switch} and \ref{lem-r-wNR} immediately. If $t \in \textup{I}_{k,\iota_1}\cap \textup{I}^c_{k,\iota_2}$, we have
\[
\begin{aligned}
\dfrac{{\rm J}_k(t,\eta,\alpha)}{{\rm J}_l(t,\xi,\beta)}\les& \dfrac{|\iota_1|}{|k|^2(1+|t-\frac{\iota_1}{k}|)}e^{\mu|k-l,\eta-\xi,\alpha-\beta|^{\fr12}}\\
=& \dfrac{|\iota_1|-|\iota_2|+|\iota_2|}{|k|^2(1+|t-\frac{\iota_2}{k}|)}\fr{1+|t-\frac{\iota_2}{k}|}{1+|t-\frac{\iota_1}{k}|}e^{\mu|k-l,\eta-\xi,\alpha-\beta|^{\fr12}}\les e^{2\mu|k-l,\eta-\xi,\alpha-\beta|^{\fr12}},
\end{aligned}
\]
since $\fr{|\iota_2|}{|k|^2(1+|t-\fr{\iota_2}{k}|)}\les1$ for $t\in{\rm I}_{k,\iota_2}^c$ with $k\ne0$. Similarly,  if $t \in \textup{I}_{k,\iota_1}\cap \textup{I}_{k,\iota_2}$, we have $|\iota_1|\approx|\iota_2|$, and
\[
\dfrac{{\rm J}_k(t,\eta,\alpha)}{{\rm J}_l(t,\xi,\beta)}\les \dfrac{|\iota_1|}{|\iota_2|}\fr{1+|t-\frac{\iota_2}{k}|}{1+|t-\frac{\iota_1}{k}|}e^{\mu|k-l,\eta-\xi,\alpha-\beta|^{\fr12}}\les e^{2\mu|k-l,\eta-\xi,\alpha-\beta|^{\fr12}}.
\]
For the case $t \in \textup{I}_{k,\iota_1}\cap \textup{I}_{l,\iota_2}$ with $l\ne k$, thanks to the condition $\big((k,\eta,\al), (l,\xi,\beta) \big)\in\mathfrak{A}_0$ and Lemma \ref{lem-switch}, we find that
\[
|\iota_1-\iota_2|\le|k-l, \eta-\xi, \al-\beta|\le\fr{9}{16} |\iota_2|.
\]
Consequently,
\begin{equation}\label{iota1=iota2}
\fr{7}{16}|\iota_2|\le|\iota_1|\le\fr{25}{16}|\iota_2|.
\end{equation}
This enables us to Lemma \ref{Scenarios} to bound $\fr{|\iota_1|}{k^2(1+|t-\fr{\iota_1}{k}|)}$. More precisely, together with Lemma \ref{lem-switch}, if (2) or (3) in Lemma \ref{Scenarios} holds, we are led to
\begin{equation}\label{good-bd}
\fr{|\iota_1|}{k^2(1+|t-\fr{\iota_1}{k}|)}\les\la k-l,\eta-\xi,\al-\beta\ra.
\end{equation}
Then we obtain
\[
\dfrac{{\rm J}_k(t,\eta,\alpha)}{{\rm J}_l(t,\xi,\beta)}\les \dfrac{|\iota_1|}{|k|^2(1+|t-\frac{\iota_1}{k}|)}e^{\mu|k-l,\eta-\xi,\alpha-\beta|^{\fr12}}\les e^{2\mu|k-l,\eta-\xi,\alpha-\beta|^{\fr12}}.
\]
Finally, if $t\in{\rm I}_{k,\iota_1}^c\cap{\rm I}_{k,\iota_2}^c$, by using Lemma \ref{lem-r-wNR}, we get $\dfrac{{\rm J}_k(t,\eta,\alpha)}{{\rm J}_l(t,\xi,\beta)}\les e^{\mu|k-l,\eta-\xi,\alpha-\beta|^{\fr12}}$ immediately.
\end{proof}
\begin{remark}
    The condition $\big((k,\eta,\al), (l,\xi,\beta) \big)\in\mathfrak{A}_0$ is merely used for the case $t \in \textup{I}_{k,\iota_1}\cap \textup{I}_{l,\iota_2}$ with $l\ne k$ in the proof of Lemma \ref{lem-ratio-J}.
\end{remark}
\begin{remark}
    It holds that for $t\leq \fr12\min\{\sqrt{|\iota(k,\eta,\alpha)|}, \sqrt{|\iota(l,\xi,\beta)|}\}$, 
    \begin{align*}
        \left|\frac{\mathrm{J}_{k}(0,\eta,\alpha)}{\mathrm{J}_{l}(0,\xi,\beta)}\right|\lesssim e^{2\mu|k-l,\eta-\xi,\alpha-\beta|^{\fr12}}.
    \end{align*}
\end{remark}

\begin{lemma}\label{short time}
    For $t\leq \fr12\min\{\sqrt{|\iota(k,\eta,\alpha)|}, \sqrt{|\iota(l,\xi,\beta)|}\}$, it holds that 
    \begin{equation}\label{Jk|Jl-1}
    \begin{aligned}
        \left|\frac{\mathrm{J}_{k}(t,\eta,\alpha)}{\mathrm{J}_{l}(t,\xi,\beta)}-1\right|\lesssim \fr{\JB{k-l,\eta-\xi,\alpha-\beta}}{\sqrt{|\iota(k,\eta,\alpha)|}+\sqrt{|\iota(l,\xi,\beta)|}}e^{3\mu|k-l,\eta-\xi,\alpha-\beta|^{\fr12}}.
    \end{aligned}
    \end{equation}
\end{lemma}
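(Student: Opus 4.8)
The plan is to show first that on the time interval of the lemma both multipliers are \emph{constant in $t$}, thereby reducing everything to a one-variable comparison of $w_{\nr}$ at the two nearby values $|\iota_1|,|\iota_2|$. For the freezing step: when $t\le\tfrac12\sqrt{|\iota|}$ one checks $\tfrac12\sqrt{|\iota|}\le t_{\E(|\iota|^{1/2}),\iota}$ (trivial for $|\iota|\le1$; for $|\iota|\ge4$ because $\E(|\iota|^{1/2})+1\le\tfrac32\sqrt{|\iota|}$ gives $t_{\E(|\iota|^{1/2}),\iota}\ge\tfrac{|\iota|}{\E(|\iota|^{1/2})+1}\ge\tfrac23\sqrt{|\iota|}$; a direct check for $1<|\iota|<4$, where $\E(|\iota|^{1/2})=1$ and $t_{1,\iota}=\tfrac34|\iota|$), and also $t_{\E(|\iota|^{1/2}),\iota}<2\sqrt{|\iota|}$, so $\mathbb{I}_{\E(|\iota|^{1/2}),\iota}$ is \emph{not} a resonant interval. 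Hence \eqref{def-wk} gives $w_k(t,\iota)=w_{\nr}(t_{\E(|\iota|^{1/2}),\iota},\iota)$ for all $t\in[0,\tfrac12\sqrt{|\iota|}]$, independently of $t$; applying this to both $(k,\eta,\alpha)$ and $(l,\xi,\beta)$ we get $\mathrm{J}_k(t,\eta,\alpha)/\mathrm{J}_l(t,\xi,\beta)=w_{\nr}(0,\iota_2)/w_{\nr}(0,\iota_1)$ throughout the range of the lemma, so it suffices to bound $\big|w_{\nr}(0,\iota_2)/w_{\nr}(0,\iota_1)-1\big|$.

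The core of the argument is a closed form for $w_{\nr}(0,\cdot)$. Telescoping the recursion \eqref{wR} at the successive times $t_{\ell,\iota}$ --- where, by \eqref{def-ab}, $1+a_{\ell,\iota}|t_{\ell,\iota}-\iota/\ell|=|\iota|/\ell^2$, so that $w_{\nr}(t_{\ell,\iota},\iota)=(\ell^2/|\iota|)^{1+2C_*}w_{\nr}(t_{\ell-1,\iota},\iota)$ --- and using $w_{\nr}(t_{0,\iota},\iota)=w_{\nr}(2|\iota|,\iota)=1$, I would obtain
\[
w_{\nr}(0,\iota)=\prod_{\ell=1}^{\E(|\iota|^{1/2})}\Big(\frac{\ell^2}{|\iota|}\Big)^{1+2C_*},
\]
with the empty product (i.e.\ $|\iota|\le1$) read as $1$. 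This is a positive continuous function of $|\iota|$ --- the $\ell=m$ factor equals $1$ at $|\iota|=m^2$, so no jump occurs when $\E(|\iota|^{1/2})$ increments --- and away from the points $|\iota|=m^2$ it is $C^1$ with $\big|\partial_{|\iota|}\log w_{\nr}(0,\iota)\big|=(1+2C_*)\E(|\iota|^{1/2})/|\iota|\le(1+2C_*)|\iota|^{-1/2}$. Integrating this bound over $[|\iota_1|\wedge|\iota_2|,\,|\iota_1|\vee|\iota_2|]$ and invoking $\big||\iota_1|-|\iota_2|\big|\le|k-l,\eta-\xi,\alpha-\beta|=:D$ from Lemma \ref{lem-switch} yields
\[
\Big|\log\frac{w_{\nr}(0,\iota_2)}{w_{\nr}(0,\iota_1)}\Big|\le 2(1+2C_*)\big|\sqrt{|\iota_2|}-\sqrt{|\iota_1|}\big|=\frac{2(1+2C_*)\big||\iota_1|-|\iota_2|\big|}{\sqrt{|\iota_1|}+\sqrt{|\iota_2|}}\le\frac{2(1+2C_*)D}{\sqrt{|\iota_1|}+\sqrt{|\iota_2|}}.
\]

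To finish I would split on the size of $D$. Fix a small absolute constant $c_0$ with $2(1+2C_*)c_0\le\tfrac12$. If $D\le c_0\big(\sqrt{|\iota_1|}+\sqrt{|\iota_2|}\big)$, then the logarithm above is $\le\tfrac12$, so $|e^x-1|\le2|x|$ gives $\big|w_{\nr}(0,\iota_2)/w_{\nr}(0,\iota_1)-1\big|\lesssim D\big(\sqrt{|\iota_1|}+\sqrt{|\iota_2|}\big)^{-1}\le\JB{D}\big(\sqrt{|\iota_1|}+\sqrt{|\iota_2|}\big)^{-1}e^{3\mu D^{1/2}}$. If instead $D>c_0\big(\sqrt{|\iota_1|}+\sqrt{|\iota_2|}\big)$, then $\JB{D}\big(\sqrt{|\iota_1|}+\sqrt{|\iota_2|}\big)^{-1}\gtrsim1$, and the remark immediately preceding this lemma gives $\big|\mathrm{J}_k(0,\eta,\alpha)/\mathrm{J}_l(0,\xi,\beta)\big|\lesssim e^{2\mu D^{1/2}}$, so by the triangle inequality $\big|\mathrm{J}_k/\mathrm{J}_l-1\big|\le\big|\mathrm{J}_k/\mathrm{J}_l\big|+1\lesssim e^{2\mu D^{1/2}}\lesssim\JB{D}\big(\sqrt{|\iota_1|}+\sqrt{|\iota_2|}\big)^{-1}e^{3\mu D^{1/2}}$. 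Combining the two cases, and recalling that the ratio is $t$-independent on the range of the lemma, proves the claim.

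The main obstacle is the middle step: deriving the closed form for $w_{\nr}(0,\cdot)$ and, above all, verifying that it is genuinely continuous at the thresholds $|\iota|=m^2$ (which is exactly what lets one integrate the a.e.\ derivative bound across them). Once that is secured, the floor-function bookkeeping and the dichotomy on $D$ are routine, and indeed most of the mechanics of the first and third paragraphs mirror the proof of Lemma \ref{lem-ratio-J} and the remark quoted there.
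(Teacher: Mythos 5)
Your proof is correct, and it takes a genuinely different route from the paper's through the main step. Both start the same way: freeze the multipliers in time (you fill in a verification the paper leaves implicit, namely that $\tfrac12\sqrt{|\iota|}\le t_{\E(|\iota|^{1/2}),\iota}$ so that $w_k(t,\iota)=w_{\nr}(0,\iota)$ on the whole range of $t$), reduce to bounding $\bigl|w_{\nr}(0,\iota_2)/w_{\nr}(0,\iota_1)-1\bigr|$, and dispose of the large-$D$ regime via the Remark preceding the lemma (equivalently, the $\mathds{1}_{\rm C}$ branch of Lemma~\ref{lem-ratio-J}). Where you diverge is in the small-$D$ regime. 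The paper exploits $|\E(\sqrt{|\iota_1|})-\E(\sqrt{|\iota_2|})|\le1$ and does a three-way case split on the floors, handling each via the algebraic inequality $(1+a/b^2)^{bc}-1\le ce^c a/b$ applied to the telescoped ratio $(\iota_1/\iota_2)^{c\E(\sqrt{|\iota_1|})}$, plus a separate term $(|\iota_1|/\E(\sqrt{|\iota_1|})^2)^c$ when the floors disagree. You instead observe that the telescoped formula $w_{\nr}(0,\iota)=\prod_{\ell=1}^{\E(\sqrt{|\iota|})}(\ell^2/|\iota|)^{1+2C_*}$ defines a \emph{continuous} function of $|\iota|$ across the thresholds $|\iota|=m^2$ (the incoming factor equals $1$ there), with a.e.\ log-derivative bounded by $(1+2C_*)|\iota|^{-1/2}$, and then integrate. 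This collapses the floor-function casework into a single step and makes the source of the $(\sqrt{|\iota_1|}+\sqrt{|\iota_2|})^{-1}$ gain transparent — it is literally the integral of $|\iota|^{-1/2}$. The paper's version stays closer to the original argument of Bedrossian–Masmoudi and keeps the constant $c$ abstract; yours pins it down as $1+2C_*$ and is arguably more self-contained, at the price of having to check the continuity at the knots $|\iota|=m^2$, which you correctly identify as the load-bearing observation.
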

 
\begin{proof}
The proof of Lemma \ref{short time} can be obtained by adjusting the proof of Lemma 3.7 in \cite{BM1}, we give the details below for the sake of completeness. Let $\iota_1$ and $\iota_2$ be the same as in \eqref{iota12}.
    Under the assumption of this lemma, we know that by \eqref{def-wk} and \eqref{multiplier J}, ${\rm{J}}_k(t,\eta,\alpha)={\rm{J}}_k(0,\eta,\alpha)$ and similarly, ${\rm{J}}_l(t,\xi,\beta)={\rm{J}}_l(0,\xi,\beta)$. Assume that $\frac{1}{100}\left(\sqrt{|\iota_1|}+\sqrt{|\iota_2|}\right)<|k-l|+|\eta-\xi|+|\alpha-\beta|$. Combining this with the upper-bound of the ratio in \eqref{ratio-J} when the support is $\rm{C}$, one easily infer that 
    
    \begin{align*}
        \left|\frac{\mathrm{J}_{k}(t,\eta,\alpha)}{\mathrm{J}_{l}(t,\xi,\beta)}-1\right|\lesssim\JB{k-l,\eta-\xi,\alpha-\beta}^{\fr12} e^{2\mu|k-l,\eta-\xi,\alpha-\beta|^{\fr12}}\lesssim \fr{\JB{k-l,\eta-\xi,\alpha-\beta}}{\sqrt{|\iota_1|}+\sqrt{|\iota_2|}} e^{3\mu|k-l,\eta-\xi,\alpha-\beta|^{\fr12}},
    \end{align*}
    which yields \eqref{Jk|Jl-1}.

    Observe that when $\sqrt{|\iota_1|}+\sqrt{|\iota_2|} \lesssim 1$, we can follow the above procedure to arrive at $\eqref{Jk|Jl-1}$. Hence, for the remaining part of the proof we focus on the case when \begin{equation}\label{sum iota larger}
        \sqrt{|\iota_1|}+\sqrt{|\iota_2|} \gtrsim 1.
    \end{equation}
    Under this restriction, we assume that 
    \begin{equation}\label{assmptn}
        \frac{1}{100}\left(\sqrt{|\iota_1|}+\sqrt{|\iota_2|}\right)\geq|k-l|+|\eta-\xi|+|\alpha-\beta| \geq \left|\iota_1-\iota_2\right|.
    \end{equation}
    It is easy to see that via \eqref{assmptn} and $\eqref{sum iota larger}$, $|\sqrt{|\iota_1|}-\sqrt{|\iota_2|}|\leq \frac{1}{100}<1$.  From this we can infer that $|\E(\sqrt{|\iota_1|})-\E(\sqrt{|\iota_2|})|\leq 1$. Let us first suppose that $\E(\sqrt{|\iota_1|})=\E(\sqrt{|\iota_2|})$. Upon recalling the definition \eqref{multiplier J}, exploiting the inequality
    \[
    \left(1+\frac{a}{b^2}\right)^{bc}-1 \leq ce^c \frac{a}{b}
    \]
    for positive constants $a,b,c$
    and using the total growth of $w$ in Lemma~\ref{total growth}, we can infer that
        \[
        \left|\fr{w_l(t,\iota_2)}{w_k(t,\iota_1)}-1\right|=\left|\fr{w_l(0,\iota_2)}{w_k(0,\iota_1)}-1\right|=\left| \big(\fr{\iota_1}{\iota_2}\big)^{c\E(\sqrt{|\iota_1|})} -1\right|\lesssim \fr{|\iota_1-\iota_2|}{\sqrt{|\iota_2|}}.
        \]
    Now let us consider the scenario when $\E(\sqrt{|\iota_1|})=\E(\sqrt{|\iota_2|})+1$. In this case it is clear that $\sqrt{|\iota_2|}<\E(\sqrt{|\iota_1|})\leq\sqrt{|\iota_1|}$. Hence,
    \[
    \begin{aligned}
        \left|\fr{w_l(t,\iota_2)}{w_k(t,\iota_1)}-1\right|=\left|\fr{w_l(0,\iota_2)}{w_k(0,\iota_1)}-1\right|&=\left| \big(\fr{\iota_1}{\iota_2}\big)^{c\E(\sqrt{|\iota_2|})}\left(\fr{|\iota_1|}{\E(\sqrt{|\iota_1|})^2}\right)^c -1\right|\\
        &\lesssim \fr{|\iota_1-\iota_2|}{\sqrt{|\iota_2|}}+\left|\left(\fr{|\iota_1|}{\E(\sqrt{|\iota_1|})^2}\right)^c-1\right| \lesssim \fr{\JB{\iota_1-\iota_2}}{\sqrt{|\iota_2|}}.
    \end{aligned}
    \]
    The estimate for remaining scenario when $\E(\sqrt{|\iota_1|})=\E(\sqrt{|\iota_2|})-1$ can be done similarly.
\end{proof}
Next, we introduce a sequence of lemmas presenting estimates on the time-dependent Fourier multipliers which are mainly used in dealing with the nonlinear interactions. We organize them according to the resonant and non-resonant interactions. To that end, we define the following set
\begin{equation}\label{Set A}
    \mathfrak{A}=\left\{((k,\eta,\alpha),(l,\xi,\beta)):  |(k-l,\eta-\xi,\beta-\alpha)| \leq \frac{3}{16}|l,\xi,\beta|\ \text{and}\ |l|\neq 0 \right\}.
\end{equation}
Clearly, any element in $\mathfrak{A}$ satisfies the condition
\begin{equation}\label{comparability}
     \frac{13}{16}|l,\xi,\beta| \leq |k,\eta,\alpha|\leq \frac{19}{16}|l,\xi,\beta|.
\end{equation}

To begin with, let us consider the case where resonance or nonresonance does not matter.
\begin{lemma}\label{l xi nonresonant}
For any $t> 10$, let us denote
\begin{align}
\mathfrak{U}_1&=\{l\xi<0\},\quad
\mathfrak{U}_2=\{l \xi>0, |\xi|\leq \frac{1}{2}|lt|\, \text{or}\, |\xi|\geq 2|lt|\},\\
\mathfrak{U}_3&=\{l \xi>0, \frac{1}{2}|lt|<|\xi|<2|lt|, |l,\xi,\beta|\leq 10^3\},\\
\mathfrak{U}_4&=\{|\xi|<t/2\}, \quad
\mathfrak{U}_5=\{|\beta|\geq\frac{1}{100}|\xi| \},
\end{align}
and
$\mathfrak{U}=\mathfrak{U}_1\cup\mathfrak{U}_2\cup \mathfrak{U}_3\cup\mathfrak{U}_4\cup\mathfrak{U}_5.$
Assume that $\big((k,\eta,\alpha),(l,\xi,\beta)\big)\in \mathfrak{A}$ and $(l,\xi,\beta)\in \mathfrak{U}$, then
\begin{align}\label{approximate}
    l^2+(\xi-lt)^2+\beta^2\approx l^2+\xi^2+l^2t^2+\beta^2,
\end{align}
and
\begin{equation}\label{est when res nonres not considered}
\begin{aligned}
&\dfrac{(t|\beta|+|\xi|+|l,\beta|)|l,\beta|}{(l^2+(\xi-lt)^2+\beta^2)^2}\frac{A_k^{\sig_1}(\eta,\alpha)}{A_l^{\sig_1}(\xi,\beta)}\\
&\lesssim  e^{\mathbf{c}\lambda(t)|k-l,\eta-\xi,\alpha-\beta|^{s}}\bigg(\frac{1}{\JB{t}^2}+\frac{|l,\xi,\beta|^\fr{s}{2} |k,\eta,\alpha|^\fr{s}{2}}{\JB{t}^{1+s}}\\
&\qquad\qquad\qquad\qquad\qquad 
+ \sqrt{\dfrac{\partial_t w_k(t,\iota(k,\eta,\alpha))}{w_k(t,\iota(k,\eta,\alpha))}}\sqrt{\dfrac{\partial_t w_l(t,\iota(l,\xi,\beta))}{w_l(t,\iota(l,\xi,\beta))}}\bigg).
\end{aligned}
\end{equation}
\end{lemma}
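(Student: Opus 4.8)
First I would establish the frequency comparability \eqref{approximate}. Since $(l,\xi,\beta)\in\mathfrak{U}$, the task is to show $l^2+(\xi-lt)^2+\beta^2\approx l^2+\xi^2+l^2t^2+\beta^2$ case by case: on $\mathfrak{U}_1$ the cross term $-2\xi lt$ has a favorable sign, so $(\xi-lt)^2\ge \xi^2+l^2t^2$, giving the lower bound for free (the upper bound $\lesssim$ is always true by Cauchy--Schwarz); on $\mathfrak{U}_2$ the condition $|\xi|\le\frac12|lt|$ or $|\xi|\ge 2|lt|$ keeps $|\xi-lt|\gtrsim\max\{|\xi|,|lt|\}$; on $\mathfrak{U}_3$ everything is bounded by an absolute constant so all four quantities are comparable to $1+t^2$; on $\mathfrak{U}_4$ we have $|\xi|<t/2$ so $|lt|\le l^2 t$ dominates and again $|\xi-lt|\gtrsim |lt|$; on $\mathfrak{U}_5$ the term $\beta^2\gtrsim\xi^2$ absorbs the bad term. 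This is routine but must be done carefully to cover all five subcases, and \eqref{comparability} ensures $|k,\eta,\alpha|\approx|l,\xi,\beta|$ throughout.

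Next, granting \eqref{approximate}, I would bound the symbol factor. Writing $m=l^2+\xi^2+l^2t^2+\beta^2\approx l^2+(\xi-lt)^2+\beta^2$, the prefactor satisfies
\[
\frac{(t|\beta|+|\xi|+|l,\beta|)|l,\beta|}{m^2}\lesssim \frac{t|\beta|\,|l,\beta|}{m^2}+\frac{(|\xi|+|l,\beta|)|l,\beta|}{m^2}.
\]
On the first piece use $t|\beta|\lesssim \sqrt{m}$ (since $l^2t^2+\beta^2\le m$ gives $t|\beta|\le\frac12(l^2t^2+\beta^2)/\text{(geom.\ mean)}$, more simply $t^2\beta^2\le m\cdot\max\{1,\ldots\}$; cleanly, $t|\beta|=\sqrt{t^2\beta^2}\le\sqrt{m}$ when $|l|\ge1$ because $l^2t^2\ge t^2\ge$ and $\beta^2\le m$, so $t^2\beta^2\le l^2t^2\beta^2\le m\beta^2$... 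I would instead split by whether $|\beta|\le t$ or not) and $|l,\beta|^2\le m$, so the first piece is $\lesssim m^{-1}$ which is $\lesssim \JB{t}^{-2}$ after using $m\gtrsim 1+t^2$ on the relevant regions, OR it produces the $\partial_t w/w$ terms when one is in a critical interval. On the second piece, $(|\xi|+|l,\beta|)|l,\beta|\le m$ directly, giving $\lesssim m^{-1}\lesssim\JB{t}^{-2}$. The multiplier ratio $A_k^{\sigma_1}/A_l^{\sigma_1}$ is split as $e^{\lambda(t)(|k,\eta,\alpha|^s-|l,\xi,\beta|^s)}\cdot\frac{\JB{k,\eta,\alpha}^{\sigma_1}}{\JB{l,\xi,\beta}^{\sigma_1}}\cdot\frac{\mathrm{J}_k}{\mathrm{J}_l}$; using $|k,\eta,\alpha|^s-|l,\xi,\beta|^s\le C|k-l,\eta-\xi,\alpha-\beta|^s$ when the difference is small relative to $|l,\xi,\beta|$ (from \eqref{comparability}), together with $\JB{k,\eta,\alpha}\approx\JB{l,\xi,\beta}$, absorbs the Sobolev correction into the claimed $e^{\mathbf{c}\lambda(t)|\cdots|^s}$ factor with room to spare; the ratio $\mathrm{J}_k/\mathrm{J}_l$ is handled by Lemma~\ref{lem-ratio-J}, and since on $\mathfrak{U}$ we are away from resonance (the sets $\mathfrak{U}_i$ are precisely the ``non-resonant'' regimes), one lands in the case ${\rm C}$ branch or gets the $\partial_t w$ terms from the transition.

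The main obstacle I anticipate is the bookkeeping that links the gain $t|\beta|\lesssim\sqrt m$ in the prefactor to the quantity $\sqrt{\partial_t w_k/w_k}\sqrt{\partial_t w_l/w_l}$ rather than a clean power of $\JB{t}$: when $(k,\eta,\alpha)$ or $(l,\xi,\beta)$ sits near a critical time $t\approx\iota/k$ one cannot simply bound $m\gtrsim\iota^2/k^2$, and instead must recognize $\frac{|\iota_j|}{|\cdot|^2(1+|t-\iota_j/\cdot|)}$ as comparable to the logarithmic derivative of $w_{\rm R}$ (from \eqref{wR} and the construction of $w_{\nr}$), exactly as in \cite{BM1}. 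So the real content is: (i) on $\mathfrak{U}$ the denominator $m$ is genuinely $\approx l^2+\xi^2+l^2t^2+\beta^2$, killing the Orr-type amplification, and (ii) the leftover $\sqrt m$ from $t|\beta|$ is either $\lesssim\JB{t}\cdot|l,\beta|$ giving the $\JB{t}^{-1-s}$-with-derivative-loss term $|l,\xi,\beta|^{s/2}|k,\eta,\alpha|^{s/2}\JB{t}^{-1-s}$ after paying $s/2$ derivatives on each side, or is tied to $\partial_t w$. Everything else — the exponential-weight commutator, the Sobolev ratio, Lemma~\ref{lem-switch} for comparing $\iota_1,\iota_2$ — is standard and follows the template already set up in this section.
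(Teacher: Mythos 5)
Your treatment of \eqref{approximate} tracks the paper's case-by-case argument and is essentially right, and the idea of splitting the prefactor and routing the multiplier ratio through Lemma~\ref{lem-ratio-J} is the correct shape. But the core estimate you lean on for the first piece is false. The claim $t|\beta|\lesssim\sqrt{m}$ with $m=l^2+\xi^2+l^2t^2+\beta^2$ fails: take $l=1$, $\xi=0$, $\beta=t$, which lies in $\mathfrak{U}_4$; then $t|\beta|=t^2$ while $\sqrt{m}\approx\sqrt{2}\,t$, and the prefactor evaluates to $\approx 1/(4t)$, not $\lesssim 1/t^2$. So your main line ``the first piece is $\lesssim m^{-1}\lesssim\JB{t}^{-2}$'' overshoots by a full power of $t$. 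The correct bound on $(t|\beta|+|\xi|+|l,\beta|)|l,\beta|/m^2$ is $|l,\beta|^s/\JB{t}^{1+s}$ — the derivative-loss term in the statement is not optional bookkeeping but exactly where the decay genuinely lands. The paper obtains it by bounding the numerator by $t|l,\beta|^2+|\xi||l,\beta|$, the denominator below by $\xi^4+(lt)^4+\beta^4$, and then checking $\bigl(t|l,\beta|^2+|\xi||l,\beta|\bigr)/\bigl(\xi^4+(lt)^4+\beta^4\bigr)\lesssim|l,\beta|^s/\JB{t}^{1+s}$ by casework on the relative sizes of $|\beta|$, $|l|$, $|lt|$. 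Your parenthetical ``split by whether $|\beta|\le t$ or not'' is the right instinct but, carried out, still only gives $\lesssim 1/(|l|t)$, and you never perform the trade that converts the spare $|l,\beta|$ into the needed $s$-power.

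Separately, the assertion that on $\mathfrak{U}$ ``one lands in the case $\mathrm{C}$ branch'' of Lemma~\ref{lem-ratio-J} misreads what $\mathfrak{U}$ controls. The sets $\mathfrak{U}_i$ constrain only $(l,\xi,\beta)$ — they kill the Orr amplification in the denominator $m$ — but say nothing about $(k,\eta,\alpha)$, so $t$ can perfectly well lie in $\mathrm{I}_{k,\iota_1}$; that puts you in set $\mathrm{A}$ of Lemma~\ref{lem-ratio-J}, and $\mathrm{J}_k/\mathrm{J}_l$ picks up the factor $|\iota_1|/\bigl(|k|^2(1+|t-\iota_1/k|)\bigr)$. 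Controlling this against the symbol is where the paper's casework comes in: splitting off $t\in\mathrm{I}_{k,\iota_1}\cap\mathrm{I}_{k,\iota_2}^c$, then for $t\in\mathrm{I}_{k,\iota_1}\cap\mathrm{I}_{l,\iota_2}^c$ with $k\neq l$ distinguishing $t\le 2\sqrt{|\iota_2|}$, $t\ge 2|\iota_2|$, and the middle window where one locates the unique $m$ with $t\in\mathrm{I}_{m,\iota_2}$ and invokes Lemma~\ref{Scenarios}. The sub-case $m=k$ is precisely where $\sqrt{\partial_t w_k/w_k}\,\sqrt{\partial_t w_l/w_l}$ comes from. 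Your obstacle paragraph gestures at this, but there is no path from your decomposition to that casework, and without it the $\partial_t w$ term in \eqref{est when res nonres not considered} is unaccounted for.
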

\begin{proof}
We begin by proving \eqref{approximate}. By triangle inequality, the `` $\lesssim$" direction of \eqref{approximate} holds. Therefore, it suffices to prove it for the lower bound of the term on the left-hand side of\eqref{approximate}. For $(l,\xi,\beta)\in \mathfrak{U}_1$, clearly, $(\xi-lt)^2=(\xi^2-2\xi lt+l^2t^2)\geq \xi^2+l^2t^2$. When $(l,\xi,\beta)\in \mathfrak{U}_2$, note that $|\xi-lt|^2\geq(|\xi|-|lt|)^2\gtrsim (|\xi|+|lt|)^2\gtrsim \xi^2+l^2t^2$. Similar steps can be taken to show \eqref{approximate} when $(l,\xi,\beta)\in \mathfrak{U}_4$.   For  $(l,\xi,\beta)\in \mathfrak{U}_3$, notice that $|lt|\approx|\xi|\les |l|$ due to $l\ne0$, then \eqref{approximate} holds. Lastly, for $(l,\xi,\beta)\in \mathfrak{U}_5\cap (\mathfrak{U}_1\cup\mathfrak{U}_2\cup\mathfrak{U}_3\cup\mathfrak{U}_4)^c$, we have $|\beta|\ge\fr{|\xi|}{100}$,  $l\xi>0, |\xi|\approx |lt|, |l,\xi,\beta|>1000$, and $2|\xi|\geq t$. With this in mind, we can infer that $l^2+(\xi-lt)^2 +\beta^2 \gtrsim l^2+\xi^2+\beta^2\gtrsim l^2+\xi^2+l^2t^2+\beta^2$ where the last inequality is obtained via $|\xi|\approx |lt|$.

Now, we devote our attention to proving \eqref{est when res nonres not considered}. First let us assume that $(l,\xi,\beta) \in \mathfrak{U}_3$. Notice that by \eqref{comparability}, one can also assert that $|k,\eta,\alpha|\lesssim 1$. Thus,
\[
\begin{aligned}
\dfrac{(t|\beta|+|\xi|+|l,\beta|)|l,\beta|}{(l^2+(\xi-lt)^2+\beta^2)^2}\frac{A^{\sig_1}_k(\eta,\alpha)}{A^{\sig_1}_l(\xi,\beta)}&\lesssim \frac{\la t\ra \JB{\frac{\xi}{l}}^4}{|l|^4\JB{\frac{\xi}{l}-t}^4\JB{\frac{\xi}{l}}^4} \lesssim \frac{1}{\JB{t}^3}.
\end{aligned}
\]

Next we focus on $(l,\xi,\beta)\in \big(\mathfrak{U}_1\cup\mathfrak{U}_2\cup \mathfrak{U}_4\cup \mathfrak{U}_5\big)\cap \mathfrak{U}_3^c$. Note first that the restriction $\big((k,\eta,\alpha),(l,\xi,\beta)\big)\in \mathfrak{A}$ implies that \eqref{iota1=iota2} holds. 
We consider the following two cases: $t\notin \mathrm{I}_{k,\iota_1}$ and $t\in \mathrm{I}_{k,\iota_1}$. 

 We first focus on the case when $t \notin {\rm{I}}_{k,\iota_1}$ or $t\in {\rm I}_{k,\iota_1}\cap {\rm I}_{k,\iota_2}^c$. By Lemma \ref{lem-ratio-J}, \eqref{triangle1} and \eqref{approximate}, we have
\begin{align*}
\dfrac{(t|\beta|+|\xi|+|l,\beta|)|l,\beta|}{(l^2+(\xi-lt)^2+\beta^2)^2}\frac{A_k^{\sigma_1}(\eta,\alpha)}{A_l^{\sigma_1}(\xi,\beta)}&\lesssim \dfrac{t|l,\beta|^2+|\xi||l,\beta|}{\xi^4+(lt)^4+\beta^4} e^{c'\lambda(t)|k-l,\eta-\xi,\alpha-\beta|^{s}}e^{2\mu|k-l,\eta-\xi,\alpha-\beta|^{\fr12}}\\&\lesssim\frac{|l,\beta|^s}{\JB{t}^{1+s}}e^{c\lambda(t)|k-l,\eta-\xi,\alpha-\beta|^{s}}
\\&\lesssim
\frac{\la l,\beta\ra^{\fr{s}{2}}\la k,\alpha\ra^{\fr{s}{2}}}{\JB{t}^{1+s}} e^{\mathbf{c}\lambda(t)|k-l,\eta-\xi,\alpha-\beta|^{s}}.
\end{align*}

Now, we focus on the case when $t \in {\rm{I}}_{k,\iota_1}\cap {\rm I}_{l,\iota_2}^c$ with $k\ne l$.
If $t\le 2\sqrt{|\iota_2|}\approx\sqrt{|\iota_1|}$, then we infer from the fact $t\approx\fr{|\iota_1|}{|k|}$ that $|k|\approx \sqrt{|\iota_1|}$ due to $t \in {\rm{I}}_{k,\iota_1}$. Or if $t\ge2|\iota_2|$, then $|t-\fr{\iota_2}{k}|\gtrsim|\iota_2|$.
Thus, for both cases, \eqref{good-bd} holds, and
\begin{equation}\label{good-bd2}
\begin{aligned}
    \dfrac{(t|\beta|+|\xi|+|l,\beta|)|l,\beta|}{(l^2+(\xi-lt)^2+\beta^2)^2}\frac{A_k^{\sigma_1}(\eta,\alpha)}{A_l^{\sigma_1}(\xi,\beta)} 
    &\lesssim 
    \dfrac{t|l,\beta|^2+|\xi||l,\beta|}{\xi^4+(lt)^4+\beta^4} \dfrac{|\iota_1|}{|k|^2(1+|t-\frac{\iota_1}{k}|)} e^{c\lambda(t)|k-l,\eta-\xi,\alpha-\beta|^{s}}\\
    &\lesssim \frac{|l,\xi,\beta|^{\fr{s}{2}}|k,\eta,\alpha|^{\fr{s}{2}}}{\JB{t}^{1+s}} e^{\mathbf{c}\lambda(t)|k-l,\eta-\xi,\alpha-\beta|^{s}}.
\end{aligned}
\end{equation}
Next we focus on $t\in[2\sqrt{|\iota_2|}, 2|\iota_2|]$. Then there exists $m$ such that $t\in{\rm I}_{m,\iota_2}$. If $m=k$, 
\[
\begin{aligned}
    \dfrac{(t|\beta|+|\xi|+|l,\beta|)|l,\beta|}{(l^2+(\xi-lt)^2+\beta^2)^2}\frac{A_k^{\sigma_1}(\eta,\alpha)}{A_l^{\sigma_1}(\xi,\beta)} 
    &\lesssim 
    \dfrac{t|l,\beta|^2+|\xi||l,\beta|}{\xi^4+(lt)^4+\beta^4} \dfrac{|\iota_1|}{|k|^2(1+|t-\frac{\iota_1}{k}|)} e^{c\lambda(t)|k-l,\eta-\xi,\alpha-\beta|^{s}}\\
    &\lesssim \dfrac{1}{\sqrt{1+|t-\frac{\iota_1}{k}|}}\dfrac{1}{\sqrt{1+|t-\frac{\iota_2}{m}|}}\fr{\sqrt{1+|t-\frac{\iota_2}{m}|}}{\sqrt{1+|t-\frac{\iota_1}{k}|}} e^{{\bf c}\lambda(t)|k-l,\eta-\xi,\alpha-\beta|^{s}}\\
    &\les \sqrt{\fr{\pr_tw_k(t,\iota_1)}{w_k(t,\iota_1)}}\sqrt{\fr{\pr_tw_l(t,\iota_2)}{w_l(t,\iota_2)}}e^{{\bf c}\lambda(t)|k-l,\eta-\xi,\alpha-\beta|^s}.
\end{aligned}
\]
For the case $m\ne k$,  (2) or (3)  in Lemma \ref{Scenarios} holds, then we have \eqref{good-bd}, which gives \eqref{good-bd2}.
\end{proof}

\begin{remark}\label{rem-iota}
Before proceeding any further, we would like to emphasize that under the restrictions $(l,\xi,\beta)\notin \mathfrak{U}$ and $((k,\eta,\alpha),(l,\xi,\beta))\in \mathfrak{A}$ with $t\geq 10$, namely, 
$|k-l,\eta-\xi,\alpha-\beta| \leq \frac{3}{16}|l,\xi,\beta|, \frac{|lt|}{2}<|\xi|<2|lt|, |\beta| < \frac{1}{100}|\xi|, t>10$,  it holds that $|l|<\fr{|\xi|}{5}$, and hence
\[
\begin{aligned}
&|\alpha|\leq \frac{619}{1600}|\xi|, \qquad |k|\leq \frac{683}{1600}|\xi|,\qquad \frac{1237}{1600}|\xi|\leq |\eta|\le\fr{1963}{1600}|\xi|,
\end{aligned}
\]
which yields $|\alpha| < |\eta|$ and $|k|<|\eta|$. Referring back to the definition \eqref{eq-def-iota}, we therefore have $\iota_1=\iota(k,\eta,\alpha)=\eta$ and $\iota_2=\iota(l,\xi,\beta)=\xi$, where this fact shall be heavily used in the sequence of lemmas below.
\end{remark}

The next lemma concerns the interaction between non-resonant frequencies.
\begin{lemma}[Nonresonant-Nonresonant]\label{est:NR-NR}
Let $t \in {\rm{I}}^c_{k,\eta} \cap {\rm{I}}^c_{l,\xi}$ and $\big((k,\eta,\alpha), (l,\xi,\beta)\big) \in \mathfrak{A}$. Then,

\[
\begin{aligned}
&\dfrac{(t|\beta|+|\xi|+|l,\beta|)|l,\beta|}{(l^2+(\xi-lt)^2+\beta^2)^2}\frac{A^{\sig_1}_k(\eta,\alpha)}{A^{\sig_1}_l(\xi,\beta)}\\
&\qquad \qquad\lesssim \bigg(\frac{1}{\JB{t}^2}+{ \frac{|l,\xi,\beta|^\fr{s}{2} |k,\eta,\alpha|^\fr{s}{2}}{\JB{t}^{1+s}}} + \sqrt{\dfrac{\partial_t w_k(t,\iota_1)}{w_k(t,\iota_1)}}\sqrt{\dfrac{\partial_t w_l(t,\iota_2)}{w_l(t,\iota_2)}}\bigg)e^{\mathbf{c}\lambda(t)|k-l,\eta-\xi,\alpha-\beta|^{s}}
\end{aligned}
\]
where $\iota_1=\iota(k,\eta,\alpha)$ and $\iota_2=\iota(l,\xi,\beta)$. 
\end{lemma}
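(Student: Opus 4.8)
The plan is to separate the ``non-exceptional'' frequencies $(l,\xi,\beta)\in\mathfrak U$ from the rest. If $(l,\xi,\beta)\in\mathfrak U$ then, since $\big((k,\eta,\alpha),(l,\xi,\beta)\big)\in\mathfrak A$, the asserted bound is exactly \eqref{est when res nonres not considered} of Lemma~\ref{l xi nonresonant} (this is the only place where the last term $\sqrt{\partial_t w_k/w_k}\,\sqrt{\partial_t w_l/w_l}$ on the right-hand side is actually needed, and the hypothesis $t\in\mathrm I^c_{k,\eta}\cap\mathrm I^c_{l,\xi}$ is not even used). Hence the real content is the case $(l,\xi,\beta)\notin\mathfrak U$, where Remark~\ref{rem-iota} applies: it gives $\iota_1=\iota(k,\eta,\alpha)=\eta$, $\iota_2=\iota(l,\xi,\beta)=\xi$, together with $l\xi>0$, $\tfrac12|lt|<|\xi|<2|lt|$, $|\beta|<\tfrac1{100}|\xi|$, $|l|<\tfrac15|\xi|$, $|l,\xi,\beta|>10^3$ (we may assume $t>10$; for $t\le10$ the estimate is trivial), so that $|l,\xi,\beta|\approx|\xi|\approx|lt|\approx|k,\eta,\alpha|\approx|\eta|$ and $t\approx|\xi|/|l|$.

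Next I would peel off the Fourier multipliers in $A^{\sigma_1}_k(\eta,\alpha)/A^{\sigma_1}_l(\xi,\beta)$. The Sobolev correction is harmless, $\JB{k,\eta,\alpha}^{\sigma_1}/\JB{l,\xi,\beta}^{\sigma_1}\lesssim1$ by \eqref{comparability}; the Gevrey exponent is handled by the standard triangle inequality valid under the gap condition defining $\mathfrak A$ (as in the proof of Lemma~\ref{l xi nonresonant}), giving $e^{\lambda(t)(|k,\eta,\alpha|^s-|l,\xi,\beta|^s)}\lesssim e^{c\lambda(t)|k-l,\eta-\xi,\alpha-\beta|^s}$; and for the ratio of the $w$'s I invoke Lemma~\ref{lem-ratio-J}: since $\iota_1=\eta$ and $\iota_2=\xi$, the hypothesis $t\in\mathrm I^c_{k,\eta}\cap\mathrm I^c_{l,\xi}$ reads $t\in\mathrm I^c_{k,\iota_1}\cap\mathrm I^c_{l,\iota_2}$, so we are in neither $\mathrm A$ nor $\mathrm B$ of that lemma, hence in $\mathrm C$, whence $\mathrm J_k(t,\eta,\alpha)/\mathrm J_l(t,\xi,\beta)\lesssim e^{2\mu|k-l,\eta-\xi,\alpha-\beta|^{1/2}}$, which is absorbed into $e^{\mathbf c\lambda(t)|k-l,\eta-\xi,\alpha-\beta|^s}$ using $s>\tfrac12$ and the smallness of $\mu$. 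After these reductions it suffices to prove the multiplier-free bound
\[
\frac{(t|\beta|+|\xi|+|l,\beta|)\,|l,\beta|}{(l^2+(\xi-lt)^2+\beta^2)^2}\;\lesssim\;\frac1{\JB t^2}+\frac{|l,\xi,\beta|^{s/2}|k,\eta,\alpha|^{s/2}}{\JB t^{1+s}},
\]
the weight-derivative term not being needed in this regime.

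The crux of the remaining estimate is the lower bound $l^2+(\xi-lt)^2+\beta^2\gtrsim\JB t^2$. Using $t\in\mathrm I^c_{l,\xi}$: if $t\in\mathbb I_{l,\xi}$, this critical interval must be non-resonant, so $|l|>\tfrac14\sqrt{|\xi|}$ by (the dichotomy in) Remark~\ref{rem-1}; combined with $|l|\le\sqrt{|\xi|}$ and $t\approx|\xi|/|l|$ this forces $|l|\approx t\approx\sqrt{|\xi|}$, hence $l^2\gtrsim\JB t^2$. If instead $t\notin\mathbb I_{l,\xi}$, then, since $t\approx\xi/l$ lies within a fixed multiple of that critical interval (of width $\approx|\xi|/l^2$), one has $|t-\xi/l|\gtrsim|\xi|/l^2$, so $(\xi-lt)^2=l^2|t-\tfrac\xi l|^2\gtrsim|\xi|^2/l^2\approx t^2$. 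In both cases the denominator is $\gtrsim\JB t^4$, and one also has the trivial lower bounds $\gtrsim l^4$ and $\gtrsim\beta^4$. The kernel bound is then elementary, splitting into $|\beta|\lesssim|l|$ (where $|l,\beta|\approx|l|$ and one uses $\mathrm{denom}\gtrsim l^4$ together with $t^2l^2\le\max\{t^4,l^4\}$ and $|\xi|\approx|lt|$) and $|\beta|\gtrsim|l|$ (where $|l,\beta|\approx|\beta|$ and one uses $\mathrm{denom}\gtrsim\beta^4$), keeping in mind $|\beta|<\tfrac1{100}|\xi|$, $t\lesssim|\xi|$ and $|l|^s\ge1$; both cases land inside $|\xi|^s\JB t^{-1-s}$, which dominates $\JB t^{-2}$ and equals $\approx|l,\xi,\beta|^{s/2}|k,\eta,\alpha|^{s/2}\JB t^{-1-s}$.

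The step I expect to be the main obstacle is this denominator lower bound: one must verify that $t\in\mathrm I^c_{l,\xi}$ genuinely rules out a small denominator, i.e.\ that whenever $t$ is close enough to the critical time $\xi/l$ for $(\xi-lt)^2$ to be small --- equivalently $t\in\mathbb I_{l,\xi}$ --- non-resonance forces $|l|\gtrsim\sqrt{|\xi|}$ and hence $l^4\gtrsim|\xi|^2\gtrsim\JB t^4$ compensates. This dichotomy (a small denominator can only occur on $\mathbb I_{l,\xi}$, but there non-resonance makes $|l|$ large) is precisely the mechanism that renders the non-resonant--non-resonant interaction benign, in contrast to the resonant interactions treated elsewhere via the steeper weight $w_{\mathrm R}$; everything else reduces to bookkeeping already carried out in the proof of Lemma~\ref{l xi nonresonant}.
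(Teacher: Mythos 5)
Your proposal is correct and follows essentially the same route as the paper: reduce to $\mathfrak U^c$ via Lemma~\ref{l xi nonresonant}, peel off the Sobolev and Gevrey factors and the $\mathrm J$-ratio (Lemma~\ref{lem-ratio-J}, case $\mathrm C$), and then bound the kernel using the fact that $t\in\mathrm I^c_{l,\xi}$ forces $l^2(1+|t-\xi/l|)\gtrsim|\xi|$, hence $l^2+(\xi-lt)^2+\beta^2\gtrsim\langle t\rangle^2$. The paper packages that last step compactly as \eqref{tnoinIlxi} after first simplifying the kernel to $\tfrac{t}{l^2+(\xi-lt)^2+\beta^2}$ via \eqref{eq: use it}, whereas you keep the full kernel and split on $|\beta|\lessgtr|l|$; this is a bookkeeping variant of the same estimate, not a different idea.
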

\begin{proof}
If $(l,\xi,\beta) \in \mathfrak{U}$, then Lemma~\ref{l xi nonresonant} gives us the desired inequality.
Here, we just focus on the case when $(l,\xi,\beta)\in\mathfrak{U}^c$, namely, 
  \begin{equation}\label{assumptions on NR-NR}
   l\xi>0, \quad \frac{|lt|}{2}<|\xi|<2|lt|, \quad |\beta|<\frac{1}{100}|\xi|,\quad \text{ and } |l,\xi,\beta|>1000.
  \end{equation}
Thus under the restriction in \eqref{assumptions on NR-NR} and by Remark~\ref{rem-iota}, we know that $\iota_1=\eta$ and $\iota_2=\xi$. Furthermore, it holds that
\begin{align}\label{eq: use it}
    \frac{(t|\beta|+|\xi|+|l,\beta|)|l,\beta|}{(l^2+(\xi-lt)^2+\beta^2)^2}\les \frac{t|l,\beta|^2}{(l^2+(\xi-lt)^2+\beta^2)^2}\lesssim \frac{t}{l^2+(\xi-lt)^2+\beta^2}.
\end{align} 
Therefore, we shall focus on estimating the upper bound above. 
    Due to the fact that both $(k,\eta,\alpha)$ and $(l,\xi,\beta)$ are non-resonant, by \eqref{triangle1} and Lemma~\ref{lem-ratio-J}, we are led to
  \begin{equation}\label{ratio A nrnr}
       \dfrac{A^{\sig_1}_k(\eta,\alpha)}{A^{\sig_1}_l(\xi,\beta)}\lesssim e^{c'\lambda(t)|k-l,\eta-\xi,\alpha-\beta|^{s}+2\mu |k-l,\eta-\xi,\alpha-\beta|^{\fr12}}\les e^{c\lambda(t)|k-l,\eta-\xi,\alpha-\beta|^{s}},
  \end{equation}
  for some $0<c'<c<1$.
  We are left to estimate  $\frac{t}{l^2+(\xi-lt)^2+\beta^2}$.  Indeed, the fact $t\in{\rm I}^c_{l,\xi}$ with $l\ne0$ imply that
  \begin{equation}\label{tnoinIlxi}
      \fr{|\xi|}{l^2(1+|t-\fr{\xi}{l}|)}\les1.
  \end{equation}
Combining this with the fact $t\approx\fr{|\xi|}{|l|}$ yield
\begin{align*}
    \fr{t}{l^2+(\xi-lt)^2+\beta^2}\les\fr{|\xi|^2}{|l|^4(1+|t-\fr{\xi}{l}|)^2}\cdot\fr{|l|}{|\xi|}\les\fr{1}{t}\les \fr{|\xi|^s}{\la t\ra^{1+s}}.
\end{align*}
The proof of Lemma \ref{est:NR-NR} is completed.
\end{proof}
  
  We now proceed and discuss the interactions between resonant and non-resonant frequencies. 

 \begin{lemma}[Resonant-Nonresonant]\label{R NR}
Let  $t\in {\rm{I}}_{k,\eta} \cap {\rm{I}}^c_{l,\xi}$ and $\big((k,\eta,\alpha), (l,\xi,\beta)\big) \in \mathfrak{A}$. Then,
\[
\begin{aligned}
&\dfrac{(t|\beta|+|\xi|+|l,\beta|)|l,\beta|}{(l^2+(\xi-lt)^2+\beta^2)^2}\frac{A^{\sig_1}_k(\eta,\alpha)}{A^{\sig_1}_l(\xi,\beta)}\\
&\qquad \qquad\lesssim \bigg(\frac{1}{\JB{t}^2}+{\frac{|l,\xi,\beta|^\fr{s}{2} |k,\eta,\alpha|^\fr{s}{2}}{\JB{t}^{1+s}}} + \sqrt{\dfrac{\partial_t w_k(t,\iota_1)}{w_k(t,\iota_1)}}\sqrt{\dfrac{\partial_t w_l(t,\iota_2)}{w_l(t,\iota_2)}}\bigg)e^{\mathbf{c}\lambda(t)|k-l,\eta-\xi,\alpha-\beta|^{s}},
\end{aligned}
\]
where $\iota_1=\iota(k,\eta,\alpha)$ and $\iota_2=\iota(l,\xi,\beta)$. 
 \end{lemma}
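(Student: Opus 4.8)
The plan is to follow the same architecture as the proof of Lemma \ref{est:NR-NR}, reducing to the truly hard regime via Remark \ref{rem-iota}. First I would observe that if $(l,\xi,\beta)\in\mathfrak{U}$ then Lemma \ref{l xi nonresonant} already gives the claimed bound, so we may assume $(l,\xi,\beta)\in\mathfrak{U}^c$, i.e. $l\xi>0$, $\frac{|lt|}{2}<|\xi|<2|lt|$, $|\beta|<\frac1{100}|\xi|$ and $|l,\xi,\beta|>1000$. By Remark \ref{rem-iota} this forces $\iota_1=\iota(k,\eta,\alpha)=\eta$ and $\iota_2=\iota(l,\xi,\beta)=\xi$, and in particular $t\in\mathrm{I}_{k,\eta}=\mathrm{I}_{k,\iota_1}$ while $t\in\mathrm{I}^c_{l,\xi}=\mathrm{I}^c_{l,\iota_2}$; moreover $\big((k,\eta,\alpha),(l,\xi,\beta)\big)\in\mathfrak{A}$ gives \eqref{iota1=iota2}. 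As in \eqref{eq: use it} I would bound the kernel by $\dfrac{t}{l^2+(\xi-lt)^2+\beta^2}$, since $(t|\beta|+|\xi|+|l,\beta|)|l,\beta|\lesssim t|l,\beta|^2\lesssim t(l^2+(\xi-lt)^2+\beta^2)$ on this set (using $t\approx|\xi/l|$ and $|\beta|\lesssim|\xi|$).

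Next I would handle the multiplier ratio. Since $t\in\mathrm{I}_{k,\iota_1}\cap\mathrm{I}^c_{l,\iota_2}$ with $k\neq l$ we are in case $\mathrm{A}$ of Lemma \ref{lem-ratio-J}, so $\dfrac{\mathrm{J}_k(t,\eta,\alpha)}{\mathrm{J}_l(t,\xi,\beta)}\lesssim \dfrac{|\iota_1|}{k^2(1+|t-\frac{\iota_1}{k}|)}e^{2\mu|k-l,\eta-\xi,\alpha-\beta|^{1/2}}$, and combining with the Gevrey part via \eqref{triangle1} yields
\[
\frac{A^{\sigma_1}_k(\eta,\alpha)}{A^{\sigma_1}_l(\xi,\beta)}\lesssim \frac{|\iota_1|}{k^2(1+|t-\frac{\iota_1}{k}|)}\,e^{c\lambda(t)|k-l,\eta-\xi,\alpha-\beta|^{s}}.
\]
Then I would split on the size of $t$ relative to $\iota_2$ exactly as at the end of the proof of Lemma \ref{l xi nonresonant}: if $t\le 2\sqrt{|\iota_2|}\approx\sqrt{|\iota_1|}$ then $t\in\mathrm{I}_{k,\iota_1}$ forces $|k|\approx\sqrt{|\iota_1|}$ so $\frac{|\iota_1|}{k^2(1+|t-\iota_1/k|)}\lesssim 1\lesssim\JB{k-l,\eta-\xi,\alpha-\beta}$; if $t\ge 2|\iota_2|$ then $|t-\iota_2/k|\gtrsim|\iota_2|$ and \eqref{good-bd} applies; and for the intermediate range $t\in[2\sqrt{|\iota_2|},2|\iota_2|]$ I would pick the $m$ with $t\in\mathrm{I}_{m,\iota_2}$, handling $m=k$ by producing the $\sqrt{\partial_t w_k/w_k}\sqrt{\partial_t w_l/w_l}$ factor and $m\neq k$ via case (2)/(3) of Lemma \ref{Scenarios} to get \eqref{good-bd}. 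In the cases where \eqref{good-bd} holds, multiplying $\dfrac{t}{l^2+(\xi-lt)^2+\beta^2}\lesssim\dfrac1t$ (from \eqref{tnoinIlxi} and $t\approx|\xi/l|$ — here replacing $\xi/l$ by $\iota_2/l$) by $\JB{k-l,\eta-\xi,\alpha-\beta}$ and absorbing the polynomial factor into $e^{\mathbf c\lambda(t)|k-l,\eta-\xi,\alpha-\beta|^s}$ at the cost of enlarging the constant, one gets a bound of size $\dfrac1{\JB{t}}\lesssim\dfrac{|l,\xi,\beta|^{s/2}|k,\eta,\alpha|^{s/2}}{\JB{t}^{1+s}}$, which is the desired form after the usual splitting $|k-l,\eta-\xi,\alpha-\beta|^s + |l,\xi,\beta|^{s/2}|k,\eta,\alpha|^{s/2}\gtrsim$ relevant frequency powers.

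The main obstacle, and the only genuinely new point compared with Lemma \ref{est:NR-NR}, is the intermediate-time sub-case $t\in\mathrm{I}_{k,\iota_1}\cap\mathrm{I}_{k,\iota_2}$ with $m=k$: here $t$ is simultaneously resonant for $k$ with respect to $\iota_1=\eta$ and (viewed through the moved index) for $k$ with respect to $\iota_2=\xi$, so neither \eqref{good-bd} nor a naive bound is available, and one must genuinely extract the product of the CK-type weights. The point is that $\frac{t}{l^2+(\xi-lt)^2+\beta^2}\lesssim \frac{|\iota_2|}{k^2(1+|t-\iota_2/k|)^2}\cdot\frac{|k|}{|\iota_2|}$ combines with $\frac{|\iota_1|}{k^2(1+|t-\iota_1/k|)}$ and the relation $|\iota_1|\approx|\iota_2|$ from \eqref{iota1=iota2} to produce exactly $\dfrac1{\sqrt{1+|t-\iota_1/k|}}\dfrac1{\sqrt{1+|t-\iota_2/k|}}\cdot\dfrac{\sqrt{1+|t-\iota_2/k|}}{\sqrt{1+|t-\iota_1/k|}}$, and the first two factors are $\sqrt{\partial_t w_k(t,\iota_1)/w_k(t,\iota_1)}$ and $\sqrt{\partial_t w_l(t,\iota_2)/w_l(t,\iota_2)}$ by \eqref{wR}, while the last factor is harmless because $|\iota_1-\iota_2|\lesssim|k-l,\eta-\xi,\alpha-\beta|$ forces the two resonant times $\iota_1/k$ and $\iota_2/k$ to be within $O(\JB{k-l,\eta-\xi,\alpha-\beta})$ of each other, so it is $\lesssim\JB{k-l,\eta-\xi,\alpha-\beta}$ and gets absorbed into $e^{\mathbf c\lambda(t)|k-l,\eta-\xi,\alpha-\beta|^s}$. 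I would carry out the argument in this order — reduce to $\mathfrak{U}^c$, fix $\iota_1=\eta,\iota_2=\xi$, bound the kernel, bound the multiplier ratio, then split into the three time ranges — and I expect essentially all steps except this resonant/resonant overlap to be routine repetitions of the previous two lemmas.
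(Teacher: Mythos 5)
Your overall architecture matches the paper's: reduce to $(l,\xi,\beta)\in\mathfrak{U}^c$ via Lemma~\ref{l xi nonresonant}, invoke Remark~\ref{rem-iota} to fix $\iota_1=\eta$, $\iota_2=\xi$, bound the kernel by $\dfrac{t}{l^2+(\xi-lt)^2+\beta^2}$ via \eqref{eq: use it}, pull the resonant factor $\dfrac{|\eta|}{k^2(1+|t-\eta/k|)}$ out of $\mathrm{J}_k/\mathrm{J}_l$ via Lemma~\ref{lem-ratio-J}, and then split in time with Lemma~\ref{Scenarios} governing the crucial $m=k$ overlap. The paper uses the split $[10,1000|\xi|^{1/2}]$ versus $[1000|\xi|^{1/2},\min\{2|\xi|,2|\eta|\}]$, while you import the three-zone split $t\le2\sqrt{|\iota_2|}$, $t\in[2\sqrt{|\iota_2|},2|\iota_2|]$, $t\ge2|\iota_2|$ from the proof of Lemma~\ref{l xi nonresonant}; these are equivalent in substance and both yield the $|\xi|^s/\langle t\rangle^{1+s}$ decay away from the doubly-resonant overlap.

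Where I would push back is the specific algebra you wrote for the $m=k$ case. You claim
$\frac{t}{l^2+(\xi-lt)^2+\beta^2}\lesssim \frac{1}{|k|(1+|t-\xi/k|)^2}$,
but this is \emph{not} a valid upper bound for the kernel on all of $\mathrm{I}_{k,\xi}$: near the edges of $\mathrm{I}_{k,\xi}$ one has $1+|t-\xi/k|\approx|\xi|/k^2$, so the right-hand side is $\approx k^3/|\xi|^2$, which is \emph{smaller} than the actual size $\approx 1/t\approx|k|/|\xi|$ of the kernel (since $|\xi|\gtrsim k^2$). Moreover, even granting that bound, the product with $\frac{|\eta|}{k^2(1+|t-\eta/k|)}$ would be
$\frac{|\eta|}{|k|^3(1+|t-\xi/k|)^2(1+|t-\eta/k|)}$,
which does not simplify to $\frac{1}{1+|t-\eta/k|}\cdot\frac{\sqrt{1+|t-\xi/k|}}{\sqrt{1+|t-\eta/k|}}$ as you asserted — near $t=\eta/k$ it is of size $\approx|\eta|/|k|^3$, which is unbounded. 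The correct route (which is what the paper does, and which you yourself use in every other branch) is the uniform kernel bound
$\frac{t}{l^2+(\xi-lt)^2+\beta^2}\lesssim\frac{1}{t}\approx\frac{|k|}{|\eta|}$
coming from $t\in\mathrm{I}^c_{l,\xi}$ via \eqref{tnoinIlxi}; this yields $\frac{1}{|k|(1+|t-\eta/k|)}\le\frac{1}{1+|t-\eta/k|}$, and only then does one split $\frac{1}{1+|t-\eta/k|}\lesssim\frac{\langle\eta-\xi\rangle^{1/2}}{\sqrt{1+|t-\eta/k|}\sqrt{1+|t-\xi/k|}}$ using $|\eta/k-\xi/k|\le|\eta-\xi|/|k|$. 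Your final displayed form $\frac{1}{\sqrt{1+|t-\iota_1/k|}}\frac{1}{\sqrt{1+|t-\iota_2/k|}}\frac{\sqrt{1+|t-\iota_2/k|}}{\sqrt{1+|t-\iota_1/k|}}$ is indeed the right target, but the intermediate derivation you offer for it does not hold; replacing it with the $\lesssim 1/t$ kernel bound closes the argument and aligns exactly with the paper. You should also record (or note that it is covered trivially) the case $k=l$, where Lemma~\ref{lem-ratio-J} only gives the $\mathds{1}_{\mathrm{C}}$ branch and the conclusion follows directly from the nonresonant kernel bound $\lesssim 1/t$.
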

 \begin{proof}
We split the estimate into two major cases:

\textbf{Case 1: $(l,\xi,\beta) \in \mathfrak{U}$.} Here, Lemma~\ref{l xi nonresonant} gives us the desired inequality.

\textbf{Case 2: $(l,\xi,\beta) \in \mathfrak{U}^c$}. Now the estimates \eqref{assumptions on NR-NR}, \eqref{eq: use it},  and the facts in Remark \ref{rem-iota} hold.  Moreover, here the restriction $t\in \textup{I}_{k,\eta}$ implies that $\eta k>0$, $1\leq |k|\leq E(|\eta|^{\frac{1}{2}})$, $t\approx\fr{|\eta|}{|k|}$ and $t\le2|\eta|$. 
Thus, we have
\be\label{approx}
|kt|\approx|\eta|\approx |\xi|\approx |lt|.
\ee
     By Lemma~\ref{lem-ratio-J} and \eqref{eq: use it}, we are led to
     \[
     \dfrac{(t|\beta|+|\xi|+|l,\beta|)|l,\beta|}{(l^2+(\xi-lt)^2+\beta^2)^2}\frac{A^{\sig_1}_k(\eta,\alpha)}{A^{\sig_1}_l(\xi,\beta)}\lesssim \frac{t}{l^2+(\xi-lt)^2+\beta^2} \dfrac{|\eta|}{|k|^2(1+|t-\frac{\eta}{k}|)} e^{c\lm(t)|k-l,\eta-\xi,\alpha-\beta|^{s}}.
     \]
     Our goal now is to bound the product of two fraction symbols appearing in the above inequality.  We split the analysis into two time sub-intervals:
\begin{enumerate}
     \item $10\leq t<1000 |\xi|^{\frac{1}{2}}$. Here  $|\xi|\les |l|^2$ holds. This, together with  \eqref{approx}, implies that
     \begin{align*}
        &\frac{t}{l^2+(\xi-lt)^2+\beta^2} \dfrac{|\eta|}{|k|^2(1+|t-\frac{\eta}{k}|)} \les \fr{|\xi|^2}{|l|^5}
        \les \fr{1}{|\xi|^{\fr12}}\les\fr{1}{\la t\ra}\les \frac{|\xi|^s}{\JB{t}^{1+2s}}.
     \end{align*}
     \item $1000 |\xi|^\fr12\leq t\leq \min\{2 |\xi|,2|\eta|\}$. Now there exists an $m$ such that $t \in {\rm{I}}_{m,\xi} \cap {\rm{I}}_{k,\eta} $. Hence, we infer from \eqref{tnoinIlxi} and \eqref{approx} that
     \begin{align*}
        \frac{t}{l^2+(\xi-lt)^2+\beta^2} \dfrac{|\eta|}{|k|^2(1+|t-\frac{\eta}{k}|)} \lesssim \frac{1}{|k|(1+|t-\frac{\eta}{k}|)}.
     \end{align*}
     It then remains to bound the term $1/(|k|(1+|t-\frac{\eta}{k}|)$. By Lemma~\ref{Scenarios}, we are led to the following scenarios:
     
     a) If $m=k$, then straightforward computation yields
     \[
     \frac{1}{1+|t-\frac{\eta}{k}|} \lesssim \frac{1}{\sqrt{1+|t-\frac{\eta}{k}|}}\frac{1}{\sqrt{1+|t-\frac{\xi}{m}|}} \JB{\eta-\xi}^\fr12.
     \]
     
     b) If $m \neq k$, then we perform our estimation in two sub-scenarios:
     
          If $|t-\frac{\eta}{k}|\gtrsim \frac{|\eta|}{|k|^2}$, then
         $\frac{1}{|k|(1+|t-\frac{\eta}{k}|)} \lesssim \frac{1}{t}\lesssim \frac{|\xi|^s}{\JB{t}^{1+s}}$;
         
          If $|\eta-\xi|\gtrsim \frac{|\eta|}{|m|}$, then we have by $\frac{|\eta|}{|m|}\approx \frac{|\eta|}{|k|}\approx t$ that
        $\frac{1}{1+|t-\frac{\eta}{k}|} \lesssim 1 \lesssim \frac{|\eta-\xi|^2}{\JB{t}^2}$.
        \end{enumerate}
        The proof of Lemma \ref{R NR} is completed.\qedhere
        \end{proof}

The following lemma focuses on the interaction between non-resonant and resonant frequencies. 
\begin{lemma}[Nonresonant-Resonant]\label{est: NR-R}
 Suppose that $t\in {\rm{I}}^c_{k,\eta} \cap {\rm{I}}_{l,\xi}$ and $\big((k,\eta,\alpha), (l,\xi,\beta) \big)\in \mathfrak{A}$. Then, 
\[
\begin{aligned}
&\dfrac{(t|\beta|+|\xi|+|l,\beta|)|l,\beta|}{(l^2+(\xi-lt)^2+\beta^2)^2}\frac{A^{\sig_1}_k(\eta,\alpha)}{A^{\sig_1}_l(\xi,\beta)}\\
&\qquad \qquad \lesssim \bigg(\frac{1}{\JB{t}^2}+\frac{|l,\xi,\beta|^\fr{s}{2} |k,\eta,\alpha|^\fr{s}{2}}{\JB{t}^{1+s}} + \sqrt{\dfrac{\partial_t w_k(t,\iota_1)}{w_k(t,\iota_1)}}\sqrt{\dfrac{\partial_t w_l(t,\iota_2)}{w_l(t,\iota_2)}}\bigg)e^{\mathbf{c}\lambda(t)|k-l,\eta-\xi,\alpha-\beta|^{s}}.
\end{aligned}
\] 
 \end{lemma}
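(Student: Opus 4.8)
### Proof proposal for Lemma \ref{est: NR-R}

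The plan is to mirror the structure of the proof of Lemma \ref{R NR}, but with the roles of the resonant and non-resonant frequencies interchanged: now $(l,\xi,\beta)$ is the resonant mode and $(k,\eta,\alpha)$ is non-resonant. As in the earlier lemmas, I would begin by splitting the argument according to whether $(l,\xi,\beta) \in \mathfrak{U}$. \textbf{Case 1:} if $(l,\xi,\beta) \in \mathfrak{U}$, then Lemma \ref{l xi nonresonant} applies verbatim and yields the claimed bound, since that lemma makes no reference to resonance of either mode. \textbf{Case 2:} suppose $(l,\xi,\beta) \in \mathfrak{U}^c$. Then, exactly as in the proof of Lemma \ref{R NR}, the conditions \eqref{assumptions on NR-NR} hold together with Remark \ref{rem-iota}, so $\iota_1 = \eta$, $\iota_2 = \xi$, and the simplifying bound \eqref{eq: use it} gives
\[
\dfrac{(t|\beta|+|\xi|+|l,\beta|)|l,\beta|}{(l^2+(\xi-lt)^2+\beta^2)^2}\frac{A^{\sig_1}_k(\eta,\alpha)}{A^{\sig_1}_l(\xi,\beta)}\lesssim \frac{t}{l^2+(\xi-lt)^2+\beta^2}\,\frac{A^{\sig_1}_k(\eta,\alpha)}{A^{\sig_1}_l(\xi,\beta)}.
\]
Moreover $t \in \textup{I}_{l,\xi}$ forces $l\xi>0$, $1\le|l|\le \E(|\xi|^{1/2})$, $t\approx |\xi|/|l|$, $t\le 2|\xi|$, and combined with $t\in \mathfrak{U}^c$ (so $|\xi|\approx|lt|$) one gets $|lt|\approx|\xi|\approx|\eta|\approx|kt|$, the analogue of \eqref{approx}.

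For the multiplier ratio I would invoke Lemma \ref{lem-ratio-J}: since $t\in \textup{I}^c_{k,\iota_1}\cap \textup{I}_{l,\iota_2}$ we are in case $\textup{B}$, giving
\[
\frac{{\rm J}_k(t,\eta,\alpha)}{{\rm J}_l(t,\xi,\beta)}\lesssim \frac{|l|^2(1+|t-\frac{\xi}{l}|)}{|\xi|}\,e^{2\mu|k-l,\eta-\xi,\alpha-\beta|^{1/2}},
\]
so together with \eqref{triangle1} the quantity to estimate is
\[
\frac{t}{l^2+(\xi-lt)^2+\beta^2}\cdot\frac{|l|^2(1+|t-\frac{\xi}{l}|)}{|\xi|}\,e^{c\lambda(t)|k-l,\eta-\xi,\alpha-\beta|^{s}}.
\]
Now split into two time sub-intervals as in Lemma \ref{R NR}. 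For $10\le t<1000|\xi|^{1/2}$ one has $|\xi|\lesssim|l|^2$, and since $t\in\textup{I}_{l,\xi}$ the denominator $l^2+(\xi-lt)^2+\beta^2\gtrsim |l|^2$; using $t\approx|\xi|/|l|$ and $1+|t-\tfrac{\xi}{l}|\lesssim |\xi|/|l|^2$ a direct computation gives a bound $\lesssim |\xi|^{-1/2}\lesssim \JB{t}^{-1}\lesssim |\xi|^s\JB{t}^{-1-s}$. For $1000|\xi|^{1/2}\le t\le\min\{2|\xi|,2|\eta|\}$ there is an $m$ with $t\in\textup{I}_{m,\xi}$; using $|\xi|\approx|lt|$ to write $\frac{t}{l^2+(\xi-lt)^2+\beta^2}\lesssim \frac{|\xi|}{|l|^3(1+|t-\frac{\xi}{l}|)^2}$ one reduces, after cancelling the factor $\frac{|l|^2(1+|t-\xi/l|)}{|\xi|}$, to controlling $\frac{1}{|l|(1+|t-\frac{\xi}{l}|)}$. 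This is now exactly the type of term handled in step (2) of Lemma \ref{R NR}, and I would apply Lemma \ref{Scenarios} in the same way: if $m=l$, use $\frac{1}{1+|t-\xi/l|}\lesssim \frac{1}{\sqrt{1+|t-\xi/l|}}\frac{1}{\sqrt{1+|t-\xi/m|}}\JB{\eta-\xi}^{1/2}$ and recognize the product of square roots as $\sqrt{\partial_t w_l/w_l}$ (note $w_k$ is non-resonant here, so the $w_k$-factor is harmless); if $m\ne l$, scenarios (2)/(3) of Lemma \ref{Scenarios} give either $\frac{1}{|l|(1+|t-\xi/l|)}\lesssim \frac1t\lesssim \frac{|\xi|^s}{\JB{t}^{1+s}}$ or a bound by $\JB{\eta-\xi}^2\JB{t}^{-2}$, in either case absorbable into the stated right-hand side after relabelling constants $c'<c<\mathbf{c}<1$ and using $|l,\xi,\beta|\approx|k,\eta,\alpha|$ from \eqref{comparability}.

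The main obstacle I anticipate is the bookkeeping in the sub-interval $1000|\xi|^{1/2}\le t\le\min\{2|\xi|,2|\eta|\}$ when $m\ne l$: one must verify that the "bad" factor $1+|t-\frac{\xi}{l}|$ coming from $w_{\res}$ in the denominator of ${\rm J}_l$ (equivalently, the numerator of the ratio) is genuinely compensated by the decay $\frac{t}{l^2+(\xi-lt)^2+\beta^2}$, rather than merely cancelling it; this is where the gain $|\xi|\approx|lt|$ and the precise form of the resonant intervals (Remark \ref{rem-1}, so that $a_{l,\xi}\approx b_{l,\xi}\approx 1$ whenever $t\in \textup{I}_{l,\xi}$) are essential, and one has to be careful that the extra $\JB{\eta-\xi}$-type losses are all of Gevrey-$\tfrac12$ order and hence absorbed by $e^{\mathbf{c}\lambda(t)|k-l,\eta-\xi,\alpha-\beta|^s}$ since $s>\tfrac12$. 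Apart from that, the proof is a routine, if lengthy, variation on Lemmas \ref{est:NR-NR} and \ref{R NR}.
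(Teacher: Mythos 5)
Your overall architecture matches the paper's: split according to whether $(l,\xi,\beta)\in\mathfrak{U}$, use Lemma~\ref{l xi nonresonant} in the first case, and in the second reduce via~\eqref{eq: use it} and the ratio bound of Lemma~\ref{lem-ratio-J} to controlling $\frac{1}{|l|(1+|t-\xi/l|)}$. However, there are two genuine gaps in the resonant/$\mathfrak{U}^c$ case.

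First, in your middle time regime the $m$ must be defined by $t\in\mathbb{I}_{m,\eta}$, not $t\in\mathrm{I}_{m,\xi}$: since $t\in\mathrm{I}_{l,\xi}$ is already given and the intervals $\mathbb{I}_{m,\xi}$ are disjoint, your condition forces $m=l$ and produces nothing new. The point is to relate $t$ to the critical structure of the \emph{other} frequency $\eta$. Consequently the second factor in your square-root splitting should be $1/\sqrt{1+|t-\eta/m|}$, not $1/\sqrt{1+|t-\xi/m|}$; with $m=l$ and $\xi$ in both factors the displayed inequality is a triviality that gives no gain. Relatedly, the parenthetical remark that the $w_k$-factor is ``harmless'' because $(k,\eta,\alpha)$ is nonresonant is the wrong intuition: the lemma's conclusion requires the \emph{product} $\sqrt{\partial_t w_k/w_k}\,\sqrt{\partial_t w_l/w_l}$, and the factor $\sqrt{\partial_t w_k(t,\iota_1)/w_k(t,\iota_1)}\approx 1/\sqrt{1+|t-\eta/m|}$ is precisely what absorbs the second square root (see the remark following the definitions of the $\mathrm{CK}$ terms: even for $t\notin\mathrm{I}_{k,\eta}$, the weight $w_k(t,\eta)=w_\nr(t,\eta)$ has a nontrivial logarithmic derivative on $\mathbb{I}_{m,\eta}$). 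This factor is essential, not incidental.

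Second, your two sub-intervals leave a hole. You stop at $t\le\min\{2|\xi|,2|\eta|\}$, but Remark~\ref{rem-iota} only guarantees $\frac{1237}{1600}|\xi|\le|\eta|\le\frac{1963}{1600}|\xi|$, so $|\eta|<|\xi|$ is possible and $t\in\mathrm{I}_{l,\xi}$ only forces $t\le 2|\xi|$. Hence the range $2|\eta|<t\le 2|\xi|$ is non-empty and uncovered; there no $m$ with $t\in\mathbb{I}_{m,\eta}$ exists, so the $\sqrt{\partial_t w/w}$ route you rely on is unavailable. The paper handles this by a separate third case, observing that $|t-\eta/l|\ge t-|\eta|\ge t/2$ and writing
\[
\frac{1}{1+|\tfrac{\xi}{l}-t|}=\frac{1}{1+|\tfrac{\eta}{l}-t|}\cdot\frac{1+|\tfrac{\eta}{l}-t|}{1+|\tfrac{\xi}{l}-t|}\lesssim\frac{\JB{\eta-\xi}}{t}\lesssim\frac{|\xi|^s\JB{\eta-\xi}}{\JB{t}^{1+s}},
\]
which lands in the second bucket of the stated right-hand side. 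You need to add this case.
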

\begin{proof}
We split the estimate into two major cases as above:

\textbf{Case 1: $(l,\xi,\beta) \in \mathfrak{U}$.} Here, Lemma~\ref{l xi nonresonant} gives us the desired inequality.

\textbf{Case 2: $(l,\xi,\beta) \in \mathfrak{U}^c$}. Observe that the estimates \eqref{assumptions on NR-NR}, \eqref{eq: use it},  and the facts in Remark \ref{rem-iota} still hold here. Also, note that by Lemma~\ref{lem-ratio-J},
    \begin{align}\label{bound on ratio of A (NR-R)}
    \frac{A^{\sig_1}_k(\eta,\alpha)}{A^{\sig_1}_l(\xi,\beta)} \lesssim \frac{|l|^2\Big(1+|t-\frac{\xi}{l}|\Big)}{|\xi|} e^{c\lambda(t)|k-l,\eta-\xi,\alpha-\beta|^{s}}.
    \end{align}
Hence, combining \eqref{eq: use it},\eqref{bound on ratio of A (NR-R)}, using the fact $\fr{|\xi|}{|l|}\approx t$, our goal now is to control
    \[
    \frac{t}{l^2+(\xi-lt)^2+\beta^2}\frac{|l|^2\Big(1+|t-\frac{\xi}{l}|\Big)}{|\xi|} e^{c\lambda(t)|k-l,\eta-\xi,\alpha-\beta|^{s}}\lesssim \fr{e^{c\lambda(t)|k-l,\eta-\xi,\alpha-\beta|^{s}}}{|l|(1+|t-\fr{\xi}{l}|)}.
    \] 
    Let us ignore the exponential term appearing above, and  split the analysis into three time sub-intervals:
    \begin{enumerate}
     \item $1\leq t<1000 |\xi|^{\frac{1}{2}}$. Here, using this restriction and the fact $\fr{|\xi|}{|l|}\approx t$, one easily deduces that
     \[
    \frac{t}{l^2+(\xi-lt)^2+\beta^2}\frac{|l|^2\Big(1+|t-\frac{\xi}{l}|\Big)}{|\xi|}\lesssim \frac{|1|}{|l|} \lesssim \frac{t}{|\xi|}\lesssim \frac{1}{t}\lesssim \frac{|\xi|^s}{t^{1+2s}}.
    \]
     \item $1000 |\xi|^\fr12\leq t\leq \min\{2 |\xi|,2|\eta|\}$. There exists an $m$ such that $t \in {\rm{I}}_{l,\xi} \cap {\rm{I}}_{m,\eta} $. By Lemma~\ref{Scenarios}, we are led to the following scenarios:
     
     a) If $m=l$, then straightforward computation yields
         \[
         \frac{t}{l^2+(\xi-lt)^2+\beta^2}\frac{|l|^2\Big(1+|t-\frac{\xi}{l}|\Big)}{|\xi|}\lesssim \frac{1}{|l|}\frac{1}{1+|\frac{\xi}{l}-t|} \lesssim \frac{1}{\sqrt{1+|\frac{\xi}{l}-t|}}\frac{1}{\sqrt{(1+|\frac{\eta}{m}-t|)}}\JB{\eta-\xi}^\fr12.
        \]
     
     b) If $m \neq l$, then we perform our estimation in two sub-scenarios:
     \begin{itemize}
         \item If $|t-\frac{\xi}{l}|\gtrsim \frac{|\xi|}{|l|^2}$, then
         \[
          \frac{t}{l^2+(\xi-lt)^2+\beta^2}\frac{|l|^2\Big(1+|t- 
          \frac{\xi}{l}|\Big)}{|\xi|}\lesssim \frac{1}{|l|}\frac{1}{(1+|\frac{\xi}{l}-t|)}\lesssim \frac{|l|}{|\xi|}\approx \frac{1}{t}\lesssim \frac{|\xi|^s}{\JB{t}^{1+s}}.
         \]
         
         \item If $|\eta-\xi|\gtrsim \frac{|\eta|}{|m|}$, as before, since $\frac{|\eta|}{|m|}\approx \frac{|\xi|}{|m|}\approx \frac{|\xi|}{|l|}\approx t$, we then have
         \[
          \frac{t}{l^2+(\xi-lt)^2+\beta^2}\frac{|l|^2\Big(1+|t- 
          \frac{\xi}{l}|\Big)}{|\xi|}\lesssim \frac{1}{|l|}\frac{1}{(1+|\frac{\xi}{l}-t|)}\lesssim 1 \lesssim \frac{|\eta-\xi|^2}{\JB{t}^2}.
          \]
     \end{itemize} 
    \item  $2|\xi|\geq t\geq 2|\eta|$.  In this regime, $|t-\fr{\eta}{l}|\geq t-|\fr{\eta}{l}|\geq t-|\eta|\geq t/2$. Thus,
          \[
          \begin{aligned}
          \frac{t}{l^2+(\xi-lt)^2+\beta^2}\frac{|l|^2\Big(1+|t- 
          \frac{\xi}{l}|\Big)}{|\xi|}&\lesssim \frac{1}{1+|\frac{\xi}{l}-t|}= \frac{1}{1+|\frac{\eta}{l}-t|} \frac{1+|\frac{\eta}{l}-t|}{1+|\frac{\xi}{l}-t|}\\&\lesssim \frac{\JB{\eta-\xi}}{t} \lesssim \frac{|\xi|^s \JB{\eta-\xi}}{\JB{t}^{1+s}}.
          \end{aligned}
          \]
\end{enumerate}
This completes the proof of Lemma~\ref{est: NR-R}.
\end{proof}

Now, let us investigate the interactions between the resonant modes.
\begin{lemma}[Resonant-Resonant]\label{R R}
    Let $t\in {\rm{I}}_{k,\eta} \cap {\rm{I}}_{l,\xi}$ and $(k,\eta,\alpha), (l,\xi,\beta) \in \mathfrak{A}$. Then for $k \neq l,$
\[
\begin{aligned}
&\dfrac{(t|\beta|+|\xi|+|l,\beta|)|l,\beta|}{(l^2+(\xi-lt)^2+\beta^2)^2}\frac{A^{\sig_1}_k(\eta,\alpha)}{A^{\sig_1}_l(\xi,\beta)}\\
&\qquad \qquad \lesssim \bigg(\frac{1}{\JB{t}^2}+{\frac{|l,\xi,\beta|^\fr{s}{2} |k,\eta,\alpha|^\fr{s}{2}}{\JB{t}^{1+s}}} + \sqrt{\dfrac{\partial_t w_k(t,\iota_1)}{w_k(t,\iota_1)}}\sqrt{\dfrac{\partial_t w_l(t,\iota_2)}{w_l(t,\iota_2)}}\bigg)e^{\mathbf{c}\lambda(t)|k-l,\eta-\xi,\alpha-\beta|^{s}}
\end{aligned}
\]
     and for $k=l,$
    \begin{align*}
     &\frac{(l^2+\beta^2+|\xi-lt||\beta|)}{(l^2+(\xi-lt)^2+\beta^2)^2}\frac{A^{\sig_1}_k(\eta,\alpha)}{A^{\sig_1}_l(\xi,\beta)}\\
     &\qquad \qquad \lesssim  \bigg(\frac{1}{\JB{t}^2}+\frac{|l,\xi,\beta|^\fr{s}{2} |k,\eta,\alpha|^\fr{s}{2}}{\JB{t}^{1+s}} + \sqrt{\dfrac{\partial_t w_k(t,\iota_1)}{w_k(t,\iota_1)}}\sqrt{\dfrac{\partial_t w_l(t,\iota_2)}{w_l(t,\iota_2)}}\bigg)e^{\mathbf{c}\lambda(t)|\eta-\xi,\alpha-\beta|^{s}}.
     \end{align*}
\end{lemma}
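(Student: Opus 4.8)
The plan is to treat the two cases ($k\neq l$ and $k=l$) separately, in both cases reducing the estimate to the bounds already obtained in the previous lemmas together with a careful analysis of the product of the two resonant growth factors. For the case $k\neq l$, I would begin exactly as in Lemma~\ref{R NR}: split on whether $(l,\xi,\beta)\in\mathfrak{U}$ (where Lemma~\ref{l xi nonresonant} closes the estimate immediately) or $(l,\xi,\beta)\in\mathfrak{U}^{c}$, where \eqref{assumptions on NR-NR}, \eqref{eq: use it} and Remark~\ref{rem-iota} give $\iota_1=\eta$, $\iota_2=\xi$ and $|kt|\approx|\eta|\approx|\xi|\approx|lt|$. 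Then I would use Lemma~\ref{lem-ratio-J} on the set $\mathrm{C}$ (since both $k$ and $l$ are resonant, $t\in\mathrm{I}_{k,\iota_1}\cap\mathrm{I}_{l,\iota_2}$ is exactly the last sub-case treated there), so that $\frac{A_k^{\sig_1}(\eta,\alpha)}{A_l^{\sig_1}(\xi,\beta)}\lesssim \frac{|\eta|}{|k|^2(1+|t-\frac{\eta}{k}|)}e^{c\lambda(t)|k-l,\eta-\xi,\alpha-\beta|^{s}}$, and the task becomes controlling $\frac{t}{l^2+(\xi-lt)^2+\beta^2}\cdot\frac{|\eta|}{|k|^2(1+|t-\frac{\eta}{k}|)}$. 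The short-time sub-interval $t<1000|\xi|^{1/2}$ is handled by $|\xi|\lesssim|l|^2$ as in Lemma~\ref{R NR}; in the main sub-interval $1000|\xi|^{1/2}\leq t\leq\min\{2|\xi|,2|\eta|\}$ there is $m$ with $t\in\mathrm{I}_{m,\xi}$, and I would invoke Lemma~\ref{Scenarios}: if $m=l$ the two factors $\frac{1}{1+|t-\frac{\eta}{k}|}$ and $\frac{1}{1+|t-\frac{\xi}{l}|}$ combine into $\sqrt{\frac{\partial_t w_k}{w_k}}\sqrt{\frac{\partial_t w_l}{w_l}}$ after trading a half-power of $\JB{\eta-\xi}$; if $m\neq l$ then case (2) or (3) of Lemma~\ref{Scenarios} gives either a $1/t$ or a $\JB{\eta-\xi}^2/\JB{t}^2$ gain, each of which is absorbed into the stated right-hand side.

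For the case $k=l$, the main point is that the symbol in front is now $\frac{l^2+\beta^2+|\xi-lt||\beta|}{(l^2+(\xi-lt)^2+\beta^2)^2}$ rather than the full velocity symbol — this is because the $k=l$ interaction corresponds to the transport term acting on the $x$-frequency, which loses the factor $t|\beta|$. I would bound this symbol by $\frac{1}{l^2+(\xi-lt)^2+\beta^2}+\frac{|\beta|}{(l^2+(\xi-lt)^2+\beta^2)^{3/2}}\lesssim\frac{1}{l^2+(\xi-lt)^2+\beta^2}$, and then, since $k=l$ forces $\iota_1$ and $\iota_2$ to be comparable and $t\in\mathrm{I}_{k,\iota_1}\cap\mathrm{I}_{k,\iota_2}$, Lemma~\ref{lem-ratio-J} gives $\frac{A_k^{\sig_1}}{A_l^{\sig_1}}\lesssim\frac{1+|t-\frac{\xi}{k}|}{1+|t-\frac{\eta}{k}|}e^{c\lambda(t)|\eta-\xi,\alpha-\beta|^{s}}$ (note only the $\eta,\alpha$ differences appear, since $k=l$). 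Multiplying and using $|\xi-kt|\approx|\frac{\xi}{k}-t|\cdot|k|$, the product telescopes into $\frac{1}{1+|t-\frac{\eta}{k}|}\cdot\frac{1}{k^2(1+|t-\frac{\xi}{k}|)}\cdot(1+|t-\tfrac{\xi}{k}|)^2$; I would then separate the regimes $|t-\frac{\xi}{k}|\geq\frac{|\xi|}{k^2}$ (giving a $1/t$ gain via $\frac{|\xi|}{k}\approx t$ on the resonant interval), $|\eta-\xi|\gtrsim\frac{|\xi|}{k}$ (giving a $\JB{\eta-\xi}^2/\JB{t}^2$ gain), and the remaining regime where Lemma~\ref{Scenarios} lets the two square-root factors recombine into the $w$-derivative product, at the cost of a harmless power of $\JB{\eta-\xi,\alpha-\beta}^{1/2}$ absorbed by the exponential.

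The main obstacle I anticipate is bookkeeping the resonant–resonant product when $k\neq l$ but both modes sit on genuinely resonant intervals of comparable (but not equal) $\iota$: one must check that $\frac{\iota_1}{\iota_2}\approx 1$ (which follows from $\mathfrak{A}$ via \eqref{iota1=iota2}), that the two resonance centers $\frac{\eta}{k}$ and $\frac{\xi}{l}$ are close enough on the relevant interval that the ratio of the two ``distance-to-resonance'' factors is controlled by $\JB{\eta-\xi}$ (so that the half-power split into two $w$-derivative square roots is legitimate), and that whenever this fails, one of the alternative scenarios (2) or (3) of Lemma~\ref{Scenarios} applies to produce the $1/t$ or $\JB{\eta-\xi}^2/\JB{t}^2$ decay instead. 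This is the same delicate interplay already handled in Lemma~\ref{R NR}, so I would reuse that structure, but the presence of two \emph{different} resonant wave numbers $k$ and $l$ makes the enumeration of sub-cases slightly heavier. Everything else is a routine, if lengthy, repetition of the estimates in Lemmas~\ref{l xi nonresonant}--\ref{est: NR-R}.
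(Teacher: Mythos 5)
You correctly identify the reductions via Lemma~\ref{l xi nonresonant} and Remark~\ref{rem-iota}, but the heart of your plan for $k\neq l$ rests on a misreading of Lemma~\ref{lem-ratio-J}. When $t\in\mathrm{I}_{k,\iota_1}\cap\mathrm{I}_{l,\iota_2}$ you are in the set $\mathrm{C}=\mathfrak{A}_0\setminus(\mathrm{A}\cup\mathrm{B})$ (you say so yourself), and there the lemma gives $\frac{\mathrm{J}_k(t,\eta,\alpha)}{\mathrm{J}_l(t,\xi,\beta)}\lesssim e^{2\mu|k-l,\eta-\xi,\alpha-\beta|^{1/2}}$ with \emph{no} resonant growth factor at all; the factor $\frac{|\iota_1|}{k^2(1+|t-\iota_1/k|)}$ that you write down belongs to case $\mathrm{A}$, i.e.\ $t\in\mathrm{I}^c_{l,\iota_2}$. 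The paper exploits exactly this: since $\eqref{ratio A nrnr}$ already bounds the whole $A$-ratio by the exponential, the task reduces to bounding only $\frac{t}{l^2+(\xi-lt)^2+\beta^2}$, which is then dispatched by Lemma~\ref{Scenarios} — scenario (2) gives $\lesssim 1/t$, scenario (3) gives $\lesssim t\lesssim |\xi-\eta|^3/\JB{t}^2$. Carrying along the spurious $\frac{|\eta|}{k^2(1+|t-\eta/k|)}$ factor as you propose recreates the much more delicate structure of Lemma~\ref{R NR} in a setting where it is not needed, and that leads directly to an actual error.

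Namely, your short-time branch $t<1000|\xi|^{1/2}$ cannot be closed by ``$|\xi|\lesssim l^2$''. That mechanism works in Lemma~\ref{R NR} precisely because $(l,\xi,\beta)$ is \emph{not} resonant; here $t\in\mathrm{I}_{l,\xi}\ne\emptyset$ already forces $2\sqrt{|\xi|}\leq t_{|l|,\xi}\approx|\xi|/|l|$, i.e.\ $|l|\lesssim\sqrt{|\xi|}$, which is the \emph{opposite} inequality $|\xi|\gtrsim l^2$. So that step fails outright (and the ``$m\neq l$'' sub-branch in your long-time regime is vacuous anyway, since $t\in\mathrm{I}_{l,\xi}$ pins $m=l$). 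The same pattern recurs in your $k=l$ case: you replace the final conclusion of Lemma~\ref{lem-ratio-J} by its intermediate ratio $\frac{1+|t-\xi/k|}{1+|t-\eta/k|}$ and then try to telescope, whereas the paper again uses the pure-exponential bound on $A_k/A_l$, observes $\frac{1}{l^2+(\xi-lt)^2+\beta^2}\lesssim\frac{1}{\sqrt{1+|\xi/l-t|}}\cdot\frac{1}{\sqrt{1+|\eta/k-t|}}\,\JB{\eta-\xi}^{1/2}$ directly (using $k=l$ and the triangle inequality on the resonance centers), and identifies the two square roots with the $\sqrt{\partial_t w/w}$ factors. The fix is to start from the correct case-$\mathrm{C}$ bound of Lemma~\ref{lem-ratio-J}; the rest of the argument then collapses to the two-scenario dichotomy of Lemma~\ref{Scenarios} with no time splitting required.
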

\begin{proof}
As before, when $(l,\xi,\beta)\in\mathfrak{U}$, Lemma~\ref{l xi nonresonant} gives us the above inequality. Here we only focus on the case when $(l,\xi,\beta)\in\mathfrak{U}^c$. Hence, the estimates \eqref{assumptions on NR-NR}, \eqref{eq: use it}, and the facts in Remark \ref{rem-iota} hold here. 
Moreover, since $t\in {\rm{I}}_{k,\eta} \cap {\rm{I}}_{l,\xi},$ by \eqref{ratio-J}, we find that \eqref{ratio A nrnr} holds as well. 

If $k=l$, it suffices to estimate 
   \begin{align*}
    \frac{(l^2+\beta^2+|\xi-lt||\beta|)}{(l^2+(\xi-lt)^2+\beta^2)^2}\lesssim& \frac{1}{(l^2+(\xi-lt)^2+\beta^2)}\lesssim \frac{1}{\sqrt{1+|\frac{\xi}{l}-t|}}\frac{1}{\sqrt{1+|\frac{\eta}{k}-t|}}\JB{\eta-\xi}^\fr12\\
    \les&\sqrt{\fr{\pr_tw_k(t,\iota_1)}{w_k(t,\iota_1)}}\sqrt{\fr{\pr_tw_k(t,\iota_1)}{w_k(t,\iota_1)}}\la\eta-\xi\ra^{\fr12}.
  \end{align*}

If $k\ne l$, then we bound $\dfrac{(t|\beta|+|\xi|+|l,\beta|)|l,\beta|}{(l^2+(\xi-lt)^2+\beta^2)^2}$ from above by $\frac{t}{l^2+(\xi-lt)^2+\beta^2}$ as that in \eqref{eq: use it}, and the following two scenarios should be taken into consideration  according to   Lemma~\ref{Scenarios}:
\begin{enumerate}
    \item $|t-\frac{\eta}{k}|\gtrsim \frac{|\eta|}{k^2}$ and $|t-\frac{\xi}{l}|\gtrsim \frac{|\xi|}{l^2}$. Thanks to the lower bound of $|t-\fr{\xi}{l}|$ and the fact $\fr{|\xi|}{|l|}\approx t$, one deduces that
    \[
    \frac{t}{l^2+(\xi-lt)^2+\beta^2}\les \fr{|l|}{|\xi|}\approx \fr{1}{t}\lesssim \frac{|\xi|^s}{\JB{t}^{1+s}}.
    \]
    
    \item $|\xi-\eta|\gtrsim \frac{|\eta|}{|l|}$. Note that $\frac{|\eta|}{|l|}\approx \frac{|\xi|}{|l|}\approx t$. Therefore, we can infer that
    \[
    \frac{t}{l^2+(\xi-lt)^2+\beta^2} \lesssim t\lesssim \frac{|\xi-\eta|^3}{\JB{t}^2}.
    \]
\end{enumerate}
The proof of Lemma \ref{R R} is completed.
\end{proof}

Finally, we give a lemma that will be used to treat the reaction term stemming from $u_0$ when the derivative in  $z$ direction dominates. 
\begin{lemma}\label{lem: zero-mode-Reaction}
    Let  $t>10$,  $100| \beta|> |\xi|$, and 
    \begin{equation}\label{frequency1}
|k,\eta-\xi, \al-\beta|\le \fr{3}{16}|\xi,\beta|.
\end{equation} 
Then
     \begin{align*}
     &t^{-3} \dfrac{(t|\beta|^2+|\xi||\beta|)}{\xi^2+\beta^2}\fr{1}{B(\xi,\beta)}\frac{A^{\sig_1}_k(t,\eta,\alpha)}{A^{\sig_1}_0(t,\xi,\beta)} \\
     \lesssim& \bigg(\frac{|\beta|}{\xi^2+\beta^2}t^{-2} +\frac{|\beta|^\fr{s}{2}}{\JB{t}^{\fr{1+s}{2}}} t^{-\fr32}\sqrt{\fr{\partial_tw_k(t,\iota(k,\eta,\alpha))}{w_k(t,\iota(k,\eta,\alpha))}} \bigg)e^{\mathbf{c}\lambda(t)|k,\eta-\xi,\alpha-\beta|^{s}}.
     \end{align*}
\end{lemma}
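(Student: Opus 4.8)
The plan is to strip off the algebraic prefactor, reduce the estimate to controlling the multiplier ratio $\mathrm{J}_k(t,\eta,\al)/\mathrm{J}_0(t,\xi,\beta)$ through a single application of Lemma~\ref{lem-ratio-J} with $l=0$, and then to split into a non-resonant and a resonant branch. First, the numerator $t|\beta|^2+|\xi||\beta|$ vanishes when $\beta=0$, so we may assume $|\beta|\ge1$; combined with $100|\beta|>|\xi|$ this gives $|\beta|\le|\xi,\beta|\le101|\beta|$, hence $\xi^2+\beta^2\approx\beta^2$ and $B(\xi,\beta)=\JB{|\xi|^{\fr12},\beta}\approx|\beta|$. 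The hypothesis \eqref{frequency1} is precisely the statement $\big((k,\eta,\al),(0,\xi,\beta)\big)\in\mathfrak{A}_0$, so I also have the comparabilities $|k,\eta,\al|\approx|\xi,\beta|\approx|\beta|$ and $\JB{k,\eta,\al}\approx\JB{\xi,\beta}$, and, writing $\iota_1:=\iota(k,\eta,\al)$, also $|\iota_1|\approx|k,\eta,\al|\approx|\beta|$. Using $|\xi|<100|\beta|\lesssim t|\beta|$ for $t>10$ together with $B(\xi,\beta)\approx|\beta|$, the algebraic factor is bounded by $\dfrac{t|\beta|^2+|\xi||\beta|}{(\xi^2+\beta^2)B(\xi,\beta)}\lesssim\dfrac{t}{|\beta|}$, so the quantity to be estimated is $\lesssim\dfrac{1}{t^2|\beta|}\cdot\dfrac{A^{\sig_1}_k(t,\eta,\al)}{A^{\sig_1}_0(t,\xi,\beta)}$.

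Next I would split $A^{\sig_1}_k/A^{\sig_1}_0$ into the Gevrey exponential $e^{\lambda(t)(|k,\eta,\al|^s-|\xi,\beta|^s)}$, the Sobolev ratio $\JB{k,\eta,\al}^{\sig_1}/\JB{\xi,\beta}^{\sig_1}\approx1$, and $\mathrm{J}_k(t,\eta,\al)/\mathrm{J}_0(t,\xi,\beta)$. By \eqref{triangle1} the first two factors contribute $\lesssim e^{c\lambda(t)|k,\eta-\xi,\al-\beta|^s}$ with $c<\mathbf{c}$, and since $s>\fr12$ the gap $\mathbf{c}-c$ leaves room to absorb the loss $e^{2\mu|k,\eta-\xi,\al-\beta|^{\fr12}}$ produced next. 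For the multiplier ratio I would invoke Lemma~\ref{lem-ratio-J} with $l=0$; this is legitimate since $\big((k,\eta,\al),(0,\xi,\beta)\big)\in\mathfrak{A}_0$, and because the critical intervals $\mathrm{I}_{0,\cdot}$ are empty the set $\mathrm{B}$ in that lemma is empty, whence
\[
\dfrac{\mathrm{J}_k(t,\eta,\al)}{\mathrm{J}_0(t,\xi,\beta)}\lesssim\Big(\mathds{1}_{t\in\mathrm{I}_{k,\iota_1}}\dfrac{|\iota_1|}{|k|^2(1+|t-\frac{\iota_1}{k}|)}+\mathds{1}_{t\notin\mathrm{I}_{k,\iota_1}}\Big)e^{2\mu|k,\eta-\xi,\al-\beta|^{\fr12}}.
\]
In the non-resonant branch (in particular when $k=0$) the left-hand side of the lemma is therefore $\lesssim\dfrac{1}{t^2|\beta|}e^{\mathbf{c}\lambda(t)|k,\eta-\xi,\al-\beta|^s}$, and because $\dfrac1{|\beta|}\approx\dfrac{|\beta|}{\xi^2+\beta^2}$ this is controlled by the first term on the right-hand side.

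In the resonant branch $t\in\mathrm{I}_{k,\iota_1}$, so $k\ne0$, and Remark~\ref{rem-1} gives $1\le|k|\le\fr12\sqrt{|\iota_1|}$ with $t\approx\dfrac{|\iota_1|}{|k|}$; together with $|\iota_1|\approx|\beta|$ this yields $t\lesssim|\beta|$ and $|k|^2\approx\dfrac{\beta^2}{t^2}$, so $\dfrac{|\iota_1|}{|k|^2(1+|t-\frac{\iota_1}{k}|)}\approx\dfrac{t^2}{|\beta|(1+|t-\frac{\iota_1}{k}|)}$. On $\mathrm{I}_{k,\iota_1}$ one has $w_k=w_{\res}$, and since $a_{k,\iota_1}\approx b_{k,\iota_1}\approx1$ there (Remark~\ref{rem-1}), the construction \eqref{wR} and the $w_{\nr}$-properties from \cite{BM1} give $\dfrac{\partial_t w_k(t,\iota_1)}{w_k(t,\iota_1)}\approx\dfrac1{1+|t-\frac{\iota_1}{k}|}$. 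Plugging these in, the left-hand side is
\[
\lesssim\dfrac1{\beta^2(1+|t-\frac{\iota_1}{k}|)}e^{\mathbf{c}\lambda(t)|k,\eta-\xi,\al-\beta|^s}\lesssim\dfrac1{\beta^2}\sqrt{\dfrac{\partial_t w_k(t,\iota_1)}{w_k(t,\iota_1)}}\,e^{\mathbf{c}\lambda(t)|k,\eta-\xi,\al-\beta|^s},
\]
and it remains only to check $\dfrac1{\beta^2}\lesssim\dfrac{|\beta|^{s/2}}{\JB{t}^{(1+s)/2}}t^{-3/2}$, i.e. $\JB{t}^{(1+s)/2}t^{3/2}\lesssim|\beta|^{2+s/2}$, which is immediate from $\JB{t}\approx t\lesssim|\beta|$. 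This matches the second term on the right-hand side.

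The step I expect to be the main obstacle is the bookkeeping in the resonant branch: one has to use the resonance relation $t\approx|\iota_1|/|k|$ to trade the lift-up prefactor $t$ and the worst-case multiplier ratio against precisely the powers of $\JB{t}$ and $|\beta|$ occurring on the right, and the margin is tight — the argument uses $\xi^2+\beta^2\approx\beta^2$ (equivalently $100|\beta|>|\xi|$, the regime where the $z$-direction dominates) and $t\lesssim|\beta|$ with essentially no slack, which is why the lemma is stated precisely under $100|\beta|>|\xi|$. The remaining ingredient, that the combined Gevrey/Sobolev weight ratio and the $e^{2\mu|\cdot|^{\fr12}}$ loss are dominated by $e^{\mathbf{c}\lambda(t)|\cdot|^s}$, is routine for $s>\fr12$ given \eqref{triangle1}.
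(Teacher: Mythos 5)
Your proof is correct and follows essentially the same route as the paper: the split into the resonant branch $t\in\mathrm{I}_{k,\iota_1}$ and the complementary non-resonant branch, the reduction of $A^{\sigma_1}_k/A^{\sigma_1}_0$ to the $\mathrm{J}$-ratio via \eqref{triangle1}, the single application of Lemma~\ref{lem-ratio-J} with $l=0$, the use of $\partial_t w_k/w_k\approx(1+|t-\iota_1/k|)^{-1}$ on the resonant interval, and the crucial trade $t\lesssim|\beta|$ coming from $t\approx|\iota_1|/|k|\le|\iota_1|\approx|\beta|$. The only (harmless) difference is the bookkeeping in the resonant case: you substitute $k^2\approx\beta^2/t^2$ to reach $\beta^{-2}(1+|t-\iota_1/k|)^{-1}$ and then convert that to the target form, whereas the paper bounds $|\iota_1|/k^2\lesssim t$ (using only $|k|\ge1$) to reach $t^{-2}(1+|t-\iota_1/k|)^{-1}$ and converts that; your intermediate bound is slightly sharper, but both lead to the same conclusion using the same fact $t\lesssim|\beta|$.
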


\begin{proof}
By \eqref{frequency1} and \eqref{triangle1}, we have
\begin{align}\label{ratio-A-1}
\fr{A^{\sig_1}_k(t,\eta,\al)}{ A^{\sig_1}_0(t,\xi,\beta)}=&\fr{e^{\lm(t)| k,\eta,\al|^s}| k,\eta,\al|^{\sig_1}{\rm J}_k(t,\eta,\al)}{e^{\lm(t)| \xi,\beta|^s}|\xi,\beta|^{\sig_1}{\rm J}_0(t,\xi,\beta)}
\les e^{c\lm(t)| k, \eta-\xi,\al-\beta|^s}\fr{{\rm J}_k(t,\eta,\al)}{{\rm J}_0(t,\xi,\beta)}.
\end{align}
We split our estimates into two cases:

    \textbf{Case 1:} Let $t \in \mathrm{I}_{k,\iota_1}$. By \eqref{ratio-J}, now we have
    \[
    \frac{{\rm J}_k(\eta,\alpha)}{\rm{J}_0(\xi,\beta)} \lesssim \frac{|\iota_1|}{k^2(1+|t-\fr{\iota_1}{k}|)}e^{2\mu|k,\eta-\xi,\alpha-\beta|^\fr12}.
    \]
    On the other hand, the restrictions $|\xi|<100|\beta|$ and \eqref{frequency1} show that $|\iota_1|\approx|k,\eta,\al|\approx |\xi,\beta|\approx|\beta|$. This, together with the fact $t\in{I}_{k,\iota_1}$, implies that
    $t\approx \fr{|\iota_1|}{|k|}\approx \frac{|\beta|}{|k|}$. 
    Therefore,
    \[
    \begin{aligned}
     &t^{-3} \dfrac{(t|\beta|^2+|\xi||\beta|)}{\xi^2+\beta^2}\fr{1}{B(\xi,\beta)}\frac{A^{\sigma_1}_k(\eta,\alpha)}{A^{\sigma_1}_0(\xi,\beta)}\\
      \lesssim& \fr{1}{t^{\fr12}}\fr{e^{{\bf c}\lm|k,\eta-\xi,\alpha-\beta|^s}}{1+|t-\fr{\iota_1}{k}|}t^{-\fr32}\lesssim \fr{|\beta|^\fr{s}{2}t^{-\fr32}}{t^{\fr{1+s}{2}}(1+|t-\fr{\iota_1}{k}|)}e^{{\bf c}\lambda|k,\eta-\xi,\alpha-\beta|^s}\\
     \lesssim& \frac{|\beta|^\fr{s}{2}}{\JB{t}^{\fr{1+s}{2}}}t^{-\fr32}\sqrt{\fr{\partial_tw_k(t,\iota(k,\eta,\alpha))}{w_k(t,\iota(k,\eta,\alpha))}} e^{\mathbf{c}\lambda(t)|k,\eta-\xi,\alpha-\beta|^{s}},
     \end{aligned}
     \]
     for some constants $c<{\bf c}<1$.

     \textbf{Case 2:} Let $t \notin \mathrm{I}_{k,\iota_1}$. By  \eqref{ratio-A-1} and Lemma \ref{lem-ratio-J},  one easily deduces that 
     \[
    \begin{aligned}
     t^{-3} \dfrac{(t|\beta|^2+|\xi||\beta|)}{\xi^2+\beta^2}\fr{1}{B(\xi,\beta)}\frac{A^{\sigma_1}_k(\eta,\alpha)}{A^{\sigma_1}_0(\xi,\beta)} \lesssim \fr{1}{t^{2}}\fr{|\beta|}{\xi^2+\beta^2} e^{\mathbf{c}\lambda(t)|k,\eta-\xi,\alpha-\beta|^{s}}.
     \end{aligned}
     \]
    This completes the proof of Lemma \ref{lem: zero-mode-Reaction}.
\end{proof}

\section{Main Energy Estimates and Bootstrap Proposition}
This section is devoted to presenting the main energy used throughout the work as well as to proving some bootstrap assumptions that play a crucial role here.
For fixed $\sigma_1>\sigma_2>\cdots>\sigma_6>\sigma_7\geq 2$ with
$\sigma_i-\sigma_{i+1}\ge 30$, for $ i=1,2,\cdots, 6$, we define the norms
\begin{equation}\label{main norm}
\begin{aligned}
&\|f(t)\|^2_{\sig_1}=\sum_{k,\al\in\Z}\int_\eta \left|A^{\sig_1}_k(t,\eta,\al)\widehat{f}_k(t,\eta,\al)\right|^2d\eta,\\
&\|Bf_0(t)\|^2_{\sigma-2}=\sum_{\al\in \Z}\int_\eta \left|A^{\sig_1}_0(t,\eta,\al)B(\eta,\alpha)\widehat{f}_0(t,\eta,\al)\right|^2d\eta
\end{aligned}
\end{equation}

For later use, we define the following terms for $t> 10$:
\begin{align*}
&{\rm CK}^{\sig_1}_{\lambda,\theta}(t)=-\dot{\lambda}(t)\norm{t^{-\fr32}|\nabla|^\fr{s}{2}\theta}^2_{\sigma_1},\\&
{\rm CK}^{\sig_1}_{w,\theta}(t)=t^{-3}\sum_{k,\alpha}\int_{\eta}\frac{\partial_t w_k(t,\iota(k,\eta,\al))}{w_k(t,\iota(k,\eta,\al))}  \big|A^{\sig_1}_k(t,\eta,\al)   \widehat{\theta}\big|^2 \;d\eta,\\&
{\rm CK}^{\sig_1-2}_{\lambda, B\theta_0}(t)=-\dot{\lambda}(t)\norm{|\nabla|^\fr{s}{2}B\theta_0}^2_{\sigma_1-2},\\&
{\rm CK}^{\sig_1-2}_{w,B\theta_0}(t)=\sum_{\alpha}\int_{\eta}\frac{\partial_t w_0(t,\iota(0,\eta,\al))}{w_0(t,\iota(0,\eta,\al))}\big|A^{\sig_1-2}_0(t,\eta,\al) B(\eta,\alpha)\widehat{\theta_0}\big|^2 \;d\eta.
\end{align*}

The norms associated with $\sigma_2,...,\sigma_7$ can be found in the bootstrap hypotheses below. 
\begin{rem}
    Note that for all $k$ and $t\in \left[\frac{2|\iota(k,\eta,\alpha)|}{2\E\big(\sqrt{|\iota(k,\eta,\alpha)|}\big)+1},2|\iota(k,\eta,\alpha)| \right]$, there is a unique $\ell$ with $\iota(k,\eta,\alpha)\ell >0$, such that $t\in \mathrm{I}_{\ell, \iota(k,\eta,\alpha)}$ and 
\begin{align*}
    \frac{\partial_t w_k(t,\iota(k,\eta,\al))}{w_k(t,\iota(k,\eta,\al))}
    \approx \frac{1}{1+\left|t-\fr{\iota(k,\eta,\al)}{\ell}\right|}. 
\end{align*}
\end{rem}

 Without loss of generality, assume that $t>10$. We can assume this because well-posedness theory holds for the three-dimensional Stokes-transport in Gevrey Spaces (even less-smoother spaces). Hence, by choosing initial data to be sufficiently small, we may ignore the time interval $[0,10]$). This is precisely the content of Lemma~\ref{local wp}.

Next, we need to show that the main energy is uniformly bounded for all times if the constant $\epsilon$ is chosen sufficiently small. To do that, we start by assuming a number of quantities, known as the bootstrap hypotheses. More precisely, we assume that for $t> 10$,
\begin{enumerate}[start=1,label={\bfseries B\arabic*:}]
    
\item  {\bf High norm bounds:}
 \begin{align}\label{En-1}
&\| t^{-\frac32}\theta(t)\|_{\sigma_1}\leq 4C \epsilon.\\
\label{En0-1}
&\left\|B\theta_0(t)\right\|_{\sigma_1-2}\leq 4C \epsilon.
\end{align}

\item {\bf Intermediate norm bounds:}
\begin{align}\label{En0-2}
&\left(\sum_{\alpha\neq 0}\int_{\mathbb{R}}\left|\sup_{t\in[10,T^{*}]}\big|\JB{\eta,\alpha}^{\sigma_2}e^{\lambda(t)|\eta,\alpha|^s}{\la t\ra^{\fr{3}{2}}}\widehat{\theta}_0(t,\eta,\alpha)\big|\right|^2d\eta\right)^{\fr{1}{2}}\leq {4C\eps}.\\
\label{En-3}
&\|t^{-\fr12}\theta(t)\|_{\mathcal{G}^{\lm,\sig_3}}\leq 4C  \eps.\\
\label{En0-4}
& \left(\sum_{\alpha\neq 0}\int_{\mathbb{R}}\left|\sup_{t\in[10,T^{*}]}\big|\JB{\eta,\alpha}^{\sigma_4}e^{\lambda(t)|\eta,\alpha|^s}{\la t\ra^{\fr52}}\widehat{\theta}_0(t,\eta,\alpha)\big|\right|^2d\eta\right)^{\fr{1}{2}}\leq {4C\eps}.
\end{align}

\item {\bf Lower norm bounds:}
\begin{align}\label{En-5}
&\|\theta(t)\|_{\mathcal{G}^{\lambda,\sig_5}}\leq 4C \eps.\\
\label{En-6}
&\left(\sum_{\alpha\neq 0}\int_{\mathbb{R}}\left|\sup_{t\in[10,T^{*}]}\big|\JB{\eta,\alpha}^{\sigma_6}e^{\lambda(t)|\eta,\alpha|^s}{\la t\ra^{3}}\widehat{\theta}_0(t,\eta,\alpha)\big|\right|^2d\eta\right)^{\fr{1}{2}}\leq {4C\eps}.\\
\label{En-7}
&\sup_{t\in [10, T^*]}\sup_{\eta\in \mathbb{R}}\left|e^{\lambda(t)|\eta|^s}\JB{\eta}^{\sigma_7}\widehat{\mathbb{P}^z_{0}\theta_0}(t,\eta)\right|\leq 4C\epsilon. 
\end{align}

\item ${\rm CK}$ and Damping integral estimates:
\[
\int_{10}^t \bigg[{\rm CK}^{\sig_1}_{\lambda,\theta}+{\rm CK}^{\sig_1}_{w,\theta}+{\rm CK}^{\sig_1-2}_{\lambda,B\theta_0}+{\rm CK}^{\sig_1-2}_{w,B\theta_0}+\|t^{-2}\theta\|_{\sigma_1}^2+\left\|\pr_z{\Dl_{yz}^{-1}} B\theta_0 \right\|_{\sig_1-2}^2\bigg]\;ds\leq 4C\epsilon^2.
\]
\end{enumerate}

\begin{rem}
One can see from \eqref{eq-theta0'} that  $\theta_0$ can take one less derivative than $\theta$ due to the appearance of $\nb_{xyz}\theta_{\ne}$ in the non-linearity. Also note that $|\alpha|\leq B(\alpha, \eta)\leq |\alpha|+|\eta|$ can be formally regarded as one derivative. That is why we assign $\la \eta,\al\ra^{\sig_1-2}$ instead of $\la \eta, \al\ra^{\sig_1}$, as the Sobolev correction in \eqref{En0-1}.
\end{rem}
\begin{rem}\label{rmk: Minkowski}
    The Minkowski inequality implies that $L^{2}L^{\infty}\subset L^{\infty}L^2$, which gives us that for all $\sigma\ge0, m\ge0$,
    \begin{align*}
    \sup_{t\in [10,T^{*}]}\|\JB{t}^m\mathbb{P}_{\neq}^z\theta_0\|_{\mathcal{G}^{\lambda,\sigma}}
    \leq \left(\sum_{\alpha\neq 0}\int_{\mathbb{R}}\left|\sup_{t\in[10,T^{*}]}\big|\JB{\eta,\alpha}^{\sigma}e^{\lambda(t)|\eta,\alpha|^s}{\la t\ra^{m}}\widehat{\theta}_0(t,\eta,\alpha)\big|\right|^2d\eta\right)^{\fr{1}{2}}.
    \end{align*}
\end{rem}
\subsection{Bootstrap}
Here, we present the bootstrap proposition which is an important ingredient in proving the main theorem. 
\begin{prop}[Bootstrap]\label{bootstrap} There exists $\epsilon_0 \in(0,\frac{1}{2})$ which depends on $\lambda',\lambda_0,s,$ and $\sigma$ such that provided $\epsilon < \epsilon_0$ and the bootstrap hypotheses \textup{\textbf{B1}}-\textup{\textbf{B4}} hold for all time $t\in[10,T^*]$, then for all $t\in [10,T^*]$, the estimates from \textup{\textbf{B1}}-\textup{\textbf{B4}} can be improved by replacing the constant $4$ by $2$. 
\end{prop}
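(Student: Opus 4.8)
The plan is a standard continuity argument. Since the local well-posedness result (Lemma~\ref{local wp}) and the smallness of the data make every quantity appearing in \textbf{B1}--\textbf{B4} strictly smaller than half its assumed bound at $t=10$, and since each energy is continuous in $t$, it suffices to prove that on $[10,T^*]$ the bootstrap assumptions with constant $4$ imply the same bounds with constant $2$. The improvements are carried out in the order indicated in Figure~\ref{map}: first the two top-level energies $\|t^{-3/2}\theta\|_{\sigma_1}$ and $\|B\theta_0\|_{\sigma_1-2}$, then a cascade in which each newly improved decay/growth rate feeds the next; at the end we collect everything, choose the $\sigma_i$ with $\sigma_i-\sigma_{i+1}\ge30$, and then $\epsilon_0$ small enough, so that each $4C\epsilon$ (resp.\ $4C\epsilon^2$) bound becomes $2C\epsilon$ (resp.\ $2C\epsilon^2$).

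\emph{Top energies.} Applying $A^{\sigma_1}_k$ to \eqref{eq-theta'}, pairing with $A^{\sigma_1}_k\widehat\theta_k$ and summing in $(k,\eta,\alpha)$, the time derivative of $\|t^{-3/2}\theta\|_{\sigma_1}^2$ produces (using $\dot\lambda<0$ and the structure of $w_k$) the favourable-sign terms ${\rm CK}^{\sigma_1}_{\lambda,\theta}$, ${\rm CK}^{\sigma_1}_{w,\theta}$, the weak-damping term, and a favourable $\|t^{-2}\theta\|_{\sigma_1}^2$ contribution from differentiating the time weight, plus the nonlinear term $\langle A^{\sigma_1}(u\cdot\nabla_{xyz}\theta),A^{\sigma_1}\theta\rangle$. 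Splitting $u\cdot\nabla\theta$ into the non-zero-mode part $u_{\ne}\cdot\nabla\theta$, the zero-mode part $u_0\cdot\nabla\theta$, and each of these into transport (low frequency in $u$), reaction (low frequency in $\theta$) and remainder paraproducts, the transport and remainder pieces are absorbed using the decay \eqref{u_ne-decay} of $u$ and commutator bounds for $A^{\sigma_1}$. The reaction of $u_{\ne}\cdot\nabla\theta_{\ne}$ is precisely the toy model \eqref{toy-theta}: the multiplier-ratio Lemmas~\ref{l xi nonresonant}, \ref{est:NR-NR}, \ref{R NR}, \ref{est: NR-R} and \ref{R R} turn the growth kernel into the three harmless factors $\langle t\rangle^{-2}$, $\langle t\rangle^{-1-s}|l,\xi,\beta|^{s/2}|k,\eta,\alpha|^{s/2}$, and $\sqrt{\partial_tw_k/w_k}\sqrt{\partial_tw_l/w_l}$, which after Cauchy--Schwarz in time are controlled by $\int\|t^{-2}\theta\|_{\sigma_1}^2$, $\int{\rm CK}^{\sigma_1}_{\lambda,\theta}$, and $\int{\rm CK}^{\sigma_1}_{w,\theta}$, i.e.\ the reservoir in \textbf{B4}. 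The genuinely new contribution, the reaction of $u_0\cdot\nabla\theta$, is the source of the $\langle t\rangle^{3/2}$ growth heuristically derived in Section~\ref{sec:mainidea}; it is closed using the improved decay of $\theta_0$ from \textbf{B2}--\textbf{B3}, together with Lemma~\ref{lem: zero-mode-Reaction} when $\partial_z$ dominates. Equation \eqref{eq-theta0'} is treated the same way with the multiplier $A^{\sigma_1-2}_0B$: the term $-\partial_{zz}\Delta_{yz}^{-2}\theta_0$ gives the positive damping integral $\|\partial_z\Delta_{yz}^{-1}B\theta_0\|_{\sigma_1-2}^2$, the source $(u_{\ne}\cdot\nabla\theta_{\ne})_0$ costs exactly one derivative (hence the Sobolev correction $\sigma_1-2$), and the transport $\tilde u_0\cdot\nabla\theta_0$ uses the commutator estimate for $B$ recorded after \eqref{B}.

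\emph{The cascade.} With the top energies improved, the intermediate and low norms are improved in the Figure~\ref{map} order. For each decay bound of $\theta_0$ --- \eqref{En0-2}, \eqref{En0-4}, \eqref{En-6} --- we integrate \eqref{eq-theta0'} against the relevant multiplier and use the Duhamel representation built on the semigroup $\mathbb{S}(t;\eta,\alpha)=\exp\{-\tfrac{\alpha^2}{(\alpha^2+\eta^2)^2}t\}$, i.e.\ \eqref{eq:theta_0-estimate}; spending a fixed number of derivatives converts the non-uniform exponential decay into the uniform algebraic rates $\langle t\rangle^{-3/2},\langle t\rangle^{-5/2},\langle t\rangle^{-3}$, which is why one needs $\sigma_2>\sigma_4>\sigma_6$ with the prescribed gaps, and the source $(u_{\ne}\cdot\nabla\theta_{\ne})_0$ is bounded through the freshly improved non-zero-mode norms and \eqref{u_ne-decay}. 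Symmetrically, each improved $\theta_0$-decay upgrades the growth bound for $\theta$ in a weaker norm: \eqref{En-3} with rate $\langle t\rangle^{1/2}$, then \eqref{En-5} uniformly bounded, because the $u_0$-reaction term in \eqref{eq-theta'} now carries extra decay. Finally \eqref{En-7}, for the $z$-average $\mathbb{P}^z_0\theta_0$, where no derivative may be spent, is obtained by estimating frequency-by-frequency in time first and then invoking $L^2_tL^\infty_\eta\subset L^\infty_tL^2_\eta$ (Remark~\ref{rmk: Minkowski}).

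\emph{Main obstacle.} The hardest point, as flagged in Section~\ref{sec:mainidea}, is reconciling the energy method (space norm first, then time) with the frequency-localized semigroup estimate for $\theta_0$ (time first, then space). This is what forces the regularity budget $\sigma_i-\sigma_{i+1}\ge30$ and the Gevrey-$2$-type loss from the zero/non-zero mode reaction (Lemma~\ref{lem: zero-mode-Reaction} and the surrounding discussion); keeping the $\langle t\rangle^{3/2}$ top-norm growth compatible with the $\langle t\rangle^{-3}$ low-norm decay while never spending derivatives on the term ${\rm S}_{00z}^{\rm HL}$ --- for which the lowest norm \eqref{En-7} is introduced --- is the delicate core, and the rest is lengthy paraproduct bookkeeping built on the multiplier lemmas of Section~\ref{sec:growth model}.
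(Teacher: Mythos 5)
Your proposal reproduces the paper's own proof strategy essentially verbatim: the top-level energies via the weighted energy identities \eqref{e-theta} and \eqref{commutator and source}, the paraproduct decomposition, and Propositions~\ref{Prop: T_0R_0R}--\ref{prop: tildeT-R-R}, followed by the cascade of Figure~\ref{map} improving the intermediate and lower norms via the Duhamel/semigroup representation \eqref{exp-theta0-hat} and, for \eqref{En-7}, the $L^\infty_t L^\infty_\eta$ bilinear estimates of Lemma~\ref{lem: est-so}. The only cosmetic slip is in describing the norm-interchange for \eqref{En-7}: the Minkowski embedding in Remark~\ref{rmk: Minkowski} is $L^2_{\eta,\alpha}L^\infty_t\subset L^\infty_t L^2_{\eta,\alpha}$, deployed for the intermediate norms \eqref{En0-2}, \eqref{En0-4}, \eqref{En-6}, whereas \eqref{En-7} is already an $L^\infty_tL^\infty_\eta$ norm where the time and frequency suprema commute trivially.
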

The rest of the paper is to prove Proposition \ref{bootstrap}. The improvements of \eqref{En-1} and \eqref{En0-1} follow directly from the classical energy method (see \eqref{e-theta}) with well-designed time-dependent Fourier multipliers and Propositions \ref{Prop: T_0R_0R}, \ref{Prop: T_neqR_neqR}, \ref{prop: sor}, and \ref{prop: tildeT-R-R}. 
The improvement of \eqref{En0-2} can be found in section \ref{sec-En02}. The improvements of  \eqref{En0-4} and \eqref{En-6} are similar and proved in sections \ref{sec-im-En0-4} and \ref{sec:im-En6} separately. The improvement of \eqref{En-3} and \eqref{En-5} are similar and can be found in sections \ref{sec-im-En-3} and \ref{sec-im-En-5}. The improvement of \eqref{En-7} is proved in section \ref{sec:im-En7}. 

Let us now focus on the key estimates, namely the improvement of \eqref{En-1}. The first natural step is to compute the time derivative of the top energy. 
To this end, from equation \eqref{eq-theta'}, we arrive at
\begin{align}\label{e-theta}
\nn&\fr12\fr{d}{dt}\|t^{-\fr32}\theta(t)\|_{\sig_1}^2
+\left\|t^{-\fr32}\sqrt{-\Dl_{xz}}\Delta^{-1}_L A^{\sig_1}\theta\right\|_{L^2}^2
+\fr{3}{2}\|t^{-2}\theta\|_{\sigma_1}^2
+{\rm CK}^{\sig_1}_{\lambda,\theta}(t)+{\rm CK}^{\sig_1}_{w,\theta}(t)\\
\nn&=-t^{-3}\left\la A^{\sig_1}(u\cdot\nb \theta), A^{\sig_1}\theta\right\ra\\
&=\fr12\int_{\mathbb{T}\times\mathbb{R}\times\mathbb{T}} t^{-3}\nb\cdot u |A^{\sig_1}\theta|^2dxdydz+ t^{-3} \left\la u\cdot\nb A^{\sig_1}\theta-A^{\sig_1}(u\cdot\nb \theta) , A^{\sig_1}\theta \right\ra.
\end{align}

The term on the right-hand side of \eqref{e-theta} consists of two parts: the divergence and commutator terms. Recalling \eqref{exp-u},  the divergence term can be given as follows
\[
\nb\cdot u=\nb \cdot u_{\ne}.
\]
This enables us to use the inequality \eqref{u_ne-decay} to bound
\begin{align}
\fr12\int_{\mathbb{T}\times\mathbb{R}\times\mathbb{T}}t^{-3}\nb\cdot u |A^{\sig_1}\theta|^2dxdydz\nn\les& t^{-3}\|u_{\ne}\|_{H^3}\|\theta\|_{\sig_1}^2\les \fr{\|\theta_{\ne}\|_{H^9}}{\la t\ra^3}\|t^{-\frac{3}{2}}\theta\|_{\sig_1}^2.
\end{align}

To bound the commutator term of \eqref{e-theta}, we split it into two parts:
\[
t^{-3}\left\la u\cdot\nb A^{\sig_1}\theta-A^{\sig_1}(u\cdot\nb \theta) , A^{\sig_1}\theta \right\ra={\bf com}_{\sig_1;0}+{\bf com}_{\sig_1;\ne},
\]
where
\begin{equation}\label{Decomposition of Commutator}
\begin{aligned}
{\bf com}_{\sig_1;0}=& t^{-3}\left\la u_0\cdot\nb A^{\sig_1}\theta-A^{\sig_1}(u_0\cdot\nb \theta) , A^{\sig_1}\theta \right\ra,\\
{\bf com}_{\sig_1;\ne}=& t^{-3}\left\la u_{\ne}\cdot\nb A^{\sig_1}\theta-A^{\sig_1}(u_{\ne}\cdot\nb \theta) , A^{\sig_1}\theta \right\ra.
\end{aligned}
\end{equation}
Via a paraproduct decomposition, each commutator term above can be expressed in terms of three main contributions: transport, reaction, and reminder. We write such decomposition for the non-zero mode interaction, the one for the zero mode can be done similarly. More precisely, ${\bf com}_{\sig_1;\ne}$ can be decomposed as follows
\[
t^{-3}\left\la u_{\ne}\cdot\nb A^{\sig_1}\theta-A^{\sig_1}(u_{\ne}\cdot\nb \theta) , A^{\sig_1}\theta \right\ra=\frac{1}{2\pi}\sum_{\rm{N}\geq 8} {\T_{{\neq}_\mathrm{N}}} +\frac{1}{2\pi}\sum_{\rm{N}\geq 8} \R_{{\neq}_\mathrm{N}} +\frac{1}{2\pi} \mathcal{R}_{\neq},
\]
where
\begin{equation}\label{paraproduct decomposition nonzero mode}
\begin{aligned}
    &\T_{{\neq}_\mathrm{N}}=2\pi t^{-3} \left\la u_{{\ne}_{<\N/8}}\cdot\nb A^{\sig_1}\theta_\mathrm{N}-A^{\sig_1}(u_{{\ne}_{<\N/8}}\cdot\nb \theta_\mathrm{N}) , A^{\sig_1}\theta \right\ra,\\&
    \R_{{\neq}_\mathrm{N}}=2\pi t^{-3}\left\la u_{{\ne}_\mathrm{N}}\cdot\nb A^{\sig_1}\theta_{<\N/8}-A^{\sig_1}(u_{{\ne}_\mathrm{N}}\cdot\nb \theta_{<\N/8}) , A^{\sig_1}\theta \right\ra, \\&
    \mathcal{R}_{\neq}= 2\pi t^{-3}\sum_{\N\in \mathbb{D}}\sum_{\fr{\rm{N}}{8}\leq \rm{N}' \leq 8\rm{N}} \left\la u_{{\ne}_\mathrm{N}}\cdot\nb A^{\sig_1}\theta_{\N'}-A^{\sig_1}(u_{{\ne}_\mathrm{N}}\cdot\nb \theta_{\N'}) , A^{\sig_1}\theta \right\ra .
\end{aligned}
\end{equation}
We use the dyadic domain $\N \in \mathbb{D}=\{\frac{1}{2}, 1,2,4,...,2^m,...\}$ and denote $f_{\N}$ as the Littlewood--Paley projection onto the $\N$-th frequency and $f_{<\N}$ as the Littlewood--Paley projection onto the frequencies less than $\N$. 

By simply replacing $\neq$ by $0$ in the above decomposition, one can obtain the associated paraproduct decomposition for ${\bf com}_{\sig_1;0}$. This process results in the appearance of ${\bf T}_{0_\N}, {\bf R}_{0_\N}, \mathcal{R}_0$.

The following propositions record corresponding estimates of ${\bf T}_{0_\N}, {\bf R}_{0_\N}, \mathcal{R}_0$ and ${\bf T}_{\neq_\N}, {\bf R}_{\neq_\N}, \mathcal{R}_{\neq}$.

\begin{prop}\label{Prop: T_0R_0R}
    Under the bootstrap hypotheses, it holds that
    \begin{align*}
    &\sum_{\N\geq 8}|{\bf R}_{0_\N}|\lesssim \epsilon \bigg(\fr{\eps^2}{\JB{t}^2}+{\rm CK}^{\sig_1}_{\lambda,\theta}+{\rm CK}^{\sig_1}_{w,\theta}+{\rm CK}^{\sig_1-2}_{\lambda,B\theta_0}+\|t^{-2}\theta\|_{\sigma_1}^2
    +\left\|\pr_z{\Dl_{yz}^{-1}} B\theta_0 \right\|_{\sig_1-2}^2\bigg),\\
        &\sum_{\N\geq 8}|{\bf T}_{0_\N}|\lesssim \fr{\epsilon^3}{\JB{t}^2}+\epsilon {\rm CK}^{\sig_1}_{\lambda,\theta},\quad\text{and}\quad 
        |\mathcal{R}_0|\lesssim  \fr{\epsilon^3}{\JB{t}^2}.
    \end{align*}
\end{prop}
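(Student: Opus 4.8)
The plan is to treat the three contributions separately, exploiting the fast decay of the zero-mode velocity $u_0$ and the damping/Sobolev-gap structure built into the energy functionals. For the \emph{transport} term $\sum_{\N\geq 8}|{\bf T}_{0_\N}|$, the key observation is that on the paraproduct support the velocity is at low frequency, so after the standard commutator gain one derivative of $u_0$ is paid and the rest matched against the top multiplier $A^{\sigma_1}$ acting on $\theta_\N$ and $A^{\sigma_1}\theta$. Since $\widehat{u}_0$ is built from $\theta_0$ through $(\pr_{zz},-\pr_{yz})\Delta_{yz}^{-2}$, the bootstrap bound \eqref{En-6} (together with Remark~\ref{rmk: Minkowski}) gives $\|u_0\|_{H^{M}}\lesssim \epsilon\JB{t}^{-3}$ for a fixed large $M$ depending on the $\sigma_i$-gaps; the commutator loses regularity controlled by $\lambda(t)|\cdot|^s$, which is absorbed into ${\rm CK}^{\sig_1}_{\lambda,\theta}$ via the cost of $\dot\lambda$, while the remaining time decay $\JB{t}^{-3}\cdot t^{-3}\cdot\|t^{-3/2}\theta\|_{\sigma_1}^2\lesssim \epsilon^2\JB{t}^{-3}\cdot\JB{t}^3\epsilon^2\lesssim \epsilon^4$ is integrable and, after extracting one $\epsilon$, yields the stated $\epsilon^3\JB{t}^{-2}$ plus $\epsilon\,{\rm CK}^{\sig_1}_{\lambda,\theta}$. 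The remainder $|\mathcal{R}_0|$ is handled identically but more easily, since there is no cancellation to exploit: one simply places both factors in $L^2$-based norms, uses that the frequencies are comparable so that the full exponential weight distributes, and uses $\|u_0\|_{\mathcal{G}^{\lambda,\sigma}}\lesssim \epsilon\JB{t}^{-3}$ at a lower index $\sigma$ together with $\|\theta\|_{\mathcal{G}^{\lambda,\sigma_5}}\lesssim C\epsilon$ from \eqref{En-5} and the top bound \eqref{En-1} for the output factor; the time budget is the same and gives $\epsilon^3\JB{t}^{-2}$.

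The genuinely delicate term is the \emph{reaction} term $\sum_{\N\geq 8}|{\bf R}_{0_\N}|$, because here the velocity $u_{0_\N}$ sits at the \emph{high} frequency $\N$ while $\theta_{<\N/8}$ is low, so no derivative can be moved onto $\theta$; instead the derivative must be paid by $\theta_0$, which we are allowed to do thanks to the two-derivative Sobolev gap encoded in the $Bf_0$-norm \eqref{main norm} and \eqref{En0-1}. The plan is to write $u_{0_\N}$ in terms of $\theta_{0_\N}$ through \eqref{exp-u}, so that the symbol $\tfrac{t\pr_{zz}}{\Delta_{yz}^2}$ (respectively $\tfrac{\pr_{yz}}{\Delta_{yz}^2}$) appears; the dangerous factor is the linearly growing $t\alpha^2(\alpha^2+\eta^2)^{-2}$, which is exactly the lift-up term discussed in Section~\ref{sec:mainidea}. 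One then compares $A^{\sigma_1}_k$ on the output against $A^{\sigma_1-2}_0 B$ on $\theta_0$ using the ratio estimate, distinguishing the regime where $\eta$ dominates (so $\iota=\eta$ and the Fourier multiplier $w_k$ genuinely reflects the growth) from the regime where $\alpha$ or $k$ dominates (where the multiplier overestimates). In the $\alpha$-dominated subcase, Lemma~\ref{lem: zero-mode-Reaction} is the right tool: it converts the product of the growing symbol with the ratio of multipliers into a sum of an integrable piece $\tfrac{|\beta|}{\xi^2+\beta^2}t^{-2}$ (absorbed into $\|\pr_z\Delta_{yz}^{-1}B\theta_0\|_{\sigma_1-2}^2$, which appears on the right-hand side) and a piece carrying $\sqrt{\pr_t w_k/w_k}$, which is absorbed into ${\rm CK}^{\sig_1}_{w,\theta}$ after a Cauchy--Schwarz in time, with the $t^{-3/2}$ prefactor matching the prefactor of $\theta$ in \eqref{En-1}; the resulting bound carries a clean factor $\epsilon$ in front, as required. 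In the $\eta$-dominated subcase the standard Orr-type commutator estimates (Lemmas~\ref{est:NR-NR}--\ref{R R}) apply, producing the $\JB{t}^{-2}$, the $\dot\lambda$-CK terms, and the $\sqrt{\pr_tw_k/w_k}\sqrt{\pr_tw_l/w_l}$-CK term; since one factor is a zero mode here, $\sqrt{\pr_tw_0/w_0}$ contributes to ${\rm CK}^{\sig_1-2}_{\lambda,B\theta_0}$, explaining its appearance on the right-hand side.

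The main obstacle is the bookkeeping in the reaction term: one must verify that the two-derivative gap $\sigma_1-2$ in the $B\theta_0$-norm, together with the single extra power of $|\xi,\beta|^{1/2}$ inside $B$, is exactly enough to absorb the derivative loss coming from $\nabla_{xyz}\theta_{\ne}$ feeding $\theta_0$ \emph{and} the derivative loss from the commutator \emph{and} the $|\eta,\xi|^{1/2}$ losses produced by the ratio-of-multiplier lemmas — all while keeping the time weights consistent (the output carries $t^{-3}$ from \eqref{e-theta}, the source $\theta_0$ decays like $\JB{t}^{-3}$ by \eqref{En-6}, and the linear-growth symbol contributes $t$, so the net budget is $t^{-3}\cdot t\cdot\JB{t}^{-3}=\JB{t}^{-5}$, comfortably integrable, but the frequency-localized pieces must each be checked not to spoil this). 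Concretely I would organize the reaction estimate by first reducing to the resonant/$\iota$-classification, then invoking Lemma~\ref{lem: zero-mode-Reaction} for the $\alpha$-dominated part and Lemmas~\ref{est:NR-NR}--\ref{R R} for the $\eta$-dominated part, and finally collecting the $\mathrm{CK}$-type and $\|\pr_z\Delta_{yz}^{-1}B\theta_0\|_{\sigma_1-2}^2$-type outputs, each multiplied by a spare $\epsilon$ coming from the smallness of either $u$ or $\theta_0$ through the bootstrap hypotheses.
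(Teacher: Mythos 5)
Your high-level plan matches the paper's proof in outline: you correctly identify that the transport term and remainder are dominated by the fast decay of $u_0$ (via \eqref{En-6} after paying one $\la t\ra$ from the symbol in \eqref{exp-u}) and that the loss of $|\nabla|^{s/2}$ is absorbed into ${\rm CK}^{\sig_1}_{\lambda,\theta}$, and you correctly identify Lemma~\ref{lem: zero-mode-Reaction} as the tool for the $|\xi|\lesssim|\beta|$ ($\alpha$-dominated) part of the reaction term. However, there are two concrete errors in the treatment of the reaction term.

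First, you propose to ``invoke Lemmas~\ref{est:NR-NR}--\ref{R R}'' for the $\eta$-dominated subcase of $\R_{0_\N}$. Those lemmas do not apply here: they are formulated for pairs $\big((k,\eta,\alpha),(l,\xi,\beta)\big)\in\mathfrak{A}$, and the definition \eqref{Set A} explicitly requires $|l|\neq 0$. Moreover, the symbol they control, $\tfrac{(t|\beta|+|\xi|+|l,\beta|)|l,\beta|}{(l^2+(\xi-lt)^2+\beta^2)^2}$, has $(\xi-lt)^2$ in the denominator and hence a genuine Orr critical-time structure; the zero-mode symbol $\tfrac{\la t\ra\beta^2+|\xi\beta|}{(\xi^2+\beta^2)^2}$ from \eqref{up-u0-1} has a $t$-independent denominator and is structurally different. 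The paper therefore does \emph{not} invoke those lemmas for $\R^{(1),y}_{0_\N}$; it instead performs a direct, self-contained analysis (split on $t\in\mathrm{I}_{k,\eta}$ or not, then on $t\in\mathrm{I}_{k,\xi}$ or not, using only Lemma~\ref{lem-ratio-J} and elementary bounds such as \eqref{ratio-Jk0}). You would need to supply this separate argument; simply citing Lemmas~\ref{est:NR-NR}--\ref{R R} leaves a gap.

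Second, your attribution of the term ${\rm CK}^{\sig_1-2}_{\lambda,B\theta_0}$ is incorrect. You claim it arises because ``$\sqrt{\partial_t w_0/w_0}$ contributes to ${\rm CK}^{\sig_1-2}_{\lambda,B\theta_0}$,'' but that quantity is by definition $-\dot\lambda\,\||\nabla|^{s/2}B\theta_0\|^2_{\sigma_1-2}$ and has nothing to do with $\sqrt{\partial_t w/w}$. In the paper, ${\rm CK}^{\sig_1-2}_{\lambda,B\theta_0}$ in fact comes from the $\alpha$-dominated part $\R^{(1),z}_{0_\N}$, where Lemma~\ref{lem: zero-mode-Reaction} outputs the factor $\tfrac{|\beta|^{s/2}}{\la t\ra^{(1+s)/2}}$ on $B\theta_0$ — i.e.\ an $|\nabla|^{s/2}$ loss with time prefactor $\la t\ra^{-(1+s)/2}$, and $\la t\ra^{-(1+s)}\lesssim -\dot\lambda$ since $a<s$. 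The $\sqrt{\partial_t w_0/w_0}$ factors that appear in the $\eta$-dominated subcase of $\R^{(1),\res,\nr}_{0_\N}$ are instead paired with $t^{-3/2}A^{\sigma_1}_0\theta_0$ (not $A^{\sigma_1-2}_0B\theta_0$) and are absorbed into ${\rm CK}^{\sig_1}_{w,\theta}$, using that $\theta_0$ is a piece of $\theta$ measured in the $\sigma_1$-norm.
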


\begin{prop}\label{Prop: T_neqR_neqR}
    Under the bootstrap hypotheses, it holds that
     \begin{align*}
    &\sum_{\N\geq 8}|{\bf R}_{\neq_\N}|\lesssim \epsilon \left(\fr{\eps^2}{\JB{t}^2}+{\rm CK}^{\sig_1}_{\lambda,\theta}
    +{\rm CK}^{\sig_1}_{w,\theta}\right),\\
        &\sum_{\N\geq 8}|{\bf T}_{\neq_\N}|\lesssim \fr{\epsilon^3}{\JB{t}^2}+\epsilon {\rm CK}^{\sig_1}_{\lambda,\theta},\quad\text{and}\quad 
        |\mathcal{R}_{\neq}|\lesssim  \fr{\epsilon^3}{\JB{t}^2}.
    \end{align*}
\end{prop}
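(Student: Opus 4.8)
\smallskip
\noindent\textbf{Proof plan.} The plan is to run the paraproduct splitting \eqref{paraproduct decomposition nonzero mode} and treat $\sum_\N {\bf T}_{\neq_\N}$, $\sum_\N {\bf R}_{\neq_\N}$ and $\mathcal{R}_\neq$ in turn, the reaction sum $\sum_\N {\bf R}_{\neq_\N}$ being the essential one. The bookkeeping that converts the frequency-by-frequency bounds into the right-hand side of Proposition~\ref{Prop: T_neqR_neqR} is the same in all three terms: a bare factor $\JB{t}^{-2}$, combined with the prefactor $t^{-3}$ and with two copies of $\|t^{-\fr32}\theta\|_{\sig_1}\lesssim\eps$ from \eqref{En-1}, produces $\eps^3\JB{t}^{-2}$; a factor $|k,\eta,\al|^{\fr s2}|l,\xi,\beta|^{\fr s2}\JB{t}^{-1-s}$ is absorbed into ${\rm CK}^{\sig_1}_{\lambda,\theta}$ using $\JB{t}^{-1-s}\lesssim -\dot\lambda(t)$, valid since $a<s$ by the choice in \eqref{def-lambda}, followed by Cauchy--Schwarz; and a factor $\sqrt{\tfrac{\pr_t w_k}{w_k}}\sqrt{\tfrac{\pr_t w_l}{w_l}}$ is absorbed into ${\rm CK}^{\sig_1}_{w,\theta}$ by writing the $t^{-3}$--weighted trilinear term as a product $\big(t^{-\fr32}\sqrt{\tfrac{\pr_t w_k}{w_k}}|A^{\sig_1}_k\widehat\theta|\big)\big(t^{-\fr32}\sqrt{\tfrac{\pr_t w_l}{w_l}}|A^{\sig_1}_l\widehat\theta_{\neq}|\big)$ times a low-frequency factor of size $O(\eps)$, and summing in frequency.

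\smallskip
\noindent\emph{Reaction term.} On the Fourier side ${\bf R}_{\neq_\N}$ is a trilinear expression in which the output frequency $(k,\eta,\al)$ and the velocity frequency $(l,\xi,\beta)$ are $\approx\N$ while the advected density carries the low difference frequency $(k-l,\eta-\xi,\al-\beta)$, with $|k-l,\eta-\xi,\al-\beta|<\N/8<\tfrac{3}{16}|l,\xi,\beta|$, so that $\big((k,\eta,\al),(l,\xi,\beta)\big)\in\mathfrak{A}$. I would write $\widehat{u}_{\neq_\N}$ in terms of $\widehat{\theta}_{\neq_\N}$ via \eqref{u_ne-decay}, dominate the resulting kernel by the one appearing in Lemmas~\ref{l xi nonresonant}--\ref{R R}, and split according to whether $(l,\xi,\beta)\in\mathfrak{U}$ (resonance irrelevant --- Lemma~\ref{l xi nonresonant}) or $(l,\xi,\beta)\in\mathfrak{U}^c$, in which case the four combinations of whether $t\in{\rm I}_{k,\eta}$ and whether $t\in{\rm I}_{l,\xi}$ are exactly Lemmas~\ref{est:NR-NR}, \ref{R NR}, \ref{est: NR-R} and \ref{R R}. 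Each of these yields precisely a bound of the form $\big(\JB{t}^{-2}+|k,\eta,\al|^{\fr s2}|l,\xi,\beta|^{\fr s2}\JB{t}^{-1-s}+\sqrt{\tfrac{\pr_tw_k}{w_k}}\sqrt{\tfrac{\pr_tw_l}{w_l}}\big)e^{\mathbf{c}\lambda(t)|k-l,\eta-\xi,\al-\beta|^{s}}$; using \eqref{comparability} and $s\le1$ to transfer the surplus Gevrey weight $(1-\mathbf{c})\lambda$ onto the low-frequency density factor turns $e^{\mathbf{c}\lambda|k-l,\eta-\xi,\al-\beta|^s}$ into an $L^1$-summable convolution kernel in the difference variable, the remaining three structures are disposed of as in the previous paragraph, and the low-frequency density factor (together with its single $\nb$ and the surplus exponential, both harmless) is estimated by \eqref{En-5}. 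Summing over the dyadic blocks $\N$ closes the estimate for $\sum_\N|{\bf R}_{\neq_\N}|$; unlike the zero-mode reaction ${\bf R}_{0_\N}$ of Proposition~\ref{Prop: T_0R_0R}, this one needs only the decay of $u_{\neq}$ and the main multiplier $A^{\sig_1}$.

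\smallskip
\noindent\emph{Transport and remainder terms.} For ${\bf T}_{\neq_\N}$ the velocity $u_{\neq_{<\N/8}}$ is low-frequency, so no resonance is present; the commutator $u_{\neq_{<\N/8}}\cdot\nb A^{\sig_1}\theta_\N-A^{\sig_1}(u_{\neq_{<\N/8}}\cdot\nb\theta_\N)$ is controlled by the standard commutator estimate, the defect of the Lipschitz-in-frequency property of $e^{\lambda(t)|\cdot|^s}$ giving a contribution $\lesssim\eps\,{\rm CK}^{\sig_1}_{\lambda,\theta}$ (the $\eps$ from $\|u_{\neq_{<\N/8}}\|$ being $O(\eps\JB{t}^{-3})$) and the leftover error being genuinely one derivative smoother, hence $\lesssim\eps^3\JB{t}^{-3}$ once the decay $\|u_{\neq}\|\lesssim\eps\JB{t}^{-3}$ from \eqref{u_ne-decay} and the prefactor $t^{-3}$ are set against the $\JB{t}^{3/2}\cdot\JB{t}^{3/2}$ growth of the two $\theta$-factors; the subweight ratio $\mathrm{J}_k/\mathrm{J}_l$ is merely bounded here (Lemmas~\ref{lem-ratio-J} and \ref{short time}), which is why no ${\rm CK}^{\sig_1}_{w,\theta}$ appears on the right. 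For $\mathcal{R}_\neq$ the two factors $u_{\neq_\N}$ and $\theta_{\N'}$ sit at comparable frequencies $\N\approx\N'$, so there is no commutator gain but, for that very reason, the multiplier may be distributed: $s\le1$ gives $|k,\eta,\al|^s\le|l,\xi,\beta|^s+|k-l,\eta-\xi,\al-\beta|^s$, the Sobolev corrections split so that the full top norm sits on one factor and a fixed low norm on the other, and the $\langle\cdot\rangle^6$-loss incurred in passing from $\theta_{\neq}$ to $u_{\neq}$ via \eqref{u_ne-decay} is swallowed by the gap $\sigma_i-\sigma_{i+1}\ge30$; with the velocity decay $\JB{t}^{-3}$ and the prefactor $t^{-3}$ this yields $|\mathcal{R}_\neq|\lesssim\eps^3\JB{t}^{-3}$, and the ratios $\mathrm{J}_k/\mathrm{J}_l$ at comparable frequencies are again bounded by Lemma~\ref{lem-ratio-J}.

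\smallskip
\noindent\emph{Main obstacle.} The substance of the argument lies entirely in the reaction sum in its resonant regimes, $t\in{\rm I}_{k,\eta}$ and/or $t\in{\rm I}_{l,\xi}$: there the velocity kernel is transiently amplified by a factor $\approx(\eta/k)^2$ near the critical time $t\approx\eta/k$, and it is precisely this amplification that the time-dependent weight $w$ --- hence $\mathrm{J}$, hence the ${\rm CK}$ quantities --- of Section~\ref{sec:growth model} is designed to absorb; Lemmas~\ref{est:NR-NR}--\ref{R R} encode exactly that this absorption succeeds. Once those lemmas are granted, what remains is the long but routine bookkeeping of the paraproduct sums, the only delicate point being that the $\JB{t}^{-2}$-decay pieces are just barely summable against the $\JB{t}^{3/2}$ growth permitted in \eqref{En-1} --- a tightness that is exactly what dictates the $t^{-3}$ prefactor in the energy identity \eqref{e-theta}.
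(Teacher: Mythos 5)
Your proposal is correct and follows essentially the same route as the paper: paraproduct decomposition, then for ${\bf R}_{\neq_\N}$ the splitting into the $\mathfrak{U}$ case and the four resonance/non-resonance combinations handled by Lemmas~\ref{l xi nonresonant}--\ref{R R}, with the resulting three structures $\JB{t}^{-2}$, $|k,\eta,\alpha|^{s/2}|l,\xi,\beta|^{s/2}\JB{t}^{-1-s}$, and $\sqrt{\tfrac{\partial_t w_k}{w_k}}\sqrt{\tfrac{\partial_t w_l}{w_l}}$ converted into the three right-hand side terms exactly as you describe; the transport term is handled via the three-way splitting of $\tfrac{A^{\sigma_1}_k}{A^{\sigma_1}_l}-1$ with the $w$-ratio treated by Lemma~\ref{short time} (short time) and a derivative-loss/velocity-decay trade (long time), so no ${\rm CK}^{\sigma_1}_{w,\theta}$ appears, as you note. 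The only cosmetic discrepancies: for $\mathcal{R}_\neq$ the paper does not bound a ratio $\mathrm{J}_k/\mathrm{J}_l$ but instead bounds $\mathrm{J}_k$ outright by its total growth (Lemma~\ref{total growth}) and distributes the Gevrey exponent using the strict sub-additivity constant $c<1$ from \eqref{triangle2} rather than the bare sub-additivity $|x+y|^s\le|x|^s+|y|^s$; and it is the long-time piece $\T^{(2),\mathfrak{L}}_{\neq_\N}$, with its factor $\JB{t}^{-2s}$, that produces the ${\rm CK}^{\sigma_1}_{\lambda,\theta}$ contribution (the Lipschitz-in-frequency defect $\T^{(1)}$ already decays as $\JB{t}^{-3}$). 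These do not affect the validity of the argument.
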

Both Propositions~\ref{Prop: T_0R_0R} and~\ref{Prop: T_neqR_neqR} are proved in section \ref{Sec: main est1}. 

Similarly, in order to obtain the improvement of \eqref{En0-1}, we study the time derivative of $\left\|  B\theta_0(t)\right\|_{\sigma_1-2}^2$, from \eqref{eq-theta0'}, using the fact $\nb_{yz}\cdot\tl u_0=0$, we find that 
\begin{equation}\label{commutator and source}
\begin{aligned}
&\fr12\fr{d}{dt} \left\|B\theta_0(t)\right\|_{\sigma_1-2}^2
+\left\|{\pr_z}{\Dl_{yz}^{-1}} B\theta_0 \right\|_{\sig_1-2}^2
+{{\rm CK}^{\sig_1-2}_{\lambda, B\theta_0}(t)}+{{\rm CK}^{\sig_1-2}_{w,B\theta_0}(t)}\\& =\left\la \tl u_0\cdot\nb_{yz}A^{\sig_1-2}_0 B\theta_0 -A^{\sig_1-2}_0B\left(\tl u_0\cdot\nb_{yz}\theta_0 \right), A^{\sig_1-2}_0B\theta_0 \right\ra\\
& \qquad -\left\la A^{\sig_1-2}_0 B\left( u_{\ne}\cdot\nb_{xyz}\theta_{\ne} \right)_0, A^{\sig_1-2}_0B\theta_0 \right\ra\\&={  {\bf com}_{\sig_1-1}}+{\bf S}_{\sig_1-1}.
\end{aligned}
\end{equation}
Again, by applying the paraproduct decomposition, we get 
\[
{\bf com}_{\sig_1-1}=\frac{1}{2\pi}\sum_{\rm{N}\geq 8} {\tilde{\T}_{\N}} +\frac{1}{2\pi}\sum_{\rm{N}\geq 8} \tilde{\R}_{\N} +\frac{1}{2\pi} \tilde{\mathcal{R}},
\]
where
\begin{equation}\label{paraproduct}
\begin{aligned}
    &\tilde{\T}_{\N}=2\pi  \left\la \tl{u}_{{0}_{<\N/8}}\cdot\nb A_0^{\sig_1-2}B\theta_{0_\N}-A_0^{\sig_1-2}B(\tl{u}_{{0}_{<\N/8}}\cdot\nb \theta_{0_\N}) , A_0^{\sig_1-2}B\theta_0 \right\ra,\\&
    \tilde{\R}_{\N}=2\pi \left\la \tl{u}_{{0}_{\N}}\cdot\nb A_0^{\sig_1-2}B\theta_{0_{<\N/8}}-A_0^{\sig_1-2}B(\tl{u}_{{0}_{\N}}\cdot\nb \theta_{0_{<\N/8}}) , A_0^{\sig_1-2}B\theta_0 \right\ra, \\&
    \tilde{\mathcal{R}}= 2\pi \sum_{\N\in \mathbb{D}}\sum_{\fr{\rm{N}}{8}\leq \rm{N}' \leq 8\rm{N}} \left\la \tl{u}_{{0}_{\N}}\cdot\nb A_0^{\sig_1-2}B\theta_{0_{\N'}}-A_0^{\sig_1-2}B(\tl{u}_{{0}_{\N}}\cdot\nb \theta_{0_{\N'}}) , A_0^{\sig_1-2}B\theta_0 \right\ra .
\end{aligned}
\end{equation}

We have the following propositions for the estimate of ${\bf S}_{\sig_1-1}$ and ${\tilde{\T}_\mathrm{N}}, \tilde{\R}_\mathrm{N}, \tilde{\mathcal{R}}$. 
\begin{prop}\label{prop: sor}
    Under the bootstrap hypotheses, it holds that
    \begin{align*}
     |{\bf S}_{\sig_1-1}|\lesssim \epsilon \bigg(\fr{\epsilon^2}{\JB{t}^{3/2}}+{\rm CK}^{\sig_1}_{\lambda,\theta}+{\rm CK}^{\sig_1}_{w,\theta}+{\rm CK}^{\sig_1-2}_{\lambda,B\theta_0}+{\rm CK}^{\sig_1-2}_{w,B\theta_0}\bigg)
    \end{align*}
\end{prop}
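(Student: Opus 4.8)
\textbf{Proof proposal for Proposition \ref{prop: sor}.}
The plan is to estimate the source term
$\mathbf{S}_{\sigma_1-1}=-\langle A_0^{\sigma_1-2}B(u_{\ne}\cdot\nabla_{xyz}\theta_{\ne})_0, A_0^{\sigma_1-2}B\theta_0\rangle$
by a paraproduct decomposition of the bilinear term $(u_{\ne}\cdot\nabla_{xyz}\theta_{\ne})_0$, writing on the Fourier side the convolution over $(l,\xi,\beta)$ with output frequency $(0,\eta,\alpha)$, so that the two non-zero factors carry frequencies $(l,\xi,\beta)$ and $(-l,\eta-\xi,\alpha-\beta)$ with $l\neq 0$. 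I would split the sum according to which factor is at higher frequency: a \emph{transport-type} piece where $\theta_{\ne}$ is at high frequency (call it $\mathbf{S}^{\mathrm{HH}}$ when both are comparable, and $\mathbf{S}^{\mathrm{HL}}_{\theta}$ when $\theta_{\ne}$ dominates) and a \emph{reaction-type} piece $\mathbf{S}^{\mathrm{HL}}_{u}$ where $u_{\ne}$ dominates. In each piece I would use $|\widehat{u}_{\ne}|\lesssim \frac{\langle t\rangle(l^2+\beta^2)+|l(\xi-lt)|+|\beta(\xi-lt)|}{(l^2+(\xi-lt)^2+\beta^2)^2}|\widehat{\theta}_{\ne}|$ from \eqref{u_ne-decay}, and absorb the ratio of $A^{\sigma_1-2}_0B$ multipliers into the factors using the commutator gain $|A^{\sigma_1-2}_0(\eta,\alpha)-A^{\sigma_1-2}_0(\xi,\beta)\cdot(\text{low factor})|$ controlled by the low-frequency factor times a Gevrey loss $e^{\mathbf{c}\lambda|l,\eta-\xi,\alpha-\beta|^s}$ which is then eaten by the Gevrey norm of the low-frequency factor.

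For the reaction term $\mathbf{S}^{\mathrm{HL}}_{u}$, where $u_{\ne}$ is at frequency $\sim(l,\xi,\beta)$ and $\theta_{\ne}$ at low frequency $\lesssim(l,\eta-\xi,\alpha-\beta)$, this is where the multiplier estimates of Section 2 are used: the ratio $\frac{A^{\sigma_1-2}_0 B(\eta,\alpha)}{A^{\sigma_1}_l(\xi,\beta)}$ combined with the velocity kernel is exactly what Lemmas \ref{l xi nonresonant}, \ref{est:NR-NR}, \ref{R NR}, \ref{est: NR-R}, \ref{R R} and, when $|\beta|$ dominates, Lemma \ref{lem: zero-mode-Reaction} are designed to bound. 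After applying these, each term is controlled by
$\bigl(\frac{1}{\langle t\rangle^2}+\frac{|l,\xi,\beta|^{s/2}|\eta,\alpha|^{s/2}}{\langle t\rangle^{1+s}}+\sqrt{\frac{\partial_t w_l}{w_l}}\sqrt{\frac{\partial_t w_0}{w_0}}\bigr)$
times the Gevrey weights. The $\frac{1}{\langle t\rangle^2}$ piece, paired with $\|t^{-3/2}\theta\|_{\sigma_1}\lesssim\epsilon\langle t\rangle^{3/2}$ from \eqref{En-1} for the high factor (noting $B\theta_0$ has two fewer derivatives, so $\sigma_1-2$ on the output matches $\sigma_1$ on $\theta_{\ne}$ up to the factor $B$, and $\theta_0$ vs.\ $\theta$ loses one derivative absorbed by the $\sigma_i$ gaps), the low factor bounded by $\|\theta\|_{\mathcal{G}^{\lambda,\sigma_5}}\lesssim\epsilon$ via \eqref{En-5}, and the test function $\|B\theta_0\|_{\sigma_1-2}\lesssim\epsilon$, yields the $\epsilon\cdot\frac{\epsilon^2}{\langle t\rangle^{3/2}}$ contribution — here the bookkeeping of how the $\langle t\rangle^{3/2}$ growth of the top norm interacts with the $\langle t\rangle^{-2}$ decay to produce the claimed $\langle t\rangle^{-3/2}$ is the crux. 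The $\langle t\rangle^{-1-s}$ piece, with $s>1/2$, decays faster and also produces an $\epsilon^3\langle t\rangle^{-3/2}$-type term after distributing the $|l,\xi,\beta|^{s/2}$ and $|\eta,\alpha|^{s/2}$ losses onto the Gevrey exponentials via $\lambda(t)|\cdot|^s$ and Young's inequality (this is where $\dot\lambda<0$ from \eqref{def-lambda} is spent to absorb $\mathrm{CK}^{\sigma_1}_{\lambda,\theta}$, $\mathrm{CK}^{\sigma_1-2}_{\lambda,B\theta_0}$). The $\sqrt{\partial_t w/w}$ piece is handled by Cauchy--Schwarz in the convolution variable, producing $\mathrm{CK}^{\sigma_1}_{w,\theta}$ and $\mathrm{CK}^{\sigma_1-2}_{w,B\theta_0}$ times $\epsilon$.

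For the transport term $\mathbf{S}^{\mathrm{HL}}_\theta$, where $\theta_{\ne}$ is at high frequency and $u_{\ne}$ at low frequency, the commutator structure gives a gain: $A^{\sigma_1-2}_0B(\eta,\alpha)$ applied to the high factor minus the high factor's own multiplier is bounded by the low frequency, so one does not lose the full multiplier ratio; instead $u_{\ne\text{low}}$ is placed in a decaying norm using \eqref{u_ne-decay} (which gives $\langle t\rangle^{-3}$ decay after paying regularity, comfortably absorbed into the $\sigma_i$-gaps), and $\theta_{\ne\text{high}}$ and $B\theta_0$ go in the top norm. The high-high term $\mathbf{S}^{\mathrm{HH}}$ is similar but easier since there is more room in the frequency bookkeeping. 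The main obstacle I anticipate is the reaction term's resonant regime: when the low factor $\theta_{\ne}$ has frequency with $|l,\eta-\xi,\alpha-\beta|$ not small compared to $|l,\xi,\beta|$ one is outside $\mathfrak{A}$, and one must either use the crude bound $\frac{A_0^{\sigma_1-2}B(\eta,\alpha)}{A_l^{\sigma_1}(\xi,\beta)}\lesssim e^{c\lambda|l,\eta-\xi,\alpha-\beta|^s}$ (no multiplier growth but also no decay help, relying purely on the velocity kernel $\langle t\rangle^{-2}$ and the extra derivatives in the Sobolev gaps) or track the $B$ multiplier carefully when $|\beta|\gtrsim|\xi|$ via Lemma \ref{lem: zero-mode-Reaction}; ensuring the $B$-anisotropy does not destroy the estimate when $\alpha\neq 0$ but $|\alpha|$ is not the dominant frequency is the delicate point, and is precisely why the lowest norm \eqref{En-7} and the careful choice of $B(\eta,\alpha)=\langle|\eta|^{1/2},\alpha\rangle$ were introduced.
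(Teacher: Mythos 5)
You have the right scaffolding — HL/LH paraproduct split, the velocity bound \eqref{u_ne-decay}, a resonant/non-resonant dichotomy, Young's inequality against $-\dot\lambda$ to produce $\mathrm{CK}_\lambda$ terms, and $\sqrt{\partial_t w/w}$-Cauchy--Schwarz for $\mathrm{CK}_w$ — and you correctly flag the $\langle t\rangle^{3/2}$ weight imbalance between $\|t^{-3/2}\theta\|_{\sigma_1}$ and $\|B\theta_0\|_{\sigma_1-2}$ as the crux. But this is exactly where the plan breaks: you cannot resolve that imbalance by citing Lemmas~\ref{l xi nonresonant}--\ref{R R} or Lemma~\ref{lem: zero-mode-Reaction}, because none of them bounds the ratio that actually appears in $\mathbf{S}_{\sigma_1-1}^{\rm HL}$.

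The relevant ratio is $\frac{A_0^{\sigma_1-2}(\eta,\alpha)B(\eta,\alpha)}{A^{\sigma_1}_k(\xi,\beta)}$, carrying two fewer Sobolev derivatives on top and the anisotropic $B=\langle|\eta|^{1/2},\alpha\rangle$. After using $|\eta,\alpha|\approx|\xi,\beta|$ and \eqref{ratio of B}, it contributes an \emph{extra} factor $\frac{\langle|\xi|^{1/2},\beta\rangle}{\langle\xi,\beta\rangle^{2}}$ multiplying $\frac{J_0}{J_k}$. Lemmas~\ref{l xi nonresonant}--\ref{R R} are built for $\frac{A^{\sigma_1}_k}{A^{\sigma_1}_l}$ with no such factor, and give decay $t^{-2}$, $t^{-1-s}$, or the $\sqrt{\partial_t w/w}$ product. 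If you use that output and then multiply by the $t^{3/2}$ needed to match the weighted norms, the $t^{-2}$ branch yields $\epsilon^3 t^{-1/2}$ (not the claimed $\epsilon^3 t^{-3/2}$, and not time-integrable), and the $t^{-1-s}$ branch yields $\epsilon^3 t^{1/2-s}$, which cannot be absorbed into $\mathrm{CK}_\lambda\approx t^{-1-a}\|\cdot\|^2$ because $1/2-s\le -1-a$ would force $s\ge 3/2+a$, contradicting $s\le 1$. So the proof does not close as proposed. Lemma~\ref{lem: zero-mode-Reaction} is also misapplied: it bounds the \emph{opposite} ratio $\frac{A^{\sigma_1}_k}{A^{\sigma_1}_0 B}$ and is used for the $u_0\cdot\nabla\theta$ reaction term in the high $\theta$-norm estimate (Proposition~\ref{Prop: T_0R_0R}), not for the $\theta_0$ source term.

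The paper instead runs a self-contained computation (see \eqref{G-2 req-1} and \eqref{up-growth2}--\eqref{up-growth3''}) that carries the extra $\frac{\langle|\xi|^{1/2},\beta\rangle}{\langle\xi,\beta\rangle^{2}}$ through the resonant/non-resonant case analysis. Near the critical time $t\approx\xi/k$ this factor is worth roughly an extra $t^{-1}$, upgrading the net multiplier estimate (after absorbing the $t^{3/2}$) to $\frac{|\beta|^{s/2}|\alpha|^{s/2}}{\langle t\rangle^{1/2+s}}\langle k\rangle^6$ plus $\sqrt{\partial_t w/w}$-terms, and now $t^{-1/2-s}\lesssim -\dot\lambda\approx t^{-1-a}$ precisely because the choice $a<s-1/2$ in \eqref{def-lambda} was made. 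The $\epsilon^3/\langle t\rangle^{3/2}$ term in the proposition actually comes from the LH piece, where $u_{\ne}$ is the low-frequency factor and \eqref{u_ne-decay} gives $t^{-3}$ decay outright, rather than from the reaction piece you focus on.
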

\begin{prop}\label{prop: tildeT-R-R}
    Under the bootstrap hypotheses, it holds that
    \begin{align*}
        &\sum_{\N\geq 8}|{\tilde{\T}_{\N}}|\lesssim \fr{\epsilon^3}{\JB{t}^2}+\epsilon {\rm CK}^{\sig_1-2}_{\lambda,B\theta_0},\\
        &\sum_{\N\geq 8}|{\tilde{\R}_{\N}}|\lesssim  \fr{\epsilon^3}{\JB{t}^2}
        +\epsilon \left\|\pr_z{\Dl_{yz}^{-1}} B\theta_0 \right\|_{\sig_1-2}^2,\quad\text{and}\quad 
        |{\tilde{\mathcal{R}}}|\lesssim \fr{\epsilon^3}{\JB{t}^2}.
    \end{align*}
\end{prop}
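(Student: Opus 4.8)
The plan is to estimate the three pieces $\tilde{\T}_\N$, $\tilde{\R}_\N$, $\tilde{\mathcal{R}}$ arising from the paraproduct decomposition of the commutator ${\bf com}_{\sig_1-1}$ in \eqref{commutator and source}, following the now-standard commutator/paraproduct strategy of \cite{BM1}, but paying attention to the fact that here we carry the anisotropic multiplier $B(\eta,\alpha)=\la|\eta|^{\fr12},\alpha\ra$ and that the transport field $\tl u_0=(\pr_{zz},-\pr_{yz})\Delta_{yz}^{-2}\theta_0$ is divergence-free in $(y,z)$ with decay governed by the bootstrap hypotheses \eqref{En0-1}--\eqref{En-7}. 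In each case one writes the difference of symbols $A_0^{\sig_1-2}(\eta,\alpha)B(\eta,\alpha) - A_0^{\sig_1-2}(\xi,\beta)B(\xi,\beta)$ acting on the product, uses the frequency localizations ($|\eta-\xi,\alpha-\beta|$ comparable to the small factor in $\tilde{\T}$, to the large factor in $\tilde{\R}$, comparable in $\tilde{\mathcal{R}}$), and then invokes the standard inequalities $|k,\eta,\alpha|^s\leq |l,\xi,\beta|^s+|k-l,\eta-\xi,\alpha-\beta|^s$ together with Lemma~\ref{short time} and the ratio bounds of Lemma~\ref{lem-ratio-J} to transfer regularity onto the low-frequency factor.

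For the transport term $\tilde{\T}_\N$: since $\tl u_{0_{<\N/8}}$ is at low frequency and divergence free, the symbol difference produces a commutator of order one less, compensated by the extra derivative in $B$ acting on $\theta_{0_\N}$; one extracts the $\dot\lambda$-gain to absorb the loss into ${\rm CK}^{\sig_1-2}_{\lambda,B\theta_0}$ and estimates the low-frequency velocity by its $L^\infty$-type norm, which by \eqref{eq: tlu_0}, the bound $|\widehat{\tl u_0}|\lesssim \la\eta,\alpha\ra^{\text{(few derivatives)}}\JB{t}^{-3}|\widehat{\theta_0}|$ (coming from $\pr_{zz}\Delta_{yz}^{-2}$ and $\pr_{yz}\Delta_{yz}^{-2}$ and the $\alpha\neq0$ damping estimate \eqref{eq:theta_0-estimate}) together with \eqref{En-7} for the $\Pe^z_0\theta_0$ piece, yields a factor $\epsilon\JB{t}^{-3}$ or better times $\|B\theta_0\|_{\sig_1-2}\|B\theta_0\|_{\sig_1-2}\lesssim\epsilon^2$; combined this gives the claimed $\epsilon^3\JB{t}^{-2}+\epsilon\,{\rm CK}^{\sig_1-2}_{\lambda,B\theta_0}$. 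For the remainder $\tilde{\mathcal{R}}$, all three frequencies are comparable, so no multiplier is resonant, the ratio of $A_0$'s is bounded by $e^{c\lambda|k-l,\dots|^s}$ by Lemma~\ref{lem-ratio-J}, and one simply pays a fixed number of derivatives from the high norm \eqref{En0-1} against the decay of $\tl u_0$ (again $\lesssim\epsilon\JB{t}^{-3}$), producing $\epsilon^3\JB{t}^{-2}$ with room to spare.

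The main obstacle is the reaction term $\tilde{\R}_\N$, where the high-frequency factor lands on the velocity $\tl u_{0_\N}$ and the low-frequency factor on $\theta_{0_{<\N/8}}$. Here the symbol $A_0^{\sig_1-2}B$ is essentially evaluated at the frequency of $\tl u_{0_\N}$, so one cannot simply pull derivatives onto the velocity without losing the top norm. The resolution is to use the structure of $\tl u_0$: writing $\tl u_{0_\N}=(\pr_{zz},-\pr_{yz})\Delta_{yz}^{-2}\theta_{0_\N}$, the leading term $\pr_z\Delta_{yz}^{-1}$ applied to $B\theta_0$ is exactly the quantity appearing in the damping integral in \textbf{B4} and in the dissipation term on the left of \eqref{commutator and source}; one therefore bounds $\tilde{\R}_\N$ by $\epsilon\|\pr_z\Delta_{yz}^{-1}B\theta_0\|_{\sig_1-2}^2$ (absorbed into the left side) plus a fully decaying $\epsilon^3\JB{t}^{-2}$ remainder, using \eqref{En0-1} and the $B$-commutator estimate together with Lemma~\ref{lem-ratio-J} to handle the ratio of multipliers, and the elementary inequality $|\pr_z\Delta_{yz}^{-2}|\lesssim|\eta|^{-1}|\pr_z\Delta_{yz}^{-1}|$ on the low-frequency factor to make the anisotropic weight $B$ close. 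The delicate point throughout is verifying that $B$ produces a genuine commutator gain — this is exactly the content of the remark after \eqref{B} ("the anisotropic multiplier $B$ still provides us with a good commutator estimate"), which one proves by the mean value theorem on $\la|\eta|^{\fr12},\alpha\ra$ exploiting that $|\eta-\xi|\lesssim|\xi|^{1/2}|\eta|^{1/2}$-type bounds hold on the relevant frequency supports, so the half-derivative in $B$ is never lost.
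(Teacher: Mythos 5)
Your proposal follows essentially the same route as the paper: paraproduct decomposition of the commutator, a four-piece expansion of the multiplier ratio (exponential, $\mathrm{J}$-ratio via Lemmas \ref{short time} and \ref{lem-ratio-J}, Sobolev ratio, and $B$-ratio via the bound $|B(\eta,\alpha)/B(\xi,\beta)-1|\lesssim \la\eta-\xi,\alpha-\beta\ra^2/\la\xi,\beta\ra^{1/2}$) for the transport term absorbed into $\epsilon\,{\rm CK}^{\sig_1-2}_{\lambda,B\theta_0}$, the crucial extraction of $\pr_z\Delta_{yz}^{-1}B\theta_0$ from the structure $\tl u_0=(\pr_{zz},-\pr_{yz})\Delta_{yz}^{-2}\theta_0$ for the reaction term (with the damping absorbing the $\alpha-\beta=0$ contribution, where \eqref{En-7} applies, and the $\epsilon\JB{t}^{-3}$ decay of $\|\Pe^z_\ne\theta_0\|_{\mathcal{G}^{\lambda,\sigma_6}}$ handling the rest), and a crude Gevrey-$\frac12$ bound for the remainder. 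Two small corrections to align with the paper's execution: $\tl u_0$ contains only $\Pe^z_\ne\theta_0$ (every component carries a $\pr_z$), so \eqref{En-7} is needed for the low-frequency factor $\nb\theta_{0_{<\N/8}}$ inside $\tilde{\R}_\N$ rather than for $\tl u_0$ in $\tilde{\T}_\N$; and for the remainder, the paper does not invoke Lemma \ref{lem-ratio-J} (whose set $\mathrm{C}$ needs $\mathfrak{A}_0$, which fails when all three frequencies are comparable) but rather bounds $\mathrm{J}_0$ absolutely via Lemma \ref{total growth} and pays the Gevrey-$\frac12$ loss against the Gevrey-$s$ gain with $s>\frac12$.
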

Both Propositions~\ref{prop: sor} and~\ref{prop: tildeT-R-R} are proved in section \ref{sec: main-2}. 

\subsection{Proof of Theorem \ref{Thm: main}}
We complete the proof of Theorem \ref{Thm: main} by proving the scattering result. Let us present the proof in the new coordinate system.  We start by integrating in time the first equation in \eqref{eq-theta'}, since $\theta$ is uniformly bounded and both $\Delta_{xz}\Delta^{-2}_L \theta$ and $u$ are time-integrable. We define $\theta_\infty$ as follows
\[
\theta_\infty=\theta(10)- \int_{10}^\infty \left (u \cdot \nabla\theta +\Delta_{xz}\Delta^{-2}_L \theta \right) dt.
\]
Via the algebra property of Gevrey space for $\lambda_\infty<\lambda_{\rm{in}}$, Minkowski inequality, and Proposition \ref{bootstrap}, we can infer that 
\begin{align*}
&\norm{\theta(t)-\theta{_\infty}}_{\mathcal{G}^{\lambda_\infty}}= \norm{\int_{t}^\infty u\cdot \nabla\theta+ \Delta_{xz}\Delta^{-2}_L \theta}_{\mathcal{G}^{\lambda_\infty}}d\tau\\
&\lesssim \int_{t}^\infty \norm{u}_{\mathcal{G}^{\lambda_\infty}}\norm{\nabla\theta}_{\mathcal{G}^{\lambda_\infty}}d\tau+\int_{t}^\infty \norm{\Delta_{xz}\Delta^{-2}_L \theta}_{\mathcal{G}^{\lambda_\infty}}d\tau\\&
\lesssim  \int_{t}^\infty \norm{u_0}_{\mathcal{G}^{\lambda_\infty}}\left(\norm{\nabla\theta_{\ne}}_{\mathcal{G}^{\lambda_\infty}}+\norm{\nabla_{yz}\theta_{0}}_{\mathcal{G}^{\lambda_\infty}}\right)+\norm{u_{\ne}}_{\mathcal{G}^{\lambda_\infty}}\left(\norm{\nabla\theta_{\ne}}_{\mathcal{G}^{\lambda_\infty}}+\norm{\nabla_{yz}\theta_{0}}_{\mathcal{G}^{\lambda_\infty}}\right)d\tau \\&\qquad+\int_{t}^\infty \norm{\Delta_{xz}\Delta^{-2}_L \theta_{\ne}+\partial_{zz}\Delta^{-2}_{y,z} \theta_{0}}_{\mathcal{G}^{\lambda_\infty}}d\tau
\\&\lesssim\epsilon \int_{t}^\infty \frac{\epsilon}{\JB{\tau}^2}d\tau+\int_t^\infty \frac{\epsilon}{\tau^3}d\tau\leq C\left(\frac{\epsilon^2}{t}+\frac{\epsilon}{t^2}\right).
\end{align*}
The other decay estimates in Theorem \ref{Thm: main} follow directly from Proposition \ref{bootstrap}, \eqref{eq:V-bd}, and the fact that $\|V_{\neq}^{(j)}\|_{L^2}=\|U_{\neq}^{(j)}\|_{L^2}$ for $j=1,2,3$.


\section{Main energy estimates}\label{Sec: main est1}
Now let us investigate the evolution of $\|\theta(t)\|_{\sig_1}^2$. We first consider the zero-mode equation given by  ${\bf com}_{\sigma_1;0}$. In light of \eqref{paraproduct decomposition nonzero mode} (with $\neq$ replaced by $0$), 
we write
\begin{align}
\T_{{0}_\N}\nn=& t^{-3}i\sum_{k,\al,\beta\in\Z}\int_{\eta,\xi} \left(1-\fr{A_k^{\sig_1}(t,\eta,\al)}{A_k^{\sig_1}(t,\xi,\beta)}\right)\widehat{u}_{0}(t,\eta-\xi,\al-\beta)_{<\N/8}\cdot (k,\xi,\beta)\\
\nn&\qquad \qquad \qquad \qquad \times \left(\widehat{A^{\sig_1}\theta}_k(t,\xi,\beta)_\N\right) \left(\overline{\widehat{A^{\sig_1}\theta}}_k(t,\eta,\al)\right) d\xi d\eta,
\end{align}
and
\begin{align}
\R_{0_{\N}}\nn=&-t^{-3} i\sum_{k,\al,\beta\in\Z}\int_{\eta,\xi} A_k^{\sig_1}(t,\eta,\al)\widehat{u}_0(t,\xi, \beta)_\N\cdot (k,\eta-\xi,\al-\beta)\\
\nn&\qquad \qquad \qquad \qquad \times \widehat{\theta}_k(t,\eta-\xi,\al-\beta)_{\N/8} \left(\overline{\widehat{A^{\sig_1}\theta}}_k(t,\eta,\al)\right) d\xi d\eta\\
\nn&+ t^{-3} i\sum_{k,\al,\beta\in\Z}\int_{\eta,\xi} \widehat{u}_0(t, \xi,\beta)_\N \cdot (k,\eta-\xi,\al-\beta)\\
\nn&\qquad \qquad \qquad \qquad \times \left(\widehat{A^{\sig_1}\theta}_k(t,\eta-\xi,\al-\beta)_{\N/8}\right) \left(\overline{\widehat{A^{\sig_1}\theta}}_k(t,\eta,\al)\right) d\xi d\eta\\
\nn=&\R^{(1)}_{0_{\N}}+\R^{(2)}_{0_{\N}}.
\end{align}
Before we move on to discuss estimates for the reaction terms above, we display the following useful inequality to pass the norm from $\sigma_1$ to $\sigma_1-2$. Recalling \eqref{exp-u}, it is true that
\begin{equation}
\begin{aligned}\label{up-u0-1}
\left|\widehat{A^{\sig_1}u}_0(t,\xi, \beta) \right|
\les&A_0^{\sig_1-2}(t,\xi,\beta)\fr{\la t\ra|\beta|^2+|\xi\beta|}{\xi^2+\beta^2}\big|\widehat{\theta}_0(t,\xi,\beta)\big|\\
\les&A_0^{\sig_1-2}(t,\xi,\beta)\left(\fr{\la t\ra|\beta|}{\xi^2+\beta^2}+1 \right)|\beta|\big|\widehat{\theta}_0(t,\xi,\beta)\big|.
\end{aligned}
\end{equation}
Next, we then investigate the reaction terms specified above.
\subsection{Reaction term (zero-mode interactions)}  On the support of the integrand of $\R_{0_{\N}}$, via the paraproduct decomposition, we have the following frequency localizations:
\[
\fr{\N}{2}\le|\xi,\beta|\le\fr{3\N}{2},\quad |k,\eta-\xi,\al-\beta|\le\fr{3}{4}\fr{\N}{8}.
\]
Consequently, \eqref{frequency1} holds.

\subsubsection{Treatment of $\R^{(1)}_{0_{\N}}$} 
\label{Nonresonant}

We first perform the following splitting:
\begin{align*}
\R^{(1)}_{0_{\N}}=&- it^{-3}\sum_{k,\al,\beta\in\Z}\int_{\eta,\xi} \big[{\mathds{1}}_{|\xi|< 100|\beta|}+{\mathds{1}}_{|\xi|>100|\beta|}\big]\fr{A_k^{\sig_1}(t,\eta,\al)}{A_0^{\sig_1}(t,\xi,\beta)}(\widehat{A^{\sig_1}u}_0)(t,\xi, \beta)_\N\cdot (k,\eta-\xi,\al-\beta)\\
&\qquad \qquad \qquad \qquad  \times \widehat{\theta}_k(t,\eta-\xi,\al-\beta)_{<\N/8} \Big(\overline{\widehat{A^{\sig_1}\theta}}_k(t,\eta,\al)\Big)\; d\xi d\eta\\
=&\R^{(1),z}_{0_{\N}}+\R^{(1),y}_{0_{\N}}.
\end{align*}
The first term in the summation above can be estimated via Lemma~\ref{lem: zero-mode-Reaction} along with the first inequality in \eqref{up-u0-1}
which then yields
\begin{align*}
   |\R^{(1),z}_{0_{\N}}|\lesssim \epsilon\|\pr_z\Dl_{yz}^{-1}B\theta_{0_{\N}}\|_{\sig_1-2}\|t^{-2}\theta_{\sim \N}\|_{\sig_1}
   +\epsilon\left\|\fr{|\nabla|^{\fr{s}{2}}}{\JB{t}^{\fr{s+1}{2}}}B\theta_{0_{\N}}\right\|_{\sig_1-2}\left\|\sqrt{\fr{\partial_tw}{w}}\fr{\theta_{\sim \N}}{t^{\fr{3}{2}}}\right\|_{\sig_1}.
\end{align*}

Before turning to the estimates of $\R^{(1),y}_{0_{\N}}$, we would like to remark that since now $l=0$ and \eqref{frequency1} holds, if $|\xi|>100|\beta|$, by elementary calculation as in Remark \ref{rem-iota}, we obtain that
\[
\fr{1297}{1600}|\xi|\le|\eta|\le\fr{1903}{1600}|\xi|,\quad |k|\le\fr{303}{1297}|\eta|,\quad |\al|\le\fr{319}{1297}|\eta|,
\]
and hence $\xi=\iota(0,\xi,\beta)$ and $\eta=\iota(k,\eta,\alpha)$.

The upper-bound of $\R^{(1),y}_{0_{\N}}$ varies depending on whether the modes $(k,\eta,\alpha)$ is resonant or non-resonant.  More precisely,
\begin{align*}
\R^{(1),y}_{0_{\N}}=&- it^{-3}\sum_{k,\al,\beta\in\Z}\int_{\eta,\xi} {\mathds{1}}_{|\xi|>100|\beta|}\big[{\mathds{1}}_{t\notin\rm{I}_{k,\eta}}+{\mathds{1}}_{t\in\rm{I}_{k,\eta}}\big]\fr{A_k^{\sig_1}(t,\eta,\al)}{A_0^{\sig_1}(t,\xi,\beta)}(\widehat{A^{\sig_1}u}_0)(t,\xi, \beta)_\N\cdot (k,\eta-\xi,\al-\beta)\\
&\qquad \qquad \qquad \qquad  \times \widehat{\theta}_k(t,\eta-\xi,\al-\beta)_{<\N/8} \overline{\widehat{A^{\sig_1}\theta}}_k(t,\eta,\al) d\xi d\eta\\
=&\R^{(1),\nr,\nr}_{0_{\N}}+\R^{(1),\res,\nr}_{0_{\N}}.
\end{align*}

 We present the estimate of the first term in the decomposition of $\R^{(1),y}_{0_{\N}}$ below. Since here the interaction is between two non-resonant modes, Lemma~\ref{lem-ratio-J} shows that
\[
\begin{aligned}
{\mathds{1}}_{t\notin{\rm I}_{k,\eta}}\fr{{\rm J}_k(t,\eta,\al)}{{\rm J}_0(t,\xi,\beta)}
\lesssim e^{2 \mu |k,\eta-\beta,\alpha-\beta|^{\fr{1}{2}}}.
\end{aligned}
\]
Using this estimate together with \eqref{up-u0-1}, we obtain
\begin{align}\label{e-HL101}
\left| \R^{(1), \nr,\nr}_{0_{\N}}\right| 
\nn\les&t^{-3}\sum_{k,\al,\beta\in\Z}\int_{\eta,\xi}  \left(\fr{\la t\ra|\beta|}{\xi^2+\beta^2}+1 \right)\left|\left(\reallywidehat{A^{\sig_1-2}B\theta}_0\right)(t,\xi,\beta)_\N\right|\\
\nn&\qquad \qquad \qquad\times \left|e^{{\bf c}\lm(t)|k, \eta-\xi,\al-\beta|^s}\widehat{\theta}_k(t,\eta-\xi,\al-\beta)_{<\N/8}\right| \left| {\widehat{A^{\sig_1}\theta}}_k(t,\eta,\al)\right|d\xi d\eta\\
\les& \left(\left\|{\pr_z}{\Dl_{yz}^{-1}}B\theta_{0_{\N}}\right\|_{\sig_1-2}\|t^{-2}\theta_{\sim \N}\|_{\sig_1}+\fr{1}{t^{\fr32}} \left\|B\theta_{0_{\N}}\right\|_{\sig_1-2}\left\|t^{-\fr32}\theta_{\sim \N}\right\|_{\sig_1}\right) \|\theta\|_{\mathcal{G}^{\lambda,\sig_5}}.
\end{align}

Next, we estimate the second term in the decomposition of $\R^{(1),y}_{0_{\N}}$. Again, by Lemma~\ref{lem-ratio-J},  we have
\begin{equation}\label{ratio-Jk0}
\begin{aligned} 
{\mathds{1}}_{t\in {\rm I}_{k,\eta}}\fr{{\rm J}_k(t,\eta,\al)}{{\rm J}_0(t,\xi,\beta)}
\lesssim \frac{|\eta|}{k^2(1+|t-\frac{\eta}{k}|)} e^{2\mu|k,\eta-\xi,\alpha-\beta|^{s}}.
\end{aligned}
\end{equation}
 Let us consider the following two cases:

{\bf Case 1: $t\in{\rm I}_{k,\eta}\cap{\rm I}_{k,\xi}^c$}. In this case, the fact that $t\notin {\rm I}_{k,\xi}$ implies  $\fr{|\xi|}{k^2(1+|t-\fr{\xi}{k}|)}\les 1 $. Hence, we have
\be
\fr{|\eta|}{|k|^2}\fr{1}{1+\left|t-\fr{\eta}{k}\right|}=\fr{|\eta|-|\xi|+|\xi|}{|k|^2}\fr{1}{1+|t-\fr{\xi}{k}|}\fr{1+|t-\fr{\xi}{k}|}{1+|t-\fr{\eta}{k}|}\les \la\eta-\xi\ra.
\ee
Then the estimate of $\R^{(1),\res,\nr}_{0_{\N}}$ for the corresponding case is the same as the one for $\R^{(1),\nr,\nr}_{0_{\N}}$.

{\bf Case 2: $t\in{\rm I}_{k,\eta}\cap{\rm I}_{k,\xi}$}. 
Due to the fact that $t\approx \fr{|\xi|}{|k|}\approx \fr{|\eta|}{|k|} $, one easily deduces that
\[
 \fr{\la t\ra|\beta|^2+|\xi\beta|}{(\xi^2+\beta^2)^{2}} \fr{|\xi|}{|k|^2}{\mathds{1}}_{t\in{\rm I}_{k,\xi}}\les1.
\]
Combining the above two inequalities with \eqref{ratio-A-1} and \eqref{ratio-Jk0}, and in view of  \eqref{exp-u} again, we  infer that 
\begin{align*}
\left|\R^{(1),\res,\nr}_{0_{\N}}\right|
\lesssim& t^{-3} \sum_{k,\al,\beta\in\Z}\int_{\eta,\xi} \left|(\widehat{A^{\sig_1}\theta}_0)(t,\xi,\beta)_{\N}\right|\fr{\la t\ra|\beta|^2+|\xi\beta|}{(\xi^2+\beta^2)^2}\frac{{\mathds{1}}_{t\in{\rm I}_{k,\eta}\cap{\rm I}_{k,\xi}}|\eta|}{k^2(1+|t-\frac{\eta}{k}|)} e^{2\mu|k,\eta-\xi,\alpha-\beta|^{\fr12}}\\
\nn&\qquad \qquad \qquad \times \left|e^{c\lm(t)| k, \eta-\xi,\al-\beta|^{s}}\widehat{\nb \theta}_k(t,\eta-\xi,\al-\beta)_{<\N/8}\right| \left|{\widehat{A^{\sig_1}\theta}}_k(t,\eta,\al)\right|d\xi d\eta\\
\lesssim& t^{-3}\sum_{k,\al,\beta\in\Z}\int_{\eta,\xi}  \left|(\widehat{A^{\sig_1}\theta}_0)(t,\xi,\beta)_{\N}\right|\frac{{\mathds{1}}_{t\in{\rm I}_{k,\eta}}}{\sqrt{1+|t-\frac{\eta}{k}|}}\frac{{\mathds{1}}_{t\in{\rm I}_{k,\xi}}}{ \sqrt{1+|t-\frac{\xi}{k}|}}\\
\nn&\qquad \qquad \qquad  \times \left|e^{\mathbf{c}\lm(t)| k, \eta-\xi,\al-\beta|^s}\widehat{ \theta}_k(t,\eta-\xi,\al-\beta)_{<\N/8}\right| \left|{\widehat{A^{\sig_1}\theta}}_k(t,\eta,\al)\right|d\xi d\eta\\
\lesssim& t^{-3}\sum_{k,\al,\beta\in\Z}\int_{\eta,\xi}  \left|(\widehat{A^{\sig_1}\theta}_0)(t,\xi,\beta)_{\N}\right| \sqrt{\frac{\partial_t w_0(t,\iota(0,\xi,\beta))}{w_0(t,\iota(0,\xi,\beta))}}\sqrt{\frac{\partial_t w_k(t,\iota(k,\eta,\alpha))}{w_k(t,\iota(k,\eta,\alpha))}}\\
\nn&\qquad \qquad \qquad \times \left|e^{\mathbf{c}\lm(t)| k, \eta-\xi,\al-\beta|^s}\widehat{ \theta}_k(t,\eta-\xi,\al-\beta)_{<\N/8}\right| \left|{\widehat{A^{\sig_1}\theta}}_k(t,\eta,\al)\right|d\xi d\eta\\
\lesssim& \norm{\theta}_{\mathcal{G}^{\lm,\sigma_5}} \norm{t^{-\fr32}\sqrt{\fr{\partial_tw}{w}}A^{\sigma_1}_0 {\theta_{0_{\N}}}}_{L^2} \norm{t^{-\fr32}\sqrt{\fr{\partial_tw}{w}}A^{\sigma_1} \theta_{\sim \N}}_{L^2}.
\end{align*}

\subsubsection{Treatment of $\R^{(2)}_{0_{\N}}$} \label{resonant} 
Now, we move on to discussing $\R^{(2)}_{0_{\N}}$. Noting that now it holds that $|k,\eta-\xi,\al-\beta|\le\fr{3}{16}|\xi,\beta|$, then by Lemma \ref{total growth} we have
\[
A^{\sig_1}_k(t,\eta-\xi,\al-\beta)\les e^{\lm(t)|k,\eta-\xi,\al-\beta|^s}\la \xi,\beta \ra^{\sig_1}e^{\fr{\mu}{2}|\iota(k,\eta-\xi,\al-\beta)|^\fr12}\les e^{c\lm(t)|\xi,\beta|^s},
\]
for some $c\in(0,1)$. Combining this with \eqref{exp-u} yields that
\begin{align*}
\big|\R^{(2)}_{0_{\N}}\big|
&\lesssim t^{-3}\sum_{k,\al,\beta\in\Z}\int_{\eta,\xi}  \left| e^{c\lm(t)|\xi,\beta|^s}|\xi,\beta|  \fr{\la t\ra|\beta|+1}{\xi^2+\beta^2}\la|\xi|^{\fr12},\beta\ra \widehat{\theta}_0(t,\xi,\beta)_\N \right| \\& \qquad \qquad \qquad \qquad \times    \left| \widehat{\theta}_k(t,\eta-\xi,\al-\beta)_{<\N/8}\right|\left|\overline{\widehat{A^{\sig_1}\theta}}_k(t,\eta,\al)\right| \;d\xi d\eta\\&
\lesssim \epsilon  \left(\left\|{\pr_z}{\Dl_{yz}^{-1}}B\theta_{0_{\N}}\right\|_{\sig_1-2}\norm{t^{-2}\theta_{\sim \N}}_{\sigma_1}+\fr{1}{t^{\fr32}}\left\|B\theta_{0_{\N}}\right\|_{\sig_1-2}  \norm{t^{-\fr32}\theta_{\sim \N}}_{\sigma_1}\right).
\end{align*}


\subsection{Transport term (zero-mode interactions)} We now proceed and focus on the transport term $\T_{0_\N}$.
Before we begin any estimation, a direct calculation yields
\begin{align}
\fr{A_k^{\sig_1}(t,\eta,\al)}{A_k^{\sig_1}(t,\xi,\beta)}-1\nn=&\left(e^{\lm(t)(| k,\eta,\al|^s-|k,\xi,\beta|^s)}-1\right)+e^{\lm(t)(| k,\eta,\al|^s-|k,\xi,\beta|^s)}\left(\fr{\mathrm{J}_k(t,\eta,\al)}{\mathrm{J}_k(t,\xi,\beta)}-1\right)\fr{\la k,\eta,\al\ra ^{\sigma_1}}{\la k,\xi,\beta\ra^{\sigma_1}}\\
\nn&+e^{\lm(t)(| k,\eta,\al|^s-| k,\xi,\beta|^s)}\left(\fr{\la k,\eta,\al\ra ^{\sigma_1}}{\la k,\xi,\beta\ra^{\sigma_1}}-1\right).
\end{align}
In light of the equation above, we decompose $\T_{0_\N}$ as follows
\begin{equation}\label{decomp T_0}
\T_{0_\N}=\T^{(1)}_{0_\N}+\T^{(2)}_{0_\N}+\T^{(3)}_{0_\N}.
\end{equation}
It is important to note that here, the following conditions still hold, namely,
\[
\frac{\N}{2} \leq |l,\xi,\beta|\leq \frac{3\N}{2}, \quad |k-l,\eta-\xi,\alpha-\beta|\leq \frac{3\N}{32}.
\]
This implies $|k-l,\eta-\xi,\alpha-\beta|\leq \frac{3}{16} |l,\xi,\beta|$ which yields \eqref{comparability}.

\subsubsection{Treatment of $\T^{(1)}_{0_\N}$}
Let us start by analyzing the first term in the above summation \eqref{decomp T_0}. Notice that, on the support of its integrand \eqref{comparability} still holds. Again, using such fact together with the inequality $|e^x-1|\leq |x|e^{|x|}$, the first inequality in Lemma \ref{exponent ineq} and definition of $u_0$ in \eqref{exp-u}, we have
\begin{align}
\left|\T^{(1)}_{0_\N}\right|\nn\les&t^{-3}\lm(t)\sum_{k,\al,\beta\in\Z}\int_{\eta,\xi} \fr{|\eta-\xi,\al-\beta|}{|k,\eta,\al|^{1-s}+|k,\xi,\beta|^{1-s}}|\widehat{u}_0(t,\eta-\xi,\al-\beta)_{<\N/8}||k,\xi,\beta|\\
\nn&\qquad \qquad \qquad \times \left|\widehat{A^{\sig_1}\theta}_k(t,\xi,\beta)_{\N}\right| \left|\overline{\widehat{A^{\sig_1}\theta}}_k(t,\eta,\al)\right|d\xi d\eta
\\\nn\lesssim&  \lambda(t)\JB{t} \sum_{k,\al,\beta\in\Z}\int_{\eta,\xi} \left|\overline{\widehat{A^{\sig_1}\theta}}_k(t,\eta,\al)\right|\fr{|\al-\beta|}{(\eta-\xi)^2+(\al-\beta)^2}|\widehat{\Pe_{\ne}^z\theta_0}(t,\eta-\xi,\al-\beta)_{<\N/8}|  \\
\nn&\qquad \qquad \qquad \times e^{c \lm(t)|\eta-\xi,\al-\beta|^s} t^{-\fr32} |k,\xi,\beta|^{\frac{s}{2}} t^{-\fr32}|k,\eta,\alpha|^{\frac{s}{2}}  \left|\widehat{A^{\sig_1}\theta}_k(t,\xi,\beta)_{\N}\right| d\xi d\eta
\\\nn\lesssim& \lambda(t) \JB{t} \|t^{-\fr32}|\nabla|^{\fr{s}{2}}\theta_{\sim\N}\|_{\sig_1} \|t^{-\fr32}|\nabla|^{\fr{s}{2}}\theta_{\N}\|_{\sig_1}   {  \|\Pe_{\ne}^z\theta_0\|_{\mathcal{G}^{\lm,\sig_6}}} \lesssim \frac{\epsilon}{\JB{t}^2} \|t^{-\fr32}|\nabla|^{\fr{s}{2}}\theta_{\sim\N}\|_{\sig_1} \|t^{-\fr32}|\nabla|^{\fr{s}{2}}\theta_{\N}\|_{\sig_1} .
\end{align}

We now move on to $\T^{(2)}_{0_\N}$. 
\subsubsection{Treatment of $\T^{(2)}_{0_\mathrm{N}}$}
Similar to the work of \cite{BM1}, we start by decomposing $\T^{(2)}_{0_\mathrm{N}}$ in the following manner
\begin{equation}\label{transport0-2-decomp}
\begin{aligned}
\T^{(2)}_{0_\N}&= t^{-3}\sum_{k,\al,\beta\in\Z}\int_{\eta,\xi} [{\mathds{1}}^{\mf{S}}+{\mathds{1}}^{\mf{L}}]e^{\lm(t)( |k,\eta,\al|^s-| k,\xi,\beta|^s)}\left(\fr{\mathrm{J}_k(t,\eta,\al)}{\mathrm{J}_k(t,\xi,\beta)}-1\right)\widehat{u}_0(t,\eta-\xi,\al-\beta)_{<\N/8}\\
\nn& \qquad \qquad \cdot i(k,\xi,\beta) \fr{\la k,\eta,\al\ra^{\sigma_1}}{\la  k,\xi,\beta\ra^{\sigma_1}}\left(\widehat{A^{\sig_1}\theta}_k(t,\xi,\beta)_\N\right) \left(\overline{\widehat{A^{\sig_1}\theta}}_k(t,\eta,\al)\right)d\xi d\eta\\&
=\T^{(2),\mf{S}}_{0_\N}+\T^{(2),\mf{L}}_{0_\N},
\end{aligned}
\end{equation}
where ${\mathds{1}}^{\mf{S}}={\mathds{1}}_{t\leq \frac{1}{2}\min{(\sqrt{|\iota(k,\eta,\alpha)|},\sqrt{|\iota(k,\xi,\beta)|})}}$ and ${\mathds{1}}^{\mf{L}}=1-{\mathds{1}}^{\mf{S}}$.
This portion requires an estimate like the one in \cite[Lemma 3.7]{BM1} which in the present work is precisely the content of Lemma~\ref{short time}. We use it to absorb the $\frac{1}{2}$ derivative. 
By using the bootstrap hypotheses, Lemma~\ref{lem-ratio-J}, \eqref{Jk|Jl-1},  \eqref{exp-u} together with the fact that $| k,\eta,\alpha | \approx | k,\xi,\beta |$ and $|e^x-1|\leq |x| e^{|x|}$, we obtain that
\begin{align}
\left|\T^{(2),\mf{S}}_{0_\N}\right|\nn\les& t^{-3} \sum_{k,\al,\beta\in\Z}\int_{\eta,\xi} {\mathds{1}}^\mf{S} e^{c\lambda(t)|\eta-\xi,\alpha-\beta|^s}  \la\eta-\xi, \alpha-\beta\ra  |\widehat{u}_0(t,\eta-\xi,\al-\beta)_{<\N/8}| e^{3\mu|\eta-\xi,\beta-\alpha|^\fr12} \\
\nn&\qquad \qquad \qquad \qquad \times |k,\xi,\beta|^{\frac{1}{2}}\left|\widehat{A^{\sig_1}\theta}_k(t,\xi,\beta)_{\N}\right| \left|\overline{\widehat{A^{\sig_1}\theta}}_k(t,\eta,\al)\right|d\xi d\eta\\
\nn\les&\fr{\la t \ra t^{-3}}{t^{s-\fr12}} \sum_{\substack{k,\al,\beta\in\Z,\\ \al\ne\beta}}\int_{\eta,\xi} {\mathds{1}}^\mf{S} \left|\overline{\reallywidehat{|\nabla|^{\fr{s}{2}}A^{\sig_1}\theta}}_k(t,\eta,\al)\right|\left|\reallywidehat{|\nabla|^{\fr{s}{2}}A^{\sig_1}\theta}_k(t,\xi,\beta)_\N\right|e^{\mathbf{c}\lambda|\eta-\xi,\alpha-\beta|^s}  \\
\nn&\qquad \qquad \qquad \qquad \times \fr{|\al-\beta|}{(\eta-\xi)^2+(\al-\beta)^2}|\reallywidehat{\Pe_{\ne}^z\theta_0}(t,\eta-\xi,\al-\beta)_{<\N/8}| d\xi d\eta\\
\nn\les& \fr{\la t \ra}{t^{s-\fr12}}\Big(\|\Pe_{\ne}^z\theta_0\|_{\mathcal{G}^{\lm,\sig_6}}\|t^{-\fr32}|\nabla|^{\frac{s}{2}}\theta_{\N}\|_{\sig_1} \|t^{-\fr32}|\nabla|^{\frac{s}{2}}\theta_{\sim \N} \|_{\sig_1}\Big)\\
\lesssim& \frac{\epsilon}{\JB{t}^{\fr32 +s}} \|t^{-\fr32}|\nabla|^{\frac{s}{2}}\theta_{\N}\|_{\sig_1} \|t^{-\fr32}|\nabla|^{\frac{s}{2}}\theta_{\sim \N} \|_{\sig_1}.
\end{align}
Let us now consider its counterpart (long-time regime), namely  $\T^{(2),\mf{L}}_{0_\N}$. The integrand in this regime is supported on $t >\frac{1}{2}\min{(\sqrt{|\iota_1|},\sqrt{|\iota_2|})}\approx \sqrt{|\iota_1|}\approx \sqrt{|\iota_2|}$. 
In this regime, we have that $|k,\xi,\beta|\lesssim |k,\xi,\beta|^{\frac{s}{2}}|k,\eta,\alpha|^{\frac{s}{2}}t^{2-2s}$. 
Hence, via Lemma \ref{lem-ratio-J} and \eqref{exp-u}, we can say that
\begin{align*}
\left| \T^{(2),\mf{L}}_{0_\N}\right| &\lesssim t^{-3} \sum_{k,\al,\beta\in\Z}\int_{\eta,\xi} {\mathds{1}}^{\mf{L}}e^{\lm(t)(| k,\eta,\al|^s-\la k,\xi,\beta|^s)}e^{3\mu|\eta-\xi,\alpha-\beta|^{\fr12}} |\widehat{u}_0(t,\eta-\xi,\al-\beta)_{<\N/8}|  \\
\nn& \qquad \qquad \qquad \qquad \times |(k,\xi,\beta)| \left|\widehat{A^{\sig_1}\theta}_k(t,\xi,\beta)_{\N}\right| \left|\overline{\widehat{A^{\sig_1}\theta}}_k(t,\eta,\al)\right| d\xi d\eta\\
&\lesssim
t^{-3}\sum_{k,\al,\beta\in\Z}\int_{\eta,\xi} e^{\mathbf{c}\lm(t)|\eta-\xi,\al-\beta|^s}\fr{\la t\ra(\al-\beta)^2+|(\eta-\xi)(\al-\beta)|}{\big((\eta-\xi)^2+(\al-\beta)^2\big)^2}|\widehat{\mathbb{P}_{\ne}^z\theta_0}(t,\eta-\xi,\al-\beta)_{<\N/8}|\\
\nn& \qquad \qquad \qquad \qquad \times t^{2-2s} \left||k,\xi,\beta|^{\frac{s}{2}}\widehat{A^{\sig_1}\theta}_k(t,\xi,\beta)_{\N}\right| \left||k,\eta,\alpha|^{\frac{s}{2}}\overline{\widehat{A^{\sig_1}\theta}}_k(t,\eta,\al)\right|d\xi d\eta\\
&\lesssim \frac{\epsilon}{\JB{t}^{2s}} \|t^{-\fr32}|\nabla|^{\frac{s}{2}}\theta_{\N}\|_{\sig_1} \|t^{-\fr32}|\nabla|^{\frac{s}{2}}\theta_{\sim \N} \|_{\sig_1}.
\end{align*}

\subsubsection{Treatment of $\T^{(3)}_{0_{\N}}$}
Next, we present an estimate associated with the third term in the decomposition: $\T^{(3)}_{0_{\N}}$.
Observe that by using the support of the integrand $|\eta-\xi,\alpha-\beta|\leq \fr{3}{16} |k,\xi,\beta|$ and the mean value theorem, we obtain
\begin{equation}\label{useful est T3}
\begin{aligned}
\nn&e^{\lm(t)( |k,\eta,\al|^s-|k,\xi,\beta|^s)}\left|\fr{\la k,\eta,\al\ra^{\sigma_1}}{\la k,\xi,\beta\ra^{\sigma_1}}-1\right|\les e^{c\lm(t)| \eta-\xi,\al-\beta|^s}\fr{|\eta-\xi,\al-\beta|}{\la k,\xi,\beta\ra}.
\end{aligned}
\end{equation}
Combining this inequality with the definition of $u_0$ in \eqref{exp-u}, we are led to
\begin{align}
\left|\T^{(3)}_{0_\mathrm{N}}\right|\nn\les& t^{-3}\sum_{k,\al,\beta\in\Z}\int_{\eta,\xi} e^{c\lm(t)|\eta-\xi,\al-\beta|^s}|\eta-\xi,\al-\beta||\widehat{u}_0(t,\eta-\xi,\al-\beta)_{<\N/8}|\\
\nn&\qquad \qquad \qquad \qquad \times \left|\widehat{A^{\sig_1}\theta}_k(t,\xi,\beta)_{\N}\right| \left|\overline{\widehat{A^{\sig_1}\theta}}_k(t,\eta,\al)\right|d\xi d\eta\\
\nn\les&\la t\ra t^{-3} \sum_{k,\al,\beta\in\Z}\int_{\eta,\xi} e^{{ c}\lm(t)  |\eta-\xi,\al-\beta|^s}\fr{|\al-\beta|}{(\eta-\xi)^2+(\al-\beta)^2}|\widehat{\Pe_{\ne}^z\theta_0}(t,\eta-\xi,\al-\beta)_{<\N/8}|\\
\nn&\qquad \qquad \qquad \qquad \times \left|\widehat{A^{\sig_1}\theta}_k(t,\xi,\beta)_{\N}\right| \left|\overline{\widehat{A^{\sig_1}\theta}}_k(t,\eta,\al)\right|d\xi d\eta\\
\nn\les&\la t\ra \|\Pe_{\ne}^z\theta_0\|_{\mathcal{G}^{\lm,\sig_6}} \| t^{-\fr32} \theta_{\N}\|_{\sig_1} \| t^{-\fr32}\theta_{\sim \N}\|_{\sig_1} \les\fr{\eps}{\la t\ra^2}\|t^{-\fr32}\theta_{\N}\|_{\sig_1} \|t^{-\fr32}\theta_{\sim \N}\|_{\sig_1}.
\end{align}

\subsection{Reaction term (non-zero-mode interactions)}
Next, we focus on the high-low interaction.
\begin{align}
\R_{{\neq}_\N}\nn=&-t^{-3} i \sum_{\substack{k, l, \al,\beta\in\Z \\ l \neq 0}}\int_{\eta,\xi} A_k^{\sig_1}(t,\eta,\al)\widehat{u}_{\ne}(t, l,\xi, \beta)_\N \cdot (k-l,\eta-\xi,\al-\beta)\\
\nn&\qquad \qquad \qquad \qquad \times \widehat{\theta}_{k-l}(t,\eta-\xi,\al-\beta)_{<\N/8} \left(\overline{\widehat{A^{\sig_1}\theta}}_k(t,\eta,\al)\right) d\xi d\eta\\
\nn&\; +t^{-3} i \sum_{\substack{k, l, \al,\beta\in\Z \\  l \neq 0}} \int_{\eta,\xi}  \widehat{u}_{\ne}(t, l, \xi,\beta)_\N \cdot (k-l,\eta-\xi,\al-\beta)\\
\nn& \qquad \qquad \qquad \qquad \times \left(\widehat{A^{\sig_1}\theta}_{k-l}(t,\eta-\xi,\al-\beta)_{<\N/8}\right) \left(\overline{\widehat{A^{\sig_1}\theta}}_k(t,\eta,\al)\right) d\xi d\eta\\
\nn=&\R^{(1)}_{{\neq}_\N}+\R^{(2)}_{{\neq}_\N}.
\end{align}
We narrow down our analysis of the Reaction term above by considering several sub-cases depending on whether the frequency $(k,\eta,\alpha)$ and $(l,\xi,\beta)$ are resonant or non-resonant. Following the notation used in \cite{Bedrossian_Germain_Masmoudi_I}, we define

\begin{equation}
\begin{aligned}
1&=\mathds{1}_{t\notin \textup{I}_{k,\eta},t\notin \textup{I}_{l,\xi}}+\mathds{1}_{t\notin \textup{I}_{k,\eta},t\in \textup{I}_{l,\xi}}+\mathds{1}_{t\in \textup{I}_{k,\eta},t\notin \textup{I}_{l,\xi}}+\mathds{1}_{t\in \textup{I}_{k,\eta},t\in \textup{I}_{l,\xi}}
\\&=: \chi^{\nr,\nr}+ \chi^{\nr,\res}+ \chi^{\res,\nr}+ \chi^{\res,\res}.
\end{aligned}
\end{equation}
As a result, we get the following decomposition
\begin{align}
\nn\left|\R^{(1)}_{{\neq}_\N}\right|&=t^{-3}\sum_{\substack{k, l, \al,\beta\in\Z \\  l \neq 0}} \bigg| \int_{\eta,\xi} \Big[\chi^{\nr,\nr}+ \chi^{\nr,\res}+ \chi^{\res,\nr}+ \chi^{\res,\res} \Big] A_k^{\sig_1}(t,\eta,\al)\widehat{u}_{\ne}(t, l,\xi, \beta)_\N \\&\nn \qquad \qquad \qquad \qquad \cdot \widehat{\nabla\theta}_{k-l}(t,\eta-\xi,\al-\beta)_{<\N/8} \left(\overline{\widehat{A^{\sig_1}\theta}}_k(t,\eta,\al)\right) d\xi d\eta\bigg|
\\\label{Decom R1neq}&
=:\R^{(1),\nr,\nr}_{{\neq}_\N}+\R^{(1),\nr,\res}_{{\neq}_\N}+\R^{(1),\res,\nr}_{{\neq}_\N}+\R^{(1),\res,\res}_{{\neq}_\N}.
\end{align}
Recalling \eqref{exp-u}, we know that  
\begin{align}\label{u-hat}
|\widehat{u}_{\ne}(t, l,\xi, \beta)|\nn\les&\fr{\left|(t\beta^2+l\xi, -(l^2+\beta^2), \beta(\xi-lt)\right|}{(l^2+(\xi-l t)^2+\beta^2)^2} |\widehat{\theta}_{\ne}(t, l,\xi,\beta)|\\
\les&\fr{\left(t|\beta|+|\xi|+|l,\beta|\right) |l,\beta|}{(l^2+(\xi-l t)^2+\beta^2)^2} |\widehat{\theta}_{\ne}(t, l,\xi,\beta)|,
\end{align}
which allows us to say the following for the first three terms in \eqref{Decom R1neq} 
\begin{align*}
&\R^{(1),\nr,\nr}_{{\neq}_\N}+\R^{(1),\nr,\res}_{{\neq}_\N}+\R^{(1),\res,\nr}_{{\neq}_\N}\\&\les t^{-3}\sum_{\substack{k, l, \al,\beta\in\Z \\ l \neq 0}} \bigg| \int_{\eta,\xi} \Big[\chi^{\nr,\nr}+ \chi^{\nr,\res}+ \chi^{\res,\nr}\Big]|\widehat{A^{\sig_1}\theta}_{\ne}(t, l,\xi,\beta)_\N||{\widehat{A^{\sig_1}\theta}}_k(t,\eta,\al)|\\&\qquad \qquad \qquad \qquad \times \fr{A_k^{\sig_1}(t,\eta,\al)}{A_l^{\sig_1}(t,\xi,\beta)}\fr{\left(t|\beta|+|\xi|+|l,\beta|\right) |l,\beta|}{(l^2+(\xi-l t)^2+\beta^2)^2} \left| \widehat{\nabla\theta}_{k-l}(t,\eta-\xi,\al-\beta)_{<\N/8}\right| d\xi d\eta\bigg|.
\end{align*}

Furthermore, for the last term in \eqref{Decom R1neq}, a direct calculation yields
\begin{align*}
    \R^{(1),\res,\res}_{{\neq}_\N}&\les t^{-3}\sum_{\substack{k, l, \al,\beta\in\Z \\ l \neq 0, \ k\neq l}} \bigg| \int_{\eta,\xi} \chi^{\res,\res}|\widehat{A^{\sig_1}\theta}_{\ne}(t, l,\xi,\beta)_\N||{\widehat{A^{\sig_1}\theta}}_k(t,\eta,\al)|\\&\qquad \qquad \qquad \times \fr{A_k^{\sig_1}(t,\eta,\al)}{A_l^{\sig_1}(t,\xi,\beta)}\fr{\left(t|\beta|+|\xi|+|l,\beta|\right) |l,\beta|}{(l^2+(\xi-l t)^2+\beta^2)^2}  \left|\widehat{\nabla\theta}_{k-l}(t,\eta-\xi,\al-\beta)_{<\N/8}\right| d\xi d\eta\bigg|\\&
    \quad+t^{-3}\sum_{\substack{k, \al,\beta\in\Z \\ k \neq 0}} \bigg|
    \int_{\eta,\xi} \chi^{\res,\res}|\widehat{A^{\sig_1}\theta}_{\ne}(t, k,\xi,\beta)_\N||{\widehat{A^{\sig_1}\theta}}_k(t,\eta,\al)|\\&\qquad \qquad \qquad \times \fr{A_k^{\sig_1}(t,\eta,\al)}{A_k^{\sig_1}(t,\xi,\beta)} \frac{(k^2+\beta^2+|\xi-kt||\beta|)}{(k^2+(\xi-kt)^2+\beta^2)^2} \left|\widehat{\nabla\theta}_{0}(t,\eta-\xi,\al-\beta)_{<\N/8}\right| d\xi d\eta\bigg|.
\end{align*}

As a consequence, keeping both aforementioned inequalities in mind, via Lemma~\ref{est:NR-NR},  Lemma~\ref{R NR}, Lemma~\ref{est: NR-R}, and Lemma~\ref{R R},  we manage to treat the terms $\R^{(1),\nr,\nr}_{{\neq}_\N}, \R^{(1),\res,\nr}_{{\neq}_\N}, \R^{(1),\nr, \res}_{{\neq}_\N}$ and $\R^{(1),\res,\res}_{{\neq}_\N}$, respectively and obtain
\begin{align*}
    \left| \R^{(1)}_{{\neq}_\N} \right|\les \epsilon \Bigg[ &\frac{1}{\JB{t}^2} \norm{\fr{\theta_{{\neq}_\N}}{t^{\fr32}}}_{\sigma_1} \norm{\fr{\theta_{{\neq}_{\sim\N}}}{t^{\fr32}}}_{\sigma_1}+ \fr{1}{ \JB{t}^{1+s}}  \norm{\frac{|\nabla|^{\fr{s}{2}}}{t^\fr32} \theta_{{\neq}_{\N}}}_{\sigma_1} \norm{\frac{|\nabla|^{\fr{s}{2}}}{t^\fr32} \theta_{{\neq}_{\sim \N}}}_{\sigma_1}\\&  +  \norm{\frac{1}{t^{\fr32}}\sqrt{\fr{\partial_t w}{w}} \theta_{{\neq}_{\N}}}_{\sigma_1} \norm{\frac{1}{t^{\fr32}}\sqrt{\fr{\partial_t w}{w}} \theta_{{\neq}_{\sim\N}}}_{\sigma_1}\Bigg].
\end{align*}

Now, let us proceed and estimate $\R^{(2)}_{{\neq}_\N}$. Here, we use the fact that $|k-l,\eta-\xi, \alpha-\beta|\leq |l,\xi,\beta|$.
\[
\begin{aligned}
|\R^{(2)}_{{\neq}_\N}| &\lesssim t^{-3} \sum_{k,l,\al,\beta\in\Z}\int_{\eta,\xi} |\widehat{u}_{\ne}(t, l, \xi,\beta)_\N| |l,\xi,\beta| \left|\widehat{A^{\sig_1}\theta}_{k-l}(t,\eta-\xi,\al-\beta)_{<\N/8}\right| \left|\overline{\widehat{A^{\sig_1}\theta}}_k(t,\eta,\al)\right| d\xi d\eta\\&
\lesssim \frac{1}{t^3} \norm{t^{-\fr32}\theta}_{\sigma_1} \norm{\theta_{\N}}_{\mathcal{G}^{\lambda,\sigma_5}} \norm{t^{-\fr32}\theta_{\sim\N}}_{\sigma_1} \lesssim \frac{\epsilon}{t^3} \norm{t^{-\fr32}\theta}_{\sigma_1}  \norm{t^{-\fr32}\theta_{\sim\N}}_{\sigma_1} .
\end{aligned}
\]
\subsection{Transport term (non-zero-mode interactions)}

First of all, observe that
\begin{align*}
\fr{A_k^{\sig_1}(t,\eta,\al)}{A_l^{\sig_1}(t,\xi,\beta)}-1
=&\left(e^{\lm(t)( |k,\eta,\al|^s-|l,\xi,\beta|^s)}-1\right)+e^{\lm(t)(|k,\eta,\al|^s-| l,\xi,\beta|^s)}\left(\fr{\mathrm{J}_k(t,\eta,\al)}{\mathrm{J}_l(t,\xi,\beta)}-1\right)\fr{\la k,\eta,\al\ra^{\sigma_1}}{ \la l,\xi,\beta\ra^{\sigma_1}}\\
&+e^{\lm(t)(|k,\eta,\al|^s- |l,\xi,\beta|^s)}\left(\fr{\la k,\eta,\al\ra^{\sigma_1}}{\la l,\xi,\beta\ra^{\sigma_1}}-1\right).
\end{align*}
In light of that, we can decompose $\T_{\neq_\N}$ as follows
\[
\T_{\neq_\N}=\T^{(1)}_{\neq_\N}+\T^{(2)}_{\neq_\N}+\T^{(3)}_{\neq_\N}.
\] It is important to note that, throughout, we are dealing with non-zero modes interactions here $k \neq l$ and  $k,l\neq 0$. Additionally, we would like to note that we will be taking advantage of the estimate of $\widehat{u}_{\neq}$ \eqref{u_ne-decay}.

\subsubsection{Treatment of $\T^{(1)}_{\neq_\N}$}
On the support of its integral \eqref{comparability} still holds: $|k,\eta,\alpha|\approx |l,\xi,\beta|.$ Hence, exploiting the facts $|e^x-1|\leq |x|e^{|x|}$, \eqref{u_ne-decay} and then applying the bootstrap hypotheses, allow us to infer
\begin{align}
\left|\T^{(1)}_{\neq_\N}\right|\nn\les& t^{-3}\sum_{k,l,\al,\beta\in\Z}\int_{\eta,\xi} \Big(e^{\lm(t)( |k,\eta,\al|^s-|l,\xi,\beta|^s)}-1\Big)|\widehat{u}_{\neq}(t,k-l,\eta-\xi,\al-\beta)_{<\N/8}||l,\xi,\beta|\\
\nn&\qquad \qquad \qquad \qquad \times \left|\widehat{A^{\sig_1}\theta}_l(t,\xi,\beta)_{\N}\right| \left|\overline{\widehat{A^{\sig_1}\theta}}_k(t,\eta,\al)\right|d\xi d\eta
\\\nn\lesssim&  \lambda(t)t^{-3} \sum_{k,l,\al,\beta\in\Z}\int_{\eta,\xi}  e^{c\lm(t)( |k-l,\eta-k,\al-\beta|^s}\frac{\langle k-l,\eta-\xi,\al-\beta \rangle^6}{\langle t\rangle^3}|\widehat{\theta}_{\ne}(t,k-l,\eta-\beta,\al-\beta)_{<\N/8}|\\
\nn&\qquad \qquad \qquad \qquad\times |l,\xi,\beta|^{\fr{s}{2}}\left|\widehat{A^{\sig_1}\theta}_l(t,\xi,\beta)_{\N}\right| |k,\eta,\alpha|^\fr{s}{2}\left|\overline{\widehat{A^{\sig_1}\theta}}_k(t,\eta,\al)\right|d\xi d\eta
\\\nn\lesssim& \lambda(t) \frac{\norm{\theta}_{\mathcal{G}^{\lambda,\sigma_5}}}{\JB{t}^3} \norm{t^{-\fr32}|\nb|^\fr{s}{2} \theta_{\sim \N}}_{\sigma_1} \norm{t^{-\fr32}|\nb|^\fr{s}{2} \theta_{\N}}_{\sigma_1}\lesssim \frac{\epsilon}{\JB{t}^3} \norm{t^{-\fr32}|\nb|^\fr{s}{2} \theta_{\sim \N}}_{\sigma_1} \norm{t^{-\fr32}|\nb|^\fr{s}{2} \theta_{\N}}_{\sigma_1}.
\end{align}

\subsubsection{Treatment of $\T^{(2)}_{\neq_\N}$} Similar to the treatment of $\T^{(2)}_{0_\N}$, we begin by considering two distinct time regimes,
${\mathds{1}}^{\mf{S}}={\mathds{1}}_{t\leq \frac{1}{2}\min{(\sqrt{|\iota_1|},\sqrt{|\iota_2|})}}$ and ${\mathds{1}}^{\mf{L}}=1-{\mathds{1}}^{\mf{S}}$. We then can recast $\T^{(2)}_{\neq_\N}$ in the following manner

\begin{equation}\label{transport_ne-2-decomp}
\begin{aligned}
\T^{(2)}_{\neq_\N}&= t^{-3}\sum_{k,l,\al,\beta\in\Z}\int_{\eta,\xi} [{\mathds{1}}^{\mf{S}}+{\mathds{1}}^{\mf{L}}]e^{\lm(t)(|k,\eta,\al|^s-|l,\xi,\beta|^s)}\left(\fr{\mathrm{J}_k(t,\eta,\al)}{\mathrm{J}_l(t,\xi,\beta)}-1\right)\fr{\la k,\eta,\al\ra^{\sigma_1}}{\la l,\xi,\beta\ra^{\sigma_1}}\\
\nn& \qquad \qquad   \times \widehat{u}_{\ne}(t,k-l,\eta-\xi,\al-\beta)_{<\N/8}\cdot i(l,\xi,\beta)\left(\widehat{A^{\sig_1}\theta}_l(t,\xi,\beta)_\N\right) \left(\overline{\widehat{A^{\sig_1}\theta}}_k(t,\eta,\al)\right)d\xi d\eta\\&
=\T^{(2),\mf{S}}_{\ne_\N}+\T^{(2),\mf{L}}_{\ne_\N}.
\end{aligned}
\end{equation}
As before, let us concentrate on the short-time regime first. Using the fact that $ |k,\eta,\alpha| \approx | k,\xi,\beta|,$ and \eqref{Jk|Jl-1}, we get that
\begin{align}
\left|\T^{(2),\mf{S}}_{\ne_\N}\right|\nn&\les t^{-3}\sum_{k,l,\al,\beta\in\Z}\int_{\eta,\xi} {\mathds{1}}^\mf{S} e^{c\lambda(t)|k-l,\eta-\xi,\alpha-\beta|^s} \la k-l, \eta-\xi, \alpha-\beta \ra  |\widehat{u}_{\ne}(t,k-l,\eta-\xi,\al-\beta)_{<\N/8}| \\
\nn& \qquad \qquad \qquad \qquad \times |l,\xi,\beta|^{\frac{1}{2}} e^{3\mu|k-l,\eta-\xi,\beta-\alpha|^\frac{1}{2}}\left|\widehat{A^{\sig_1}\theta}_l(t,\xi,\beta)_{\N}\right| \left|\overline{\widehat{A^{\sig_1}\theta}}_k(t,\eta,\al)\right|d\xi d\eta\\
\nn&\les\frac{\norm{\theta}_{\mathcal{G}^{\lambda,\sigma_5}}}{\JB{t}^{3}} \norm{t^{-\fr32}|\nb|^\fr{s}{2} \theta_{\sim \N}}_{\sigma_1} \norm{t^{-\fr32}|\nb|^\fr{s}{2} \theta_{\N}}_{\sigma_1}\lesssim \frac{\epsilon}{\JB{t}^{3}} \norm{t^{-\fr32}|\nb|^\fr{s}{2} \theta_{\sim \N}}_{\sigma_1} \norm{t^{-\fr32}|\nb|^\fr{s}{2} \theta_{\N}}_{\sigma_1}.
\end{align}
Having established the estimate of $\T^{(2)}_{\ne_\N}$ for the short-time regime, we now continue and derive an estimate for $\T^{(2),\mf{L}}_{\ne_\N}$. Here, we have that $t\gtrsim \sqrt{|\iota_1|}\approx \sqrt{|\iota_2|}$ and $|l,\xi,\beta|\lesssim |l,\xi,\beta|^{\frac{s}{2}}|k,\eta,\alpha|^{\frac{s}{2}}t^{2-2s}$. This together with the following inequality (from Lemma \ref{lem-ratio-J})
\begin{align*}
\dfrac{{\rm J}_k(t,\eta,\alpha)}{{\rm J}_l(t,\xi,\beta)} &\lesssim t e^{2\mu|k-l,\eta-\xi,\alpha-\beta|^{\fr12}}
\end{align*}
and \eqref{u_ne-decay} yield
\begin{align*}
\left| \T^{(2),\mf{L}}_{\ne_\N}\right| &\lesssim t^{-3}\sum_{k,l,\al,\beta\in\Z}\int_{\eta,\xi} {\mathds{1}}^{\mf{L}}e^{\lm(t)(| k,\eta,\al|^s-| l,\xi,\beta|^s)}e^{3\mu|k-l,\eta-\xi,\alpha-\beta|^{\fr12}} |\widehat{u}_{\ne}(t,k-l,\eta-\xi,\al-\beta)_{<\N/8}|  \\
\nn&  \qquad \qquad \qquad \qquad \times |(l,\xi,\beta)| \JB{t}\left|\widehat{A^{\sig_1}\theta}_k(t,\xi,\beta)_{\N}\right| \left|\overline{\widehat{A^{\sig_1}\theta}}_k(t,\eta,\al)\right| d\xi d\eta\\&
\lesssim t^{-3}
\sum_{k,l,\al,\beta\in\Z}\int_{\eta,\xi} {\mathds{1}}^{\mf{L}} e^{\mathbf{c}\lm(t)|k-l,\eta-\xi,\al-\beta|^s}\frac{\JB{k-l,\eta-\xi,\alpha-\beta}^6}{\JB{t}^3} t^{3-2s}\left||k,\eta,\alpha|^{\frac{s}{2}}\overline{\widehat{A^{\sig_1}\theta}}_k(t,\eta,\al)\right|\\
\nn& \qquad \qquad \qquad \qquad  \times  |\widehat{\theta}_{\ne}(t,k-l,\eta-\xi,\al-\beta)_{<\N/8}| \left||l,\xi,\beta|^{\frac{s}{2}}\widehat{A^{\sig_1}\theta}_l(t,\xi,\beta)_{\N}\right| d\xi d\eta\\&
\les\frac{\norm{\theta}_{\mathcal{G}^{\lambda,\sigma_5}}}{\JB{t}^{2s}} \norm{t^{-\fr32}|\nb|^\fr{s}{2} \theta_{\sim \N}}_{\sigma_1} \norm{t^{-\fr32}|\nb|^\fr{s}{2} \theta_{\N}}_{\sigma_1}\lesssim \frac{\epsilon}{\JB{t}^{2s}} \norm{t^{-\fr32}||\nb|^\fr{s}{2}\theta_{\sim \N}}_{\sigma_1} \norm{t^{-\fr32}||\nb|^\fr{s}{2} \theta_{\N}}_{\sigma_1},
\end{align*}
where the last inequality is obtained via the bootstrap hypotheses.

Let us now continue and estimate $\T^{(3)}_{\neq_\N}$.
\subsubsection{Treatment of $\T^{(3)}_{\neq_\N}$} Observe that using the support of the integrand $|k-l,\eta-\xi,\alpha-\beta|\leq \fr{3}{16} |l,\xi,\beta|$ and the mean value theorem,
\begin{align}
\nn&e^{\lm( |k,\eta,\al|^s- |l,\xi,\beta|^s)}\left|\fr{ \la k,\eta,\al\ra^{\sigma_1}}{\la  l,\xi,\beta\ra^{\sigma_1}}-1\right|\les e^{c\lm(t) |k-l, \eta-\xi,\al-\beta|^s}\fr{| k-l,\eta-\xi,\al-\beta|}{ \la l,\xi,\beta\ra }.
\end{align}
Upon applying this inequality, \eqref{u_ne-decay}, and the bootstrap hypotheses, we are then led to say
\begin{align}
\left|\T^{(3)}_{\ne_\mathrm{N}}\right|\nn\les& t^{-3}\sum_{k,l,\al,\beta\in\Z}\int_{\eta,\xi} e^{c\lm(t) |k-l, \eta-\xi,\al-\beta|^s}|k-l,\eta-\xi,\al-\beta||\widehat{u}_{\ne}(t,k-l,\eta-\xi,\al-\beta)_{<\N/8}|\\
\nn&\qquad \qquad \qquad \qquad\times \left|A^{\sig_1}\widehat{\theta}_l(t,\xi,\beta)_{\N}\right| \left|A^{\sig_1}\overline{\widehat{\theta}}_k(t,\eta,\al)\right|d\xi d\eta\\
\nn\les& t^{-3}\sum_{k,l,\al,\beta\in\Z}\int_{\eta,\xi} e^{c\lm(t) |k-l,\eta-\xi,\al-\beta|^s}\frac{\la k-l,\eta-\xi,\alpha-\beta\ra ^7}{\JB{t}^3}|\widehat{\theta}_{\ne}(t,k-l,\eta-\xi,\al-\beta)_{<\N/8}|\\
\nn&\qquad \qquad \qquad \qquad\times \left|\widehat{A^{\sig_1}\theta}_l(t,\xi,\beta)_{\N}\right| \left|\overline{\widehat{A^{\sig_1}\theta}}_k(t,\eta,\al)\right|d\xi d\eta\\
\nn\les&\frac{\norm{\theta}_{\mathcal{G}^{\lambda,\sigma_5}}}{t^3} \norm{ t^{-\fr32}\theta_{\sim \N}}_{\sigma_1} \norm{ t^{-\fr32}\theta_{\N}}_{\sigma_1}\lesssim \frac{\epsilon}{\JB{t}^3} \norm{ t^{-\fr32}\theta_{\sim \N}}_{\sigma_1} \norm{ t^{-\fr32}\theta_{\N}}_{\sigma_1}.
\end{align}
\subsection{Remainders} In this subsection, we shall provide estimates corresponding to $\mathcal{R}_{0}$ and $\mathcal{R}_{\neq}$. Recall that from \eqref{paraproduct decomposition nonzero mode}, upon replacing $\neq$ by $0$, we obtain

\begin{align*}
\mathcal{R}_{0}&= 2\pi t^{-3}\sum_{\N\in \mathbb{D}}\sum_{\fr{\rm{N}}{8}\leq \rm{N}' \leq 8\rm{N}} \left\la u_{{0}_{\N}}\cdot\nb A^{\sig_1}\theta_{N'}-A^{\sig_1}(u_{{0}_\mathrm{N}}\cdot\nb \theta_{\N'}) , A^{\sig_1}\theta \right\ra\\&
=\mathcal{R}^1_{0}+\mathcal{R}^2_{0}.
\end{align*}

For the time being, let us focus on $\mathcal{R}^2_{0}$. The first term $\mathcal{R}^1_{0}$ can be treated in a similar manner and even simpler. Notice that on the Fourier side, we can write it as
\begin{align*}
\mathcal{R}^2_{0}&= -\fr{2\pi}{ t^{3}}\sum_{\N\in \mathbb{D}}\sum_{\fr{\rm{N}}{8}\leq \rm{N}' \leq 8\rm{N}} \sum_{k,\al,\beta \in \mathbb{Z}} \int_{\eta,\xi} \overline{\widehat{A^{\sigma_1}\theta_k}}(t,\eta,\alpha) {A^{\sigma_1}_k}(t,\eta,\alpha)\widehat{u_0}(t,\xi,\beta)_\N \cdot\widehat{\nabla \theta}_{k}(t,\eta-\xi,\al-\beta)_{\N'}d\xi d\eta.
\end{align*}
On the support of the integrand of  $\mathcal{R}^2_0$, it is true that $\fr{\N}{2}\le|\xi,\beta|\le\fr{3}{2} \N,\quad{\rm and}\quad \fr{\N'}{2}\le|k,\eta-\xi,\al-\beta|\le\fr{3}{2} \N'\quad{\rm with}\quad \fr{\N}{8}\le \N'\le8\N$.
Hence, 
\begin{equation}\label{comp xi beta}
\fr{1}{24}|k,\eta-\xi,\al-\beta|\le|\xi,\beta|\le24|k,\eta-\xi,\al-\beta|.
\end{equation}
Also, by  \eqref{triangle2}, we know that there exists $0<c<1$ for $ 0<s\leq 1$, so that \begin{align*}
    |k,\eta,\alpha|^s \le  c|\xi,\beta|^s +c|k,\eta-\xi,\alpha-\beta|^s,
\end{align*}
which gives that  
\[
A^{\sigma_1}_k(t,\eta,\alpha)\lesssim \textup{J}_k(t,\eta,\alpha)e^{c\lambda(t)|\xi,\beta|^s}e^{c\lambda(t)|k,\eta-\xi,\al-\beta|^s}|\xi,\beta||k,\eta-\xi,\al-\beta|^{\sigma_1-1}.
\]
But, via the definition of \textup{J} \eqref{multiplier J} and the total growth in Lemma~\ref{total growth}, we can infer that 
\[
\textup{J}_k(t,\eta,\alpha) \lesssim e^{\mu\sqrt{|\iota(k,\eta,\alpha)|}}\lesssim e^{\mu\sqrt{|k,\eta,\alpha|}}\lesssim e^{\mu(\sqrt{|k,\eta-\xi,\alpha-\beta|}+\sqrt{|\xi,\beta|})}. 
\]
Hence, via \eqref{exp-u} and the bootstrap hypotheses
\begin{align*}
|\mathcal{R}^2_{0}|&\lesssim t^{-3}\sum_{\N\in \mathbb{D}}\sum_{\fr{\rm{N}}{8}\leq \rm{N}' \leq 8\rm{N}} \sum_{\substack{k,\al,\beta \in \mathbb{Z}\\ l=0}} \int_{\eta,\xi} |\overline{\widehat{A^{\sigma_1}\theta_k}}(t,\eta,\alpha)|
e^{\lambda(t)|\xi,\beta|^s}|\xi,\beta||\widehat{u_0}(t,\xi,\beta)_\N| \\&
\qquad \qquad \qquad \times e^{\lambda(t)|k,\eta-\xi,\al-\beta|^s} |k,\eta-\xi,\al-\beta|^{\sigma_1-1}|\widehat{\nabla \theta}_{k}(t,\eta-\xi,\al-\beta)_{\N'}|d\xi d\eta\\&
\lesssim \sum_{\N\in \mathbb{D}}\sum_{\fr{\rm{N}}{8}\leq \rm{N}' \leq 8\rm{N}} \norm{t^{-\fr32}\theta}_{\sigma_1} {  \|\Pe_{\ne}^z\theta_{{0}_\N}\|_{\mathcal{G}^{\lm,\sig_6}}} \norm{t^{-\fr12} \theta_{\N'}}_{\mathcal{G}^{\lambda,\sigma_3}}\lesssim \fr{\epsilon^3}{\JB{t}^2}.
\end{align*}
Let us now consider the remainder arising from the non-zero modes
\begin{align*}
\mathcal{R}_{\ne}&= 2\pi t^{-3}\sum_{\N\in \mathbb{D}}\sum_{\fr{\rm{N}}{8}\leq \rm{N}' \leq 8\rm{N}} \left\la u_{{\ne}_{\N}}\cdot\nb A^{\sig_1}\theta_{\N'}-A^{\sig_1}(u_{{\ne}_{\N}}\cdot\nb \theta_{\N'}) , A^{\sig_1}\theta \right\ra\\&
=\mathcal{R}^1_{\ne}+\mathcal{R}^2_{\ne}.
\end{align*}
As before, we focus on $\mathcal{R}^2_{\ne}$. $\mathcal{R}^1_{\ne}$ can be estimated in a similar way. Recall first that, 
\begin{align*}
\mathcal{R}^2_{\ne}&= -2\pi t^{-3}\sum_{\N\in \mathbb{D}}\sum_{\fr{\rm{N}}{8}\leq \rm{N}' \leq 8\rm{N}} \sum_{k,l,\al,\beta \in \mathbb{Z}}\int_{\eta,\xi} \overline{\widehat{A^{\sigma_1}\theta_k}}(t,\eta,\alpha) A^{\sigma_1}_k(t,\eta,\alpha)\widehat{u_{\ne}}(t,l,\xi,\beta)_\N\\
&\qquad\qquad\qquad\qquad
\cdot\widehat{\nabla \theta}_{k-l}(t,\eta-\xi,\al-\beta)_{\N'}d\xi d\eta.
\end{align*}
Undergoing the same estimates as before  (now with $l\neq 0$) and utilizing decay rate of $\widehat{u}_{\neq}$ \eqref{u_ne-decay}, we arrive at 
\[
|\mathcal{R}^2_{\ne}|\lesssim \frac{\epsilon^3}{\JB{t}^3}.
\]

\section{Energy estimates of the zero mode}\label{sec: main-2}

\subsection{Estimates of the source term}
Let us start by analyzing the term ${\bf S}_{\sig_1-1}$. Recall from \eqref{commutator and source} that 
\[
{\bf S}_{\sig_1-1}=
-\left\la A^{\sig_1-2}_0B\left( u_{\ne}\cdot\nb_{xyz}\theta_{\ne} \right)_0, A^{\sig_1-2}_0B\theta_0 \right\ra.
\]
We split it into two parts
\[
{\bf S}_{\sig_1-1}={\bf S}_{\sig_1-1}^{\rm HL}+{\bf S}_{\sig_1-1}^{\rm LH},
\]
where 
\begin{align}
{\bf S}_{\sig_1-1}^{\rm HL}\nn=&-\sum_{\substack{\N\in \mathbb{D},\\ \N\ge8}}\sum_{\substack{k\ne0\\ \al,\beta\in\Z}}\int_{\eta,\xi} \left(\widehat{A^{\sig_1}u}_{\ne}\right)(t,k,\xi,\beta)_{\N}\cdot \widehat{\nb\theta}_{\ne}(t,-k,\eta-\xi,\al-\beta)_{<\N/8}\\
\nn&\qquad \qquad \qquad \qquad \times \fr{A_0^{\sig_1-2}(t,\eta,\al)}{A^{\sig_1}_k(t,\xi,\beta)}\la |\eta|^{\fr12},\al\ra\left(\overline{\reallywidehat{A^{\sig_1-2}B\theta}}_0\right)(t,\eta,\al) d\eta d\xi,\\
{\bf S}_{\sig_1-1}^{\rm LH}\nn=&-\sum_{\N\in \mathbb{D}}\sum_{\substack{k\ne0\\ \al,\beta\in\Z}}\int_{\eta,\xi} \widehat{u}_{\ne}(t,-k,\eta-\xi,\al-\beta)_{<16\N}\cdot \left(\reallywidehat{A^{\sig_1-1}\nb\theta}_{\ne}\right)(t, k, \xi, \beta)_{\N}\\
\nn&\qquad \qquad \qquad \qquad\times \fr{A_0^{\sig_1-2}(t,\eta,\al)}{A^{\sig_1-1}_k(t,\xi,\beta)}\la |\eta|^{\fr12},\al\ra\left(\overline{\reallywidehat{A^{\sig_1-2}B\theta}}_0\right)(t,\eta,\al) d\eta d\xi,
\end{align}
 and the paraproduct decomposition given above is only for $(y,z)$ variables.
\subsubsection{Treatment of high-low interactions}
On the support of the integrand of ${\bf S}_{\sig_1-1}^{\rm HL}$, we have the following frequency localizations 
\[
\fr{\N}{2}\le |\xi,\beta|\le \fr32\N,\quad |\eta-\xi, \al-\beta|\le\fr{3}{4}\fr{\N}{8},
\]
and hence
\[
|\eta-\xi,\al-\beta|\le\fr{3}{16}|\xi,\beta|, \quad \fr{13}{16}|\xi,\beta|\le|\eta,\al|\le\fr{19}{16}|\xi,\beta|.
\]
Noting that
\begin{equation}\label{ratio of B}
\fr{\la |\eta|^\fr12,\alpha\ra}{\la |\xi|^{\fr12},\beta\ra}=\fr{\sqrt{1+|\eta|+\al^2}}{\sqrt{1+|\xi|+\beta^2}}\les\la \eta-\xi, \al-\beta\ra,
\end{equation}
then we have
\begin{align}
\nn\fr{A_0^{\sig_1-2}(t,\eta,\al)}{A^{\sig_1}_k(t,\xi,\beta)}\la |\eta|^{\fr12}, \al\ra=&\fr{e^{\lm(t) |\eta,\al|^s} |\eta,\al|^{\sig_1-2}{\rm J}_0(t,\eta,\al)}{e^{\lm(t) |k,\xi,\beta|^s} |k,\xi,\beta|^{\sig_1}{\rm J}_k(t,\xi,\beta)}\la |\eta|^{\fr12}, \al\ra\\
\nn\les&e^{c\lm(t)|-k,\eta-\xi,\al-\beta|^s}{  \fr{\la |\xi|^{\fr12}, \beta\ra}{\la \xi,\beta\ra^2}}\fr{{\rm J}_0(t,\eta,\al)}{{\rm J}_k(t,\xi,\beta)}\la \eta-\xi, \al-\beta\ra.
\end{align}
Combining this with the upper bound of $|\widehat{u}_{\neq}|$ in \eqref{u-hat}, we can say that
\begin{align}
{\bf S}_{\sig_1-1}^{\rm HL}\nn&\les\sum_{\N\in \mathbb{D}}\sum_{\substack{\al, \beta\in\Z\\ k\neq 0}}\int_{\eta,\xi} ({\mathds{1}}_{t\in{\textup{I}}_{k,\xi}}+{\mathds{1}}_{t\notin{\textup{I}}_{k,\xi}})\left|t^{-\fr32}\reallywidehat{A^{\sig_1}\theta}_{\ne}(t, k,\xi, \beta)_\N\right|\\
\nn&\qquad \qquad \times t^{\fr32}{  \fr{\la |\xi|^{\fr12},\beta\ra}{\la\xi,\beta\ra^2}}\fr{\left(t|\beta|+|\xi|+|k,\beta|\right) |k,\beta|}{(k^2+(\xi-k t)^2+\beta^2)^2}   {   \fr{{\rm J}_0(t,\eta,\al)}{{\rm J}_k(t,\xi,\beta)}} {\mathds{1}}_{|\eta-\xi,\al-\beta|\le \fr{3}{16}|\xi,\beta|}\\
\nn&\qquad \qquad \times e^{\mathbf{c}\lm(t)|\eta-\xi,\al-\beta|^s} {|\reallywidehat{\la\nb\ra^2\theta}_{\ne}(t,-k,\eta-\xi,\al-\beta)_{<\N/8}|}\left|\reallywidehat{A^{\sig_1-2}B\theta}_0(t,\eta,\al)\right|d\eta d\xi
\\&={\bf S}_{\sig_1-1}^{\rm HL;R}+{  {\bf S}_{\sig_1-1}^{\rm HL;NR}}.
\end{align}

If $(k,\xi,\beta)\in \mathfrak{U}$, then \eqref{approximate} holds. This, together with Lemma \ref{lem-ratio-J} and Remark \ref{rem-2}, lead us to the following inequality  
\begin{align*}
    &t^{\fr32}\fr{\la |\xi|^{\fr12}, \beta\ra}{\la\xi,\beta\ra^2}\fr{\left(t|\beta|+|\xi|+|k,\beta|\right) |k,\beta|}{(k^2+(\xi-k t)^2+\beta^2)^2}\fr{{\rm J}_0(t,\eta,\al)}{{\rm J}_k(t,\xi,\beta)}\\
    &\qquad \qquad    \lesssim  t^{\fr32}\fr{\la |\xi|^{\fr12}, \beta\ra}{\la\xi,\beta\ra^2}\fr{\left(t|\beta|+|\xi|+|k,\beta|\right) |k,\beta|}{k^4+\xi^4+k^4 t^4+\beta^4}e^{2\mu|-k,\eta-\xi,\al-\beta|^{\fr12}}
    \lesssim \fr{|\beta|^{s/2}|\alpha|^{s/2}\JB{k}^2}{\JB{t}^{\fr12+s}}e^{2\mu|-k,\eta-\xi,\al-\beta|^{\fr12}}.
\end{align*}
Thus, it suffices to consider the case where $(k,\xi,\beta)\in \mathfrak{U}^c$ and $t>10$. Firstly, we consider the case when $t\notin\textup{I}_{k,\xi}$. Note that $(k,\xi,\beta)\in \mathfrak{U}^c$ implies that  $k\xi>0, t\approx \fr{\xi}{k}$, and $t\le2|\xi|$. Meanwhile,  Remark \ref{rem-1} and $t\notin {\textup {I}}_{k,\xi}$ imply that one of the following holds:
\begin{enumerate}
    \item $t\in{\rm I}_{k,\xi}\ne\emptyset$, but $|k|>\fr{\sqrt{|\xi|}}{4}$;
    \item ${\rm I}_{k,\xi}=\emptyset$;
    \item $t\notin{\rm I}_{k,\xi}\ne\emptyset$.
\end{enumerate}
Thus, for $k\ne0$, if  $(k,\xi,\beta)\in \mathfrak{U}^c$ and $t\notin {\textup{I}}_{k,\xi}$, then either $|k|\gtrsim\sqrt{|\xi|}\gtrsim 
\sqrt{t}$ or $|t-\fr{\xi}{k}|\gtrsim\fr{|\xi|}{k^2}$ holds. In either case, it is always true that
\[
k^4\left(1+\Big|t-\fr{\xi}{k}\Big|^2\right)\gtrsim t^2.
\]
Consequently,
\begin{align}\label{G-2 req-1}
    t^{\fr32}\fr{\la |\xi|^{\fr12}, \beta\ra}{\la\xi,\beta\ra^2}\fr{\left(t|\beta|+|\xi|+|k,\beta|\right) |k,\beta|}{(k^2+(\xi-k t)^2+\beta^2)^2}
    \lesssim  |k|^4 t^{\fr52}\fr{\la |\xi|^{\fr12}, \beta\ra}{\la\xi,\beta\ra^2}\fr{(|\xi|+|k,\beta|)|k,\beta|}{t^4+\beta^4}\lesssim \fr{|\beta|^{s}}{\JB{t}^{\fr12+s}}\JB{k}^6. 
\end{align}
Hence, we get
\[
{\bf S}_{\sig_1-1}^{\rm HL;NR}\lesssim \fr{\epsilon}{t^{\fr12+s}}\left\||\nb |^{\fr{s}{2}}\fr{A^{\sig_1}}{t^{\fr32}}\theta_{\ne}\right\|_{L^2}\left\||\nb |^{\fr{s}{2}}A^{\sig_1-2} _0B\theta_0\right\|_{L^2}.
\]

Let us  now focus on the case $t\in{\textup{I}}_{k,\xi}$. In particular, now we have $|k|\le|\xi|$ and $|\beta|\le\fr{1}{100}|\xi|$, and hence $\iota(k,\xi,\beta)=\xi$. Furthermore, combining the fact $|\beta|\le\fr{1}{100}|\xi|$  with the frequency restriction $|\eta-\xi,\al-\beta|\le \fr{3}{16}|\xi,\beta|$ yields
\begin{equation}\label{eta aprx xi}
    \fr{1297}{1600}|\xi|\le|\eta|\le\fr{1903}{1600}|\xi|, \quad {\rm and } \quad  |\al|\le \fr{319}{1297}|\eta|.
\end{equation}
Thus $\iota(0,\eta,\al)=\eta$. Then by Lemma \ref{lem-ratio-J}, one easily deduces that
\be\label{good1}
\fr{{\rm J}_0(t,\eta,\al)}{{\rm J}_k(t,\xi,\beta)}{\mathds{1}}_{t\in\textup{I}_{k,\xi}}\les \fr{k^2(1+|t-\fr{\xi}{k}|)}{|\xi|}e^{2\mu|-k,\eta-\xi,\al-\beta|^{\fr12}}.
\ee

\par
\noindent{\bf Case 1: $|\xi|^{\fr12}\le 10 (1+ |\beta|)$.} Knowing $t\in\textup{I}_{k,\xi}$, we can infer that $t\approx\fr{|\xi|}{|k|}$. Thanks to \eqref{up-growth}, we have
\begin{align}\label{up-growth2}
t^{\fr32}\fr{\la |\xi|^{\fr12}, \beta\ra}{\la\xi,\beta\ra^2}&\fr{\left(t|\beta|+|\xi|+|k,\beta|\right) |k,\beta|}{(k^2+(\xi-k t)^2+\beta^2)^2}\nn\les \fr{|\fr{\xi}{k}|^{\fr32}\la\beta\ra}{\la\xi,\beta\ra^2}\fr{|\fr{\xi}{k}|}{k^2+(\xi-k t)^2+\beta^2}\\
\les&\fr{{  |\xi|^{\fr12}}\la\beta\ra}{|k|^{\fr{5}{2}}(k^2+(\xi-k t)^2+\beta^2)}
\les\fr{{   |\xi|^{\fr12}}(|\fr{\beta}{k}|+1)}{|k|^{\fr{7}{2}}(1+(t-\fr{\xi}{k})^2+|\fr{\beta}{k}|^2)},
\end{align}
and
\begin{align}\label{up-growth2'}
\fr{|\xi|^{\fr12}(|\fr{\beta}{k}|+1)}{|k|^{\fr{7}{2}}\Big(1+(t-\fr{\xi}{k})^2+|\fr{\beta}{k}|^2\Big)} \nn&{\fr{|k|^2}{|\xi|}\left(1+\left|t-\fr{\xi}{k}\right|\right)}\\
\nn\les&\fr{1}{|k|^\fr32|\xi|^\fr12}\fr{(|\fr{\beta}{k}|+1)}{\sq{1+(t-\fr{\xi}{k})^2+|\fr{\beta}{k}|^2}}\les\fr{1}{|k|^\fr32|\xi|^\fr12}=\fr{1}{|k|^\fr32}\fr{|\xi|^s}{|\xi|^{\fr12+s}}\\
\les &\fr{|\xi|^{\fr{s}{2}}|\eta|^{\fr{s}{2}}}{t^{\fr12+s}}.
\end{align}

\noindent{\bf Case 2: $(1+|\beta|)<\fr{1}{10}|\xi|^{\fr12}$.} 
Now, in this case, we obtain an analog of \eqref{up-growth2}, namely
\begin{align}\label{up-growth3}
t^{\fr32}\fr{\la |\xi|^{\fr12}, \beta\ra}{\la\xi,\beta\ra^2}\fr{\left(t|\beta|+|\xi|+|k,\beta|\right) |k,\beta|}{(k^2+(\xi-k t)^2+\beta^2)^2}\nn\les& \fr{|\fr{\xi}{k}|^{\fr32}|\xi|^{\fr12}}{\la\xi,\beta\ra^2}\fr{|\fr{\xi}{k}|}{k^2+(\xi-k t)^2+\beta^2}\\
\les&\fr{{   |\xi|}}{|k|^{\fr{9}{2}}(1+(t-\fr{\xi}{k})^2+|\fr{\beta}{k}|^2)}.
\end{align}
If $t\in {\rm I}_{k,\xi}\cap{\rm I}_{k,\eta}$, then the right hand side of \eqref{up-growth3} multiplied the ratio in \eqref{good1}  can be estimated as follows
\begin{align}\label{up-growth3'}
\fr{{   |\xi|}}{|k|^{\fr{9}{2}}\Big(1+(t-\fr{\xi}{k})^2+|\fr{\beta}{k}|^2\Big)}{  \fr{|k|^2}{|\xi|}\left(1+\left|t-\fr{\xi}{k}\right|\right)}\les&\fr{1}{|k|^\fr52}\fr{1}{\sq{1+|t-\fr{\xi}{k}|}}\fr{1}{\sq{1+|t-\fr{\eta}{k}|}}\fr{\sq{1+|t-\fr{\eta}{k}|}}{\sq{1+|t-\fr{\xi}{k}|}}\\
\nn\les &\sqrt{\fr{\pr_tw_k(t,\xi)}{w_k(t,\xi)}}\sqrt{\fr{\pr_tw_0(t,\eta)}{w_0(t,\eta)}}\la \eta-\xi\ra^{\fr{1}{2}}.
\end{align}
If $t\in {\rm I}_{k,\xi}\cap{\rm I}^c_{k,\eta}$, then $\fr{|\eta|}{k^2}\fr{1}{1+|t-\fr{\eta}{k}|}\les1$, and hence using \eqref{eta aprx xi} we get
\begin{align}\label{up-growth3''}
\fr{{   |\xi|}}{|k|^{\fr{9}{2}}\Big(1+(t-\fr{\xi}{k})^2+|\fr{\beta}{k}|^2\Big)}{  \fr{|k|^2}{|\xi|}\left(1+\left|t-\fr{\xi}{k}\right|\right)}&\les\fr{1}{|k|^\fr52}\fr{1}{1+|t-\fr{\eta}{k}|}\fr{1+|t-\fr{\eta}{k}|}{1+|t-\fr{\xi}{k}|}\les\fr{1}{|k|^{\fr32}}\fr{|k|}{|\xi|}\fr{|\xi|}{|\eta|}\la \eta-\xi\ra\\
\nn&\les \fr{1}{|k|^{\fr32}} \fr{1}{t}\la\eta-\xi\ra\les \fr{1}{|k|^{\fr32+s}}\fr{|\xi|^{\fr{s}{2}}|\eta|^{\fr{s}{2}}}{t^{1+s}}.
\end{align}

After combining estimates in both cases together with the bootstrap hypotheses, we may infer
\begin{align}
{\bf S}_{\sig_1-1}^{\rm HL;R}\nn\les&
\fr{\epsilon}{t^{\fr12+s}}\left\||\nb |^{\fr{s}{2}}\fr{A^{\sig_1}}{t^{\fr32}}\theta_{\ne}\right\|_{L^2}\left\||\nb |^{\fr{s}{2}}A^{\sig_1-2} _0B\theta_0\right\|_{L^2}\\&
\qquad + \epsilon\left\| \sqrt{\frac{\partial_t w}{w}}\fr{A^{\sig_1}}{t^{\fr32}}\theta_{\ne}\right\|_{L^2}\left\|\sqrt{\frac{\partial_t w}{w}}A^{\sig_1-2} _0B\theta_0\right\|_{L^2}.
\end{align}

\subsubsection{Treatment of low-high interactions}
On the support of the integrand of ${\bf S}^{\rm LH}_{\sig_1-1}$, the following frequency localization holds
\[
\fr{\N}{2}\leq |\xi,\beta|\le\fr32\N,\quad |\eta-\xi,\al-\beta|\le\fr34\times 16\N,
\]
and hence
\[
|\eta-\xi,\al-\beta|\le 24|\xi,\beta|.
\]
Thanks to \eqref{triangle1} and
\eqref{ratio-J}, we are led to
\[
\fr{A_0^{\sig_1-2}(t,\eta,\al)}{A^{\sig_1-1}_k(t,\xi,\beta)}\la |\eta|^{\fr12},\al\ra\les e^{c\lm(t) |\eta-\xi,\al-\beta|^s}.
\]
which in combination with \eqref{u_ne-decay} and the bootstrap hypotheses yields
\begin{equation}
    \begin{aligned}
        {\bf S}_{\sig_1-1}^{\rm LH}\les&\sum_{\substack{k\ne0\\ \al,\beta\in\Z}}\int_{\eta,\xi} \frac{\langle k,\eta-\xi,\al-\beta\rangle^6}{\langle t\rangle^3}|\reallywidehat{\theta}_{\ne}(t,k,\eta-\xi,\al-\beta)| \left|\left(\widehat{A^{\sig_1}\theta}_{\ne}\right)(t, k, \xi, \beta)\right|\\
&\qquad \qquad \times e^{\mathbf{c}\lm(t) |\eta-\xi,\al-\beta|^s}\left|\left(\reallywidehat{A^{\sig_1-2}B\theta}_0\right)(t,\eta,\al)\right| d\eta d\xi\\
\les&\fr{1}{t^{\fr32}}\|\theta\|_{\mathcal{G}^{\lm,\sig_5}}\|B\theta_0\|_{\sig_1-2}\|t^{-\fr32}\theta\|_{\sig_1}\les\fr{\epsilon}{t^{\fr32}}\|B\theta_0\|_{\sig_1-2}\|t^{-\fr32}\theta\|_{\sig_1}.
    \end{aligned}
\end{equation}

\subsection{Estimates of the commutator}

\subsubsection{Treatment of the transport term}
Note that from ${  {\bf com}_{\sig_1-1}}$ in \eqref{commutator and source}, we derive the following useful equation
\begin{align*}
 \frac{A^{\sig_1-2}_0(t,\eta,\alpha)B(\eta,\alpha)}{A^{\sig_1-2}_0(t,\xi,\beta)B(\xi,\beta)}-1&
 =\frac{B(\eta,\al)}{B(\xi,\beta)}\bigg[\left(e^{\lm(t)(|\eta,\al|^s-| \xi,\beta|^s)}-1\right)\\&\qquad \qquad +e^{\lm(t)(|\eta,\al|^s-| \xi,\beta|^s)}\left(\fr{\mathrm{J}_0(t,\eta,\al)}{\mathrm{J}_0(t,\xi,\beta)}-1\right)\fr{\la \eta,\al \ra^{\sigma_1-2}}{\la \xi,\beta\ra^{\sigma_1-2}}\\
\nn&\qquad\qquad +
e^{\lm(t)(|\eta,\al|^s- |\xi,\beta|^s)}\left(\fr{\la\eta,\al\ra^{\sigma_1-2}}{\la\xi,\beta\ra^{\sigma_1-2}}-1\right)\bigg]\\
& \quad
+\frac{B(\eta,\alpha)-B(\xi,\beta)}{B(\xi,\beta)}.
\end{align*}
With this in mind, we can decompose $\tilde{\T}_{\N}$ in \eqref{paraproduct} as follows,
\[
\tilde{\T}_{\N}=\tilde{\T}^{(1)}_{\N}+\tilde{\T}^{(2)}_{\N}+\tilde{\T}^{(3)}_{\N}+\tilde{\T}^{(4)}_{\N}.
\]
Below, we present estimates associated with each $\tilde{\T}^{(i)}_{\N}, i=1,2,3,4.$ 

\underline{Treatment of $\tl{\T}^{(1)}_\N$.}
First of all, notice that on support of the integrand bellow, it is still true that $|\eta,\alpha|\approx|\xi,\beta|.$ Using $|e^x-1|\leq |x|e^{|x|}$, Lemma \ref{exponent ineq}, \eqref{ratio of B}, \eqref{eq: tlu_0} and the bootstrap hypotheses, we can infer that 
\begin{align*}
|\tilde{\T}^{(1)}_{\N}|&
\lesssim \sum_{\al,\beta\in\Z}\int_{\eta,\xi} e^{c\lm(t)( |\eta-\xi,\al-\beta|^s)}\fr{|\al-\beta|}{(\eta-\xi)^2+(\al-\beta)^2}|\widehat{\Pe_{\ne}^z\theta_0}(t,\eta-\xi,\al-\beta)_{<\N/8}||\xi,\beta|^{\fr{s}{2}} |\eta,\alpha|^{\fr{s}{2}}\\
\nn&\qquad \qquad \times| \reallywidehat{A^{\sig_1-2}B\theta}_0(t,\xi,\beta)_{\N}| |A^{\sig_1-2}_0(t,\eta,\alpha)B(\eta,\alpha) \widehat{\theta}_0(t,\eta,\alpha)|\;d\xi d\eta\\&
\lesssim \|\Pe_{\ne}^z \theta_0\|_{\mathcal{G}^{\lm,\sig_6}} \||\nabla|^{\fr{s}{2}}B\theta_{{0}_{\N}}\|_{\sigma_1-2} \||\nabla|^{\fr{s}{2}}B\theta_{{0}_{\sim \N}}\|_{\sigma_1-2}\lesssim \frac{\epsilon}{\JB{t}^3} \||\nabla|^{\fr{s}{2}}B\theta_{{0}_{\N}}\|_{\sigma_1-2} \||\nabla|^{\fr{s}{2}}B\theta_{{0}_{\sim \N}}\|_{\sigma_1-2}.
\end{align*}

\underline{Treatment of $\tl{\T}^{(2)}_\N$.} Here, we are using the estimates in \eqref{ratio of B}, \eqref{ratio-J}, and the fact that $|\eta,\alpha|\approx |\xi,\beta|$. Similar to the approach performed in \eqref{transport0-2-decomp}, we decompose $\tl{\T}^{(2)}_\N$ as follows,
\begin{align*}
\tilde{\T}^{(2)}_{\N}&= \sum_{\al,\beta\in\Z}\int_{\eta,\xi} [{\mathds{1}}^{\mf{S}}+{\mathds{1}}^{\mf{L}}]  \frac{B(\eta,\al)}{B(\xi,\beta)}e^{\lm(t)(|\eta,\al|^s-| \xi,\beta|^s)}\left(\fr{\mathrm{J}_0(t,\eta,\al)}{\mathrm{J}_0(t,\xi,\beta)}-1\right)\\
&\qquad\qquad\times\fr{ \la \eta,\al\ra^{\sigma_1-2}}{\la \xi,\beta\ra^{\sigma_1-2}} \widehat{\tl{u}}_{0}(t,\eta-\xi,\alpha-\beta)_{<\N/8}\cdot i(\xi,\beta) \\
&\qquad \qquad \times\reallywidehat{A^{\sig_1-2}B\theta}_0(t,\xi,\beta)_{\N}  \overline{\reallywidehat{A^{\sig_1-2}B\theta}_0}(t,\eta,\alpha)\;d\xi d\eta=:\tilde{\T}^{(2),\mf{S}}_{\N}+\tilde{\T}^{(2),\mf{L}}_{\N}.
\end{align*}
Let us now focus on the short-time regime. We know, by Lemma \ref{short time}, \eqref{ratio of B}, and the bootstrap hypotheses, that
\begin{align*}
\left|\tilde{\T}^{(2),\mf{S}}_{\N}\right|\nn\les&  \sum_{\al,\beta\in\Z}\int_{\eta,\xi} {\mathds{1}}^\mf{S} \bigg( \frac{B(\eta,\al)}{B(\xi,\beta)}e^{\lm(t)(|\eta,\al|^s-| \xi,\beta|^s)}\left(\fr{\mathrm{J}_0(t,\eta,\al)}{\mathrm{J}_0(t,\xi,\beta)}-1\right) \fr{ \la \eta,\al\ra^{\sigma_1-2}}{\la \xi,\beta\ra^{\sigma_1-2}} \bigg)|\widehat{\tl{u}}_{0}(t,\eta-\xi,\alpha-\beta)_{<\N/8}|\\
&\qquad \qquad \times |\xi,\beta| |\reallywidehat{A^{\sig_1-2}B\theta_0}(t,\xi,\beta)_{\N}| |\reallywidehat{A^{\sig_1-2}B\theta_0}(t,\eta,\alpha)|\;d\xi d\eta\\
\les&\sum_{\al,\beta\in\Z}\int_{\eta,\xi} {\mathds{1}}^\mf{S} \frac{e^{\mathbf{c}\lambda(t)|\eta-\xi,\alpha-\beta|^s}}{(|\eta|+|\xi|+|\alpha|+|\beta|)^{\fr12}}|\widehat{\tl{u}}_{0}(t,\eta-\xi,\alpha-\beta)_{<\N/8}||\xi,\beta|\\& \qquad \qquad \times |\reallywidehat{A^{\sig_1-2}B\theta_0}(t,\xi,\beta)_{\N}| |\reallywidehat{A^{\sig_1-2}B\theta_0}(t,\eta,\alpha)|\;d\xi d\eta\\
\les&\sum_{\al,\beta\in\Z}\int_{\eta,\xi} {\mathds{1}}^\mf{S} e^{\mathbf{c}\lambda(t)|\eta-\xi,\alpha-\beta|^s}\fr{|\al-\beta|}{(\eta-\xi)^2+(\al-\beta)^2}|\widehat{\Pe_{\ne}^z\theta_0}(t,\eta-\xi,\al-\beta)_{<\N/8}| |\xi,\beta|^{\fr12}\\&\qquad \qquad \times |\reallywidehat{A^{\sig_1-2}B \theta_0}(t,\xi,\beta)_{\N}| |\reallywidehat{A^{\sig_1-2}B \theta_0}(t,\eta,\alpha)|\;d\xi d\eta\\
\lesssim&\|\Pe_{\ne}^z \theta_0\|_{\mathcal{G}^{\lm,\sig_6}} \||\nabla|^{\fr{s}{2}}B\theta_{{0}_{\N}}\|_{\sigma_1-2} \||\nabla|^{\fr{s}{2}}B\theta_{{0}_{\sim \N}}\|_{\sigma_1-2}\lesssim \frac{\epsilon}{\JB{t}^3} \||\nabla|^{\fr{s}{2}}B\theta_{{0}_{\N}}\|_{\sigma_1-2} \||\nabla|^{\fr{s}{2}}B\theta_{{0}_{\sim \N}}\|_{\sigma_1-2}.
\end{align*}
Let us now consider its counterpart (long-time regime) $\tilde{\T}^{(2),\mf{L}}_{\N}$. The integrand in this regime is supported on the time interval $t >\frac{1}{2}\min{(\sqrt{|\iota_1|},\sqrt{|\iota_2|})}$. 
We have $|\xi,\beta|\lesssim |\xi,\beta|^{\frac{s}{2}}|\eta,\alpha|^{\frac{s}{2}}t^{2-2s}$. Hence,
upon using Lemma~\ref{lem-ratio-J}, \eqref{ratio of B}, \eqref{eq: tlu_0} and the bootstrap hypotheses, we obtain
\begin{align*}
\left| \tilde{\T}^{(2),\mf{L}}_{\N}\right| &\lesssim \sum_{\al,\beta\in\Z}\int_{\eta,\xi} {\mathds{1}}^{\mf{L}} e^{\mathbf{c}\lm(t)|\eta-\xi,\al-\beta|^s}|\widehat{\tl{u}}_{0}(t,\eta-\xi,\alpha-\beta)_{<\N/8}||\xi,\beta|\\
\nn& \qquad \qquad \qquad\times | \reallywidehat{A^{\sig_1-2}B\theta_0}(t,\xi,\beta)_{\N}| |\reallywidehat{A^{\sig_1-2}B\theta}_0(t,\eta,\alpha)|\;d\xi d\eta\\&
\lesssim \sum_{\al,\beta\in\Z}\int_{\eta,\xi} {\mathds{1}}^{\mf{L}} e^{\mathbf{c}\lm(t)|\eta-\xi,\al-\beta|^s}|\widehat{\tl{u}}_{0}(t,\eta-\xi,\alpha-\beta)_{<\N/8}||\xi,\beta|^{\frac{s}{2}}|\eta,\alpha|^{\frac{s}{2}}t^{2-2s}\\
\nn& \qquad \qquad \qquad \times | \reallywidehat{A^{\sig_1-2}B\theta_0}(t,\xi,\beta)_{\N}|  | \reallywidehat{A^{\sig_1-2}B\theta}_0(t,\eta,\alpha)|\;d\xi d\eta\\&
\lesssim \sum_{\al,\beta\in\Z}\int_{\eta,\xi} {\mathds{1}}^{\mf{L}}e^{\mathbf{c}\lm(t)|\eta-\xi,\al-\beta|^s} \fr{|\al-\beta|}{(\eta-\xi)^2+(\al-\beta)^2}|\widehat{\Pe_{\ne}^z\theta_0}(t,\eta-\xi,\al-\beta)_{<\N/8}|\xi,\beta|^{\frac{s}{2}}|\eta,\alpha|^{\frac{s}{2}}\\
\nn& \qquad \qquad \qquad \times t^{2-2s} | \reallywidehat{A^{\sig_1-2}B\theta_0}(t,\xi,\beta)_{\N}|  | \reallywidehat{A^{\sig_1-2}B\theta}_0(t,\eta,\alpha)|\;d\xi d\eta
\\&
\lesssim t^{2-2s} \|\Pe_{\ne}^z \theta_0\|_{\mathcal{G}^{\lm,\sig_6}} \||\nabla|^{\fr{s}{2}}B\theta_{{0}_{\N}}\|_{\sigma_1-2} \||\nabla|^{\fr{s}{2}}B\theta_{{0}_{\sim \N}}\|_{\sigma_1-2}\\
&\lesssim \frac{\epsilon}{\JB{t}^{2s+1}} \||\nabla|^{\fr{s}{2}}B\theta_{{0}_{\N}}\|_{\sigma_1-2} \||\nabla|^{\fr{s}{2}}B\theta_{{0}_{\sim \N}}\|_{\sigma_1-2}.
\end{align*}
This therefore completes the estimate for $\tilde{\T}^{(2)}_{\N}$. Now, we switch our attention to estimating $\tilde{\T}^{(3)}_{\N}$.

\underline{Treatment of $\tl{\T}^{(3)}_\N$.}
By the mean value theorem, we obtain
\begin{align*}
\nn&e^{\lm(t)( |\eta,\al|^s-| \xi,\beta|^s)}\left|\fr{ \la \eta,\al\ra^{\sigma_1-2}}{\la \xi,\beta \ra^{\sigma_1-2}}-1\right|\les e^{c\lm(t)| \eta-\xi,\al-\beta|^s}\fr{|\eta-\xi,\al-\beta|}{\la \xi,\beta\ra}.
\end{align*}
Hence, combining this estimate with \eqref{ratio of B} allows us to infer that 
\begin{align*}
\left|\tilde{\T}^{(3)}_{\N}\right|
\les&\sum_{\al,\beta\in\Z}\int_{\eta,\xi} e^{c\lm(t) |\eta-\xi,\alpha-\beta|^s} \fr{|\al-\beta|}{(\eta-\xi)^2+(\al-\beta)^2}|\widehat{\Pe_{\ne}^z\theta_0}(t,\eta-\xi,\al-\beta)_{<\N/8}|\\
&\qquad \qquad\times | \reallywidehat{A^{\sig_1-2}B\theta}_0(t,\xi,\beta)_{\N}| | \reallywidehat{A^{\sig_1-2}B\theta}_0(t,\eta,\alpha)|\;d\xi d\eta\\ \lesssim & \|\Pe_{\ne}^z \theta_0\|_{\mathcal{G}^{\lm,\sig_6}} \|B\theta_{{0}_{\N}}\|_{\sigma_1-2} \|B\theta_{{0}_{\sim \N}}\|_{\sigma_1-2}\lesssim \frac{\epsilon}{\JB{t}^3} \|B\theta_{{0}_{\N}}\|_{\sigma_1-2} \|B\theta_{{0}_{\sim \N}}\|_{\sigma_1-2}.
\end{align*}
Now, we move on and estimate $\tilde{\T}^{(4)}_{\N}$.

\underline{Treatment of $\tl{\T}^{(4)}_{\N}$.}

Notice that 
\[
\left|\frac{B(\eta,\alpha)-B(\xi,\beta)}{B(\xi,\beta)}\right|\lesssim \fr{\JB{\eta-\beta,\alpha-\beta}^2}{\JB{\xi,\beta}^\fr12}.
\]
Therefore, as done before, we obtain the following inequality
\begin{equation}
\begin{aligned}
\left|\tilde{\T}^{(4)}_{\N}\right|\nn\les& \sum_{\al,\beta\in\Z}\int_{\eta,\xi} \fr{\JB{\eta-\beta,\alpha-\beta}^2}{\JB{\xi,\beta}^\fr12}|\widehat{\tl{u}}_{0}(t,\eta-\xi,\alpha-\beta)_{<\N/8}||\xi,\beta|\\
&\qquad \qquad \times  | \reallywidehat{A^{\sig_1-2}B\theta}_0(t,\xi,\beta)_{\N}| | \reallywidehat{A^{\sig_1-2}B\theta}_0(t,\eta,\alpha)|\;d\xi d\eta
\\ \lesssim & \|\Pe_{\ne}^z \theta_0\|_{\mathcal{G}^{\lm,\sig_6}} \||\nabla|^\fr{s}{2}B\theta_{{0}_{\N}}\|_{\sigma_1-2} \||\nabla|^\fr{s}{2}B\theta_{{0}_{\sim \N}}\|_{\sigma_1-2}\lesssim \frac{\epsilon}{\JB{t}^3} \||\nabla|^\fr{s}{2}B\theta_{{0}_{\N}}\|_{\sigma_1-2} \||\nabla|^\fr{s}{2}B\theta_{{0}_{\sim \N}}\|_{\sigma_1-2}.
\end{aligned}
\end{equation}

\subsubsection{Treatment of the reaction term} Recalling \eqref{paraproduct}, we write
\[
\tl{\R}_\N=\tl{\R}_\N^{(1)}+\tl{\R}_\mathrm{N}^{(2)},
\]
where
\begin{align*}
    \tilde{\R}_\mathrm{N}^{(1)}
    =&-\sum_{\al,\beta\in\Z}\int_{\eta,\xi} A^{\sig_1-2}_0(t,\eta,\al)B(\eta,\al)\widehat{\tl{u}^y_{0}}(\xi,\beta)_\mathrm{N}\widehat{\pr_y \theta}_{0}(\eta-\xi,\al-\beta)_{<\N/8}  \left(\overline{\reallywidehat{A^{\sig_1-2}B\theta}}_0\right)(t,\eta,\al) d\xi d\eta\\
    &-\sum_{\al,\beta\in\Z}\int_{\eta,\xi} A_0^{\sig_1-2}(t,\eta,\al)B(\eta,\al)\widehat{\tl{u}^z_{0}}(\xi,\beta)_\mathrm{N}\widehat{\pr_z \theta}_{0}(\eta-\xi,\al-\beta)_{<\N/8}  \left(\overline{\reallywidehat{A^{\sig_1-2}B\theta}}_0\right)(t,\eta,\al) d\xi d\eta\\
    =&\tilde{\R}_\mathrm{N}^{(1,y)}+\tilde{\R}_\mathrm{N}^{(1,z)},
\end{align*}
and
\[
\tl{\R}_\N^{(2)}=\sum_{\al,\beta\in\Z}\int_{\eta,\xi} \widehat{\tl{u}_{0}}(\xi,\beta)_\mathrm{N}\cdot\reallywidehat{\nb A^{\sig_1-2}_0B \theta}_{0}(\eta-\xi,\al-\beta)_{<N/8}  \left(\overline{\reallywidehat{A^{\sig_1-2}B\theta}}_0\right)(t,\eta,\al) d\xi d\eta.
\]
More precisely, recalling \eqref{eq: tlu_0}, we further split $\tilde{\R}_\mathrm{N}^{(1,y)}$ as follows:
\begin{align*}
    \tilde{\R}_\mathrm{N}^{(1,y)}=&\sum_{\beta\ne0}\int_{\eta,\xi} A^{\sig_1-2}_0(t,\eta,\beta)B(\eta,\beta)\fr{\beta^2}{(\xi^2+\beta^2)^2}\widehat{\theta}_0(\xi,\beta)_\mathrm{N}\widehat{\pr_y \theta}_{0}(\eta-\xi,0)_{<\N/8}  \\
    &\qquad \qquad \times\left(\overline{\reallywidehat{A^{\sig_1-2}B\theta}}_0\right)(t,\eta,\beta) d\xi d\eta\\
    &+\sum_{\al\ne\beta,\beta\ne0}\int_{\eta,\xi} A^{\sig_1-2}_0(t,\eta,\al)B(\eta,\al)\fr{\beta^2}{(\xi^2+\beta^2)^2}\widehat{\theta}_0(\xi,\beta)_\mathrm{N}\widehat{\pr_y \theta}_{0}(\eta-\xi,\al-\beta)_{<\N/8}  \\
    &\qquad \qquad \times\left(\overline{\reallywidehat{A^{\sig_1-2}B\theta}}_0\right)(t,\eta,\al) d\xi d\eta=\tilde{\R}_{\N,0}^{(1,y)}+\tilde{\R}_{\N,\ne}^{(1,y)}.
\end{align*}
On the support of the integrand of $\tilde{\R}_{\N,0}^{(1,y)}$, there holds
\[
|\eta-\xi|\le\fr{3}{16}|\xi,\beta|,\quad{\rm and}\quad |\eta,\beta|\approx|\xi,\beta|.
\]
From this and \eqref{ratio of B}, we have
\begin{align*}
    \tilde{\R}_{\N,0}^{(1,y)}=&\sum_{\beta\ne0}\int_{\eta,\xi} \fr{\beta}{\xi^2+\beta^2}\left(\reallywidehat{A^{\sig_1-2}B\theta}_0\right)(t,\xi,\beta)_\mathrm{N} \fr{A^{\sig_1-2}_0(t,\eta,\beta)B(\eta,\beta)}{A^{\sig_1-2}_0(t,\xi,\beta)B(\xi,\beta)}\fr{\eta^2+\beta^2}{\xi^2+\beta^2}\widehat{\pr_y \theta}_{0}(\eta-\xi,0)_{<\N/8}  \\
    &\qquad \qquad \times \fr{\beta}{\eta^2+\beta^2}\left(\overline{\reallywidehat{A^{\sig_1-2}B\theta}}_0\right)(t,\eta,\beta) d\xi d\eta\\
    \les& \eps\left\|\pr_z\Delta_{yz}^{-1}B\theta_{0_\mathrm{N}}\right\|_{\sig_1-2}\left\|\pr_z\Delta_{yz}^{-1}B\theta_{0_{\sim \N}}\right\|_{\sig_1-2}.
\end{align*}
Using again \eqref{ratio of B}, we find that
\begin{align*}
\tilde{\R}_{\N,\ne}^{(1,y)}\les&\sum_{\al\ne\beta,\beta\ne0}\int_{\eta,\xi} \left|\left(\reallywidehat{A^{\sig_1-2}B\theta}_0\right)(t,\xi,\beta)_\mathrm{N}\right| \fr{A^{\sig_1-2}_0(t,\eta,\alpha)B(\eta,\alpha)}{A^{\sig_1-2}_0(t,\xi,\beta)B(\xi,\beta)}\widehat{\pr_y \theta}_{0}(\eta-\xi,\al-\beta)_{<\N/8}  \\
    &\times \left|\left({\reallywidehat{A_0^{\sig_1-2}B\theta}}_0\right)(t,\eta,\beta) \right|d\xi d\eta\\
    \les&\left\|B\theta_{0_\mathrm{N}}\right\|_{\sig_1-2}\left\|B\theta_{0_{\sim \N}}\right\|_{\sig_1-2}\|\mathbb{P}^z_{\ne}\theta_0\|_{\mathcal{G}^{\lambda,\sig_6}}
    \les\fr{\eps}{t^3}\left\|B\theta_{0_\mathrm{N}}\right\|_{\sig_1-2}\left\|B\theta_{0_{\sim \N}}\right\|_{\sig_1-2}.
\end{align*}

The term $\tilde{\R}_\mathrm{N}^{(1,z)}$ can be treated in the same manner as $\tilde{\R}_{\N,\ne}^{(1,y)}$.
\begin{align*}
    \tilde{\R}_\mathrm{N}^{(1,z)}=&-\sum_{\al\ne\beta,\beta\ne0}\int_{\eta,\xi}\fr{\xi\beta}{(\xi^2+\beta^2)^2}\left(\reallywidehat{A^{\sigma_1-2}B\theta_0}\right)(t, \xi,\beta)_\mathrm{N}\fr{A^{\sig_1-2}_0(t,\eta,\al)B(\eta,\al)}{A^{\sig_1-2}_0(t,\xi,\beta)B(\xi,\beta)}  \\
    &\times \widehat{\pr_z \theta}_{0}(\eta-\xi,\al-\beta)_{<\N/8}\left(\overline{\reallywidehat{A^{\sig_1-2}B\theta}}_0\right)(t,\eta,\al) d\xi d\eta\\
    \les&\fr{\eps}{t^3}\left\|B\theta_{0_\mathrm{N}}\right\|_{\sig_1-2}\left\|B\theta_{0_{\sim \N}}\right\|_{\sig_1-2}.
\end{align*}

As for $\tl{\R}_\N^{(2)}$, noting that $|\eta-\xi,\al-\beta|\le\fr{3}{16}|\xi,\al|$ holds on the support of the integrand in $\tl{\R}_\N^{(2)}$, one easily deduces that
\[
A_0^{\sig_1-2}(t,\eta-\xi,\al-\beta)B(\eta-\xi,\al-\beta)\les e^{c\lm(t)|\xi,\beta|^s}, \quad {\rm for\ \ some}\quad c\in(0,1).
\]
Then $\tl{\R}_\N^{(2)}$ can be bounded in the same manner as $\tl{\R}_\N^{(1)}$. We omit the details and state the result:
\begin{align*}
    |\tl{\R}_\N^{(2)}|\les \eps\left\|\pr_z\Delta_{yz}^{-1}B\theta_{0_\mathrm{N}}\right\|_{\sig_1-2}\left\|\pr_z\Delta_{yz}^{-1}B\theta_{0_{\sim \N}}\right\|_{\sig_1-2}+\fr{\eps}{t^3}\left\|B\theta_{0_\mathrm{N}}\right\|_{\sig_1-2}\left\|B\theta_{0_{\sim \N}}\right\|_{\sig_1-2}.
\end{align*}

\subsubsection{Treatment of Remainder $\tilde{\mathcal{R}}$}
Let us start by writing down the term of interest here
 \begin{align*}
 \tilde{\mathcal{R}}&= 2\pi \sum_{\N\in \mathbb{D}}\sum_{\fr{\rm{N}}{8}\leq \rm{N}' \leq 8\rm{N}} \left\la \tl{u}_{{0}_{\N}}\cdot\nb A_0^{\sig_1-2}B\theta_{0_{\N'}}-A_0^{\sig_1-2}B(\tl{u}_{{0}_{\N}}\cdot\nb \theta_{0_{\N'}}) , A_0^{\sig_1-2}B\theta_0 \right\ra\\&=\tilde{\mathcal{R}}_1+\tilde{\mathcal{R}}_2.
 \end{align*}
The estimate of $
\tilde{\mathcal{R}}_1$ is easier to get, hence we avoid carrying it out here. Let us focus on $\tilde{\mathcal{R}}_2$,   which on the Fourier side reads
 \begin{align*}
 \tilde{\mathcal{R}}_2=-2\pi\sum_{\N\in \mathbb{D}}\sum_{\fr{\rm{N}}{8}\leq \rm{N}' \leq 8\rm{N}} \sum_{\substack{\al,\beta \in \mathbb{Z}\\ k,l=0}} \int_{\eta,\xi} &\overline{\reallywidehat{A^{\sigma_1-2}B\theta_0}}(t,\eta,\alpha) {A^{\sigma_1-2}_0}(t,\eta,\alpha)B(\eta,\alpha)\\
 &\times \widehat{\tilde{u}_0}(t,\xi,\beta)_\N \cdot\widehat{\nabla \theta}_{0}(t,\eta-\xi,\al-\beta)_{\N'}d\xi d\eta.
 \end{align*}
 Again, via the paraproduct decomposition, on the support of the integrand of  $\tilde{\mathcal{R}}_2$, it is true that $\fr{\N}{2}\le|\xi,\beta|\le\fr{3\N}{2}$  $\fr{\N'}{2}\le|\eta-\xi,\al-\beta|\le\fr{3\N'}{2}$ with $\fr{\N}{8}\le \N'\le8\N$.
Hence, the inequality \eqref{comp xi beta} still holds with $k=0$.
Furthermore, we know that there exists $0<c<1$ for $ 0<s\leq 1$, so that by triangle inequality, $|\eta,\alpha|^s \lesssim \left( c|\xi,\beta|^s +c|\eta-\xi,\alpha-\beta|^s \right)$.

As a result, via \eqref{ratio of B},
\[
A^{\sigma_1-2}_0
(t,\eta,\alpha)B(\eta,\alpha)\lesssim \textup{J}_0(t,\eta,\alpha)e^{c\lambda(t)|\xi,\beta|^s}e^{c\lambda(t)|\eta-\xi,\al-\beta|^s}\JB{\eta-\xi,\alpha-\beta}B(\xi,\beta)\JB{\xi,\beta}\JB{\eta-\xi,\al-\beta}^{\sigma_1-3}.
\]
But, via the definition of \textup{J} \eqref{multiplier J} and the total growth in Lemma~\ref{total growth}, we can infer that 
\[
|\textup{J}_0(t,\eta,\alpha)| \lesssim e^{\mu\sqrt{|\iota(\eta,\alpha)|}}\lesssim e^{\mu\sqrt{|\eta,\alpha|}}\lesssim e^{\mu(\sqrt{|\eta-\xi,\alpha-\beta|}+\sqrt{|\xi,\beta|})}. 
\]
Hence, via \eqref{eq: tlu_0} and the bootstrap hypotheses
\begin{align*}
|\tilde{\mathcal{R}}^2|&\lesssim \sum_{\N\in \mathbb{D}}\sum_{\fr{\rm{N}}{8}\leq \rm{N}' \leq 8\rm{N}} \sum_{\substack{\al,\beta \in \mathbb{Z}\\ k,l=0}} \int_{\eta,\xi} |\overline{\reallywidehat{A^{\sigma_1-2}B\theta_0}}(t,\eta,\alpha)|
e^{\mathbf{c}\lambda(t)|\xi,\beta|^s}B(\xi,\beta)|\xi,\beta||\widehat{\tilde{u}_0}(t,\xi,\beta)_\N| \\&
\qquad \qquad \qquad \times e^{\mathbf{c}\lambda(t)|\eta-\xi,\al-\beta|^s} |\eta-\xi,\al-\beta|^{\sigma_1-2}|\widehat{\nabla \theta}_{0}(t,\eta-\xi,\al-\beta)_{\N'}|d\xi d\eta\\&
\lesssim \norm{B\theta_0}_{\sigma_1-2}\|\mathbb{P}^z_{\ne}\theta_0\|_{\mathcal{G}^{\lambda,\sig_6}}\norm{\theta}_{\mathcal{G}^{\lambda,\sigma_3}}\lesssim\frac{\epsilon^3}{t^2}.
\end{align*}

\section{Intermediate and lower energy estimates}
In this section, we deal with the intermediate and lower energy estimates.  Here we combine the semi-group estimates for the zero mode $\mathbb{P}^z_{\ne}\theta_0$ and energy estimates for $\theta$. We refer to Figure \ref{map} for the idea of the proof. 

To begin with, we rewrite \eqref{eq-theta0'} in Fourier variable:
\be\label{eq-theta0hat}
\pr_t\widehat{\theta}_0(t,\eta,\al)+\fr{\al^2}{(\eta^2+\al^2)^2}\widehat{\theta}_0(t,\eta,\al)=-\mathcal{F}[\tl u_0\cdot\nb_{yz}\theta_0](t,\eta,\al)-\mathcal{F}[(u_{\ne}\cdot\nb_{xyz}\theta_{\ne})_0](t,\eta,\al).
\ee
Then
\begin{align}\label{exp-theta0-hat}
\widehat{\theta}_0(t,\eta,\al)\nn=&\mathbb{S}(t-10;\eta,\al)\widehat{\theta}_0(10,\eta, \al)-\int_{10}^t\mathbb{S}(t-\tau;\eta,\al) \mathcal{F}[\tl u_0\cdot\nb_{yz}\theta_0](\tau,\eta,\al) d\tau\\
\nn&-\int_{10}^t\mathbb{S}(t-\tau;\eta,\al) \mathcal{F}[(u_{\ne}\cdot\nb_{xyz}\theta_{\ne})_0](\tau,\eta,\al) d\tau\\
=&\mathcal{I}+{\rm S}_{0}+{\rm S}_{\ne},
\end{align}
where 
$\mathbb{S}(t;\eta,\al)=e^{-\fr{\al^2}{(\eta^2+\al^2)^2}t}$
is the semi-group.

\subsection{Improvement of \eqref{En0-2}}\label{sec-En02}
We first deal with the initial term $\mathcal{I}$. Since for $\al\ne0$,  $\la t\ra^{\fr32}\mathbb{S}(t;\eta,\al)\les \left(\fr{(\eta^2+\al^2)^2}{\al^2}\right)^{\fr32}\les |\eta,\al|^6$, one easily deduces that
\begin{equation}
    \left\|\sup_t{\mathds{1}}_{\al\ne0}e^{\lm(t)|\eta,\al|^s}\la t\ra^{\fr32}\la\eta,\al\ra^{\sig_2}|\mathcal{I}|\right\|_{L^2_{\eta,\al}}\les \left\|e^{\lm(10)|\nb|^s}\la \nb\ra^{\sig_2+6}\theta(10)\right\|_{L^2}.
\end{equation}

Now we turn to $S_0$. By using frequency decomposition, we split ${\rm S}_0$ into two part:
\be\label{split-S0}
\begin{aligned}
{\rm S}_{0}
=&-\int_{10}^t\mathbb{S}(t-\tau;\eta,\al)\sum_{\beta\in\Z}\int_\xi\widehat{\tl u}_0(\tau, \xi,\beta)\cdot\widehat{\nb_{yz}\theta}_0(\tau,\eta-\xi,\al-\beta){\mathds{1}}_{|\eta-\xi,\al-\beta|\le2|\xi,\beta|}d\xi d\tau\\
&-\int_{10}^t\mathbb{S}(t-\tau;\eta,\al)\sum_{\beta\in\Z}\int_\xi\widehat{\tl u}_0(\tau, \eta-\xi,\al-\beta)\cdot\widehat{\nb_{yz}\theta}_0(\tau,\xi,\beta){\mathds{1}}_{|\eta-\xi,\al-\beta|\le\fr12|\xi,\beta|}d\xi d\tau\\
=&{\rm S}_{0}^{\rm HL}+{\rm S}_{0}^{\rm LH}.
\end{aligned}
\ee
For $\al\ne0$, by \eqref{triangle1}, we have 
\begin{align*}
    &\la t\ra^{\fr32}e^{\lm(t)| \eta,\al|^s}\la \eta,\al\ra^{\sig_2}|{\rm S}_{0}^{\rm LH}|\\
\les&\fr{\al^2}{(\eta^2+\al^2)^2}\int_{10}^t\left(\fr{\al^2\la t\ra}{(\eta^2+\al^2)^2} \right)^{\fr32}e^{\lm(t)| \eta,\al|^s}\la \eta,\al\ra^{\sig_2}\mathbb{S}(t-\tau;\eta,\al)\sum_{\beta\in\Z}\int_\xi   \left( \fr{(\eta^2+\al^2)^2}{\al^2}\right)^\fr52 \\
&\times\left|\widehat{\tl u}_0(\tau, \eta-\xi,\al-\beta)\right|\left|\widehat{\nb_{yz}\theta}_0(\tau,\xi,\beta)\right|{\mathds{1}}_{|\eta-\xi,\al-\beta|\le\fr12|\xi,\beta|}d\xi d\tau\\
\les&\fr{\al^2}{(\eta^2+\al^2)^2}\int_{10}^t \left(\fr{\al^2\la t-\tau\ra}{(\eta^2+\al^2)^2} \right)^\fr32 \mathbb{S}(t-\tau;\eta,\al)d\tau \sup_{\tau}\sum_{\beta\in\Z}\int_\xi e^{c\lm(\tau)| \eta-\xi,\al-\beta|^s}\\
&\times \la \tau\ra^{\fr32}\left|\widehat{\tl u}_0(\tau, \eta-\xi,\al-\beta)\right|\left[e^{\lm(\tau)| \xi,\beta|^s}\la\xi,\beta\ra^{\sig_2+11}\left(\la |\xi|^{\fr12}, \beta\ra|\widehat{\theta}_0(\tau,\xi,\beta)|\right)\right]d\xi.
\end{align*}
The following estimate will be frequently used in the proof. It holds that for all $\al\ne0,\eta\in\mathbb{R}$, and $m\ge0$, $t>10$
\be\label{semigroup-bd}
\fr{\al^2}{(\eta^2+\al^2)^2}\int_{10}^t\left(\fr{\al^2\la t-\tau\ra}{(\eta^2+\al^2)^2} \right)^m \mathbb{S}(t-\tau;\eta,\al)d\tau\les1.
\ee
By \eqref{exp-u}, we also get that
\begin{align}\label{bd-tlu0}
\left|\widehat{\tl{u}}_0(\tau,\eta-\xi,\al-\beta)\right|\les|\widehat{\Pe^z_{\ne}\theta_0}(\tau,\eta-\xi,\al-\beta)|.
\end{align}
Then by Remark \ref{rmk: Minkowski}, the above three inequalities show that 
\begin{equation}\label{S20LH}
\begin{aligned}
\la t\ra^{\fr32}e^{\lm(t)| \eta,\al|^s}&\la \eta,\al\ra^{\sig_2}|{\rm S}_{0}^{\rm LH}|\\
\les&\sup_{\tau}\sum_{\beta\in\Z}\int_\xi
 \la \tau\ra^{\fr32}e^{c\lm(\tau) |\eta-\xi,\al-\beta|^s}\la\eta-\xi,\al-\beta\ra^2|\widehat{\Pe^z_{\ne}\theta_0}(\tau,\eta-\xi,\al-\beta)|\\
&\times\left[e^{\lm(\tau)|\xi,\beta|^s}\la\xi,\beta\ra^{\sig_2+13}\left(\la |\xi|^{\fr12}, \beta\ra|\widehat{\theta}_0(\tau,\xi,\beta)|\right)\right]d\xi \fr{1}{\la \eta,\al\ra^2}\\
\lesssim & \sup_\tau\left(\|\JB{\tau}^\fr32\mathbb{P}_{\neq}^z\theta_0(\tau)\|_{\mathcal{G}^{\lm,\sig_2}}\right)\sup_\tau\left(\|B\theta_0(\tau)\|_{\sig_1-2}\right)\fr{1}{\la\eta,\al\ra^2}\\
\lesssim & \left\|\sup_{\tau}\Big|\JB{\tau}^\fr32 e^{\lambda(\tau)|\eta,\al|^{s}}\JB{\eta,\al}^{\sigma_2}\widehat{\mathbb{P}_{\neq}^z\theta_0}(\tau,\eta,\al)\Big|\right\|_{L^2_{\eta,\al}}\sup_\tau\left(\|B\theta_0(\tau)\|_{\sig_1-2}\right)\fr{1}{\la\eta,\al\ra^2},
\end{aligned}
\end{equation}
provided $\sig_1-2\ge \sig_2+13$. It follows that
\[
\begin{aligned}
&\left\|\sup_t{\mathds{1}}_{\al\ne0}\la t\ra^{\fr32}e^{\lm(t) |\eta,\al|^s}\la \eta,\al\ra^{\sig_2}|{\rm S}_{0}^{\rm LH}|\right\|_{L^2_{\eta,\al}}\les\eps^2.
\end{aligned}
\]

Let us postpone the estimates of ${\rm S}_0^{\rm HL}$ and move on to ${\rm S}_{\ne}$, which can be treated in a similar manner to that of ${\rm S}_0^{\rm LH}$.
More precisely, we can split ${\rm S}_{\ne}$ as \eqref{split-S0}:
\begin{align}
{\rm S}_{\ne}\nn=&-\int_{10}^t\mathbb{S}(t-\tau;\eta,\al)\sum_{k,\beta}\int_{\xi} \widehat{u}_{\ne}(\tau,k,\xi,\beta)\cdot\widehat{\nb_{xyz}\theta}_{\ne}(\tau,-k,\eta-\xi,\al-\beta){\mathds{1}}_{|\eta-\xi,\al-\beta|\le2|\xi,\beta|}d\xi d\tau\\
\nn&-\int_{10}^t\mathbb{S}(t-\tau;\eta,\al)\sum_{k,\beta}\int_{\xi} \widehat{u}_{\ne}(\tau,k,\eta-\xi,\al-\beta)\cdot\widehat{\nb_{xyz}\theta}_{\ne}(\tau,-k,\xi,\beta){\mathds{1}}_{|\eta-\xi,\al-\beta|\le\fr12|\xi,\beta|}d\xi d\tau\\
\nn=&{\rm S}_{\ne}^{\rm HL}+{\rm S}_{\ne}^{\rm LH}.
\end{align}
Then using \eqref{u_ne-decay} and \eqref{semigroup-bd}, one  deduces that
\begin{align}\label{Sne2HL}
\la t \ra^{\fr32}e^{\lm(t)| \eta,\al|^s}\la \eta,\al\ra^{\sig_2}|{\rm S}_{\ne}^{\rm HL}|
\nn\les&\sup_\tau\sum_{k, \beta\in\Z}\int_\xi e^{\lm(\tau) |\xi,\beta|^s}\la \tau\ra^{\fr32}\la \xi,\beta\ra^{\sig_2+12}\left|\widehat{u}_{\ne}(\tau, k, \xi,\beta)\right|\\
\nn&\times \left(e^{\lm(\tau)|\eta- \xi,\al-\beta|^s}\left|\widehat{\theta}_{\ne}(\tau,-k,\eta-\xi,\al-\beta)\right|\right)d\xi \fr{1}{\la\eta,\al\ra^2}\\
\nn\les&\sup_\tau \sum_{k,\beta\in\Z}\int_\xi e^{\lm(\tau) |\xi,\beta|^s}\la k, \xi,\beta\ra^{\sig_2+18}\left|\widehat{\theta}_{\ne}(\tau, k, \xi,\beta)\right|\fr{1}{\la\tau\ra^{\fr32}}\\
\nn&\times \left(e^{\lm(\tau)|\eta- \xi,\al-\beta|^s}\left|\widehat{\theta}_{\ne}(\tau, -k,\eta-\xi,\al-\beta)\right|\right)d\xi \fr{1}{\la\eta,\al\ra^2}\\
\les&\sup_\tau\left[\left(\fr{1}{\la\tau\ra^{\fr32}}\|\theta(\tau)\|_{\mathcal{G}^{\lm,\sig_1}}\right)\|\theta(\tau)\|_{\mathcal{G}^{\lm,\sig_5}}\right]\fr{1}{\la\eta,\al\ra^2},
\end{align}
and
\begin{align}\label{Sne2LH}
\nn&\la t \ra^{\fr32}e^{\lm(t) |\eta,\al|^s}\la \eta,\al\ra^{\sig_2}|{\rm S}_{\ne}^{\rm LH}|\\
\nn\les&\sup_\tau\sum_{k,\beta\in\Z}\int_\xi e^{c\lm(\tau)|\eta-\xi,\al-\beta|^s}\la\eta-\xi,\al-\beta\ra^2\la \tau\ra^\fr32\left|\widehat{u}_{\ne}(\tau, k, \eta-\xi,\al-\beta)\right|\\
\nn&\times \left(e^{\lm(\tau)|\xi,\beta|^s}\la \xi,\beta\ra^{\sig_2+12}\left|\widehat{\nb_{xyz}\theta}_{\ne}(\tau,-k,\xi,\beta)\right|\right)d\xi \fr{1}{\la\eta,\al\ra^2}\\
\nn\les&\sup_\tau \sum_{k,\beta\in\Z}\int_\xi e^{c\lm(\tau) |\eta-\xi,\al-\beta|^s}\la k, \eta-\xi,\al-\beta\ra^8\left|\widehat{\theta}_{\ne}(\tau, k, \eta-\xi,\al-\beta)\right|\\
\nn&\times \fr{1}{\la\tau\ra^{\fr32}}\left(e^{\lm(\tau)| \xi,\beta|^s}\la \xi,\beta\ra^{\sig_2+12}\left|\widehat{\nb_{xyz}\theta}_{\ne}(\tau,-k,\xi,\beta)\right|\right)d\xi\fr{1}{\la\eta,\al\ra^2}\\
\les&\sup_\tau\left[\left(\fr{1}{\la\tau\ra^{\fr32}}\|\theta(\tau)\|_{\mathcal{G}^{\lm,\sig_1}}\right)\|\theta(\tau)\|_{\mathcal{G}^{\lm,\sig_5}}\right]\fr{1}{\la\eta,\al\ra^2}.
\end{align}
Consequently,
\begin{align}
\left\|\sup_t{\mathds{1}}_{\al\ne0}\la t \ra^{\fr32}e^{\lm(t)| \eta,\al|^s}\la \eta,\al\ra^{\sig_2}{\rm S}_{\ne}\right\|_{L^2_{\eta,\al}}\les\sup_t\left[\|\theta(t)\|_{\mathcal{G}^{\lm,\sig_5}}\left(\fr{1}{\la t \ra^{\fr32}} \|\theta(t)\|_{\mathcal{G}^{\lm,\sig_1}}\right)\right]\les\eps^2,
\end{align}
provided we choose $\sig_2+18\le \sig_1$.

Next, we turn to ${\rm S}_{0}^{\rm HL}$, which can be divided into two parts based on $\theta_0$ is at zero frequency or not with respect to $z$ variable:
\[
{\rm S}_{0}^{\rm HL}={\rm S}_{0\ne z}^{\rm HL}+{\rm S}_{00z}^{\rm HL},
\]
with
\[
\begin{aligned}
{\rm S}_{0\ne z}^{\rm HL}=&-\int_{10}^t\mathbb{S}(t-\tau;\eta,\al)\sum_{\beta\ne0,\beta\ne\al}\int_\xi\widehat{\tl u}_0(\tau, \xi,\beta)\reallywidehat{\nb_{yz}\mathbb{P}^z_{\ne}\theta_0}(\tau,\eta-\xi,\al-\beta){\mathds{1}}_{|\eta-\xi,\al-\beta|\le2|\xi,\beta|}d\xi d\tau,\\
{\rm S}_{00z}^{\rm HL}=&i\int_{10}^t\mathbb{S}(t-\tau;\eta,\al)\int_{\xi}\fr{\al^2}{(\xi^2+\al^2)^2}\widehat{\Pe^{z}_{\ne}\theta}_0(\tau,\xi,\al)(\eta-\xi)\widehat{\theta}_0(\tau,\eta-\xi,0){\mathds{1}}_{|\eta-\xi|\le2|\xi,\al|}d\xi d\tau.
\end{aligned}
\]
The interaction with non-zero modes in $z$ is easy to control, due to the time decay of $\mathbb{P}^z_{\ne}\theta_0$ in the lower frequencies. In fact,  using again \eqref{semigroup-bd} and Remark \ref{rmk: Minkowski}, we arrive at
\begin{align}
 \nn   &\la t\ra^{\fr32}e^{\lm(t)|\eta,\al|^s}\la\eta,\al\ra^{\sig_2}|{\rm S}^{\rm HL}_{0\ne z}|\\
\nn    \les&\fr{\al^2}{(\eta^2+\al^2)^2}\int_{10}^t\left(\fr{\al^2\la t-\tau\ra}{(\eta^2+\al^2)^2}\right)^{\fr32}\mathbb{S}(t-\tau; \eta,\al)\\
\nn    &\times\sum_{\beta\ne0,\beta\ne\al} \int_{\xi}e^{\lm(\tau)|\xi,\beta|^s}\la\xi,\beta\ra^{\sig_2+12}\la|\xi|^\fr12,\beta\ra|\widehat{\mathbb{P}_{\ne}^z\theta_0}(\tau,\xi,\beta)|\\
\nn    &\times\la\tau\ra^{\fr32}e^{c\lm(\tau)|\eta-\xi,\al-\beta|^s}\la \eta-\xi,\al-\beta\ra^2|\reallywidehat{\nb_{yz}\mathbb{P}^z_{\ne}\theta_0}(\tau,\eta-\xi,\al-\beta)|d\xi \fr{1}{\la\eta,\al\ra^2}\\
\label{e-S0nezHL}    \les&\sup_\tau \|B\theta_0\|_{\sig_1-2}\left\|\sup_\tau \la\tau\ra^{\fr32}e^{\lm(\tau)|\eta,\al|^s}\la \eta,\al\ra^{\sig_2}|\widehat{\mathbb{P}_{\ne}^z\theta_0}(\tau,\eta,\al)| \right\|_{L^2_{\eta,\al}}\fr{1}{\la\eta,\al\ra^2}.
\end{align}
Thus,
\[
\left\|\sup_{t}{\mathds{1}}_{\al\ne0} \la t\ra^{\fr32}e^{\lm(t)|\eta,\al|^s}\la\eta,\al\ra^{\sig_2} {\rm S}^{\rm HL}_{0\ne z}\right\|_{L^2_{\eta,\al}}\les\eps^2,
\]
provided we choose $\sig_1-2\ge \sig_2+12$.

To bound ${\rm S}^{\rm HL}_{00z}$, noting that $\fr{|\eta,\al|}{|\xi,\al|}\les\la \eta-\xi\ra$ for $\al\ne0$, we have
\be\label{Isig2}
\begin{aligned}
&\la t\ra^\fr32 e^{\lm(t)|\eta,\al|^s}\la\eta,\al\ra^{\sig_2}|{\rm S}_{00z}^{\rm HL}|\\
\les&\int_{\fr{t}{2}}^t\fr{\al^2}{(\eta^2+\al^2)^2}\mathbb{S}(t-\tau;\eta,\al)\int_{\xi}\la\tau\ra^\fr32e^{\lm(\tau)| \xi,\al|^s}\la \xi,\al\ra^{\sig_2}|\widehat{\Pe^z_{\ne}\theta_0}(\tau, \xi,\al)|\\
&\qquad \times e^{c\lambda(\tau)| \eta-\xi|^s}\la\eta-\xi\ra^7 |\widehat{\theta}_0(\tau,\eta-\xi,0)| \fr{1}{\la\eta-\xi\ra^2}d\xi d\tau\\
&+\int_{10}^\fr{t}{2}\fr{\al^2}{(\eta^2+\al^2)^2}\la t-\tau\ra^\fr32 e^{(\lm(t)-\lm(\tau))| \eta,\al|^s}\mathbb{S}(t-\tau;\eta,\al)\int_{\xi}e^{\lm(\tau) |\xi,\al|^s}\la \xi,\al\ra^{\sig_2}|\widehat{\Pe^z_{\ne}\theta_0}(\tau, \xi,\al)|\\
&\qquad \qquad  \times e^{c\lm(\tau)| \eta-\xi|^s}\la\eta-\xi\ra^7 |\widehat{\theta}_0(\tau,\eta-\xi,0)|\fr{1}{\la\eta-\xi\ra^2}d\xi d\tau={\rm I}_1^{\sig_2}+{\rm I}_2^{\sig_2}.
\end{aligned}
\ee
In view of \eqref{semigroup-bd} and the bootstrap assumption \eqref{En-7}, one can bound ${\rm I}_1^{\sig_2}$ as follows:
\[
\begin{aligned}
|{\rm I}_1^{\sig_2}|\les &\int_{\xi}\sup_{\tau}\la\tau\ra^\fr32e^{\lm(\tau)| \xi,\al|^s}\la \xi,\al\ra^{\sig_2}|\widehat{\Pe^z_{\ne}\theta_0}(\tau, \xi,\al)|\fr{1}{\la\eta-\xi\ra^2}d\xi\sup_t\sup_\eta \left| e^{\lm(t)|\eta|^s}\widehat{\mathbb{P}^z_0\theta}_0(t,\eta)\right|\\
\les&\eps\int_{\xi}\sup_{\tau}\la\tau\ra^\fr32e^{\lm(\tau)| \xi,\al|^s}\la \xi,\al\ra^{\sig_2}|\widehat{\Pe^z_{\ne}\theta}_0(\tau, \xi,\al)|\fr{1}{\la\eta-\xi\ra^2}d\xi,
\end{aligned}
\]
and hence,
\begin{equation}\label{bd-I1}
\left\|\sup_t {\rm I}_1^{\sig_2}\right\|_{L^2_{\eta,\al}}\les\eps\left\|\sup_\tau \la\tau\ra^{\fr32}e^{\lm(\tau)| \eta,\al|^s}\la \eta,\al\ra^{\sig_2}|\widehat{\Pe^z_{\ne}\theta}_0(\tau, \eta,\al)|\right\|_{L^2_{\eta,\al}}\les \eps^2.
\end{equation}
As for ${\rm I}_2^{\sig_2}$, by the definition of $\lm(t)$,
\begin{equation}\label{gap1}
\lm(\tau)-\lm(t)=\fr{\tl\dl}{(1+\tau)^a}-\fr{\tl \dl}{(1+t)^a}\gt\fr{t-\tau}{(1+ \tau)^a(1+t)}, \quad \mathrm{for \ \ all}\quad \tau<t.
\end{equation}
Combining this with the restriction $\tau\le\fr{t}{2}$, noting that $\al\ne0$, we are led to 
\begin{align}
\la t-\tau\ra^{\fr32} e^{(\lm(t)-\lm(\tau))| \eta,\al|^s}=\nn&\left(\fr{\la t-\tau\ra\al^2}{(\eta^2+\al^2)^2}\right)^{\fr32}\left(\fr{(\eta^2+\al^2)^2}{\al^2}\right)^{\fr32}e^{-(\lm(\tau)-\lm(t))| \eta,\al|^s}\\
\nn\les&\left(\fr{\la t-\tau\ra\al^2}{(\eta^2+\al^2)^2}\right)^{\fr32}\la\eta,\al\ra^{6}\fr{\la\tau\ra^{\Gamma a}}{\la\eta,\al\ra^{\Gamma s}}\\
\nn=&\left(\fr{\la t-\tau\ra\al^2}{(\eta^2+\al^2)^2}\right)^{\fr32}\la\eta,\al\ra^{6}\fr{1}{\la\eta,\al\ra^{\Gamma s}}\left(\fr{\la\tau\ra\al^2}{(\eta^2+\al^2)^2}\right)^{\Gamma a}\left(\fr{(\eta^2+\al^2)^2}{\al^2}\right)^{\Gamma a}\\
\nn\les&\left(\fr{\la t-\tau\ra\al^2}{(\eta^2+\al^2)^2}\right)^{\fr32+\Gamma a}\fr{\la\eta,\al\ra^{4\Gamma a+6}}{\la\eta,\al\ra^{\Gamma s}}\les\left(\fr{\la t-\tau\ra\al^2}{(\eta^2+\al^2)^2}\right)^{3+\Gamma a},
\end{align}
provided we choose 
\be
\Gamma s\ge 4\Gamma a+6, \quad{\rm namely,}\quad \Gamma\ge\fr{12}{s-4a}\quad {\rm with}\quad s>4a.
\ee
Then
\be
\begin{aligned}
\int_{10}^\fr{t}{2}&\fr{\al^2}{(\eta^2+\al^2)^2}\la t-\tau\ra^{\fr32} e^{(\lm(t)-\lm(\tau))| \eta,\al|^s}\mathbb{S}(t-\tau;\eta,\al)d\tau\\
\les&\int_{10}^\fr{t}{2}\fr{\al^2}{(\eta^2+\al^2)^2}\left(\fr{\la t-\tau\ra\al^2}{(\eta^2+\al^2)^2}\right)^{\fr32+\Gamma a}e^{-\fr{\al^2}{(\eta^2+\al^2)^2}(t-\tau)}d\tau\\
\les&\int_{10}^t(1+ \tau)^{\fr32+\Gamma a}e^{-\tau}d\tau\les1.
\end{aligned}
\ee
Consequently, similar to the estimate of ${\rm I}_1^{\sig_2}$, we have
\[
\begin{aligned}
{\rm I}_2^{\sig_2}\les&\sup_\tau\sup_{\eta}\left| e^{\lm(\tau)|\eta|^s}\widehat{\mathbb{P}^z_0\theta}_0(\tau,\eta)\right|\sup_{\tau}\int_{\xi}e^{\lm(\tau)|\xi,\al|^s}\la\xi,\al\ra^{\sig_2}|\widehat{\mathbb{P}^z_{\ne}\theta_0}(\tau,\xi,\al)|\fr{1}{\la \eta-\xi\ra^2}d\xi\\
\les&\eps\int_{\xi}\sup_{\tau}\Big(e^{\lm(\tau)|\xi,\al|^s}\la\xi,\al\ra^{\sig_2}|\widehat{\mathbb{P}^z_{\ne}\theta}_0(\tau,\xi,\al)|\Big)\fr{1}{\la \eta-\xi\ra^2}d\xi,
\end{aligned}
\]
and thus
\begin{equation}\label{e-I_2}
\left\|\sup_t{\rm I}_2^{\sig_2} \right\|_{L^2_{\eta,\al}}\les\eps
\left\|\sup_\tau \la\tau\ra^{\fr32}e^{\lm(\tau)| \eta,\al|^s}\la \eta,\al\ra^{\sig_2}|\widehat{\Pe^z_{\ne}\theta_0}(\tau, \eta,\al)|\right\|_{L^2_{\eta,\al}}\les\eps^2.
\end{equation}

\subsection{Improvement of \eqref{En-3}}\label{sec-im-En-3}
Similar to \eqref{e-theta}, we have
\be\label{e-theta-sig3}
\begin{aligned}
&\fr12\fr{d}{dt}\|\theta(t)\|_{\mathcal{G}^{\lm, \sig_3}}^2
+\left\|\sqrt{-\Dl_{xz}}\Delta^{-1}_L \theta\right\|_{\mathcal{G}^{\lm, \sig_3}}^2+{\rm CK}^{\sig_3}_{\lambda,\theta}(t)\\
=&-\left\la e^{\lm(t)|\nb|^s}\la\nb\ra^{\sig_3}(u_0\cdot\nb \theta), e^{\lm(t)|\nb|^s}\la\nb\ra^{\sig_3}\theta\right\ra-\left\la e^{\lm(t)|\nb|^s}\la\nb\ra^{\sig_3}(u_{\ne}\cdot\nb \theta), e^{\lm(t)|\nb|^s}\la\nb\ra^{\sig_3}\theta\right\ra\\
=&{\bf NL}_{\sig_3;0}+{\bf NL}_{\sig_3;\ne}.
\end{aligned}
\ee
By using frequency decomposition, we write
\[
{\bf NL}_{\sig_3;0}={\bf NL}_{\sig_3;0}^{\rm HL}+{\bf NL}_{\sig_3;0}^{\rm LH},\qquad {\bf NL}_{\sig_3;\ne}={\bf NL}_{\sig_3;\ne}^{\rm HL}+{\bf NL}_{\sig_3;\ne}^{\rm LH},
\]
where
\begin{align*}
    {\bf NL}_{\sig_3;0}^{\rm HL}=&-\sum_{k,\al,\beta\in\Z}\int_{\eta,\xi}e^{\lm(t)|k, \eta,\al|^s}\la k, \eta,\al\ra^{\sig_3}\widehat{u}_0(t,\xi, \beta)\cdot\widehat{\nb_{xyz}\theta}_k(t,\eta-\xi,\al-\beta)\\
&\qquad \qquad \times  e^{\lm(t)| k,\eta,\al|^s}\la k,\eta,\al\ra^{\sig_3}\overline{\widehat{\theta}}_k(t,\eta,\al){\mathds{1}}_{|k,\eta-\xi,\al-\beta|<\fr12|\xi,\beta|}d\xi d\eta,\\
{\bf NL}_{\sig_3;0}^{\rm LH}=&-\sum_{k,\al,\beta\in\Z}\int_{\eta,\xi}e^{\lm(t)|k, \eta,\al|^s}\la k, \eta,\al\ra^{\sig_3}\widehat{u}_0(t,\eta-\xi, \al-\beta)\cdot\widehat{\nb_{xyz}\theta}_k(t,\xi,\beta)\\
&\qquad \qquad \times  e^{\lm(t)| k,\eta,\al|^s}\la k,\eta,\al\ra^{\sig_3}\overline{\widehat{\theta}}_k(t,\eta,\al){\mathds{1}}_{|\eta-\xi,\al-\beta|\le 2|k,\xi,\beta|}d\xi d\eta,\\
{\bf NL}_{\sig_3;\ne}^{\rm HL}=&-\sum_{k,\al,\beta\in\Z,l\ne0}\int_{\eta,\xi}e^{\lm(t)|k, \eta,\al|^s}\la k, \eta,\al\ra^{\sig_3}\widehat{u}_{\ne}(t,l,\xi, \beta)\cdot\widehat{\nb_{xyz}\theta}_{k-l}(t,\eta-\xi,\al-\beta)\\
&\qquad \qquad\times  e^{\lm(t)| k,\eta,\al|^s}\la k,\eta,\al\ra^{\sig_3}\overline{\widehat{\theta}}_k(t,\eta,\al){\mathds{1}}_{|k-l,\eta-\xi,\al-\beta|<\fr12|l,\xi,\beta|}d\xi d\eta,\\
{\bf NL}_{\sig_3;\ne}^{\rm LH}=&-\sum_{k\ne l,\al,\beta\in\Z}\int_{\eta,\xi}e^{\lm(t)|k, \eta,\al|^s}\la k, \eta,\al\ra^{\sig_3}\widehat{u}_{\ne}(t,k-l,\eta-\xi, \al-\beta)\cdot\widehat{\nb_{xyz}\theta}_l(t,\xi,\beta)\\
&\qquad \qquad\times  e^{\lm(t)| k,\eta,\al|^s}\la k,\eta,\al\ra^{\sig_3}\overline{\widehat{\theta}}_k(t,\eta,\al){\mathds{1}}_{|k-l,\eta-\xi,\al-\beta|\le 2|l,\xi,\beta|}d\xi d\eta.
\end{align*}

Thanks to \eqref{u_ne-decay},
\be\label{e-NLHL3}
\begin{aligned}
{\bf NL}_{\sig_3;\ne}^{\rm HL}\les&\sum_{\substack{k,\al,\beta\in\Z\\l\ne0}}\int_{\eta,\xi}e^{\lm(t)|l, \xi,\beta|^s}\la l, \xi,\beta\ra^{\sig_3+6}\fr{1}{\la t\ra^3}|\widehat{\theta}_{\ne}(t,l,\xi, \beta)| e^{c\lm(t)|k-l,\eta-\xi,\al-\beta|^s} \\
&\qquad \qquad\times|\widehat{\nb_{xyz}\theta}_{k-l}(t,\eta-\xi,\al-\beta)| e^{\lm(t)| k,\eta,\al|^s}\la k,\eta,\al\ra^{\sig_3}|\widehat{\theta}_k(t,\eta,\al)|d\xi d\eta\\
\les&\fr{1}{\la t\ra^{\fr32}}\|t^{-\fr32}\theta \|_{\sig_1}\|\theta\|_{\mathcal{G}^{\lm,\sig_5}}\|\theta\|_{\mathcal{G}^{\lm,\sig_3}}\les\fr{1}{\la t\ra^{\fr32}}\eps^2\|\theta\|_{\mathcal{G}^{\lm,\sig_3}},
\end{aligned}
\ee
and
\be\label{e-NLLH3}
\begin{aligned}
{\bf NL}_{\sig_3;\ne}^{\rm LH}\les&\sum_{k\ne l,\al,\beta\in\Z}\int_{\eta,\xi}e^{c\lm(t)|k-l, \eta-\xi,\al-\beta|^s}\la k-l, \eta-\xi,\al-\beta\ra^{6}\fr{1}{\la t\ra^3}|\widehat{\theta}_{k-l}(t,\eta-\xi, \al-\beta)| \\
&\qquad \quad\times e^{\lm(t)|l,\xi,\beta|^s}|l,\xi,\beta|^{\sig_3}\widehat{\nb_{xyz}\theta}_{l}(t,\xi,\beta)| \times  e^{\lm(t)| k,\eta,\al|^s}\la k,\eta,\al\ra^{\sig_3}|\widehat{\theta}_k(t,\eta,\al)|d\xi d\eta\\
\les&\fr{1}{\la t\ra^{\fr32}}\|t^{-\fr32}\theta \|_{\sig_1}\|\theta\|_{\mathcal{G}^{\lm,\sig_5}}\|\theta\|_{\mathcal{G}^{\lm,\sig_3}}\les\fr{1}{\la t\ra^{\fr32}}\eps^2\|\theta\|_{\mathcal{G}^{\lm,\sig_3}}.
\end{aligned}
\ee
Recalling \eqref{exp-u}, it is easy to see that
\begin{align}\label{bd-u0hat}
|\widehat{u}_0(t,\xi, \beta)|
\les \fr{\la t\ra \beta^2+|\xi||\beta|}{(\xi^2+\beta^2)^2}|\widehat{\Pe^z_{\ne}\theta_0}(t,\xi, \beta)|
\les \fr{\la t\ra }{\xi^2+\beta^2}|\widehat{\Pe^z_{\ne}\theta_0}(t,\xi, \beta)|.
\end{align}
It follows that
\begin{align}
{\bf NL}_{\sig_3;0}^{\rm HL}\nn\les&\sum_{k,\al,\beta\in\Z}\int_{\eta,\xi}e^{\lm(t)| \xi,\beta|^s}\la \xi,\beta\ra^{\sig_3}\fr{\la t\ra \beta^2+|\xi||\beta|}{(\xi^2+\beta^2)^2}|\widehat{\Pe^z_{\ne}\theta_0}(t,\xi, \beta)| e^{c\lm(t)| k,\eta-\xi,\al-\beta|^s}\\
\nn&\qquad \qquad \times\left|\widehat{\nb_{xyz}\theta}_k(t,\eta-\xi,\al-\beta) \right|\left|e^{\lm(t)| k,\eta,\al|^s}\la k,\eta,\al\ra^{\sig_3}\widehat{\theta}_k(t,\eta,\al)\right|d\xi d\eta\\
\nn\les&\sum_{k,\al,\beta\in\Z}\int_{\eta,\xi}e^{\lm(t)|\xi,\beta|^s}\la \xi,\beta\ra^{\sig_3-2}\la t\ra |\widehat{\Pe^z_{\ne}\theta_0}(t,\xi, \beta)|e^{c\lm(t) |k,\eta-\xi,\al-\beta|^s}\\
\nn&\qquad \qquad \times \left|\widehat{\nb_{xyz}\theta}_k(t,\eta-\xi,\al-\beta) \right|\left|e^{\lm(t)|k,\eta,\al|^s}\la k,\eta,\al\ra^{\sig_3}\widehat{\theta}_k(t,\eta,\al)\right|d\xi d\eta\\
\nn\les&\la t\ra \|\Pe^z_{\ne}\theta_0(t) \|_{\mathcal{G}^{\lm,\sig_3-2}}\|\theta(t)\|_{\mathcal{G}^{\lm,\sig_5}}\|\theta(t)\|_{\mathcal{G}^{\lm,\sig_3}}\\
\nn\les&\fr{1}{\la t\ra^{\fr12}}\left(\la t\ra^{\fr32}\|\Pe^z_{\ne}\theta_0(t)\|_{\mathcal{G}^{\lm,\sig_2}}\right)\|\theta(t)\|_{\mathcal{G}^{\lm,\sig_5}}\|\theta(t)\|_{\mathcal{G}^{\lm,\sig_3}}\les\fr{\eps^2}{\la t\ra^{\fr12}}\|\theta(t)\|_{\mathcal{G}^{\lm,\sig_3}},
\end{align}
and
\begin{align}
{\bf NL}_{\sig_3;0}^{\rm LH}
\nn\les&\sum_{k,\al,\beta\in\Z}\int_{\eta,\xi}e^{c\lm(t)| \eta-\xi,\al-\beta|^s}\la t\ra |\Pe^z_{\ne}\widehat{\theta}_0(t,\eta-\xi, \al-\beta)|e^{\lm(t) |k,\xi,\beta|^s}\la k,\xi,\beta\ra^{\sig_3}\\
\nn&\times \left|\widehat{\nb_{xyz}\theta}_k(t,\xi,\beta) \right|\left|e^{\lm(t)|k,\eta,\al|^s}\la k,\eta,\al\ra^{\sig_3}\widehat{\theta}_k(t,\eta,\al)\right|d\xi d\eta\\
\nn\les&\la t\ra \|\Pe^z_{\ne}\theta_0(t) \|_{\mathcal{G}^{\lm,\sig_6}}\|\theta(t)\|_{\sig_1}\|\theta(t)\|_{\mathcal{G}^{\lm,\sig_3}}\\
\nn\les&\fr{1}{\la t\ra^{\fr12}}\left(\la t\ra^{3}\|\Pe^z_{\ne}\theta_0(t)\|_{\mathcal{G}^{\lm,\sig_6}}\right)\|t^{-\fr32}\theta(t)\|_{\sig_1}\|\theta(t)\|_{\mathcal{G}^{\lm,\sig_3}}\les\fr{\eps^2}{\la t\ra^{\fr12}}\|\theta(t)\|_{\mathcal{G}^{\lm,\sig_3}}.
\end{align}
We then conclude that 
\begin{align*}
    \fr{d}{dt}\|\theta(t)\|_{\mathcal{G}^{\lm,\sig_3}}\lesssim \eps^2\fr{1}{\la t\ra^{\fr12}},
\end{align*}
which gives us for $t\in[10, T^*]$, 
\begin{align*}
    \|\theta(t)\|_{\mathcal{G}^{\lm,\sig_3}}\leq \|\theta(10)\|_{\mathcal{G}^{\lm,\sig_3}}+C\eps^2 \sqrt{t}\leq 2C\eps\sqrt{t}. 
\end{align*}

\subsection{Improvement of \eqref{En0-4}}\label{sec-im-En0-4} We first focus on ${\rm S}_{0}^{\rm LH}$ as in Section \ref{sec-En02}. Indeed,  similar to \eqref{S20LH}, we have
\begin{equation}\label{S40LH}
\begin{aligned}
\la t\ra^{\fr52}&e^{\lm(t)| \eta,\al|^s}\la \eta,\al\ra^{\sig_4}|{\rm S}_{0}^{\rm LH}|\\
\lesssim & \left\|\sup_{\tau}\Big|\JB{\tau}^{\fr52} e^{\lambda(\tau)|\eta,\al|^{s}}\JB{\eta,\al}^{\sigma_4}\widehat{\mathbb{P}_{\neq}^z\theta_0}(\tau,\eta,\al)\Big|\right\|_{L^2_{\eta,\al}}\sup_\tau\left(\|B\theta_0(\tau)\|_{\sig_1-2}\right)\fr{1}{\la\eta,\al\ra^2},
\end{aligned}
\end{equation}
provided $\sig_1-2\ge \sig_4+17$, and hence 
\[
\begin{aligned}
&\left\|\sup_t{\mathds{1}}_{\al\ne0}\la t\ra^{\fr52} e^{\lm(t) |\eta,\al|^s}\la \eta,\al\ra^{\sig_4}|{\rm S}_{0}^{\rm LH}|\right\|_{L^2_{\eta,\al}}\les\eps^2.
\end{aligned}
\]

As for ${\rm S}_{\ne}$, similar to \eqref{Sne2HL} and \eqref{Sne2LH}, if $\al\ne0$, we have
\begin{align}\label{Sne4HL}
\la t \ra^{\fr52}e^{\lm(t)|\eta,\al|^s}\la \eta,\al\ra^{\sig_4}|{\rm S}_{\ne}^{\rm HL}|
\nn\les&\sup_\tau \sum_{k,\beta\in\Z}\int_\xi e^{\lm(\tau)| \xi,\beta|^s}\la k, \xi,\beta\ra^{\sig_4+22}\left|\widehat{\theta}_{\ne}(\tau, k, \xi,\beta)\right|\fr{1}{\la\tau\ra^{\fr12}}\\
\nn&\qquad \qquad\times \left(e^{\lm(\tau)|\eta- \xi,\al-\beta|^s}\left|\widehat{\theta}_{\ne}(\tau, -k,\eta-\xi,\al-\beta)\right|\right)d\xi \fr{1}{\la\eta,\al\ra^2}\\
\les&\sup_\tau\left[\left(\fr{1}{\la\tau\ra^{\fr12}}\|\theta(\tau)\|_{\mathcal{G}^{\lm,\sig_3}}\right)\|\theta(\tau)\|_{\mathcal{G}^{\lm,\sig_5}}\right]\fr{1}{\la\eta,\al\ra^2},
\end{align}
and
\begin{align}\label{Sne4LH}
\la t \ra^{\fr52}&e^{\lm(t)|\eta,\al|^s}\la \eta,\al\ra^{\sig_4}|{\rm S}_{\ne}^{\rm LH}|\\
\nn\les&\sup_\tau \sum_{k,\beta\in\Z}\int_\xi e^{c\lm(\tau)|  \eta-\xi,\al-\beta|^s}\la k, \eta-\xi,\al-\beta\ra^8\left|\widehat{\theta}_{\ne}(\tau, k, \eta-\xi,\al-\beta)\right|\\
\nn&\qquad \qquad\times \fr{1}{\la\tau\ra^{\fr12}}\left(e^{\lm(\tau)| \xi,\beta|^s}\la \xi,\beta\ra^{\sig_4+16}\left|\widehat{\nb_{xyz}\theta}_{\ne}(\tau,-k,\xi,\beta)\right|\right)d\xi\fr{1}{\la\eta,\al\ra^2}\\
\les&\sup_\tau\left[\left(\fr{1}{\la\tau\ra^{\fr12}}\|\theta(\tau)\|_{\mathcal{G}^{\lm,\sig_3}}\right)\|\theta(\tau)\|_{\mathcal{G}^{\lm,\sig_5}}\right]\fr{1}{\la\eta,\al\ra^2}.
\end{align}
Thus,
\begin{align}
\left\|\sup_t{\mathds{1}}_{\al\ne0}\la t \ra^{\fr52}e^{\lm(t)|\eta,\al|^s}\la \eta,\al\ra^{\sig_4}{\rm S}_{\ne}\right\|_{L^2_{\eta,\al}}\les\sup_t\left[\|\theta(t)\|_{\mathcal{G}^{\lm,\sig_5}}\left(\fr{1}{\la t \ra^{\fr12}} \|\theta(t)\|_{\mathcal{G}^{\lm,\sig_3}}\right)\right]\les\eps^2,
\end{align}
provided we choose $\sig_4+22\le \sig_3$.

Now we turn to ${\rm S}_{0}^{\rm HL}$. First consider ${\rm S}_{0\ne z}^{\rm HL}$. Similar to \eqref{e-S0nezHL}, we are led to
\be\label{e-S0nezHL-4}
\begin{aligned}
    \la t\ra^{\fr52}&e^{\lm(t)|\eta,\al|^s}\la\eta,\al\ra^{\sig_4}|{\rm S}^{\rm HL}_{0\ne z}|\\
    \les&\sup_{\tau}\sum_{\beta\ne0,\beta\ne\al} \int_{\xi}e^{\lm(\tau)|\xi,\beta|^s}\la\xi,\beta\ra^{\sig_4+16}\la|\xi|^\fr12,\beta\ra|\widehat{\mathbb{P}_{\ne}^z\theta_0}(\tau,\xi,\beta)|\\
    &\qquad \qquad \times\la\tau\ra^{\fr52}e^{c\lm(\tau)|\eta-\xi,\al-\beta|^s}\la \eta-\xi,\al-\beta\ra^2|\reallywidehat{\nb_{yz}\mathbb{P}^z_{\ne}\theta_0}(\tau,\eta-\xi,\al-\beta)|d\xi \fr{1}{\la\eta,\al\ra^2}\\
    \les&\sup_\tau \|B\theta_0\|_{\sig_1-2}\left\|\sup_\tau \la\tau\ra^{\fr52}e^{\lm(\tau)|\eta,\al|^s}\la \eta,\al\ra^{\sig_4}|\widehat{\mathbb{P}_{\ne}^z\theta_0}(\tau,\eta,\al)| \right\|_{L^2_{\eta,\al}}\fr{1}{\la\eta,\al\ra^2},
\end{aligned}
\ee
and hence,
\[
\left\|\sup_{t}{\mathds{1}}_{\al\ne0}\la t\ra^{\fr52}e^{\lm(t)|\eta,\al|^s}\la\eta,\al\ra^{\sig_4} {\rm S}^{\rm HL}_{0\ne z}\right\|_{L^2_{\eta,\al}}\les\eps^2,
\]
provide we choose $\sig_1-2\ge \sig_4+16$.

To bound ${\rm S}_{00z}^{\rm HL}$, similar to \eqref{Isig2}, we have the following split: 
\be\label{Isig4}
\begin{aligned}
\la t\ra^\fr52 &e^{\lm(t)|\eta,\al|^s}\la\eta,\al\ra^{\sig_4}|{\rm S}_{00z}^{\rm HL}|\\
\les&\int_{\fr{t}{2}}^t\fr{\al^2}{(\eta^2+\al^2)^2}\mathbb{S}(t-\tau;\eta,\al)\int_{\xi}\la\tau\ra^\fr52e^{\lm(\tau)| \xi,\al|^s}\la \xi,\al\ra^{\sig_4}|\widehat{\Pe^z_{\ne}\theta_0}(\tau, \xi,\al)|\\
&\qquad \times e^{c\lambda(\tau)| \eta-\xi|^s}\la\eta-\xi\ra^7 |\widehat{\theta}_0(\tau,\eta-\xi,0)| \fr{1}{\la\eta-\xi\ra^2}d\xi d\tau\\
&+\int_{10}^\fr{t}{2}\fr{\al^2}{(\eta^2+\al^2)^2}\left(\fr{\al^2\la t-\tau\ra}{(\eta^2+\al^2)}\right)^\fr52 \int_{\xi}e^{\lm(\tau)| \xi,\al|^s}\la \xi,\al\ra^{\sig_4+12}|\widehat{\Pe^z_{\ne}\theta_0}(\tau, \xi,\al)|\\
&\qquad \times e^{c\lm(\tau)| \eta-\xi|^s}\la\eta-\xi\ra^7 |\widehat{\theta}_0(\tau,\eta-\xi,0)|d\xi \fr{1}{\la\eta\ra^2} d\tau={\rm I}_1^{\sig_4}+{\rm I}_2^{\sig_4}.
\end{aligned}
\ee
Note that  ${\rm I}_2^{\sig_4}$  above is defined in a different way than ${\rm I}_2^{\sig_2}$. However, ${\rm I}_1^{\sig_4}$ can be treated in the same manner as ${\rm I}_1^{\sig_2}$:
\begin{equation}\label{bd-I1-4}
\left\|\sup_t {\rm I}_1^{\sig_4}\right\|_{L^2_{\eta,\al}}\les\eps\left\|\sup_\tau \la\tau\ra^{\fr52}e^{\lm(\tau)| \eta,\al|^s}\la \eta,\al\ra^{\sig_4}|\widehat{\Pe^z_{\ne}\theta_0}(\tau, \eta,\al)|\right\|_{L^2_{\eta,\al}}\les \eps^2.
\end{equation}
Now ${\rm I}_2^{\sig_4}$  can be treated without resorting to \eqref{gap1} and \eqref{En-7}. In fact, we will take a strategy similar to that used in the treatment of ${\rm S}_0^{\rm LH}$ and ${\rm S}_{\ne}$. More precisely, using again \eqref{semigroup-bd} and Cauchy-Schwarz in $\xi$ variable, we find that
\[
\begin{aligned}
|{\rm I}^{\sig_4}_2|\les&\sup_\tau \left\|e^{\lm(\tau)| \xi,\al|^s}\la \xi,\al\ra^{\sig_4+12}|\widehat{\Pe^z_{\ne}\theta_0}(\tau, \xi,\al)|\right\|_{L^2_\xi}\\
&\times \sup_\tau\left\|e^{c\lm(\tau)| \eta|^s}\la\eta\ra^7 |\widehat{\theta}_0(\tau,\eta,0)|\right\|_{L^2_\eta} \fr{1}{\la\eta\ra^2}\\
\les& \left\|\sup_\tau e^{\lm(\tau)| \xi,\al|^s}\la \xi,\al\ra^{\sig_2}|\widehat{\Pe^z_{\ne}\theta_0}(\tau, \xi,\al)|\right\|_{L^2_\xi} \sup_\tau\left\| \theta(\tau)\right\|_{\mathcal{G}^{\lm,\sig_5}} \fr{1}{\la\eta\ra^2},
\end{aligned}
\]
and thus,
\begin{equation}\label{e-I4}
\left\|\sup_t{\rm I}_2^{\sig_4} \right\|_{L^2_{\eta,\al}}\les\eps
\left\|\sup_\tau \la\tau\ra^{\fr32}e^{\lm(\tau)| \eta,\al|^s}\la \eta,\al\ra^{\sig_2}|\widehat{\Pe^z_{\ne}\theta_0}(\tau, \eta,\al)|\right\|_{L^2_{\eta,\al}}\les\eps^2,
\end{equation}
provide we choose $\sig_2\ge\sig_4+12$.

\subsection{Improvement of\eqref{En-5}}\label{sec-im-En-5}
Note that \eqref{e-theta-sig3} still holds with $\sig_3$ replaced by $\sig_5$. Thus we will continue the notation from  Section \ref{sec-im-En-3}. ${\bf NL}_{\sig_5;\ne}$ can be treated in the same way as ${\bf NL}_{\sig_3;\ne}$. Indeed, similar to \eqref{e-NLHL3} and \eqref{e-NLLH3}, we have
\begin{align*}
{\bf NL}_{\sig_5;\ne}^{\rm HL}\les&\sum_{k,\al,\beta\in\Z,l\ne0}\int_{\eta,\xi}e^{\lm(t)|l, \xi,\beta|^s}\la l, \xi,\beta\ra^{\sig_5+6}\fr{1}{\la t\ra^3}|\widehat{\theta}_{\ne}(t,l,\xi, \beta)| \\
&\times e^{c\lm(t)|k-l,\eta-\xi,\al-\beta|^s}|\widehat{\nb_{xyz}\theta}_{k-l}(t,\eta-\xi,\al-\beta)|\\
&\times  e^{\lm(t)| k,\eta,\al|^s}\la k,\eta,\al\ra^{\sig_5}|\widehat{\theta}_k(t,\eta,\al)|d\xi d\eta\\
\les&\fr{1}{\la t\ra^{\fr32}}\|t^{-\fr12}\theta \|_{\sig_3}\|\theta\|_{\mathcal{G}^{\lm,\sig_5}}^2\les\fr{\eps^3}{\la t\ra^{\fr52}},
\end{align*}
and
\be
\begin{aligned}
{\bf NL}_{\sig_5;\ne}^{\rm LH}\les&\sum_{k\ne l,\al,\beta\in\Z}\int_{\eta,\xi}e^{c\lm(t)|k-l, \eta-\xi,\al-\beta|^s}\la k-l, \eta-\xi,\al-\beta\ra^{6}\fr{1}{\la t\ra^3}|\widehat{\theta}_{k-l}(t,\eta-\xi, \al-\beta)| \\
&\times e^{\lm(t)|l,\xi,\beta|^s}|l,\xi,\beta|^{\sig_5}\widehat{\nb_{xyz}\theta}_{l}(t,\xi,\beta)|\\
&\times  e^{\lm(t)| k,\eta,\al|^s}\la k,\eta,\al\ra^{\sig_5}|\widehat{\theta}_k(t,\eta,\al)|d\xi d\eta\\
\les&\fr{1}{\la t\ra^{\fr32}}\|t^{-\fr12}\theta \|_{\sig_3}\|\theta\|_{\mathcal{G}^{\lm,\sig_5}}^2\les\fr{\eps^3}{\la t\ra^{\fr52}}.
\end{aligned}
\ee

For ${\bf NL}_{\sig_5;0}$, using again \eqref{bd-u0hat}, we find that
\begin{align}
{\bf NL}_{\sig_5;0}^{\rm HL}
\nn\les&\sum_{k,\al,\beta\in\Z}\int_{\eta,\xi}e^{\lm(t)| \xi,\beta|^s}\la \xi,\beta\ra^{\sig_5-2}\la t\ra |\Pe^z_{\ne}\widehat{\theta}_0(t,\xi, \beta)|\\
\nn&\times e^{c\lm(t)| k,\eta-\xi,\al-\beta|^s}\left|\widehat{\nb_{xyz}\theta}_k(t,\eta-\xi,\al-\beta) \right|\left|e^{\lm(t)| k,\eta,\al|^s}\la k,\eta,\al\ra^{\sig_5}\widehat{\theta}_k(t,\eta,\al)\right|d\xi d\eta\\
\nn\les&\fr{1}{\la t\ra^{\fr32}}\left(\la t\ra^{\fr52}\|\Pe^z_{\ne}\theta_0(t)\|_{\mathcal{G}^{\lm,\sig_4}}\right)\|\theta(t)\|_{\mathcal{G}^{\lm,\sig_5}}^2\les\fr{\eps^3}{\la t\ra^{\fr32}},
\end{align}
and
\begin{align}
{\bf NL}_{\sig_5;0}^{\rm HL}
\nn\les&\sum_{k,\al,\beta\in\Z}\int_{\eta,\xi}e^{c\lm(t)| k,\eta-\xi,\al-\beta|^s}\la t\ra |\Pe^z_{\ne}\widehat{\theta}_0(t,\eta-\xi, \al-\beta)|\\
\nn&\times e^{\lm(t)| \xi,\beta|^s}\la \xi,\beta\ra^{\sig_5}\left|\widehat{\nb_{xyz}\theta}_k(t,\xi,\beta) \right|\left|e^{\lm(t)| k,\eta,\al|^s}\la k,\eta,\al\ra^{\sig_5}\widehat{\theta}_k(t,\eta,\al)\right|d\xi d\eta\\
\nn\les&\fr{1}{\la t\ra^{\fr32}}\left(\la t\ra^{3}\|\Pe^z_{\ne}\theta_0(t)\|_{\mathcal{G}^{\lm,\sig_6}}\right)\left(\la t\ra^{-\fr12}\|\theta(t)\|_{\mathcal{G}^{\lm,\sig_3}}\right)\|\theta(t)\|_{\mathcal{G}^{\lm,\sig_5}}\les\fr{\eps^3}{\la t\ra^{\fr32}}.
\end{align}

\subsection{Improvement of \eqref{En-6}}\label{sec:im-En6}
Similar to \eqref{S20LH} and \eqref{S40LH}, we arrive at
\begin{equation}\label{S60LH}
\begin{aligned}
&\la t\ra^{3}e^{\lm(t)|\eta,\al|^s}\la \eta,\al\ra^{\sig_6}|{\rm S}_{0}^{\rm LH}|\\
\lesssim & \left\|\sup_{\tau}\Big|\JB{\tau}^3 e^{\lambda(\tau)|\eta,\al|^{s}}\JB{\eta,\al}^{\sigma_6}\mathbb{P}_{\neq}^z\widehat{\theta}_0(\tau,\eta,\al)\Big|\right\|_{L^2_{\eta,\al}}\sup_\tau\left(\|B\theta_0(\tau)\|_{\sig_1-2}\right)\fr{1}{\la\eta,\al\ra^2},
\end{aligned}
\end{equation}
provided $\sig_1-2\ge \sig_6+19$, and hence 
\[
\begin{aligned}
&\left\|\sup_t{\mathds{1}}_{\al\ne0}\la t\ra^3 e^{\lm(t)|\eta,\al|^s}\la \eta,\al\ra^{\sig_6}|{\rm S}_{0}^{\rm LH}|\right\|_{L^2_{\eta,\al}}\les\eps^2.
\end{aligned}
\]

For ${\rm S}_{\ne}$, similar to \eqref{Sne4HL} and \eqref{Sne4LH}, if $\al\ne0$, we have
\begin{align}\label{Sne6HL}
\la t \ra^{3}e^{\lm(t)|\eta,\al|^s}\la \eta,\al\ra^{\sig_6}|{\rm S}_{\ne}^{\rm HL}|
\nn\les&\sup_\tau \sum_{k,\beta\in\Z}\int_\xi e^{\lm(\tau)| \xi,\beta|^s}\la k, \xi,\beta\ra^{\sig_6+24}\left|\widehat{\theta}_{\ne}(\tau, k, \xi,\beta)\right|\\
\nn&\times \left(e^{\lm(\tau)|\eta- \xi,\al-\beta|^s}\left|\widehat{\theta}_{\ne}(\tau, -k,\eta-\xi,\al-\beta)\right|\right)d\xi \fr{1}{\la\eta,\al\ra^2}\\
\les&\sup_\tau\|\theta(\tau)\|_{\mathcal{G}^{\lm,\sig_5}}^2\fr{1}{\la\eta,\al\ra^2},
\end{align}
and
\begin{align}\label{Sne6LH}
\la t \ra^{3}&e^{\lm(t)|\eta,\al|^s}\la \eta,\al\ra^{\sig_6}|{\rm S}_{\ne}^{\rm LH}|\\
\nn\les&\sup_\tau \sum_{k,\beta\in\Z}\int_\xi e^{c\lm(\tau)|  \eta-\xi,\al-\beta|^s}\la k, \eta-\xi,\al-\beta\ra^8\left|\widehat{\theta}_{\ne}(\tau, k, \eta-\xi,\al-\beta)\right|\\
\nn&\times \left(e^{\lm(\tau)| \xi,\beta|^s}\la \xi,\beta\ra^{\sig_6+18}\left|\widehat{\nb_{xyz}\theta}_{\ne}(\tau,-k,\xi,\beta)\right|\right)d\xi\fr{1}{\la\eta,\al\ra^2}\\
\les&\sup_\tau\|\theta(\tau)\|_{\mathcal{G}^{\lm,\sig_5}}^2\fr{1}{\la\eta,\al\ra^2}.
\end{align}
Accordingly,
\begin{align}
\left\|\sup_t{\mathds{1}}_{\al\ne0}\la t \ra^{3}e^{\lm(t)|\eta,\al|^s}\la \eta,\al\ra^{\sig_6}{\rm S}_{\ne}\right\|_{L^2_{\eta,\al}}\les\sup_t\|\theta(t)\|_{\mathcal{G}^{\lm,\sig_5}}^2\les\eps^2,
\end{align}
provided we choose $\sig_6+24\le \sig_5$.

New we consider ${\rm S}_{0\ne z}^{\rm HL}$. Similar to \eqref{e-S0nezHL} and \eqref{e-S0nezHL-4}, one deduces that
\be\label{e-S0nezHL-6}
\begin{aligned}
    &\la t\ra^{3}e^{\lm(t)|\eta,\al|^s}\la\eta,\al\ra^{\sig_6}|{\rm S}^{\rm HL}_{0\ne z}|\\
    \les&\sup_{\tau}\sum_{\beta\ne0,\beta\ne\al} \int_{\xi}e^{\lm(\tau)|\xi,\beta|^s}\la\xi,\beta\ra^{\sig_6+18}\la|\xi|^\fr12,\beta\ra|\widehat{\mathbb{P}_{\ne}^z\theta_0}(\tau,\xi,\beta)|\\
    &\times\la\tau\ra^{3}e^{c\lm(\tau)|\eta-\xi,\al-\beta|^s}\la \eta-\xi,\al-\beta\ra^2|\widehat{\nb_{yz}\mathbb{P}^z_{\ne}\theta_0}(\tau,\eta-\xi,\al-\beta)|d\xi \fr{1}{\la\eta,\al\ra^2}\\
    \les&\sup_\tau \|B\theta_0\|_{\sig_1-2}\left\|\sup_\tau \la\tau\ra^{3}e^{\lm(\tau)|\eta,\al|^s}\la \eta,\al\ra^{\sig_6}|\widehat{\mathbb{P}_{\ne}^z\theta_0}(\tau,\eta,\al)| \right\|_{L^2_{\eta,\al}}\fr{1}{\la\eta,\al\ra^2},
\end{aligned}
\ee
and hence,
\[
\left\|\sup_{t}{\mathds{1}}_{\al\ne0}\la t\ra^{3}e^{\lm(t)|\eta,\al|^s}\la\eta,\al\ra^{\sig_6} {\rm S}^{\rm HL}_{0\ne z}\right\|_{L^2_{\eta,\al}}\les\eps^2,
\]
provide we choose $\sig_1-2\ge \sig_6+18$.

We are left to treat ${\rm S}_{00z}^{\rm HL}$. It suffices to bound ${\rm I}^{\sig_6}_1$ and ${\rm I}^{\sig_6}_2$ defined analogously to those in \eqref{Isig4}. We can repeat what we did for ${\rm I}^{\sig_2}_1$ and ${\rm I}^{\sig_4}_1$ to get
\be\label{bd-I1-6}
\left\|\sup_t {\rm I}_1^{\sig_6}\right\|_{L^2_{\eta,\al}}\les\eps\left\|\sup_\tau \la\tau\ra^{3}e^{\lm(\tau)| \eta,\al|^s}\la \eta,\al\ra^{\sig_6}|\Pe^z_{\ne}\widehat{\theta}_0(\tau, \eta,\al)|\right\|_{L^2_{\eta,\al}}\les \eps^2.
\ee
Finally, for ${\rm I}^{\sig_6}_2$, similar to the treatment of ${\rm I}^{\sig_4}_2$, we have 
\[
\begin{aligned}
|{\rm I}^{\sig_6}_2|\les&\sup_\tau \left\|e^{\lm(\tau)| \xi,\al|^s}\la \xi,\al\ra^{\sig_6+14}|\Pe^z_{\ne}\widehat{\theta}_0(\tau, \xi,\al)|\right\|_{L^2_\xi}\\
&\times \sup_\tau\left\|e^{c\lm(\tau)| \eta|^s}\la\eta\ra^7 |\widehat{\theta}_0(\tau,\eta,0)|\right\|_{L^2_\eta} \fr{1}{\la\eta\ra^2}\\
\les& \left\|\sup_\tau e^{\lm(\tau)| \xi,\al|^s}\la \xi,\al\ra^{\sig_4}|\Pe^z_{\ne}\widehat{\theta}_0(\tau, \xi,\al)|\right\|_{L^2_\xi} \sup_\tau\left\| \theta(\tau)\right\|_{\mathcal{G}^{\lm,\sig_5}} \fr{1}{\la\eta\ra^2},
\end{aligned}
\]
and thus,
\begin{equation}\label{e-I6}
\left\|\sup_t{\rm I}_2^{\sig_6} \right\|_{L^2_{\eta,\al}}\les\eps
\left\|\sup_\tau \la\tau\ra^{\fr52}e^{\lm(\tau)| \eta,\al|^s}\la \eta,\al\ra^{\sig_4}|\Pe^z_{\ne}\widehat{\theta}_0(\tau, \eta,\al)|\right\|_{L^2_{\eta,\al}}\les\eps^2,
\end{equation}
provide we choose $\sig_4\ge\sig_6+14$.

\subsection{Improvement of \eqref{En-7}}\label{sec:im-En7}
Taking $\al=0$ in \eqref{eq-theta0hat}, we get the equation for $\mathbb{P}^z_0\theta_0$:
\begin{equation}\label{eq: theta_00}
    \pr_t\widehat{\mathbb{P}^z_0\theta}_0(t,\eta)={\rm S}_{00}+{\rm S}_{\ne\ne},
\end{equation}
with
\be\label{S00}
\begin{aligned}
{\rm S}_{00}=&-\mathcal{F}[\tl u_0\cdot\nb_{yz}\theta_0](t,\eta,0)\\
=&i\sum_{\beta\ne0}\int_{\xi}\fr{\beta^2}{((\eta-\xi)^2+\beta^2)^2}\widehat{\theta}_0(t,\eta-\xi,-\beta)\xi\widehat{\theta}_0(t,\xi,\beta)d\xi\\
&+i\sum_{\beta\ne0}\int_{\xi}\fr{(\eta-\xi)\beta}{((\eta-\xi)^2+\beta^2)^2}\widehat{\theta}_0(t,\eta-\xi,-\beta)\beta\widehat{\theta}_0(t,\xi,\beta)d\xi,
\end{aligned}
\ee
and
\be\label{Snene}
\begin{aligned}
{\rm S}_{\ne\ne}=-\mathcal{F}[(u_{\ne}\cdot\nb_{xyz}\theta_{\ne})_0](t,\eta,0)=&-\sum_{k\ne0,\beta\in\Z }\int_{\xi}\widehat{u}_{\ne}(-k,\eta-\xi,-\beta)\cdot\widehat{\nb_{xyz}\theta}_{\ne}(k,\xi,\beta)d\xi.
\end{aligned}
\ee
Thus, we have 
\be\label{exp-theta00-hat}
\widehat{\Pe_0^z\theta}_0(t,\eta)=\widehat{\Pe_0^z\theta}_0(10,\eta)+\int_{10}^t\big({\rm S}_{00}+{\rm S}_{\ne\ne}\big)d\tau. 
\ee
It is easy to see that
\begin{align*}
\sup_{t}\sup_{\eta}\left|e^{\lm(t)|\eta|^s}\la\eta\ra^{\sig_7}\widehat{\Pe_0^z\theta}_0(10,\eta)\right|\le\sup_{t}\left|e^{\lm(10)|\eta|^s}\la\eta\ra^{\sig_7}\widehat{\Pe_0^z\theta}_0(10,\eta)\right|{   \le\epsilon}.
\end{align*}
The rest two terms in \eqref{exp-theta00-hat} can be treated by using the following lemma.
\begin{lem}\label{lem: est-so}
 For any $K>1$, the following two estimates hold: 
 \be
\begin{aligned}\label{ineq1}
&\left\|e^{\lm(t)|\eta|^s}\la\eta\ra^{\sig_7}\sum_{k, \beta\in\Z}\int_{10}^t \fr{1}{\JB{\tau}^{K}}\int_{\xi}\widehat{f}(\tau,-k,\eta-\xi,-\beta)\widehat{g}(\tau,k,\xi,\beta) d\xi d\tau \right\|_{L^{\infty}_tL^{\infty}_{\eta}}\\
\lesssim &\Big(\sup_{t\in [10,T^{*}]}\|f\|_{\mathcal{G}^{\lambda(t),\sigma_7}}\Big)\Big(\sup_{t\in [10,T^{*}]}\|g\|_{\mathcal{G}^{\lambda(t),\sigma_7}}\Big),
\end{aligned}
\ee
and
\be
\begin{aligned}\label{ineq2}
&\left\|e^{\lm(t)|\eta|^s}\la\eta\ra^{\sig_7}\sum_{ \beta\in\Z}\int_{10}^t \fr{1}{\JB{\tau}^{K}}\int_{\xi}\widehat{f}_0(\tau,\eta-\xi,-\beta)\widehat{g}_0(\tau,\xi,\beta) d\xi d\tau \right\|_{L^{\infty}_tL^{\infty}_{\eta}}\\
\lesssim &\left\| \sup_{t\in [10,T^{*}]}\Big|e^{\lambda(t)|\eta,\al|^s}\JB{\eta,\al}^{\sigma_7}\widehat{f}_0(t,\eta,\al)\Big|\right\|_{L^2_{\eta,\al}}\left\| \sup_{t\in [10,T^{*}]}\Big|e^{\lambda(t)|\eta,\al|^s}\JB{\eta,\al}^{\sigma_7}\widehat{g}_0(t,\eta,\al)\Big|\right\|_{L^2_{\eta,\al}}. 
\end{aligned}
\ee
\end{lem}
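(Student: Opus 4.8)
The plan is to establish both \eqref{ineq1} and \eqref{ineq2} by one and the same frequency-space Cauchy--Schwarz argument, with the time integration handled separately at the very end using only $K>1$. First, since $\lambda(t)$ in \eqref{def-lambda} is nonincreasing, for every $\tau\in[10,t]$ we have $e^{\lambda(t)|\eta|^s}\le e^{\lambda(\tau)|\eta|^s}$, so the outer weight may be replaced by the weight evaluated at the integration time $\tau$. Then, writing the output frequency ($(0,\eta,0)$ in \eqref{ineq1}, $(\eta,0)$ in \eqref{ineq2}) as the sum of the two convolution frequencies and using the subadditivity inequalities $|a+b|^s\le|a|^s+|b|^s$ (valid for $0<s\le1$) and $\JB{a+b}^{\sigma_7}\lesssim\JB{a}^{\sigma_7}+\JB{b}^{\sigma_7}$, I would bound $e^{\lambda(\tau)|\eta|^s}\JB{\eta}^{\sigma_7}$ by $e^{\lambda(\tau)|k,\eta-\xi,\beta|^s}e^{\lambda(\tau)|k,\xi,\beta|^s}\big(\JB{k,\eta-\xi,\beta}^{\sigma_7}+\JB{k,\xi,\beta}^{\sigma_7}\big)$ (and similarly without the $k$'s for \eqref{ineq2}), thereby splitting each $\tau$-integrand into two pieces, one in which the Sobolev weight sits on the $\widehat{f}$ factor and one in which it sits on the $\widehat{g}$ factor.

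Fix the piece with the weight on $\widehat{f}$. For each fixed $\eta$ and $\tau$, inserting $1=\JB{k,\xi,\beta}^{-\sigma_7}\JB{k,\xi,\beta}^{\sigma_7}$ in the $\widehat{g}$ factor and applying Cauchy--Schwarz in $(k,\xi,\beta)$ (sum over $k,\beta$, integral over $\xi$), the $\widehat{f}$-factor becomes $\big(\sum_{k,\beta}\int_\xi e^{2\lambda(\tau)|k,\eta-\xi,\beta|^s}\JB{k,\eta-\xi,\beta}^{2\sigma_7}|\widehat{f}(\tau,-k,\eta-\xi,-\beta)|^2\,d\xi\big)^{1/2}$, which after the reindexing $(k,\xi,\beta)\mapsto(-k,\eta-\xi,-\beta)$ equals $\|f(\tau)\|_{\mathcal{G}^{\lambda(\tau),\sigma_7}}$ and is independent of $\eta$; the $\widehat{g}$-factor, after using $\JB{\cdot}^{-2\sigma_7}\le1$, is bounded by $\|g(\tau)\|_{\mathcal{G}^{\lambda(\tau),\sigma_7}}$. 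For \eqref{ineq2} these same two $L^2$-in-frequency quantities are instead estimated pointwise in $\tau$ by $\big\|\sup_t|e^{\lambda(t)|\eta,\alpha|^s}\JB{\eta,\alpha}^{\sigma_7}\widehat{f}_0(t,\eta,\alpha)|\big\|_{L^2_{\eta,\alpha}}$ and the analogous quantity for $g_0$, since the integrand is pointwise bounded by its supremum over $t$. The other piece (weight on $\widehat{g}$) is symmetric, with the roles of the two factors exchanged.

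At this stage the bound on the inner sum-integral is uniform in $\eta$ and in $\tau$, so the two time-suprema factor out of the $\tau$-integral and what remains is $\int_{10}^t\JB{\tau}^{-K}\,d\tau\le\int_{10}^\infty\JB{\tau}^{-K}\,d\tau\lesssim1$ because $K>1$; taking $\sup_t$ and then $\sup_\eta$ finishes the estimate. I do not expect a genuine obstacle: the argument is essentially bookkeeping, and the only points requiring care are (i) distributing the exponential and Sobolev weights across the convolution via the subadditivity of $|\cdot|^s$ and $\JB{\cdot}^{\sigma_7}$, and (ii) absorbing the outer weight $e^{\lambda(t)|\eta|^s}$ at time $t$ into weights at the earlier integration time $\tau$, which is exactly where monotonicity of $\lambda$ enters. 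The hypothesis $K>1$ serves only to make the time integral converge after the frequency norms have been frozen at their suprema, which is why the lemma is phrased for all $K>1$ rather than for the sharp decay rates arising in \eqref{S00}, \eqref{Snene}, and \eqref{exp-theta00-hat}.
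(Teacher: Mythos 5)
Your proof is correct and follows essentially the same route as the paper: pull the time integral out, use monotonicity of $\lambda$ together with subadditivity of $|\cdot|^s$ and a triangle-type inequality for the Japanese bracket to redistribute the output weight onto the two convolution factors, then Cauchy--Schwarz in frequency gives the product of Gevrey--Sobolev norms uniformly in $\eta$, after which $\int_{10}^\infty\JB{\tau}^{-K}\,d\tau<\infty$ closes the estimate; finally for \eqref{ineq2} you pass from $\sup_\tau\|\cdot\|_{L^2_{\eta,\alpha}}$ to $\|\sup_\tau|\cdot|\|_{L^2_{\eta,\alpha}}$ pointwise, which is exactly the $L^2L^\infty\subset L^\infty L^2$ observation of Remark \ref{rmk: Minkowski}. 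The only cosmetic difference is that the paper uses the multiplicative Peetre inequality $\la\eta\ra^{\sigma_7}\le\la\eta-\xi\ra^{\sigma_7}\la\xi\ra^{\sigma_7}$ (giving a single Cauchy--Schwarz), whereas you split via $\JB{\eta}^{\sigma_7}\lesssim\JB{k,\eta-\xi,\beta}^{\sigma_7}+\JB{k,\xi,\beta}^{\sigma_7}$ into two symmetric pieces; both yield the same bound, and your detour through inserting $1=\JB{k,\xi,\beta}^{-\sigma_7}\JB{k,\xi,\beta}^{\sigma_7}$ is unnecessary but harmless.
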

\begin{proof}
By using triangle inequality and Cauchy-Schwarz, one easily deduces that for all $t$ and $\eta$,
\begin{align*}
&e^{\lm(t)|\eta|^s}\la\eta\ra^{\sig_7}\left|\sum_{k, \beta\in\Z}\int_{10}^t \fr{1}{\JB{\tau}^{K}}\int_{\xi}\widehat{f}(\tau,-k,\eta-\xi,-\beta)\widehat{g}(\tau,k,\xi,\beta) d\xi d\tau \right|\\
\les&\int_{10}^t \fr{1}{\JB{\tau}^{K}}d\tau\sup_\tau\sum_{k, \beta\in\Z}\int_{\xi}e^{\lm(\tau)|\eta-\xi|^s}\la\eta-\xi\ra^{\sig_7}|\widehat{f}(\tau,-k,\eta-\xi,-\beta)|e^{\lm(\tau)|\xi|^s}\la\xi\ra^{\sig_7}|\widehat{g}(\tau,k,\xi,\beta)| d\xi\\
\les&\sup_\tau \left(\left\|e^{\lm(\tau)|\eta|^s}\la\eta\ra^{\sig_7}\widehat{f}(\tau,k,\eta,\al)\right\|_{L^2_{k,\eta,\al}}\left\|e^{\lm(\tau)|\eta|^s}\la\eta\ra^{\sig_7}\widehat{g}(\tau,k,\eta,\al)\right\|_{L^2_{k,\eta,\al}}\right),
\end{align*}
then \eqref{ineq1} follows immediately. Combining the strategy in the proof of \eqref{ineq1} above with Remark \ref{rmk: Minkowski}, we get \eqref{ineq2}.
\end{proof}
Now we are in a position to estimate the last two terms \eqref{exp-theta00-hat}. Recalling \eqref{u_ne-decay} and \eqref{Snene}, we have
\begin{align*}
&\left|e^{\lm(t)|\eta|^s}\la\eta\ra^{\sig_7}\int_{10}^t{\rm S}_{\ne\ne} d\tau\right|\\
\les& \sum_{k\ne0,\beta\in\Z}\int_{10}^t \fr{1}{\la\tau\ra^3}\int_{\xi}e^{\lm(\tau)|\eta-\xi|^s}\la \eta-\xi \ra^{\sig_7}|\widehat{\JB{\nabla}^6\theta}_{\ne}(\tau,-k,\eta-\xi,-\beta)| \\
&\quad\quad \times e^{\lm(\tau)|\xi|^s}\la k,\xi,\beta \ra^{\sig_7}|\widehat{\nabla \theta}_{\ne}(\tau,k,\xi,\beta)|d\xi d\tau. 
\end{align*}
Then applying Lemma \ref{lem: est-so} by taking $K=3$, $f=\JB{\nabla}^6\theta_{\ne}$, $g=\nabla  \theta_{\ne}$, and the bootstrap hypotheses \eqref{En-5}, we get that 
\begin{align*}
    \left\|e^{\lm(t)|\eta|^s}\la\eta\ra^{\sig_7}\int_{10}^t{\rm S}_{\ne\ne} d\tau\right\|_{L^{\infty}_tL^{\infty}_{\eta}}\lesssim \epsilon^2,
\end{align*}
provided we choose $\sig_5\ge\sig_7+6$.

Finally, recalling the definition of $S_{00}$ in \eqref{S00}, we infer that 
\begin{align*}
    &\left|e^{\lm(t)|\eta|^s}\la\eta\ra^{\sig_7}\int_{10}^t{\rm S}_{00} d\tau\right|\\
    \lesssim& \sum_{\beta\neq 0}\int_{10}^t \fr{1}{\JB{\tau}^6}\int_{\xi} \big|e^{\lm(\tau)|\eta-\xi|^s}\la\xi-\eta,\beta\ra^{\sig_7}\JB{\tau}^3\widehat{\JB{\nabla}^{-1}\theta}(\tau,\eta-\xi,-\beta)\big|\\
    &\quad\times \big|e^{\lm(\tau)|\xi|^s}\la\xi,\beta\ra^{\sig_7}\JB{\tau}^3\widehat{\JB{\nabla}\theta}(\tau,\xi,\beta)\big|d\xi d\tau. 
\end{align*}
Then applying Lemma \ref{lem: est-so} by taking $K=6$, $f_0=\JB{\tau}^3\JB{\nabla}^{-1}\mathbb{P}_{\neq}^z\theta_0$, $g_0=\JB{\tau}^3\JB{\nabla}\mathbb{P}_{\neq}^z\theta_0$, and the bootstrap hypotheses \eqref{En-6}, we get that 
\begin{align*}
    \left\|e^{\lm(t)|\eta|^s}\la\eta\ra^{\sig_7}\int_{10}^t{\rm S}_{00} d\tau\right\|_{L^{\infty}_tL^{\infty}_{\eta}}\lesssim \epsilon^2,
\end{align*}
provide we choose $\sig_6\ge\sig_7+1$.

\appendix
\section{Various estimates}

The following two lemmas below ultimately predict the growth of high frequencies which signal the loss of Gevrey-2 regularity. 
\begin{lemma}\label{total growth}
    Suppose that $|\iota|>1$. Then there exists $\mu=4(1+2C_*)$ such that 
    \begin{equation}\label{growth of Theta}
          \dfrac{w_k(2\iota,\iota)}{w_k(0,\iota)}=\dfrac{1}{w_k(0,\iota)}=\dfrac{1}{w_k\big(t_{E(|\iota|^{\frac{1}{2}}),\iota},\iota\big)}\lesssim e^{\fr{\mu}{2}|\iota|^{\frac{1}{2}}},
    \end{equation}
    where $C_*$ is the constant in the definition of $w_{\res}$ and $w_{\nr}$. 
\end{lemma}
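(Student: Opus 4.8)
The plan is to track the multiplicative factors picked up by $w_{\nr}$ as we march backward in time from $t = 2\iota$ down to $t = t_{\E(|\iota|^{1/2}),\iota}$, passing through each critical interval $\mathbb{I}_{\ell,\iota}$ for $\ell$ ranging from $1$ up to $\E(|\iota|^{1/2})$. First I would recall from the construction that $w_k(2\iota,\iota) = 1$ and that, below $t_{\E(|\iota|^{1/2}),\iota}$, $w_k$ is frozen at the value $w_k(t_{\E(|\iota|^{1/2}),\iota},\iota)$; so the ratio in question is exactly $1/w_{\nr}(t_{\E(|\iota|^{1/2}),\iota},\iota)$ (using Remark~\ref{rem-2}, since $k=0$ or $k = \iota$ forces $w_k = w_{\nr}$; for the resonant branch one argues similarly using \eqref{wR} and that the $w_{\res}/w_{\nr}$ factor is $\le 1$ at the interval endpoints, or one simply notes the bound is monotone in the needed direction). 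The key step is then to estimate, on each $\mathbb{I}_{\ell,\iota}$, the total factor $w_{\nr}(t_{\ell,\iota},\iota)/w_{\nr}(t_{\ell-1,\iota},\iota)$, which by the defining relations splits into the gain on $\mathbb{I}_{\ell,\iota}^{\rm R} = [\frac{\iota}{\ell}, t_{\ell-1,\iota}]$ and the gain on $\mathbb{I}_{\ell,\iota}^{\rm L} = [t_{\ell,\iota}, \frac{\iota}{\ell}]$.

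On $\mathbb{I}_{\ell,\iota}^{\rm R}$ the multiplier is multiplied (going backward) by $\big(\frac{\ell^2}{\iota}[1 + b_{\ell,\iota}|t_{\ell-1,\iota} - \frac{\iota}{\ell}|]\big)^{-C_*}$, and since $|t_{\ell-1,\iota} - \frac{\iota}{\ell}|$ is comparable to $\frac{\iota}{\ell^2}$ (this is the width of the critical interval), the quantity inside the parentheses is $\approx 1$, so this contributes a bounded factor; the same holds on $\mathbb{I}_{\ell,\iota}^{\rm L}$ for the factor $\big([1 + a_{\ell,\iota}|t_{\ell,\iota} - \frac{\iota}{\ell}|]\big)^{1+C_*} \approx 1$. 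Hence each individual interval contributes a factor bounded by an absolute constant $C = C(C_*)$. Multiplying over all $\ell$ from $1$ to $\E(|\iota|^{1/2})$ gives a total bound $C^{\E(|\iota|^{1/2})} \le e^{(\log C)\,|\iota|^{1/2}}$. Choosing $\mu = 4(1 + 2C_*)$ (as in the statement, which is exactly the constant appearing in \cite{BM1}) absorbs $\log C$ with room to spare, giving $1/w_{\nr}(t_{\E(|\iota|^{1/2}),\iota},\iota) \lesssim e^{\frac{\mu}{2}|\iota|^{1/2}}$.

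The main obstacle is making the per-interval estimate uniform in $\ell$, in particular near the top index $\ell \approx \E(|\iota|^{1/2})$, where $b_{\ell,\iota}$ and $a_{\ell,\iota}$ degenerate to $0$ (as noted in Remark~\ref{rem-1}). When $b_{\ell,\iota}$ or $a_{\ell,\iota}$ is small, the term $[1 + b_{\ell,\iota}|t - \frac{\iota}{\ell}|]$ is even closer to $1$ over the interval, so the factor is still $\approx 1$ — but one must check this does not overshoot in the other direction either, i.e., that it stays bounded below, which follows from $[1 + b_{\ell,\iota}|t-\frac{\iota}{\ell}|] \ge 1$. So the degeneracy actually helps rather than hurts; the only real care needed is the bookkeeping of the geometric factors $\frac{\ell^2}{\iota}$ on the $\mathbb{I}_{\ell,\iota}^{\rm R}$ pieces, which telescope: writing the cumulative product $\prod_{\ell} \big(\frac{\ell^2}{\iota}\big)^{-C_*}$ and comparing against $\big(\prod_\ell \ell^2 / \iota^{\E(\sqrt{\iota})}\big)^{-C_*}$, one uses Stirling-type bounds $\prod_{\ell=1}^{m} \ell^2 = (m!)^2 \ge (m/e)^{2m}$ with $m = \E(|\iota|^{1/2}) \le |\iota|^{1/2}$, so that $\iota^{m}/(m!)^2 \le (e^2 \iota / m^2)^m \le e^{2m} \lesssim e^{2|\iota|^{1/2}}$; raising to the power $C_*$ and combining with the bounded $[1 + \cdots]$ factors reproduces the claimed exponential. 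Since all of these estimates are carried out in detail in \cite[Section~3]{BM1} for precisely this construction of $w_{\nr}, w_{\res}$, I would cite that reference for the routine computations and only spell out the telescoping product and the choice of $\mu$.
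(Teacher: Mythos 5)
Your overall strategy—marching backward through the critical intervals, telescoping the per-interval factors, and finishing with a Stirling bound on $|\iota|^m/(m!)^2$—is the right idea and is essentially the argument the paper outsources to \cite{BM1}. However, there is a concrete computational error in the middle that gives the wrong power of the telescoping product.

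You assert that the bracket factors $\bigl[1+b_{\ell,\iota}|t-\iota/\ell|\bigr]$ and $\bigl[1+a_{\ell,\iota}|t-\iota/\ell|\bigr]$ evaluated at the interval endpoints are $\approx 1$. They are not. Plugging $|t_{\ell-1,\iota}-\iota/\ell| = |\iota|/(2\ell(\ell-1))$ and $|t_{\ell,\iota}-\iota/\ell| = |\iota|/(2\ell(\ell+1))$ into the formulas for $a_{\ell,\iota}$, $b_{\ell,\iota}$ from \eqref{def-ab}, one finds the exact identities
\begin{align*}
1 + b_{\ell,\iota}\,\Bigl|t_{\ell-1,\iota} - \tfrac{\iota}{\ell}\Bigr| = \frac{|\iota|}{\ell^2},
\qquad
1 + a_{\ell,\iota}\,\Bigl|t_{\ell,\iota} - \tfrac{\iota}{\ell}\Bigr| = \frac{|\iota|}{\ell^2},
\end{align*}
which can be as large as $|\iota|$ when $\ell=1$. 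These identities are precisely what make $w_{\nr}$ continuous across the breakpoints; they are by no means negligible. Consequently the backward per-interval decrease is
\begin{align*}
\frac{w_{\nr}(t_{\ell,\iota},\iota)}{w_{\nr}(t_{\ell-1,\iota},\iota)}
= \underbrace{\Bigl(\tfrac{\ell^2}{|\iota|}\Bigr)^{C_*}}_{\text{on } \mathbb{I}^{\rm R}_{\ell,\iota}}
\cdot
\underbrace{\Bigl(\tfrac{|\iota|}{\ell^2}\Bigr)^{-1-C_*}}_{\text{on } \mathbb{I}^{\rm L}_{\ell,\iota}}
= \Bigl(\frac{\ell^2}{|\iota|}\Bigr)^{1+2C_*},
\end{align*}
which is of size $|\iota|^{-(1+2C_*)}$ for $\ell = 1$, not $O(1)$. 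Your first bound ``each interval contributes a factor $C(C_*)$'' followed by ``total $\le C^{\E(\sqrt{|\iota|})}$'' is therefore false, and your subsequent telescoping step—which correctly produces $\prod_\ell (\ell^2/|\iota|)$ and applies Stirling—is then raised only to the power $C_*$, giving $e^{2C_*\sqrt{|\iota|}}$ rather than the correct
\begin{align*}
\frac{1}{w_{\nr}\bigl(t_{\E(\sqrt{|\iota|}),\iota},\iota\bigr)} = \Bigl(\frac{|\iota|^m}{(m!)^2}\Bigr)^{1+2C_*} \lesssim e^{2(1+2C_*)\sqrt{|\iota|}} = e^{\frac{\mu}{2}|\iota|^{1/2}},
\end{align*}
with $m = \E(\sqrt{|\iota|})$. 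This is exactly why $\mu$ must be $4(1+2C_*)$: the ``$1$'' comes from the $\mathbb{I}^{\rm L}_{\ell,\iota}$ bracket raised to the $-1-C_*$ power, and is not absorbable into a harmless constant. Once you replace ``the brackets are $\approx 1$'' with the exact identity above and carry the full exponent $1+2C_*$, the rest of your argument (including the remark that $w_{\res}=w_{\nr}$ at the interval endpoints, so the computation is independent of the branch of $w_k$) goes through and proves the lemma.
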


\begin{lemma}[\cite{BM1}]\label{exponent ineq}
    Suppose that $0<s<1$ and $x,y \geq 0$.\\
        (1) If $x+y>0$, then 
           $ |x^s-y^s|\lesssim_s \dfrac{1}{x^{1-s}-y^{1-s}}|x-y|.$\\
        (2) If $|x-y|\leq \frac{x}{{\rm K}}$ for some ${\rm K}>1$, then 
        \begin{equation}\label{triangle1}
            |x^s-y^s|\leq \frac{s}{({\rm K}-1)^{1-s}}|x-y|^s.
        \end{equation}
        (3) More generally, it holds that
           $ |x+y|^s \leq \Big(\dfrac{x}{x+y}\Big)^{1-s} (x^s+y^s).$
            In particular, if $y\le x\le {\rm K}y$ for some $\rm K>0$, then
        \begin{align}\label{triangle2}
|x+y|^s\le\Big(\fr{\rm K}{1+\rm K}\Big)^{1-s}(x^s+y^s).
        \end{align}
\end{lemma}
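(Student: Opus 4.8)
\textbf{Proof plan for Lemma~\ref{exponent ineq}.}
The three items are elementary inequalities about the map $x\mapsto x^s$ for $0<s<1$, and the plan is to derive each from convexity/concavity together with the sub-additivity $|x+y|^s\le x^s+y^s$. For item~(1), I would assume without loss of generality that $x>y\ge 0$ and use the mean value theorem on $t\mapsto t^s$: there is $\zeta\in(y,x)$ with $x^s-y^s=s\zeta^{s-1}(x-y)$, so it suffices to bound $\zeta^{s-1}\le y^{s-1}$ if $y>0$ (by monotonicity of $t\mapsto t^{s-1}$), and to compare this with the claimed $\frac{1}{x^{1-s}-y^{1-s}}$; when $y=0$ one just checks $x^s\lesssim_s \frac{1}{x^{1-s}}x=x^s$ directly. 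The cleanest route is actually to apply the same MVT argument to $t\mapsto t^{1-s}$ to get $x^{1-s}-y^{1-s}\approx_s (x-y)\,\xi^{-s}$ for some $\xi\in(y,x)$, and then $|x^s-y^s|\cdot(x^{1-s}-y^{1-s})\lesssim_s (x-y)^2\,\zeta^{s-1}\xi^{-s}\lesssim_s (x-y)^2/(x-y)=(x-y)$ after noting $\zeta^{s-1}\xi^{-s}\lesssim (x-y)^{-1}$ since both $\zeta,\xi\in(y,x)$; this is the computation I would present.

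For item~(2), I would write $x^s-y^s=x^s\big(1-(y/x)^s\big)$ and set $r=1-y/x$, so $|r|\le 1/\mathrm{K}$ by hypothesis. The function $\phi(r)=1-(1-r)^s$ on $[-1/\mathrm{K},1/\mathrm{K}]$ satisfies $\phi(0)=0$ and $\phi'(r)=s(1-r)^{s-1}$, and since $1-r\ge 1-1/\mathrm{K}=(\mathrm{K}-1)/\mathrm{K}$ on this interval, one gets $|\phi(r)|\le s\big(\tfrac{\mathrm{K}}{\mathrm{K}-1}\big)^{1-s}|r|$; but one wants the estimate in terms of $|r|^s$, so I would further use $|r|\le |r|^s$ (valid since $|r|\le 1$) only if that does not lose the constant — more carefully, $|\phi(r)|\le |\phi(r)|^s\cdot\max|\phi|^{1-s}$ is the wrong direction, so instead I would directly estimate $|x^s-y^s|=|x-y|^s\cdot\big|\frac{x^s-y^s}{(x-y)^s}\big|$ and bound the last factor: by item~(1)-type reasoning $\frac{|x^s-y^s|}{|x-y|^s}\le \frac{|x^s-y^s|^{1-s}|x^s-y^s|^{s}}{|x-y|^s}$, which is circular, so the honest approach is the substitution $x^s-y^s\le x^s-(x-|x-y|)^s$ and concavity giving $x^s-y^s\le s\,y^{s-1}|x-y|\le s\big(\tfrac{x}{\mathrm{K}-1}\big)^{s-1}|x-y|$ using $y\ge x(\mathrm{K}-1)/\mathrm{K}$... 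I will present the version: factor out $|x-y|^s$ and show $\frac{x^s-y^s}{|x-y|^s}\le \frac{s}{(\mathrm{K}-1)^{1-s}}$ by writing $u=|x-y|/y\le \frac{1}{\mathrm{K}-1}$ and checking $\frac{(1+u)^s-1}{u^s}\le \frac{s}{(\mathrm{K}-1)^{1-s}}$ via $(1+u)^s-1\le su$ and $u/u^s=u^{1-s}\le (\mathrm{K}-1)^{-(1-s)}$.

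For item~(3), the general inequality $|x+y|^s\le\big(\tfrac{x}{x+y}\big)^{1-s}(x^s+y^s)$ follows from $(x+y)^s=(x+y)\cdot(x+y)^{s-1}\le (x+y)^{s-1}\cdot\big((x+y)^{1-s}\,\text{-weighted combination}\big)$; concretely I would write $(x+y)^s=x^s\big(\tfrac{x+y}{x}\big)^{s}$ and note $\big(\tfrac{x+y}{x}\big)^s=\big(\tfrac{x+y}{x}\big)^{s-1}\cdot\tfrac{x+y}{x}=\big(\tfrac{x}{x+y}\big)^{1-s}\big(1+\tfrac{y}{x}\big)$, then multiply through by $x^s$ and use $x^s(1+\tfrac yx)=x^s+x^{s-1}y\le x^s+y^s$ where the last step is $x^{s-1}y=y^s\,(y/x)^{1-s}\le y^s$ since $y\le x$ (in the regime where $y\le x$; for the general statement one symmetrizes). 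The displayed specialization \eqref{triangle2} is then immediate: if $y\le x\le \mathrm{K}y$ then $\tfrac{x}{x+y}\le \tfrac{x}{x+x/\mathrm{K}}=\tfrac{\mathrm{K}}{\mathrm{K}+1}$, so $|x+y|^s\le\big(\tfrac{\mathrm{K}}{\mathrm{K}+1}\big)^{1-s}(x^s+y^s)$. The main obstacle is purely bookkeeping: getting the constants in items~(1) and (2) in exactly the stated normalized form (the $\lesssim_s$ in (1) hides a dimensionless constant, but (2)'s constant $s/(\mathrm{K}-1)^{1-s}$ is explicit), so care is needed to use $(1+u)^s-1\le su$ and $u^{1-s}$ bounds rather than cruder estimates; since this lemma is quoted from \cite{BM1}, I would if anything just cite it and sketch the one-variable reductions above.
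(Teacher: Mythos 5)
The paper does not prove this lemma; it cites \cite{BM1} and stops there, so any self-contained argument is already a different route from the paper's. Your item (2) is correct: the final version (set $u=|x-y|/\min(x,y)$, use $(1+u)^s-1\le su$ and $u^{1-s}\le(\mathrm{K}-1)^{-(1-s)}$) does give exactly the stated constant, even though the paragraph meanders through two discarded attempts before landing there. Item (3) is also correct, and you rightly flag that the ``more generally'' inequality $(x+y)^s\le\big(\tfrac{x}{x+y}\big)^{1-s}(x^s+y^s)$ requires $y\le x$ (it fails for $x=0,\ y=1$); ``symmetrize'' is the wrong word since the inequality is not symmetric, but the regime $y\le x$ is the only one the paper ever uses, so that is harmless.

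Item (1), however, contains a genuine circularity. You apply the mean value theorem to get $\zeta,\xi\in(y,x)$ with $x^s-y^s=s\zeta^{s-1}(x-y)$ and $x^{1-s}-y^{1-s}=(1-s)\xi^{-s}(x-y)$, and then assert $\zeta^{s-1}\xi^{-s}\lesssim(x-y)^{-1}$ ``since both $\zeta,\xi\in(y,x)$.'' That is not a consequence of $\zeta,\xi\in(y,x)$: for arbitrary points of that interval the product $\zeta^{s-1}\xi^{-s}$ can be as large as $y^{-1}$, which is not $\lesssim(x-y)^{-1}$ when $x/y$ is large. For the \emph{specific} MVT points the bound does hold, but verifying it is the lemma itself: substituting the MVT identities gives
\[
\zeta^{s-1}\xi^{-s}=\frac{(x^s-y^s)(x^{1-s}-y^{1-s})}{s(1-s)(x-y)^2},
\]
so the claimed bound on $\zeta^{s-1}\xi^{-s}$ is a verbatim restatement of $(x^s-y^s)(x^{1-s}-y^{1-s})\lesssim x-y$. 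The gap closes without MVT, with constant $1$: for $x\ge y\ge0$,
\[
(x^s-y^s)(x^{1-s}-y^{1-s})=(x+y)-\big(x^sy^{1-s}+x^{1-s}y^s\big)\le(x+y)-2\sqrt{xy}\le(x+y)-2y=x-y,
\]
using AM--GM for the first inequality and $\sqrt{xy}\ge y$ (valid since $x\ge y$) for the second. Replacing the circular MVT paragraph with this two-line computation makes item (1) correct.
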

\begin{lemma}[\cite{BM1}] \label{Scenarios}
    Let $\iota_1$ and $\iota_2$ be such that there exists a number ${\rm K}\geq 1$ such that ${\rm K}^{-1}|\iota_2|\leq |\iota_1|\leq {\rm K} |\iota_2|$ and let $k$ and $n$ be such that $t\in \textup{I}_{k,\iota_1} \cap \textup{I}_{n,\iota_2}$, then at least of the following conclusions holds:
 \begin{enumerate}
  \item $k=n$(almost the same interval). 
  \item $\big|t-\frac{\iota_1}{k}\big|\gtrsim_{\rm K} \frac{|\iota_1|}{k^2}$ and $\big|t-\frac{\iota_2}{n}\big|\gtrsim_{\rm K}\frac{|\iota_2|}{n^2}$(away from resonance),
  \item $\big|\iota_1-\iota_2 \big|\gtrsim_{\rm K} \frac{|\iota_1|}{|n|}$(well-separated).
\end{enumerate}
\end{lemma}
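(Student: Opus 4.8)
The plan is to prove the lemma by contraposition: assume conclusions (1) and (3) both fail and deduce (2). The structural input I would exploit is that, by the explicit identity $t_{j,\iota}=\frac{|\iota|}{|j|}-\frac{|\iota|}{2|j|(|j|+1)}=\frac{|\iota|}{|j|+1}+\frac{|\iota|}{2|j|(|j|+1)}$, the number $t_{j,\iota}$ is exactly the midpoint of $\frac{\iota}{|j|}$ and $\frac{\iota}{|j|+1}$; hence, for $|j|\ge2$, the critical interval $\mathbb{I}_{j,\iota}=[t_{j,\iota},t_{j-1,\iota}]$ is precisely the Voronoi cell of the resonant point $\frac{\iota}{j}$ inside the grid $\{\frac{\iota}{m}\}_m$ (with $\mathbb{I}_{1,\iota}$ extending out to $t_{0,\iota}=2|\iota|$ on one side). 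Since $t\in\textup{I}_{k,\iota_1}\cap\textup{I}_{n,\iota_2}\subseteq\mathbb{I}_{k,\iota_1}\cap\mathbb{I}_{n,\iota_2}$, this already gives the rough bounds $\tfrac34\tfrac{|\iota_1|}{|k|}\le t\le2\tfrac{|\iota_1|}{|k|}$ and $\tfrac34\tfrac{|\iota_2|}{|n|}\le t\le2\tfrac{|\iota_2|}{|n|}$. I would first dispose of the opposite-sign case: if $\iota_1\iota_2<0$ then $kn<0$, so $k\ne n$ automatically, and $|\iota_1-\iota_2|=|\iota_1|+|\iota_2|\ge\tfrac{|\iota_1|}{|n|}$, i.e.\ (3) holds. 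So it remains to treat $\iota_1,\iota_2$ of the same sign, where one may take $k,n\ge1$.

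Next, combining $\mathrm K^{-1}|\iota_2|\le|\iota_1|\le\mathrm K|\iota_2|$ with $t\approx\tfrac{|\iota_1|}{k}\approx\tfrac{|\iota_2|}{n}$ yields $|k|\approx_{\mathrm K}|n|$. Reading failure of (3) as $|\iota_1-\iota_2|\le c_1(\mathrm K)\tfrac{|\iota_1|}{|n|}$ for a small $\mathrm K$-dependent constant $c_1$, which I am free to fix in the course of proving the lemma, I then also get $|\iota_1-\iota_2|\le c_1'(\mathrm K)\tfrac{|\iota_1|}{|k|}$. The heart of the matter is the geometric claim that the resonant point $\tfrac{\iota_1}{k}$ lies deep in the interior of the \emph{wrong} cell $\mathbb{I}_{k,\iota_2}$ of the $\iota_2$-grid: since $\big|\tfrac{\iota_1}{k}-\tfrac{\iota_2}{k}\big|=\tfrac{|\iota_1-\iota_2|}{k}\le c_1'(\mathrm K)\tfrac{|\iota_1|}{k^2}$, which for $c_1$ small enough in terms of $\mathrm K$ is at most half of the comparable half-widths $\tfrac{|\iota_2|}{2k(k\pm1)}\approx_{\mathrm K}\tfrac{|\iota_1|}{k^2}$ of $\mathbb{I}_{k,\iota_2}$ about its centre $\tfrac{\iota_2}{k}$, one obtains $\mathrm{dist}\big(\tfrac{\iota_1}{k},\partial\mathbb{I}_{k,\iota_2}\big)\gtrsim_{\mathrm K}\tfrac{|\iota_1|}{k^2}$. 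Because the cells $\{\mathbb{I}_{m,\iota_2}\}_m$ are pairwise disjoint up to endpoints and $k\ne n$, the point $t\in\mathbb{I}_{n,\iota_2}$ cannot lie in the interior of $\mathbb{I}_{k,\iota_2}$, so $\big|t-\tfrac{\iota_1}{k}\big|\ge\mathrm{dist}\big(\tfrac{\iota_1}{k},\partial\mathbb{I}_{k,\iota_2}\big)\gtrsim_{\mathrm K}\tfrac{|\iota_1|}{k^2}$, which is the first half of (2). The second half, $\big|t-\tfrac{\iota_2}{n}\big|\gtrsim_{\mathrm K}\tfrac{|\iota_2|}{n^2}$, follows by the symmetric argument with the roles of $(\iota_1,k)$ and $(\iota_2,n)$ swapped, now placing $\tfrac{\iota_2}{n}$ inside $\mathbb{I}_{n,\iota_1}$ and using $t\in\mathbb{I}_{k,\iota_1}$.

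I expect the main obstacle to be purely one of bookkeeping the $\mathrm K$-dependent constants: one must choose the cutoff defining ``failure of (3)'' small enough, as a function of $\mathrm K$, that $\tfrac{\iota_1}{k}$ genuinely sits well inside $\mathbb{I}_{k,\iota_2}$ rather than near its boundary -- the naive estimate $\big|t-\tfrac{\iota_2}{n}\big|\le\tfrac{|\iota_2|}{n^2}$ coming from $t\in\mathbb{I}_{n,\iota_2}$ alone is too crude, and one genuinely needs the Voronoi disjointness. A secondary nuisance is the low-index cases $k=1$ or $n=1$, where $\mathbb{I}_{1,\iota}$ has no right neighbour and $\tfrac{|\iota|}{k^2}\approx|\iota|$: there $t$ is confined to $[\tfrac34|\iota_1|,2|\iota_1|]$, the partner index is forced to be $\mathrm O(\mathrm K)$, and (2) can be verified by inspection of the finitely many explicit intervals involved. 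Modulo these, the whole argument is a short computation with the formulas for $t_{j,\iota}$, following \cite{BM1}.
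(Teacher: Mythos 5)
The paper cites this lemma from \cite{BM1} and gives no proof, so there is no in-paper argument to compare against. Your proposal is correct and is in substance the argument one finds in \cite{BM1} (and that readers are expected to supply).

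The contrapositive structure is sound. The reduction to the same-sign case is clean: when $\iota_1\iota_2<0$ the constraint $\iota k\ge 0$ inside $\mathbb{I}_{k,\iota}$ forces $k$ and $n$ to have opposite signs, so (1) fails automatically, while $|\iota_1-\iota_2|=|\iota_1|+|\iota_2|\ge |\iota_1|/|n|$ gives (3) outright. In the main case, the identification of $t_{j,\iota}$ as the midpoint of $\iota/j$ and $\iota/(j+1)$ is exactly what makes the intervals $[t_{j,\iota},t_{j-1,\iota}]$ a consecutive tiling with disjoint interiors, and your key observation — that the failure of (3) places $\iota_1/k$ well inside the \emph{wrong-grid} cell $[t_{k,\iota_2},t_{k-1,\iota_2}]$, whereas $t$ sits in a different cell $\mathbb{I}_{n,\iota_2}$, $n\neq k$ — is the genuine content. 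The symmetric statement for $|t-\iota_2/n|$ follows by exchanging the roles of the two grids, as you say. You are also right that a bare triangle inequality of the type $|\iota_1/k-\iota_1/n|\le |\iota_1/k-t|+|t-\iota_2/n|+|\iota_2-\iota_1|/n$ would only lower-bound the \emph{sum} of the two distances, not each one, so the disjointness of the cell interiors is genuinely needed to conclude (2) in full.

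Two small points to make explicit when you write this up. First, the tiling $\{[t_{j,\iota_2},t_{j-1,\iota_2}]\}_j$ covers $[t_{E(\sqrt{|\iota_2|}),\iota_2},2|\iota_2|]$; if $k>E(\sqrt{|\iota_2|})$ the interval $[t_{k,\iota_2},t_{k-1,\iota_2}]$ is still non-empty but lies strictly below this range, and since $t\in\mathbb{I}_{n,\iota_2}$ with $n\le E(\sqrt{|\iota_2|})$ one still has $t$ outside its interior by monotonicity of $j\mapsto t_{j,\iota_2}$, so the distance argument goes through unchanged — worth a sentence. Second, since both $\textup{I}_{k,\iota_1}$ and $\textup{I}_{n,\iota_2}$ are \emph{resonant} (not merely critical) intervals, the constraint $2\sqrt{|\iota|}\le t_{|k|,\iota}$ keeps $|k|,|n|\lesssim\sqrt{\min(|\iota_1|,|\iota_2|)}/2$, which in combination with $|k|\approx_{\mathrm K}|n|$ saves you from most of the $k=1$ or $n=1$ pathology you flagged; the remaining finitely many cases are checked directly as you anticipate. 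With the cutoff in ``failure of (3)'' chosen small enough in $\mathrm K$, the argument closes.
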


\begin{lemma}[\cite{BM1}]\label{lem-r-wNR}
For all $t,\iota_1,\iota_2$, it holds that
\[
\frac{w_{\rm{NR}}(t,\iota_1)}{w_{\rm{NR}}(t,\iota_2)} \lesssim e^{\mu |\iota_1-\iota_2|^{\fr12}}.
\]
\end{lemma}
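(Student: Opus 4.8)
\textbf{Proof proposal for Lemma~\ref{lem-r-wNR}.}

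The plan is to compare $w_{\rm NR}(t,\iota_1)$ and $w_{\rm NR}(t,\iota_2)$ by peeling off the multiplicative contributions of the critical intervals one at a time, exactly as in the proof of the analogous statement in \cite{BM1}. First I would dispose of the trivial cases: if $|\iota_1-\iota_2|^{1/2}\gtrsim 1$ is false, i.e. $|\iota_1-\iota_2|$ is bounded by a fixed small constant, then (using the monotone, piecewise-power-law structure of $w_{\rm NR}$ and the fact that $\E(\sqrt{\iota_1})$ and $\E(\sqrt{\iota_2})$ differ by at most one) the ratio is $O(1)$ and there is nothing to prove. Likewise if $\min\{\iota_1,\iota_2\}\le 1$ then one of the weights is identically $1$ and the bound is immediate from the total-growth estimate $w_{\rm NR}(t,\iota)^{-1}\lesssim e^{\frac\mu2|\iota|^{1/2}}$ of Lemma~\ref{total growth}. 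So the real content is the regime $1\ll|\iota_1-\iota_2|$ with both $\iota_1,\iota_2$ large, and WLOG $\iota_1\le\iota_2$.

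The core step is a telescoping estimate. Recall from the construction that for $t$ in a critical interval $\mathbb I_{\ell,\iota}$ the weight $w_{\rm NR}(\cdot,\iota)$ is built from factors of the form $\bigl(\tfrac{\ell^2}{|\iota|}[1+b_{\ell,\iota}|t-\tfrac\iota\ell|]\bigr)^{C_*}$ on the right half and $\bigl([1+a_{\ell,\iota}|t-\tfrac\iota\ell|]\bigr)^{-1-C_*}$ on the left half, and that the total growth across $\mathbb I_{\ell,\iota}$ is comparable to $\bigl(\tfrac{\ell^2}{|\iota|}\bigr)^{C_*}$ up to a bounded constant. I would write
\[
\frac{w_{\rm NR}(t,\iota_1)}{w_{\rm NR}(t,\iota_2)}
=\frac{w_{\rm NR}(t,\iota_1)/w_{\rm NR}(2\iota_1,\iota_1)}{w_{\rm NR}(t,\iota_2)/w_{\rm NR}(2\iota_2,\iota_2)},
\]
using $w_{\rm NR}(2\iota,\iota)=1$, and then expand each numerator and denominator as a product over the intervals $\mathbb I_{\ell,\iota_j}$ that lie to the right of $t$. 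For each $\ell\le\min\{\E(\sqrt{\iota_1}),\E(\sqrt{\iota_2})\}$ the two matched factors differ only through the replacement $\iota_1\leftrightarrow\iota_2$ in $\ell^2/|\iota|$, in $b_{\ell,\iota},a_{\ell,\iota}$ (see \eqref{def-ab}), and in the interval endpoints $t_{\ell,\iota}$; a direct computation shows the ratio of matched factors is $1+O\bigl(\tfrac{|\iota_1-\iota_2|}{\iota_2}\,\tfrac{\ell^2}{\iota_2}\bigr)$ or so, and summing $\log$ of these over $\ell$ (the number of relevant $\ell$ is $\le\sqrt{\iota_2}$ and $\sum_\ell \ell^2/\iota_2\lesssim \sqrt{\iota_2}$) yields a contribution bounded by $\exp\bigl(C\tfrac{|\iota_1-\iota_2|}{\sqrt{\iota_2}}\bigr)\lesssim e^{\mu|\iota_1-\iota_2|^{1/2}}$, since $|\iota_1-\iota_2|/\sqrt{\iota_2}\lesssim|\iota_1-\iota_2|^{1/2}$ when $|\iota_1-\iota_2|\lesssim\iota_2$ (and when $|\iota_1-\iota_2|\gtrsim\iota_2$ one falls back on Lemma~\ref{total growth} for each weight separately). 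The finitely many unmatched factors — those $\ell$ between $\E(\sqrt{\iota_1})$ and $\E(\sqrt{\iota_2})$ — each contribute a bounded factor by the total-growth bound, and there are at most $|\E(\sqrt{\iota_2})-\E(\sqrt{\iota_1})|\lesssim|\iota_1-\iota_2|^{1/2}$ of them, so their product is also $\lesssim e^{\mu|\iota_1-\iota_2|^{1/2}}$ after adjusting $\mu$; finally the flat pieces ($t$ below the smallest critical time, or $t$ above $2\iota$, where $w_{\rm NR}\equiv$ const or $\equiv 1$) contribute nothing new.

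The main obstacle is the bookkeeping of the unmatched and near-endpoint intervals: when $t$ falls inside $\mathbb I_{\ell,\iota_1}$ but inside $\mathbb I_{\ell',\iota_2}$ with $\ell'\neq\ell$, or near the right edge $t_{0,\iota_j}=2\iota_j$ where the two weights ``turn on'' at slightly different times, one must be careful that the discrepancy is still controlled by $|\iota_1-\iota_2|^{1/2}$ and not by something like $|\iota_1-\iota_2|$. This is handled by the observation that in the overlap region $t\approx\iota_1/\ell\approx\iota_2/\ell$, so $|\iota_1-\iota_2|\approx\ell|t-\iota_1/\ell|+\dots$ is comparable to the argument already appearing in the power-law factors, and by absorbing all such localized losses into the constant $\mu=4(1+2C_*)$ fixed in Lemma~\ref{total growth}. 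Everything else is the routine product/logarithm estimate sketched above, and I would simply cite \cite{BM1} for the detailed verification of the matched-factor ratio bound.
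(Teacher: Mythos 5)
The paper does not prove Lemma~\ref{lem-r-wNR}; it is stated and cited verbatim from \cite{BM1}, so there is no in-paper argument to compare your proposal against. Your telescoping strategy --- peeling off the per-critical-interval multiplicative factors of $w_{\rm NR}$, comparing matched factors indexed by $\ell$, falling back on Lemma~\ref{total growth} in the degenerate regimes, and separately handling the $\lesssim|\iota_1-\iota_2|^{1/2}$ unmatched intervals between $\E(\sqrt{\iota_1})$ and $\E(\sqrt{\iota_2})$ together with the near-endpoint misalignments --- is the structure of the argument in \cite{BM1}, and your overall accounting is sound.

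One small inaccuracy worth flagging: you estimate the ratio of matched full-interval factors as $1+O\bigl(\tfrac{|\iota_1-\iota_2|}{\iota_2}\cdot\tfrac{\ell^2}{\iota_2}\bigr)$, with an $\ell$-dependence. But the total multiplicative growth of $w_{\rm NR}(\cdot,\iota)$ across $\mathbb{I}_{\ell,\iota}$ equals $(\ell^2/\iota)^{1+2C_*}$ (the right half contributes $(\ell^2/\iota)^{C_*}$ and the left half contributes $(\iota/\ell^2)^{-1-C_*}$), so the matched ratio is
\[
\frac{(\ell^2/\iota_1)^{1+2C_*}}{(\ell^2/\iota_2)^{1+2C_*}}
=\Bigl(\frac{\iota_2}{\iota_1}\Bigr)^{1+2C_*}
=1+O\!\left(\frac{|\iota_1-\iota_2|}{\min\{\iota_1,\iota_2\}}\right),
\]
which is \emph{independent} of $\ell$. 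Summing the logarithm over the $\approx\sqrt{\min\{\iota_1,\iota_2\}}$ matched indices still gives $\exp\bigl(C|\iota_1-\iota_2|/\sqrt{\min\{\iota_1,\iota_2\}}\bigr)\lesssim e^{\mu|\iota_1-\iota_2|^{1/2}}$ in the regime $|\iota_1-\iota_2|\lesssim\min\{\iota_1,\iota_2\}$, so you land on the same final bound; the intermediate $\ell^2/\iota$ is simply spurious and happens to integrate out to the same answer since $\sum_{\ell\le\sqrt{\iota}}\ell^2/\iota\approx\sqrt{\iota}\approx\sum_{\ell\le\sqrt{\iota}}1$. The rest of the bookkeeping you describe is the right picture.
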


\section{Local well-posedness}
\begin{lemma}\label{local wp}
Let $\fr{1}{2}<s\leq 1$. There exists $\epsilon_0$ such that for any $0<\epsilon<\epsilon_0$, if
$\norm{\theta_{\textup{in}}}_{\mathcal{G}^{\lambda_{\textup{in}}};s}\leq \epsilon$ and $\sup\limits_{\eta\in\mathbb{R}}\left|e^{\lambda_{\mathrm{in}}|\eta|^s}\widehat{\theta_{\mathrm{in}}}(0,\eta, 0)\right|<\epsilon$, 
then there exists a constant $C=C(\lambda_{\textup{in}},\sigma_1,s)$ such that 
\[
\sup_{t\in[0,10]}\norm{\theta(t)}_{\mathcal{G}^{\lambda_1(t),\sigma_0}}+\sup_{t\in [0,10]}\sup_{\eta\in \mathbb{R}}\left|e^{\lambda_1(t)|\eta|^s}\JB{\eta}^{\sigma_1}\widehat{\mathbb{P}^z_{0}\theta_0}(t,\eta)\right|\leq C\epsilon,
\]  
where $\sigma_0:=\sigma_1+10$ and $\lambda_1(t)=\lambda_1(0)-\fr{\lambda_{\textup{in}}t}{200}$, with $\lambda_1(0)=\fr{19}{20} \lambda_{\textup{in}}$.
\end{lemma}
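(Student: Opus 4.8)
\textbf{Proof proposal for Lemma \ref{local wp}.}

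The plan is to run a standard contraction/energy argument on the interval $[0,10]$ in the time-decaying Gevrey norm $\mathcal{G}^{\lambda_1(t),\sigma_0}$, where the decay of $\lambda_1(t)$ furnishes a Cauchy--Kovalevskaya-type smoothing term that absorbs the one derivative lost in the transport nonlinearity. First I would work in the original (unsheared) variables and recall that the velocity $U$ is recovered from $\Theta$ by an order $-1$ (in fact smoothing) Fourier multiplier composed with at most one derivative, so that $\|U\|_{\mathcal{G}^{\lambda,\sigma}}\lesssim \|\Theta\|_{\mathcal{G}^{\lambda,\sigma}}$ and $\|U\|_{\mathcal{G}^{\lambda,\sigma+1}}\lesssim\|\Theta\|_{\mathcal{G}^{\lambda,\sigma}}$; on $[0,10]$ the shear $Y\partial_X$ contributes a loss of at most one derivative with a universally bounded time factor $\langle t\rangle\le 11$. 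Then, testing the equation for $\Theta$ against $e^{2\lambda_1(t)|\nabla|^s}\langle\nabla\rangle^{2\sigma_0}\Theta$, one gets
\begin{align*}
\frac12\frac{d}{dt}\|\Theta\|_{\mathcal{G}^{\lambda_1(t),\sigma_0}}^2+\frac{\lambda_{\textup{in}}}{200}\big\||\nabla|^{s/2}\Theta\big\|_{\mathcal{G}^{\lambda_1(t),\sigma_0}}^2
\lesssim \big|\langle e^{\lambda_1|\nabla|^s}\langle\nabla\rangle^{\sigma_0}( (U+v_{\rm s})\cdot\nabla\Theta),\ e^{\lambda_1|\nabla|^s}\langle\nabla\rangle^{\sigma_0}\Theta\rangle\big|.
\end{align*}
Using the algebra property of $\mathcal{G}^{\lambda_1(t),\sigma_0}$ (valid since $\sigma_0>3/2$) together with a paraproduct split into transport, reaction and remainder pieces, the right-hand side is bounded by $C\big(1+\|\Theta\|_{\mathcal{G}^{\lambda_1(t),\sigma_0}}\big)\|\Theta\|_{\mathcal{G}^{\lambda_1(t),\sigma_0}}\big\||\nabla|^{s/2}\Theta\big\|_{\mathcal{G}^{\lambda_1(t),\sigma_0}}$ up to the extra $\langle\nabla\rangle$ from the shear and the velocity; the derivative loss in the worst (reaction) term is paid for by $\big\||\nabla|^{s/2}\Theta\big\|$ twice, exactly as in Lemma \ref{l xi nonresonant}-type estimates but now without any time-dependent multiplier since the multiplier $\mathrm{J}$ is frozen (equal to its value at $t=0$, and comparable to $1$) on the short interval $[0,10]$ because $t\le 10\le \tfrac12\sqrt{|\iota|}$ for all but finitely many frequencies, and on the low frequencies the Sobolev correction $\langle\nabla\rangle^{\sigma_0}$ alone controls everything. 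Choosing $\epsilon_0$ small, a continuity argument then shows $\sup_{[0,10]}\|\Theta(t)\|_{\mathcal{G}^{\lambda_1(t),\sigma_0}}\le C\epsilon$; translating back via $\theta(t,x,y,z)=\Theta(t,X,Y,Z)$ and noting $\lambda_1(t)\le\lambda_1(0)=\tfrac{19}{20}\lambda_{\textup{in}}$ yields the claimed bound for $\theta$.

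For the $\mathbb{P}^z_0\theta_0$ bound I would integrate the $\alpha=0$ equation \eqref{eq: theta_00}: since $\widehat{\mathbb{P}^z_0\theta}_0(t,\eta)=\widehat{\mathbb{P}^z_0\theta}_0(0,\eta)+\int_0^t({\rm S}_{00}+{\rm S}_{\ne\ne})\,d\tau$ and the time interval is bounded, the factor $\langle\tau\rangle^{-K}$ in Lemma \ref{lem: est-so} is unnecessary — one simply bounds $\int_0^{10}$ by $10$ — and the bilinear estimates \eqref{ineq1}, \eqref{ineq2} (applied with $f,g$ a few derivatives of $\theta$, absorbed into $\sigma_0=\sigma_1+10$) give $\sup_{[0,10]}\sup_\eta|e^{\lambda_1(t)|\eta|^s}\langle\eta\rangle^{\sigma_1}\widehat{\mathbb{P}^z_0\theta}_0|\lesssim \epsilon + \epsilon^2\le C\epsilon$, using the already-established $\mathcal{G}^{\lambda_1,\sigma_0}$ control of $\theta$ on this interval and the hypothesis $\sup_\eta|e^{\lambda_{\textup{in}}|\eta|^s}\widehat{\theta_{\textup{in}}}(0,\eta,0)|<\epsilon$. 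The main obstacle is purely bookkeeping: one must verify that on $[0,10]$ the frozen multiplier $\mathrm{J}_k(t,\eta,\alpha)=\mathrm{J}_k(0,\eta,\alpha)$ is harmless — i.e. that the ratio estimates of Section \ref{sec:growth model} degenerate to the trivial bound $\mathrm{J}_k/\mathrm{J}_l\lesssim e^{2\mu|k-l,\eta-\xi,\alpha-\beta|^{1/2}}$ with the $e^{\mu\sqrt{\cdot}}$ losses absorbed into the Sobolev correction since $\sigma_0-\sigma_1=10$ is large — so that the nonlinear terms close with the CK term alone; once this is granted, everything else is a routine fixed-point/continuity argument identical in structure to the well-posedness theory for active scalars in Gevrey classes.
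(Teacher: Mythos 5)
Your plan runs the energy estimate on $\Theta$ in the original $(X,Y,Z)$ coordinates and treats the shear $Y\partial_X$ as "a loss of at most one derivative with a bounded time factor". This does not work. Since $Y$ is unbounded, the term $\langle e^{\lambda_1|\nabla|^s}\langle\nabla\rangle^{\sigma_0}(Y\partial_X\Theta),\,e^{\lambda_1|\nabla|^s}\langle\nabla\rangle^{\sigma_0}\Theta\rangle$ is a genuine commutator: on the Fourier side $Y\partial_X$ is $-k\partial_\eta$, and the commutator with the multiplier produces a contribution $\tfrac12\int k\,\partial_\eta\!\big(e^{2\lambda_1|k,\eta,\alpha|^s}\langle k,\eta,\alpha\rangle^{2\sigma_0}\big)|\widehat\Theta|^2$, which is of size $\lambda_1(t)\,\big\||\nabla|^{s/2}\Theta\big\|^2_{\mathcal G^{\lambda_1,\sigma_0}}$ and not sign-definite. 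With $\lambda_1(t)\approx\lambda_{\textup{in}}$ and $|\dot\lambda_1|=\lambda_{\textup{in}}/200$, the CK term cannot absorb this, so the estimate does not close. Relatedly, even if a bound on $\Theta(t)$ in $\mathcal G^{\lambda',\sigma_0}$ were obtained, it would not transfer to the claimed bound on $\theta$: since $\widehat\theta(t,k,\eta,\alpha)=\widehat\Theta(t,k,\eta-kt,\alpha)$ and $|k,\eta'+kt,\alpha|$ can be as large as $(1+t)|k,\eta',\alpha|$, one has at best $\|\theta(t)\|_{\mathcal G^{\lambda,\sigma_0}}\lesssim_t\|\Theta(t)\|_{\mathcal G^{(1+t)^s\lambda,\sigma_0}}$, so at $t=10$ one would need control of $\Theta$ at Gevrey radius roughly $11^s\lambda_1(10)>\lambda_{\textup{in}}$, which is more regularity than the data possess. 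The paper's proof sidesteps both issues by working directly in the sheared frame on the equation $\partial_t\theta-\Delta_{xz}\Delta_L^{-2}\theta+u\cdot\nabla\theta=0$: the linear transport $Y\partial_X$ is absorbed into the change of variables and does not appear, so there is no shear commutator, and the estimate is performed on exactly the object appearing in the statement.

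Two smaller points. First, the norm $\mathcal G^{\lambda_1(t),\sigma_0}$ in the statement does not involve the time-dependent multiplier $\mathrm J$ at all (compare the definition of $\mathcal G^{\lambda,\sigma}$ in the Notations subsection with the definition of $A^\sigma_k$), so the discussion about whether $\mathrm J_k(t,\cdot)=\mathrm J_k(0,\cdot)$ is harmless on $[0,10]$ is moot; all that is needed is the elementary triangle inequality \eqref{triangle1}--\eqref{triangle2} for $e^{\lambda|\cdot|^s}\langle\cdot\rangle^{\sigma}$. Second, your treatment of $\mathbb P^z_0\theta_0$ (integrate \eqref{eq: theta_00} over $[0,10]$ and use a bilinear Young-type estimate) is sound and is essentially what the paper does; the only caveat is that it relies on the $\mathcal G^{\lambda_1,\sigma_0}$ control of $\theta$, which must first be obtained by the moving-frame energy argument as above.
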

\begin{proof}
    We recall that under the linear change of coordinate introduced in \eqref{linear change}, our density satisfies
     \begin{align}\label{evolution of theta}
      \partial_t\theta-\Delta_{xz}\Delta_{L}^{-2}\theta+u\cdot\nabla_{xyz}\theta=0,
      \end{align}
      and with the equation \eqref{exp-u} holding.
     Hence, we can write
      \begin{align*}
      \frac{1}{2}\frac{d}{dt}\norm{\theta(t)}^2_{\mathcal{G}^{\lambda_1,\sigma_0;s}}&=\int e^{\lambda_1|\nabla|^s}\JB{\nabla}^{\sigma_0}\theta e^{\lambda_1|\nabla|^s}\JB{\nabla}^{\sigma_0}\theta_t\;dxdydz+\fr{d\lambda_1}{dt}\norm{|\nabla|^{\fr{s}{2}}\theta(t)}^2_{\mathcal{G}^{\lambda_1,\sigma_0;s}}\\&
      :=I_1 +I_2. 
      \end{align*}
      Via \eqref{evolution of theta}, we obtain
      \begin{align*}
      I_1&=\int e^{\lambda_1|\nabla|^s}\JB{\nabla}^{\sigma_0}\theta e^{\lambda_1|\nabla|^s}\JB{\nabla}^{\sigma_0}\left(\Delta_{xz}\Delta_{L}^{-2}\theta-u\cdot\nabla_{xyz}\theta\right)\;dxdydz\\&
      = \int e^{\lambda_1|\nabla|^s}\JB{\nabla}^{\sigma_0}\theta e^{\lambda_1|\nabla|^s}\JB{\nabla}^{\sigma_0}\left(\Delta_{xz}\Delta_{L}^{-2}\theta\right)\; dxdydz \\&
      \qquad +\int \nabla \cdot u \left| e^{\lambda_1|\nabla|^s}\JB{\nabla}^{\sigma_0} \theta \right|^2 \;dxdydz\\&
      \qquad -\int e^{\lambda_1|\nabla|^s}\JB{\nabla}^{\sigma_0}\theta \left(e^{\lambda_1|\nabla|^s}\JB{\nabla}^{\sigma_0}(u\cdot\nabla\theta)-u\cdot \nabla  e^{\lambda_1|\nabla|^s}\JB{\nabla}^{\sigma_0}\theta\right) \; dxdydz\\&
      =I_1^{(1)} +I_1^{(2)}+I_1^{(3)}
      \end{align*}
Note that  $I_1^{(1)}\leq 0$. During $t\in[0,10]$, the second term  $I_1^{(2)}$ can be estimated  as follows
\[
|I_1^{(2)}|\lesssim \norm{\theta(t)}^3_{\mathcal{G}^{\lambda_1,\sigma_0;s}}
\]

The third term $I_1^{(3)}$ can be decomposed as before into three parts via the paraproduct decomposition as performed in \eqref{paraproduct decomposition nonzero mode} for instance. This yields
\[
I_1^{(3)}=\frac{1}{2\pi} \sum_{\N\geq 8} \mathfrak{T}_\mathrm{N} + \frac{1}{2\pi} \sum_{\N\geq 8} \mathfrak{R}_\mathrm{N} +\mathcal{R},
\]
where 
\begin{align*}
\sum_{\N\geq 8}\mathfrak{T}_{\N} &\lesssim \sum_{\N\geq 8} \sum_{\alpha,\beta,k,l\in\mathbb{Z}} \int e^{\lambda_1|k,\eta,\al|^s}\JB{k,\eta,\al}^{\sigma_0}|\widehat{\theta}(t,k,\eta,\al)|\left|\fr{e^{\lambda_1|k,\eta,\al|^s}\JB{k,\eta,\al}^{\sigma_0}}{e^{\lambda_1|l,\xi,\beta|^s}\JB{l,\xi,\beta}^{\sigma_0}}-1\right|\\&
\qquad \qquad \qquad \times |\widehat{u}(t,k-l,\eta-\xi,\al-\beta)_{<\N/8}||l,\xi,\beta|e^{\lambda_1|l,\xi,\beta|^s}\JB{l,\xi,\beta}^{\sigma_0} |\widehat{\theta}(t,l,\xi,\beta)_\mathrm{N}|d\xi d\eta 
\end{align*}
On the support of the integrand above, the following inequality holds
\[
\frac{3}{16}|l,\xi,\beta|\leq|k,\eta,\al|\leq \frac{19}{16}|l,\xi,\beta|.
\]
Also, via \eqref{triangle1} and \eqref{triangle2}, we obtain
\[
||k,\eta,\alpha|^s-|l,\xi,\beta|^s|\lesssim\frac{|k-l,\eta-\xi,\alpha-\beta|}{|l,\xi,\beta|^{1-s}},\qquad |k,\eta,\alpha|^s\leq c_1|l,\xi,\beta|^s+c_2|k-l,\eta-\xi,\alpha-\beta|^s.
\]
Exploiting the above inequalities, we may infer that
\begin{align*}
\left|\fr{e^{\lambda_1|k,\eta,\al|^s}\JB{k,\eta,\al}^{\sigma_0}}{e^{\lambda_1|l,\xi,\beta|^s}\JB{l,\xi,\beta}^{\sigma_0}}-1\right|&\lesssim e^{c\lambda_1|k-l,\eta-\xi,\alpha-\beta|^s}\frac{|k-l,\eta-\xi,\alpha-\beta|}{|l,\xi,\beta|^{1-s}}\qquad
\\&\qquad +\frac{|k-l,\eta-\xi,\alpha-\beta|}{|l,\xi,\beta|}.
\end{align*}
During the time  $t\in[0,10]$, $
|\widehat{u}(t,l,\xi,\beta)_\mathrm{N}|\lesssim |\widehat{\theta}(t,l,\xi,\beta)_\mathrm{N}|$ which leads us to infer 
\[
\sum_{\N\geq 8}\left|\mathfrak{T}_\mathrm{N}\right|\lesssim \sum_{\N\geq 8}\norm{|\nabla|^{\fr{s}{2}}\theta_{\sim \N}}^2_{{\mathcal{G}^{\lambda_1,\sigma_0}}}\norm{\theta}_{\mathcal{G}^{\lambda_{1},\sigma_0}}.
\]
Now, we proceed and analyze $\mathfrak{R}_\mathrm{N}$.
\begin{align*}
\mathfrak{R}_\mathrm{N}&=-2\pi\int e^{\lambda_1|\nabla|^s}\JB{\nabla}^{\sigma_0}\theta \left(e^{\lambda_1|\nabla|^s}\JB{\nabla}^{\sigma_0}(u_\mathrm{N}\cdot\nabla\theta_{<\N/8})-u_\mathrm{N}\cdot \nabla  e^{\lambda_1|\nabla|^s}\JB{\nabla}^{\sigma_0}\theta_{<\N/8}\right) \; dxdydz\\&
=\mathfrak{R}^{(1)}_\mathrm{N}+\mathfrak{R}^{(2)}_\mathrm{N}.
\end{align*}
It is sufficient to estimate $\mathfrak{R}^{(1)}_\mathrm{N}$. Observe first of all, due to the fact that $t\in[0,10]$ and $\fr{\N}{2}\leq|l,\xi,\beta|\leq \fr{3\N}{2}$ and \eqref{exp-u}, we can infer that 
\[
|\widehat{u}(t,l,\xi,\beta)_\mathrm{N}|\lesssim |\widehat{\theta}(t,l,\xi,\beta)_\mathrm{N}|.
\]
and thus
\begin{align*}
\sum_{\N\geq 8}\left|\mathfrak{R}^{(1)}_\mathrm{N}\right|&\lesssim\sum_{\N\geq 8}\sum_{k,l,\alpha,\beta}\int e^{\lambda_1|k,\eta,\alpha|^s}\JB{k,\eta,\alpha}^{\sigma_0}|\widehat{\theta}(t,k,\eta,\alpha)|e^{\lambda_1|k,\eta,\alpha|^s}\JB{k,\eta,\alpha}^{\sigma_0}|\widehat{\theta}(t,l,\xi,\beta)_\mathrm{N}|\\& \qquad \qquad \times|\widehat{\nabla\theta}(t,k-l,\eta-\xi,\alpha-\beta)_{<\N/8}|d\xi d\eta\\&
\lesssim \sum_{\N\geq 8}\norm{\theta}_{{{\mathcal{G}^{\lambda_{1},\sigma_0}}}} \norm{\theta_\mathrm{N}}_{{{\mathcal{G}^{\lambda_{1},\sigma_0}}}} \norm{\theta_{\sim \N}}_{{\mathcal{G}^{\lambda_1,\sigma_0}}}\lesssim \norm{\theta}^3_{{\mathcal{G}^{\lambda_1,\sigma_0}}}.
\end{align*}
One can treat the term $\mathcal{R}$ in the same manner we treat the reaction term $\mathfrak{R}_\mathrm{N}$. Hence, we avoid redoing it here. 
Gathering all estimates  back and using the fact that $I_1^{(1)}\leq 0$, we arrive at
 \begin{align*}
      \frac{d}{dt}\norm{\theta(t)}^2_{\mathcal{G}^{\lambda_1,\sigma_0}} &\lesssim \fr{d\lambda_1}{dt} \norm{|\nabla|^{\fr{s}{2}}\theta}^2_{\mathcal{G}^{\lambda_1,\sigma_0}} + \norm{|\nabla|^{\fr{s}{2}}\theta}^2_{{\mathcal{G}^{\lambda_1,\sigma_0}}}\norm{\theta}_{\mathcal{G}^{\lambda_{1},\sigma_0}}+\norm{\theta(t)}^3_{\mathcal{G}^{\lambda_1,\sigma_0}}
      \lesssim \norm{\theta(t)}^3_{\mathcal{G}^{\lambda_1,\sigma_0}}.
\end{align*}
Thus,
\[
\sup_{t\in[0,10]}\norm{\theta(t)}^2_{\mathcal{G}^{\lambda_1,\sigma_0}}\lesssim \epsilon^2+\sup_{t\in[0,10]}
\int_0^t \norm{\theta(t)}^3_{\mathcal{G}^{\lambda_1,\sigma_0}},
\]
from which we can infer that $\sup_{t\in[0,10]}\norm{\theta(t)}^2_{\mathcal{G}^{\lambda_1,\sigma_0}}\leq C \epsilon^2.$ 

To estimate $\widehat{\mathbb{P}_0^z\theta_0}$, we use \eqref{eq: theta_00}. By using the Young inequality, we obtain that 
\begin{align*}
    \|e^{\lambda_1(t)|\eta|^s}\JB{\eta}^{\sigma_1}\mathrm{S}_{00}\|_{L^{\infty}_{t,\eta}}+\|e^{\lambda_1(t)|\eta|^s}\JB{\eta}^{\sigma_1}\mathrm{S}_{\neq\neq}\|_{L^{\infty}_{t,\eta}}\lesssim \|\theta\|_{\mathcal{G}^{\lambda_1(t),\sigma_1}}^2,
\end{align*}
which together with the initial condition completes the proof of the lemma. 
\end{proof}

\bigbreak
\noindent{\bf Acknowledgments: }
RZ is partially supported by NSF of China under  Grants 12222105. Part of this work was done when RZ was visiting NYU Abu Dhabi. He appreciates the hospitality of NYUAD.

\noindent{\bf Conflict of interest:}  We confirm that we do not have any conflict of interest. 

\noindent{\bf Data availibility:} The manuscript has no associated data.

\bibliographystyle{abbrv.bst} 
\bibliography{references.bib}

\end{document}